\newif\ifbackrefshowonlyfirst
\let\BR@direct@old@hyper@natlinkstart\hyper@natlinkstart
\renewcommand*{\hyper@natlinkstart}{\phantomsection\BR@direct@old@hyper@natlinkstart}
\let\BR@direct@oldBR@citex\BR@citex
\renewcommand*{\BR@citex}{\phantomsection\BR@direct@oldBR@citex}%
\long\def\hyper@page@BR@direct@ref#1#2#3{\hyperlink{#3}{#1}}
    \let\backrefxxx\hyper@page@BR@direct@ref
\patchcmd{\Hy@backout}{Doc-Start}{\@currentHref}{}{\errmessage{I can't seem to patch backref}}
\numberwithin{equation}{chapter}
\definecolor{darkblue}{rgb}{0.0,0,0.7} 
\definecolor{darkred}{rgb}{0.7,0,0} 
\newcommand{\darkred}{\color{darkred}} 
\definecolor{lightgrey}{rgb}{0.7,0.7,0.7} 
\newcommand{\lightgrey}{\color{lightgrey}} 
\NewDocumentCommand{\varnewtheorem}{momo}
 {
  \IfValueTF{#4}
   {\newtheorem{#1}{#3}[#4]}
    {
     \IfValueTF{#2}
      {\newtheorem{#1}[#2]{#3}}
      {\newtheorem{#1}{#3}}
    }
  \newtheorem*{#1*}{#3}
  \newenvironment{#1+}
   {\pushQED{\qed}\begin{#1}}
   {\popQED\end{#1}}
  \newenvironment{#1*+}
   {\pushQED{\qed}\begin{#1*}}
   {\popQED\end{#1*}}
 }
\def\qedherecases{\par\vspace{-1.4\baselineskip}\qedhere}
\theoremstyle{definition}
\crefname{theorem}{Thm.}{Thms.}
\Crefname{theorem}{Theorem}{Theorems}
\crefname{definition}{Def.}{Defs.}
\Crefname{definition}{Definition}{Definitions}
\crefname{corollary}{Cor.}{Cors.}
\Crefname{corollary}{Corollary}{Corollaries}
\crefname{proposition}{Prop.}{Props.}
\Crefname{proposition}{Proposition}{Propositions}
\crefname{conjecture}{Conj.}{Conjs.}
\Crefname{conjecture}{Conjecture}{Conjectures}
\crefname{observation}{Obser.}{Obsers.}
\Crefname{observation}{Observation}{Observations}
\Crefname{hope}{Hope}{Hopes}
\Crefname{mywarning}{Warning}{Warnings}
\newcommand\emm{m}
\newcommand\mhead{\texorpdfstring{$m$}{\protect\emm}}
\newcommand{\ind}{{\operatorname{ind}}}
\newcommand{\Ext}{{{\operatorname{Ext}}}}
\newcommand{\Hom}{{{\operatorname{Hom}}}}
\newcounter{nathancount}
\newcounter{christiancount}
\newcounter{hughcount}
\newcommand{\nathanside}[1]{
  \marginpar{
    {\small #1} \\
    \hspace*{25pt}
    \includegraphics[height=.5in]{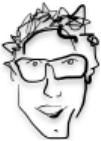}
    \hspace*{8pt}
    {\tiny\#\arabic{nathancount}}
  }
  \addtocounter{nathancount}{1}
}
\newcommand{\christianside}[1]{
  \marginpar{
    {\small #1} \\
    \hspace*{20pt}
    \includegraphics[height=.5in]{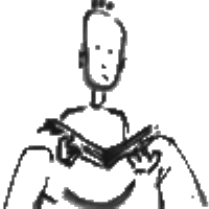}
    \hspace*{3pt}
    {\tiny\#\arabic{christiancount}}
  }
  \addtocounter{christiancount}{1}
}
\newcommand{\hughside}[1]{
   \marginpar{
    {\small #1} \\
    \hspace*{22pt}
    \ifthispageodd{
      \scalebox{-1}[1]{\includegraphics[height=.4in]{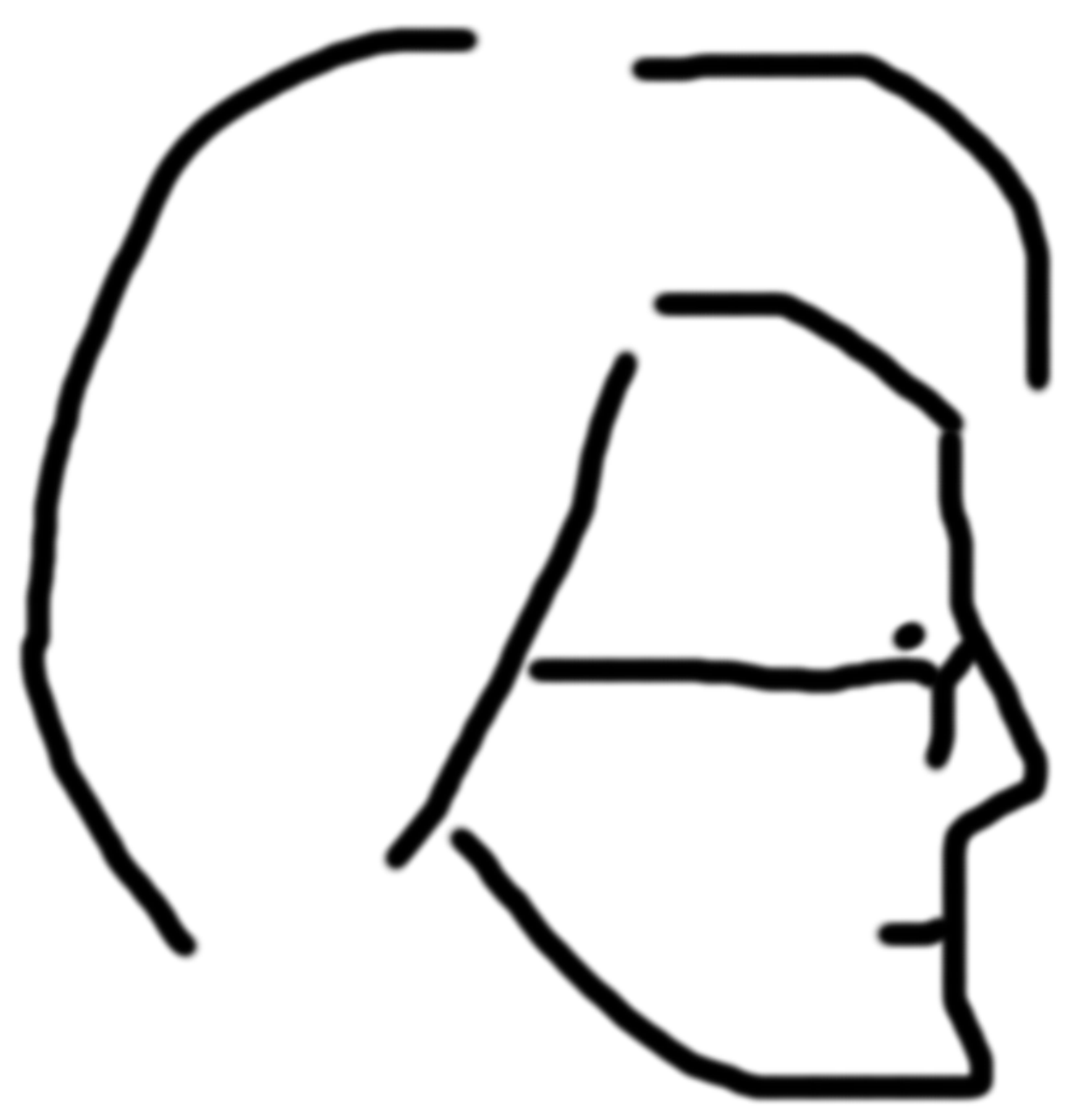}}
    }{
      \includegraphics[height=.4in]{hugh-avatar}
    }
    \hspace*{10pt}
    {\tiny\#\arabic{hughcount}}
  }
  \addtocounter{hughcount}{1}
}
\newcommand{\QQ}{\mathbb{Q}} 
\newcommand{\C}{\mathbb{C}} 
\newcommand{\R}{\mathbb{R}} 
\newcommand{\set}[2]{\left\{ #1 \; : \; #2 \right\}} 
\newcommand{\bigset}[2]{\big\{ #1 \; : \; #2 \big\}} 
\newcommand{\id}{{1\!\!1}} 
\newcommand{\one}{{e}} 
\newcommand{\coveredref}{{\cov_{\downarrow}}}
\newcommand{\coveringref}{{\cov^{\uparrow}}}
\newcommand{\sq}[1]{{\sf #1}} 
\newcommand{\Q}{\sq{Q}} 
\newcommand{\q}{\sq{q}} 
\newcommand{\s}{\sq{s}} 
\renewcommand{\r}{\sq{r}} 
\renewcommand{\t}{\sq{t}} 
\renewcommand{\u}{\sq{u}} 
\newcommand{\wo}{{w_\circ}} 
\newcommand{\bwo}{\boldsymbol{w_\circ}} 
\NewDocumentCommand\wom{O{m}}{{w_\circ^{#1}}} 
\NewDocumentCommand\bwom{O{m}}{{\boldsymbol{w_\circ}^{#1}}} 
\NewDocumentCommand\wop{O{p}}{w_{#1}} 
\NewDocumentCommand\bwop{O{p}}{\boldsymbol{w}_{#1}} 
\NewDocumentCommand\cwop{O{p}}{\sq{w}_{#1}}
\newcommand{\refl}{\mathcal{R}}
\newcommand{\sref}{\mathcal{S}}
\newcommand{\arefl}{\bm{\mathcal{R}}}
\newcommand{\asref}{\bm{\mathcal{S}}}
\newcommand{\length}{\ell} 
\newcommand{\lengthS}{\ell_\sref} 
\newcommand{\lengthR}{\length_\refl} 
\newcommand{\leqref}{\leq_\refl} 
\newcommand{\geqref}{\geq_\refl} 
\newcommand{\subwordsS}{{\textsc{Sub}_\sref}}
\newcommand{\subwordsSB}{{\textsc{Sub}_{\asref}^{\Artingrp}}}
\newcommand{\subwordsSC}{{\textsc{Sub}_\sref^{\operatorname{col}}}}
\newcommand{\subwordsR}{{\textsc{Sub}_\refl}}
\newcommand{\reds}{{\mathrm{Red}_\refl}}
\newcommand{\flipGraph}[1][\Q,w,a]{\mathcal{G}(#1)} 
\newcommand{\FR}{{\operatorname{F\hspace*{-2pt}R}}} 
\newcommand{\FRm}{{\FR}^{(m)}} 
\newcommand{\Root}[2]{\mathsf{r}_{#1}(#2)} 
\newcommand{\Roots}[1]{\mathsf{R}(#1)} 
\renewcommand{\c}{\sq{c}} 
\newcommand{\sw}[2]{\sq{#1}(\sq{#2})} 
\newcommand{\cw}[1]{\sq{#1}\cwo[#1]} 
\newcommand{\cwm}[1]{\sq{#1}\cwom[#1]} 
\NewDocumentCommand\cwo{O{c}}{\sw{\wo}{#1}} 
\NewDocumentCommand\cwom{O{c}O{m}}{{\sq{w}_\circ^{#2}}{(\sq{#1})}} 
\DeclareMathOperator{\inv}{inv} 
\DeclareMathOperator{\invs}{\mathsf{inv}} 
\newcommand{\del}{{\operatorname{del}}}
\newcommand{\ie}{\textit{i.e.}} 
\newcommand{\defn}[1]{\emph{\darkred #1}} 
\renewcommand{\paragraph}[1]{\bigskip\noindent\textbf{#1.}} 
\def\Phipm{\Phi_{\geq -1}} 
\newcommand{\mPhipm}[1][m]{{\Phi^{(#1)}_{\geq -1}}} 
\newcommand{\alt}[1]{{[#1]}}
\newcommand{\toap}{{\phi}}
\NewDocumentCommand\BA{O{n}}{\boldsymbol{\mathfrak{S}}^+_{#1}}
\NewDocumentCommand\WA{O{n}}{\mathfrak{S}_{#1}}
\newcommand{\Cat}{{{\sf Cat}}}
\NewDocumentCommand\Catm{O{m}}{{{\sf Cat}^{(#1)}}}
\NewDocumentCommand\Catplusm{O{m}}{{{\sf Cat}^{(#1)}_+}}
\NewDocumentCommand\Catrat{O{p}}{{{\sf Cat}^{[#1]}}}
\newcommand{\sninv}{\overline{s}}
\newcommand{\sinv}{\overline{\s}}
\newcommand{\coxr}{{\sinv\c\s}}
\newcommand{\coxs}{{\sinv\c}}
\newcommand{\coxrn}{{\sninv c s}}
\newcommand{\coxsn}{{\sninv c}}
\newcommand{\coxsf}{{\c\sinv}}
\newcommand{\coxrnf}{{sc\sninv}}
\newcommand{\coxsnf}{{c\sninv}}
\DeclareMathOperator{\lcm}{lcm}
\DeclareMathOperator{\rev}{rev}
\DeclareMathOperator{\garside}{garside}
\newcommand\restr[2]{{
  \left.\kern-\nulldelimiterspace 
  #1 
  \vphantom{\big|} 
  \right|^{#2} 
  }}
\newcommand\restri[2]{{
  \left.\kern-\nulldelimiterspace 
  #1 
  \vphantom{\big|} 
  \right|_{#2} 
  }}
\newcommand{\Artinmon}{\boldsymbol{B}^+}
\newcommand{\Artingrp}{\boldsymbol{B}}
\newcommand{\Braidgrp}{\boldsymbol{\mathfrak{S}}}
\newcommand{\Weak}{{{\sf Weak}}}
\NewDocumentCommand\Weakm{O{m}}{{{\sf Weak}^{(#1)}}}
\NewDocumentCommand\Wm{O{m}}{{W}^{(#1)}}
\newcommand{\Abs}{{{\sf Abs}}}
\newcommand{\Assoc}{{{\sf Asso}}}
\newcommand{\NablaAssoc}{{\sf Asso}_\nabla}
\NewDocumentCommand\Assocm{O{m}}{{{\sf Asso}^{(#1)}}}
\NewDocumentCommand\Assocmpos{O{m}}{{{\sf Asso}^{(#1)}_+}}
\NewDocumentCommand\Assocrat{O{p}}{{{\sf Asso}^{[#1]}}}
\NewDocumentCommand\DeltaAssocm{O{m}}{{{\sf Asso}^{(#1)}_\Delta}}
\newcommand{\Shard}{{{\sf Shard}}}
\newcommand{\leqsh}{{\leq_{\operatorname{Sh}}}}
\newcommand{\geqsh}{{\geq_{\operatorname{Sh}}}}
\newcommand{\ncs}{{\underline w}}
\NewDocumentCommand\Krew{O{c}}{{\sf Krew}_{#1}}
\NewDocumentCommand\Krewpos{O{c}}{{\sf Krew}^{+}_{#1}}
\newcommand{\Camb}{{{\sf Camb}}}
\newcommand{\Cambsort}{{{\sf Camb}_\Sort}}
\newcommand{\Cambnc}{{{\sf Camb}_\NC}}
\NewDocumentCommand\Cambm{O{m}}{{{\sf Camb}^{(#1)}}}
\NewDocumentCommand\Cambsortm{O{m}}{{{\sf Camb}_\Sort^{(#1)}}}
\NewDocumentCommand\Cambncm{O{m}}{{{\sf Camb}_\NC^{(#1)}}}
\NewDocumentCommand\GCambncm{O{m}}{{{\mathcal{G}\sf Camb}_\NC^{(#1)}}}
\NewDocumentCommand\Cambassocm{O{m}}{{{\sf Camb}_\Assoc^{(#1)}}}
\newcommand{\FNC}{{\delta_\ncs}}
\NewDocumentCommand\deltaNCm{O{m}}{{{\sf NC}_{\delta}^{(#1)}}}
\newcommand{\DeltaNC}{{{\sf NC}_{\Delta}}}
\NewDocumentCommand\DeltaNCm{O{m}}{{{\sf NC}_{\Delta}^{(#1)}}}
\newcommand{\Rev}{{\operatorname{Fund}}}
\newcommand{\DesSet}{{{\sf des}}}
\newcommand{\AscSet}{{{\sf asc}}}
\newcommand{\Shift}{{{\sf Shift}}}
\newcommand{\PhiP}{{\Phi^+}}
\NewDocumentCommand\PhiPJ{O{J}}{{\Phi^+_{#1}}}
\newcommand{\NC}{{{\sf NC}}}
\NewDocumentCommand\NCm{O{m}}{{{\sf NC}^{(#1)}}}
\NewDocumentCommand\NCmpos{O{m}}{{{\sf NC}^{(#1)}_+}}
\NewDocumentCommand\NCrat{O{p}}{{{\sf NC}^{[#1]}}}
\newcommand{\NCL}{{{\sf NCL}}}
\newcommand{\flip}{{{\sf Flip}}}
\newcommand{\LW}{{\mathcal{L}(W)}} 
\newcommand{\PW}{{\mathcal{P}(W)}} 
\newcommand{\Park}{{{\sf Park}}} 
\newcommand{\fixd}{\operatorname{Fix}}
\newcommand{\movd}{\operatorname{Mov}}
\newcommand{\pid}{{\pi^c_\downarrow}}
\newcommand{\piu}{{\pi_c^\uparrow}}
\newcommand{\Sort}{{{\sf Sort}}}
\NewDocumentCommand\Sortm{O{m}}{{{\sf Sort}^{(#1)}}}
\NewDocumentCommand\Sortinf{O{\infty}}{{{\sf Sort}^{(#1)}}}
\NewDocumentCommand\Sortmpos{O{m}}{{{\sf Sort}^{(#1)}_+}}
\NewDocumentCommand\Sortrat{O{p}}{{{\sf Sort}^{[#1]}}}
\NewDocumentCommand\Sortma{O{m}}{{{\sf Sort}_{{\sf fact}}^{(#1)}}}
\NewDocumentCommand\deltaSortm{O{m}}{{{\sf Sort}_{{\sf shard}}^{(#1)}}}
\newcommand{\cov}{{{\sf cov}}}
\newcommand{\order}{\operatorname{ord}}
\newcommand{\supp}{\operatorname{supp}}
\newcommand{\bij}{{\ \longrightarrow\ }}
\newcommand{\lrbij}{{\ \longleftrightarrow\ }}
\newcommand\XLeftrightarrow[2]{\xLeftrightarrow{\makebox[#1]{#2}}}
\newcommand\Xleftrightarrow[2]{\xleftrightarrow{\makebox[#1]{#2}}}
\newcommand{\cambbij}{{\ \Xleftrightarrow{12pt}{\tiny $c$}\ }}
\newcommand{\canbij}{{\ \Xleftrightarrow{12pt}{\tiny $\sim$}\ }}
\renewcommand{\S}{{\lightgrey\s}}
\newcommand{\T}{{\lightgrey\t}}
\newcommand{\U}{{\lightgrey\u}}
\newcommand\dyckpath[2]{
  \coordinate (last) at ($#1-(0.6,0.3)$);
  \draw[very thin, lightgrey] (last) -- ($(last)+(1.2,0.6)$);
  \filldraw (last) circle (1pt);
  \foreach \dir in {#2}{
    \ifnum\dir=0
      \coordinate (step) at (0,0.2);
      \coordinate (ext) at (0,0.013);
    \else
      \coordinate (step) at (0.2,0);
      \coordinate (ext) at (0.013,0);
    \fi
    \draw[line width=1pt] (last) -- ($(last)+(step)+(ext)$);
    \coordinate (last) at ($(last)+(step)$);
    \filldraw (last) circle (1pt);
  };
}
\newcommand{\rootposet}[6]{ 
  \node[draw,rectangle,minimum width=2*#4*#2cm+#4cm,minimum height=#4*#2cm+#4cm,opacity=0.5] #5 at (#1) {};

  \coordinate (start) at ($(#1)-(#2*#4,#2*#4/2)+(#4,#4/2)$);
  \foreach \row in {#3}{
    \coordinate (row) at (start);
    \foreach \entry in \row{
      \ifnum\entry=0
        \node[draw,circle,inner sep =  7*#4pt] at (row) {};
      \fi
      \ifnum\entry=1
        \node[fill,circle,inner sep =  7*#4pt] at (row) {};
      \fi
      \ifnum\entry=2
        \node[fill,circle,inner sep =  7*#4pt] at (row) {};
        \node[fill,circle,inner sep = 12*#4pt,fill opacity=0.5] at (row) {};
      \fi
      \ifnum\entry=3
        \node[fill,circle,inner sep =  7*#4pt] at (row) {};
        \node[draw,circle,inner sep = 12*#4pt,opacity=0.5] at (row) {};
      \fi
      \coordinate (row) at ($(row)+(#4*2,#4*0)$);
    }
    \coordinate (start) at ($(start)+(#4,#4)$);
  }
}
\newcommand{\rootposetfill}[6]{ 
  \coordinate (start) at ($(#1)-(#2*#4,#2*#4/2)+(#4,#4/2)$);
  \foreach \row in {#3}{
    \coordinate (row) at (start);
    \foreach \entry in \row{
        \node at (row) {\entry};
      \coordinate (row) at ($(row)+(#4*2,#4*0)$);
    }
    \coordinate (start) at ($(start)+(#4,#4)$);
  }
}
\NewDocumentCommand\Skipset{O{c}}{{\mathcal{C}_{#1}}} 
\NewDocumentCommand\Lastset{O{c}}{{\mathcal{C}^*_{#1}}} 
\NewDocumentCommand\Lr{O{c}}{{{\operatorname{Lr}}_{#1}}} 
\NewDocumentCommand\Lrplus{O{c}}{{{\operatorname{Lr}}^+_{#1}}} 
\NewDocumentCommand\taum{O{s}}{{\tau^{(m)}_{#1}}}
\NewDocumentCommand\num{O{s}}{{\nu^{(m)}_{#1}}}
\renewcommand{\th}{\textsuperscript{th}}
\newcommand{\polygon}[6]{ 

  \node[circle,minimum size=#5cm] (#2) at  #1 {};

  \foreach \t in {1,...,#3} {
    \coordinate (#2\t) at ($#1+(90-\t*360/#3:#4)$);
  }
  \draw[thin,black,fill=white,opacity=0.3,densely dashed] #1 circle (#4);
  \setcounter{intege}{1}
  \pgfmathsetcounter{intege}{1}
  \foreach \object in {#6}{
    \ifthenelse{\not\equal{\object}{}}{
      \filldraw[black] ($#1+($(90-\theintege*360/#3:#4)$)$) circle(2pt);
    }{
    }
    \node[inner sep=0pt] at ($#1+($1.25*(90-\theintege*360/#3:#4)$)$) {$\scriptstyle\object$};
    \pgfmathsetcounter{intege}{\theintege+1}
    \setcounter{intege}{\theintege}
  }
}
\newcounter{intege}
\NewDocumentCommand\DeltaAssocinf{O{\infty}}{{{\sf Asso}^{(#1)}_\Delta}}
\NewDocumentCommand\DeltaAssocmp{O{m+1}}{{{\sf Asso}^{(#1)}_\Delta}}
\NewDocumentCommand\Cambncinf{O{\infty}}{{{\sf Camb}_\NC^{(#1)}}}
\NewDocumentCommand\DeltaNCinf{O{\infty}}{{{\sf NC}_{\Delta}^{(#1)}}}
\NewDocumentCommand\Cambsortinf{O{\infty}}{{{\sf Camb}_\Sort^{(#1)}}}
\NewDocumentCommand\DeltaNCinfpos{O{\infty}}{{{\sf NC}_{+}^{(#1)}}}
\newcommand{\DeltaNCmpos}{{{\sf NC}_{+}^{(m)}}}
\newcommand{\Vs}{\V^\sharp}
\newcommand{\dimp}{\underline{\operatorname{dim}}^\sharp}
\newcommand{\storf}{\operatorname{CoAisle}^{(\infty)}}
\newcommand{\storfp}{\operatorname{CoAisle}^{\sharp}}
\newcommand{\storfm}{\operatorname{CoAisle}^{(m)}}
\newcommand{\sta}{\mathcal V_{\textup{st}}}
\renewcommand{\mod}{\operatorname{mod}}
\newcommand{\Silt}{{\operatorname{Silt}}}
\newcommand{\Siltp}{\operatorname{Silt}^{(\infty)}}
\newcommand{\Homp}{\operatorname{Hom}_{\leq 0}^{(\infty)}}
\newcommand{\Siltm}{\operatorname{Silt}^{(m)}}
\newcommand{\Homm}{\operatorname{Hom}_{\leq 0}^{(m)}}
\newcommand{\Siltpos}{\operatorname{Silt}^{(\infty)}_+}
\newcommand{\Siltposm}{\operatorname{Silt}^{(m)}_+}
\NewDocumentCommand\NablaAssocinfpos{O{\infty}}{{{\sf Asso}^{(#1)}_{+}}}
\newcommand{\NablaAssocmpos}{{{\sf Asso}^{(m)}_{+}}}
\newcommand{\Hompos}{{\sf Hom}_{\leq 0,+}^{(\infty)}}
\newcommand{\Homposm}{{ \sf Hom}_{\leq 0,+}^{(m)}}
\NewDocumentCommand\NablaAssocinf{O{\infty}}{{{\sf Asso}^{(#1)}_\nabla}}
\NewDocumentCommand\NablaAssocm{O{m}}{{{\sf Asso}^{(#1)}_\nabla}}
\NewDocumentCommand\NablaAssocmp{O{m+1}}{{{\sf Asso}^{(#1)}_\nabla}}
\newcommand{\udim}{\underline{\operatorname{dim}}}
\renewcommand{\Rev}{\operatorname{Fund}}
\newcommand{\Revp}{\operatorname{Fund}} 
\newcommand{\bw}{\boldsymbol{w}}
\newcommand{\ba}{\boldsymbol{a}}
\newcommand{\bb}{\boldsymbol{b}}
\newcommand{\bu}{\boldsymbol{u}}
\newcommand{\bv}{\boldsymbol{v}}
\newcommand{\bone}{\boldsymbol{e}}
\newcommand{\cwoinf}{{\sq w}_\circ^{\infty}({\sq c})}
\newcommand{\V}{\mathcal V}
\newcommand{\TT}{\mathcal T}
\newcommand{\bs}{\boldsymbol{s}}
\newcommand{\br}{\boldsymbol{r}}
\newcommand{\bt}{\boldsymbol{t}}
\newcommand{\bc}{\boldsymbol{c}}
\newcommand{\BJ}{\boldsymbol{J}}
\newcommand{\Fm}{\mathcal F^{(m)}}
\newcommand{\Cm}{{\mathcal C^{(m)}}}
\newcommand{\fmp}{F^{(m)}_+}
\newcommand{\Cmp}{\mathcal C^{(m)}_+}
\newcommand{\Fmp}{\mathcal F^{(m)}_+}
\newcommand{\asc}{\operatorname{asc}}
\newcommand{\des}{\operatorname{des}}
\NewDocumentCommand\parallelcclassic{O{c}}{\parallel_{#1}}
\newcommand{\eqdef}{:=}
\NewDocumentCommand\Wres{O{s}}{W_{\langle #1\rangle}}
\NewDocumentCommand\Cambupflip{O{r_a}}{\flip^\uparrow_{#1}}
\NewDocumentCommand\Cambdownflip{O{r_a}}{\flip^\downarrow_{#1}}
\newcommand{\link}{\operatorname{lk}}
\newcommand{\shard}{\Sigma}
\newcommand{\shardbij}{\mathrm{shard}}
\newcommand\scalemath[2]{\scalebox{#1}{\mbox{\ensuremath{\displaystyle #2}}}}
\title
  {Cataland: Why the Fu\ss?}
\author[C.~Stump]{Christian Stump$^*$}
\address[C.~Stump]{Ruhr-Universit\"at Bochum, Germany}
\email{christian.stump@rub.de}
\thanks{$^*$Supported by DFG grants STU 563/2 ``Coxeter-Catalan combinatorics'' and STU 563/4-1 ``Noncrossing phenomena in Algebra and Geometry''. }
\author[H.~Thomas]{Hugh Thomas$^\dagger$}
\address[H.~Thomas]{Universit\'e du Qu\'ebec \`a Montr\'eal, Canada}
\email{hugh.ross.thomas@gmail.com}
\thanks{$^\dagger$Partially supported by an NSERC Discovery Grant and the
Canada Research Chairs program.}
\author[N.~Williams]{Nathan Williams}
\address[N.~Williams]{University of Texas at Dallas, USA}
\email{nathan.f.williams@gmail.com}
\date{\today}
\keywords{Coxeter groups, Artin groups, Coxeter-Catalan combinatorics, Fu\ss-Catalan numbers, noncrossing partitions, cluster complexes, Coxeter-sortable elements, associahedra, subword complexes}
\subjclass[2010]{20F55, 20F36, 16G10, 05E10}
\begin{document}

\maketitle

\cleardoublepage
\thispagestyle{empty}
\vspace*{13.5pc}
\begin{center}
   To Anke, Magali and Maria.
\end{center}
\setcounter{tocdepth}{1}
\tableofcontents

\chapter*{Abstract}

The three main objects in noncrossing Catalan combinatorics associated to a finite Coxeter system are noncrossing partitions, clusters, and sortable elements.
The first two of these have known Fu\ss-Catalan generalizations.
We provide new viewpoints for both and introduce the missing generalization of sortable elements by lifting the theory from the Coxeter system to the associated positive Artin monoid.  We show how this new perspective ties together all three generalizations, providing a uniform framework for noncrossing Fu\ss-Catalan combinatorics.  Having developed the combinatorial theory, we provide an interpretation of our generalizations in the language of the representation theory of hereditary Artin algebras.


\chapter*{Acknowledgements}

This project began in December 2012 at the workshop ``Rational Catalan combinatorics'' at the  American Institute of Mathematics.
We thank AIM for financial support, and we thank the organizers D.~Armstrong, S.~Griffeth, V.~Reiner, and M.~Vazirani for the invitation to participate.

We made substantial progress in June 2014 at the workshop ``Non-crossing partitions in representation theory'' at Bielefeld University.   We thank the funding agencies for financial support, and we thank the organizers B.~Baumeister, A.~Hubery, and H.~Krause.

Part of the research for this monograph was carried out while we were staying at the Mathematical Research Institute Oberwolfach supported by the ``Research in Pairs'' program.
Finalizing this monograph was only possible because of the hospitality of the institute.

\medskip

We thank Drew Armstrong, Nathan Reading, and Vic Reiner for many helpful discussions.

\chapter{Introduction}
\label{sec:introduction}

\christianside{There are several \emph{alternative titles} in the source if you are interested. And maybe even other interesting stuff.}

\section{A brief historical overview}
This monograph takes the perspective that there are three distinct families of noncrossing Catalan objects---noncrossing partitions, triangulations, and stack-sortable permutations.\nathanside{Some people might argue there are a few other Catalan objects.}
The families are counted by the \defn{Catalan number}
\begin{equation*}
  \Cat_n\eqdef \frac{1}{n+1}\binom{2n}{n}.
\end{equation*}
Historically, these three families arose as follows:

\smallskip

	 In 1972, G.~Kreweras introduced and studied \emph{noncrossing set partitions} of the set $\{1,\ldots,n\}$~\cite{Kre1972}.  Besides giving many refined enumerations, he also proved that the noncrossing partitions form a lattice under refinement.  This curious property turns out to be intimately connected to the $K(\pi,1)$ problem for the braid group.  We refer to~\cite[Chapter 1]{Arm2006} and \cite{bessis2015finite} for details.

\smallskip

	 In 1751, L.~Euler guessed the enumeration of \emph{triangulations of a convex $(n+2)$-gon}; J.~Segner discovered the standard combinatorial recurrence in~\cite{de1758enumeratio}, from which L.~Euler was able to prove the enumeration.  The proof appeared in 1761, and E.~C.~Catalan was born in 1814---some 50 years later.  We refer to~\cite[Appendix B, History of Catalan Numbers]{stanley2015catalan} for a comprehensive historical treatment.  
\smallskip

	In 1968, D.~Knuth introduced the notion of \emph{stack-sortability} as those permutations in $\WA$ that can be sorted in a single pass through a stack.  In~\cite[Exercise 2.2.1.5]{Knu1973}, he gives the exercise of showing that such permutations are characterized as~$231$-avoiding.

\medskip

This monograph is the continuation of a program to simultaneously generalizing these three classical families in two orthogonal directions: the Fu\ss-Catalan direction, and the Coxeter-theoretic direction.

\paragraph{Fu\ss-Catalan generalizations}
The first direction is the combinatorial problem of finding generalizations of Catalan families that incorporate an additional nonnegative integral parameter~$m$.  We refer to such a generalization as an \defn{$m$-eralization}.
Such $m$-eralizations have been established for the first two families, but  we are not aware of any previous $m$-eralization of the stack-sortable elements:

\smallskip

For noncrossing partitions, G.~Kreweras considered chains in the noncrossing partition lattice~\cite[Section 6]{Kre1972}.  In 1980, P.~Edelman put a partial order on these chains in~\cite{Ede1980}, and interpreted them as those noncrossing partitions on $\{1,\ldots,mn\}$ with block sizes divisible by~$m$.

\smallskip

In 1791---as detailed in~\cite[Section 3.5]{Arm2006}---the $m$-eralization of triangulations goes back to N.~Fu\ss\, who counted the number of dissections of a convex $(mn{+}2)$-gon into $(m{+}2)$-gons~\cite{fuss1791solutio}.

\bigskip

 These two families are counted by the \defn{Fu\ss-Catalan number}\footnote{The Fu\ss-Catalan numbers---and this monograph---are named for N.~Fu\ss\ and E.~Catalan.}
 \begin{equation*}
  \Cat_n^{(m)} \eqdef \frac{1}{mn+1}\binom{(m+1)n}{n}.
\end{equation*}

\paragraph{Coxeter-theoretic generalizations}
This second direction---as championed by R.~Si\-mion and V.~Reiner---is concerned with a generalization of Catalan objects to finite Coxeter groups.
All three classical Catalan objects have known Coxeter-theoretic generalizations:

\smallskip
 As is often the case, the Coxeter-theoretic generalizations of noncrossing partitions began with the study of symmetric objects.  In unpublished work from 1993 \cite{montenegro1993fixed}, C.~Montenegro counted centrally symmetric noncrossing partitions.\nathanside{C is for Carlos?}  In 1997, V.~Reiner associated these to the root system of type~$B$ \cite{Rei1997} and considered more refined enumerative properties.  He posed the problem of natural generalizations to all reflection groups.  The answer came from the theory of Artin groups.

In 1998, J.~Birman, K.~H.~Ko, and S.~J.~Lee gave a remarkable new presentation of the braid group~\cite{BKL1998}.  In the early 2000s---inspired by this and the $K(\pi,1)$ problem---T.~Brady and C.~Watt~\cite{brady2001partial,BW2002}, and D.~Bessis~\cite{Bes2003} gave a uniform definition of noncrossing partition lattices for all finite Coxeter groups.  This resolved V.~Reiner's question.

\smallskip

 Coxeter-theoretic generalizations of triangulations originated in the theory of polytopes.  Triangulations of an $(n{+}2)$-gon naturally index the vertices of an $n$-dimensional polytope called the \emph{associahedron}, famously studied by D.~Tamari after his 1951 thesis (as a poset, which he later showed was a lattice) and in J.~Stasheff's 1961 thesis (as a polytope).  The centrally symmetric triangulations of a $(2n)$-gon naturally index the vertices of a polytope J.~Stasheff called the \emph{cyclohedron}~\cite{stasheff1997operads}, which first appeared in 1994 work of R.~Bott and C.~Taubes~\cite{bott1994self}.  R.~Simion independently asked and solved the problem of finding a type $B$ analogue of the associahedron~\cite{simion2003type}.  The construction for general root systems came from the theory of cluster algebras.

As a consequence of their classification of finite-type cluster algebras~\cite{FZ2003}, S.~Fomin and A.~Zelevinsky gave uniform definitions of associahedra for all Weyl groups.  These were realized as polytopes by many researchers in many ways, as detailed in~\cite{HPS}.  We refer to the entire collection \cite{muller2012associahedra} for further background.

\smallskip

 The Coxeter-theoretic generalization of stack-sortable elements is intimately connected to polytopes, but also to lattice theory.  It is well-known that the associahedron can be constructed from the permutahedron by giving a map from permutations to triangulations~\cite{BW1997,tonks1997relating}.  Answering a question of R.~Simion from~\cite{simion2003type}, V.~Reiner constructed the cyclohedron from the permutahedron of type $B$ using a theory of equivariant fiber polytopes.  In particular, this construction defined a map from signed permutations to centrally symmetric triangulations~\cite{reiner2002equivariant}.

In 2005, N.~Reading uniformly generalized such maps to all finite Coxeter groups using the notion of \emph{Coxeter-sortability}.  He used the weak order to define a generalization of the Tamari lattices he called \emph{Cambrian lattices}, giving a combinatorial model for the cluster complex in the corresponding Coxeter group.

\medskip

Let $W$ be a finite Coxeter group.  Although no uniform proof is currently known, these Coxeter-Catalan families are counted by the \defn{Catalan number of type $W$},
\begin{equation*}
  \Cat(W) \eqdef \prod_{i=1}^n \frac{h+d_i}{d_i},
\label{eq:cat_num}
\end{equation*}\nathanside{'cause otherwise, calling them Catalan numbers wouldn't make sense.}
where $d_1\leq d_2 \leq \ldots \leq d_n$ are the degrees of the fundamental invariants of~$W$ and $h \eqdef d_n$ is the Coxeter number.
For the symmetric group $\WA$, this definition recovers the classical Catalan numbers.
%

\medskip

\paragraph{Fu\ss-Catalan and Coxeter-theoretic generalizations together} 
The Coxeter-theoretic generalizations of noncrossing partitions and triangulations have both previously been $m$-eralized: in his 2006 thesis~\cite{Arm2006}, D.~Armstrong defined \emph{$m$-eralized noncrossing partitions} by applying P.~Edelman's construction to the uniformly-defined noncrossing partition lattices for finite Coxeter groups; and in 2005, S.~Fomin and N.~Reading found an \emph{$m$-eralized cluster complex} for bipartite Coxeter elements~\cite{FR2005}.
These $m$-eralized objects are counted by the \defn{Fu{\ss}-Catalan numbers of type~$W$},

\begin{equation*}
  \Catm(W) \eqdef \prod_{i=1}^n \frac{m h+d_i}{d_i}.
\label{eq:fuss_cat_num}
\end{equation*}

\medskip

While both $m$-eralizations have been thoroughly studied, there are several missing pieces.  Most importantly, the following structures have not been constructed previously:
\begin{itemize}
  \item An $m$-eralized cluster complex for general Coxeter elements,
  \item an $m$-eralization of sortable elements,
  \item an $m$-eralized Cambrian lattice, and
  \item bijections relating $m$-eralized noncrossing partitions to the above.
\end{itemize}

Since the weak order on a finite Coxeter group~$W$ is so integral to the $m=1$ versions of the above, we should start our search with one more item:
\begin{itemize}
  \item an $m$-eralization of the weak order.
\end{itemize}
\christianside{This is the end of the historic treatment. It was a little quick, maybe too quick...}

\section{Summary of results}

This monograph introduces these missing $m$-eralizations by passing from the Coxeter group to its {\bf positive Artin monoid}.  In the remainder of this introduction, we sketch our main results. \nathanside{TL;DR: we replaced Coxeter groups with the positive Artin monoid and did some combinatorics.}





\medskip

\paragraph{The weak order}
In \Cref{sec:m-weak}, we introduce an $m$-eralization of the weak order using the positive Artin monoid~$\Artinmon$ corresponding to~$W$.  Let~$\bwo$ be the image
of the longest element of~$W$ in~$\Artinmon$, so that the weak order on~$W$ embeds into the weak order on~$\Artinmon$ as the interval consisting of those elements that are initial of $\bwo$.  We introduce the Fu\ss-Catalan parameter $m$ as follows.


\begingroup
\def\thedefinition{\ref{def:mweak}}
\begin{definition}
  $\Wm\eqdef\bigset{ \bw \in \Artinmon}{\bw\bu=\bwom \text{ for some } \bu \in \Artinmon }$.
\end{definition}
\addtocounter{definition}{-1}
\endgroup

We recall in \Cref{prop:weak_degree_characterization} that $\Wm$ contains exactly those elements of~$\Artinmon$ with at most~$m$ Garside factors, and show in \Cref{thm:weak_lattice} that the weak order on~$\Wm$ is a rank-symmetric and self-dual lattice.


\paragraph{Subword complexes}
In \Cref{sec:chapter_subword_complexes}, we recall some background from simplicial complexes and then generalize the notion of subword complexes to the needed generality  in \Cref{def:subwordsS} and \Cref{prop:subwordcomplexesinjection}.
We finally introduce and study Coxeter initial subword complexes in \Cref{sec:initial_subword_complexes,,sec:initial_subword_complexes}.
These later turn out to be the generality in which we need subword complexes in this monograph.

\paragraph{Noncrossing partitions}
In \Cref{sec:noncrossing_partitions}, we review background on $m$-eralized noncrossing partitions, including their description in terms of a dual subword complex in \Cref{prop:nc_fuss_subwords}.
We $m$-eralize the notions of Cambrian rotation and recurrence, and provide a novel perspective---new even for $m=1$---by defining an $m$-eralized $c$-Cambrian poset structure on noncrossing partitions in \Cref{def:nc_cambrian_poset}.

\paragraph{Cluster complexes}
In \Cref{sec:clusters_and_subwords}, we consider the missing $m$-eralization of the $c$-cluster complex for arbitrary Coxeter elements~$c$.
We use an $m$-eralized $c$-compati\-bility relation whose existence and uniqueness are proven by $m$-eralizing the subword complex approach to $c$-cluster complexes given in~\cite{CLS2011,PS20112}.


\begingroup
\def\thedefinition{\ref{def:m-c-cluster}}
\begin{definition}
  The simplicial complex $\Assocm(W,c)$ is the set of subwords of the $c$-sorting word $\cwm{c}$ (see \Cref{def:c-sorting}) whose complements contain a word for $\bwom \in \Artinmon$.
\end{definition}
\addtocounter{definition}{-1}
\endgroup

We show in \Cref{cor:m-c-assoc-m-assoc} that in the special case of bipartite Coxeter elements, this definition recovers the generalized cluster complex of S.~Fomin and N.~Reading~\cite{FR2005}.  This perspective allows us to give simple proofs of many of its known properties.
In \Cref{cor:m-c-shelling-order} we prove C.~Atha\-na\-si\-adis and E.~Tzanaki's result from~\cite{AT2008} that $\Assocm(W,c)$ is vertex-decomposable (and therefore shellable); and in \Cref{thm:assoc_homotopy} we show that $\Assocm(W,c)$ is a wedge of $\Catm[m-1](W)$ many spheres of dimension $n{-}1$~\cite[Proposition~11.1]{FR2005}.

\paragraph{Sortable elements}
In \Cref{sec:sortable_elements}, we provide the missing $m$-eralization of $c$-sortable elements by lifting N.~Reading's definition from~$W$ to $\Wm$ (\Cref{def:msort_sortable}).  
We denote the set of all \defn{$m$-eralized $c$-sortable elements} by $\Sortm(W,c)$, and characterize the individual Garside factors of an $m$-eralized $c$-sortable element in \Cref{def:factorsort} and in \Cref{cor:factorsort}.


We denote the set of all \defn{$m$-eralized $c$-sortable elements} by $\Sortm(W,c)$, and characterize the individual Garside factors of an $m$-eralized $c$-sortable element in \Cref{def:factorsort} and in \Cref{cor:factorsort}.

\paragraph{Bijections}
The following theorem relates D.~Armstrong's $m$-eralization of noncrossing partitions, S.~Fomin and N.~Reading's $m$-eralization of cluster complexes, and the new $m$-eralizations of sortable elements and subword complexes.

\begingroup
\def\thetheorems{\ref{thm:bij_sort_to_nc}, \ref{thm:bij_nc_to_sort}, \ref{thm:rootconf_skiptset}, \ref{thm:sortcluster}}
\begin{theorems}
  There are explicit, uniform, and natural bijections between the three families
  \begin{itemize}
    \item the $m$-eralized $c$-noncrossing partitions $\NCm(W,c)$,
    \item the $m$-eralized $c$-cluster complexes $\Assocm(W,c)$, and
    \item the $m$-eralized $c$-sortable elements $\Sortm(W,c)$.
  \end{itemize}
\label{thm:bij_intro}
\end{theorems}
\addtocounter{theorems}{-1}
\endgroup

We use the term ``explicit'' to mean that we provide a bijection (rather than relying on a counting argument), the term ``uniform'' to indicate that we do not use the classification theorem of finite Coxeter systems, and the term ``natural'' to mean that the given bijections respect the inductive parabolic structure on each family.
In fact, the proofs will very often be based on the inductive structure provided by a modification of Cambrian rotation called the \defn{Cambrian recurrence}, as in \Cref{prop:nc_cambrian_recurrence,,prop:assoc_cambrian_recurrence,,prop:msort_cambrian_recurrence}.

\paragraph{Cambrian lattices}
Although the flip graph of S.~Fomin and N.~Reading's $m$-eralized $c$-cluster complex can be used to define a Cambrian graph for bipartite Coxeter elements, no corresponding poset has been considered in the literature for $m>1$.
In particular, no orientation of the $m$-eralized exchange graph was known to be a lattice.

In \Cref{sec:nc_cambrian_posets,sec:m-cluster-subword,sec:sort_cambrian_lattices}, we give definitions of the $m$-eralized $c$-Cambrian lattice on each of $\NCm(W,c)$, $\Assocm(W,c)$, and $\Sortm(W,c)$.
For $\NCm(W,c)$ and $\Assocm(W,c)$, we construct these posets as the transitive closures of the objects under certain flips, while $\Sortm(W,c)$ inherits its poset structure from the weak order on~$\Wm$.  
The present construction $m$-eralizes N.~Reading's Cambrian lattices, which are themselves generalizations of the classical Tamari lattices.
As a case of particular interest, this construction provides a new $m$-eralization of the Tamari lattice, different from the $m$-Tamari lattice (\Cref{sec:tamari}).

\begingroup
\def\thetheorems{\ref{thm:sort_is_lattice}, \ref{thm:cambnccambsort}, \ref{thm:cambnccambassoc}}
\begin{theorems}
  The restriction of the weak order to $\Sortm(W,c)$ is a lattice.
  It is isomorphic to the increasing flip posets of $\NCm(W,c)$ and of $\Assocm(W,c)$.
\end{theorems}
\addtocounter{theorems}{-1}
\endgroup


\paragraph{Positive and rational Catalan combinatorics}
In \Cref{sec:rat_positive}, we study \emph{positive} analogues of the three noncrossing Catalan families, and we explain a special symmetry on positive $m$-eralized $c$-noncrossing partitions.

D.~Armstrong has proposed that noncrossing Fu\ss-Catalan combinatorics should have a ``rational Catalan'' generalization to accommodate a parameter $p$ coprime to the Coxeter number $h$---the $m$-eralizations of noncrossing objects should then be recovered for $p=mh+1$, while the positive analogues should correspond to $p=mh-1$~\cite{ARW2013,ALW2014}.  In \Cref{sec:rat}, we give conjectural constructions in the classical types.\nathanside{Sorry, conjectural.}

\paragraph{Representation Theory}
\Cref{sec:representationtheory} discusses the link to the representation theory of hereditary Artin algebras.
For~$W$ crystallographic, $m$-eralized noncrossing partitions and $m$-eralized clusters have been given representation-theoretic interpretations in, respectively,~\cite{BRT2012} and \cite{thomas2007defining,zhu2008}.
We show that the $m$-eralized $c$-sortable elements also fit into this framework, generalizing the work of C.~Ingalls and H.~Thomas for $m=1$~\cite{IT2009}.
It turns out that the combinatorial bijections of \Cref{thm:rootconf_skiptset,thm:sortcluster} coincide in crystallographic types with representation-theoretic bijections~\cite{BRT2012,KV}.

\medskip

\Cref{fig:objects_and_maps} illustrates these objects and their bijections.\nathanside{\Cref{fig:objects_and_maps} is perfect for use as a desktop background, wall-hanging, or tattoo.}

\medskip

\paragraph{Gross omissions and open problems}
There are many directions in Coxeter-Catalan combinatorics that are outside the scope of this monograph, and there are also many open problems.  We give a short list here:

\begin{enumerate}[$\circ$]
	\item We do not give type-by-type combinatorial constructions of our noncrossing Fu\ss-Catalan objects.  Enumerations in the classical types often make use of such constructions~\cite{FR2005,Arm2006}.
	\item We do not do any refined enumeration, nor do we address cyclic sieving~\cite{KS2018,BR2007}.
	\item We do not talk about \emph{nonnesting} Catalan objects (using root posets or the affine Weyl group)~\cite{Rei1997}, or touch on the vast field of $q,t$-Catalan combinatorics~\cite{Hai1994}.
	\item We do not address recent advances regarding noncrossing parking functions and parking spaces~\cite{ARR2015,Rho2014}.
	\item Outside of the historical overview, we do not mention polytopes or discuss realizations of associahedra.
	\item We do not construct $m$-eralized cluster \emph{algebras}.
	\item We do not consider our constructions for infinite Coxeter groups~\cite{RS2011}.  This would already be interesting in affine type~\cite{reading2015cambrian}.
\end{enumerate}

\begin{landscape}
\centering
\vspace*{\fill}
\begin{figure}[h]
  \centering
  \scalebox{0.9}{
    \begin{tikzpicture}[xscale=3,yscale=4]
      \tikzstyle{rect}=[rectangle,draw,opacity=.5,fill opacity=1,text width=10em,align=center]
      \tikzstyle{sort}=[fill=black!20]
      \node[rect] (sortshard)  at (0,0) {$\deltaSortm(W,c)$ \\ \cref{def:sort_multichains}  \\ \small chains in shard intersection order};
      \node[rect] (sortfact)   at (2,0) {$\Sortma(W)$ \\ \cref{def:factorsort} \\ \small characterized using \\ Garside factorizations};
      \node[rect] (sort)  at (4,0) {$\Sortm(W,c)$ \\ \cref{def:msort_sortable}  \\ \small characterized using \\ sorting words in $\Artinmon$};
      \node[rect] (assnab) at (3,1) {$\NablaAssocm(W,c)$ \\ \cref{def:m-c-cluster} \\ \small $\refl$-words for $c^{-1}$ in \\ $\invs_\refl(\cwm{c})$};
      \node[rect] (assdel)   at (1,1) {$\DeltaAssocm(W,c)$\\ \cref{def:m-c-cluster} \\ $\asref$-words for $\bwom$ in \\ $\cwm{c}$};
      \node[rect] (nc) at (0,2) {$\NCm(W,c)$\\ \cref{def:nc_multichains} \\ \small chains in absolute order};
      \node[rect] (ncdelta) at (2,2) {$\deltaNCm(W,c)$\\ \cref{def:nc_delta} \\ \small delta sequences};
      \node[rect] (nccapdelta) at (4,2) {$\DeltaNCm(W,c)$ \\ \cref{def:nc_fuss_subwords} \\ $\refl$-words for $c$ in \\ $\invs_\refl(\cwom[c][m+1])$};
      \node[rect] (hom) at (6,2) {$\Homm(H)$ \\ \cref{def:hom} \\ \small $\Hom_{\leq 0}$ configurations};
	\node[rect] (silt) at (6,1) {$\Siltm(H)$ \\
\cref{def:silting} \\
\small silting objects};
\node[rect] (aisle) at (6,0) {$\storfm(H)$ \\ \cref{def:good-co} \\ \small co-aisles};
\draw[stealth-stealth] (silt) -- (aisle) node[midway,right] {\cref{th:silt-co}};
	\draw[stealth-stealth] (aisle) -- (sort) node[midway,below] {\cref{aisle-sort}};
	\draw [stealth-stealth] (sort) -- (assdel) node[midway,left] {\cref{thm:m-assoc_to_m_ncp}};	
	\draw[stealth-stealth] (hom) -- (silt) node[midway,right] {\cref{thm:rev}};
	 \draw [stealth-stealth] (hom) -- (nccapdelta) node[midway,above] {\cref{thm:hom-deltanc}};
      \draw [stealth-stealth] (sortshard) -- (sortfact) node[midway,below] {\cref{thm:bij_multichain_sort_to_sort}};
  	\draw [stealth-stealth] (sortfact) -- (sort) node[midway,below] {\cref{cor:factorsort}};
      \draw [stealth-stealth] (assnab) -- (assdel) node[midway,above] {\cref{prop:dualsubwordcomplexes}};
      \draw [stealth-stealth] (nc) -- (ncdelta) node[midway,above] {\text{\cref{prop:nc_delta}}};
 	\draw[stealth-stealth] (ncdelta) -- (nccapdelta) node[midway,above] {\text{\cref{prop:nc_fuss_subwords}}};
      \draw [stealth-stealth] (sortshard) -- (nc) node[midway,left] {\cref{thm:bij_sort_multichains_to_nc}};
		\draw [stealth-stealth] (assdel) -- (nccapdelta) node[midway,left] {\cref{thm:rootconf_skiptset}};
		\draw [stealth-stealth] (sort) -- (nccapdelta) node[pos=0.33,right] {\cref{thm:bij_sort_to_nc}};
		\draw[line width=5pt,white] (silt) -- (assnab);
	\draw[stealth-stealth] (silt) -- (assnab) node[midway,above] {\cref{prop:silt-to-assoc}};
    \end{tikzpicture}
  }
  \caption{A diagram of the Fu\ss--Catalan objects and their bijections.  The top line contains the noncrossing partition-type objects (\Cref{sec:noncrossing_partitions}), the middle line the cluster-type objects (\Cref{sec:clusters_and_subwords}), and the bottom line the sortable-type objects (\Cref{sec:sortable_elements}).  The rightmost column contains the representation-theoretic counterparts (\Cref{sec:representationtheory}).}
  \label{fig:objects_and_maps}
\end{figure}
\vfill
\end{landscape}

\chapter{Background on Coxeter and Artin groups}
\label{sec:m-weak}

In this chapter, we review the theory of finite Coxeter and Artin groups.
Most of this material is classical and detailed background can be found in~\cite{Hum1990,BB2005,Deh2013}.\nathanside{If you can read Drew's monograph, you'll be fine.}
After recalling this background (\Cref{sec:coxeter_artin_definitions,sec:cox_length_support,sec:descentsandcovers,sec:garside_factorizations}), we extend the theory of sorting words to the positive Artin monoid (\Cref{sec:coxeter_elements}) and recall the geometry of Coxeter groups (\Cref{sec:geometry,sec:inversions}).
We then discuss the shard intersection order and give a new characterization in the Coxeter group (\Cref{sec:intersection_lattice}).
We conclude with the definition of the $m$-eralized weak order inside the positive Artin monoid (\Cref{sec:weak order}).

\section{Coxeter and Artin systems}
\label{sec:coxeter_artin_definitions}

A (finite) \defn{Coxeter system} $(W,\sref)$ of \defn{rank} $n \eqdef |\sref|$ is a finite group~$W$ together with a distinguished subset $\sref \subseteq W$ of generators and a presentation
\[
  W = \left\langle \sref  : s^2=\one, \quad \alt{s|t}^{m(s,t)}= \alt{t|s}^{m(t,s)} \text{ for } s,t \in \sref \text{ with } s \neq t \right\rangle,
\]
for integers $m(s,t)=m(t,s) \geq 2$, where $\alt{s|t}^{m(s,t)}$ consists of $m(s,t)$ alternating factors~$s$ and~$t$.\footnote{We will regularly use the notation $\alt{u|v}^{i}$ for a positive integer~$i$ to mean~$i$ alternating copies of~$u$ and~$v$---both being elements in the group~$W$, in the positive monoid~$\Artinmon$, or words in~$\sref$ or in~$\refl$.}
The elements in the set~$\sref$ are called \defn{simple generators} or \defn{simple reflections}. 
The relations $sts\cdots = tst\cdots$ are called \defn{braid relations}, and invoking one to rewrite a word in~$\sref$ is called a \defn{braid move}.  A braid relation of the form $st=ts$ is called a \defn{commutation relation}.

The set of \defn{reflections} in~$W$ is defined to be
\[
  \refl \eqdef \bigset{ s^w }{ s \in \sref, w \in W },
\]
where we write $u^w \eqdef w^{-1} u w$ and ${}^w u \eqdef wuw^{-1}$.

\medskip

The (spherical) \defn{Artin system} $(\Artingrp,\asref)$ corresponding to the finite Coxeter system $(W,\sref)$ is the group~$\Artingrp$ given by a formal copy of the generators~$\sref$---written $\asref$---subject to only the braid relations
\[
  \Artingrp = \left\langle \asref  : \alt{\bs|\bt}^{m(s,t)} = \alt{\bt|\bs}^{m(t,s)} \text{ for } s,t \in \sref \text{ with } s \neq t \right\rangle.
\]
  We mostly restrict to the submonoid generated by~$\asref$, called the \defn{positive Artin monoid}~$\Artinmon$.

\begin{mywarning*}
  While we used boldface above to distinguish between the simple generators $s\in \sref$ and the corresponding generator $\bs \in \asref$, we usuall do not distinguish between these two generating sets if there is no risk of confusion.  When confusion may arise, we still use boldface to distinguish elements $\bw \in \Artinmon$ and $w \in W$.
  \nathanside{This is great notation.  It even comes with a 90 day warranty.}
\end{mywarning*}

\begin{example}
  When~$W$ is the \defn{symmetric group} $\WA$,~$\Artingrp$ is the \defn{braid group} $\Braidgrp_n$, and $\Artinmon$ is the \defn{positive braid monoid} $\BA$.
  We often illustrate definitions and results using the running example of the symmetric group on three letters $\WA[3]$, which is the Coxeter group (of type~$A_2$ as given in \Cref{sec:classification}) generated by the simple reflections $\sref=\{s,t\}$ corresponding to the transpositions $s = (12)$ and $t = (23)$.
  We denote the third transposition by $u = (13)$, which completes the set of reflections
  \[
    \refl = \{ s,u,t\} = \big\{ (12),(13),(23)\big\}.
  \]
  In addition to these three reflections and the identity element~$\one$, $\WA[3]$ contains the two long cycles
  \[
    (123) = st = tu = us \quad \text{and}\quad (321) = ts = ut = su.
  \]
  For the symmetric group $\WA[n]$ on~$n$ letters, we sometimes use $\sref = s_1,\ldots,s_{n-1}$ with $s_i = (i\ i{+}1) \in \WA[n]$.
  The positive Artin monoid $\BA[3]$ contains all words in the letters~$\bs$ and~$\bt$, subject to the relation $\bs\bt\bs = \bt\bs\bt$, indicated as
  \begin{figure}[h]
    \centering
    \begin{tikzpicture}[scale=.7]
        \braid [rotate=90, style strands={1}{red},style strands={2}{blue}, style strands={3}{green}, line width=.4mm] (fir) at (0,-2.5) a_2 a_1 a_2;
        \node (equiv) at (1.8,1) {$\sim$};
        \braid [rotate=90, style strands={1}{red},style strands={2}{blue}, style strands={3}{green}, line width=.4mm] (sec) at (0, 2.5) a_1 a_2 a_1;
    \end{tikzpicture}\quad .
  \end{figure}
\end{example}

\subsection{Words in generators}

We use sans-serif letters to distinguish \emph{words} in generators from \emph{elements} in~$W$ or in~$\Artinmon$.  In this way, $\s_1 \cdots \s_p$ is an $\sref$-word, while $s_1 \cdots s_p \in W$ or $\bs_1 \cdots \bs_p \in \Artinmon$ are elements.\nathanside{We take typography very seriously.}\christianside{But we don't necessarily respect our own conventions.}
We call two $\sref$- or $\refl$-words $\Q$ and $\Q'$ \defn{commutation equivalent} if we may transform one into the other by a sequence of interchanges of consecutive commuting letters, \ie, letters that commute in the group.
In this case, we write $\Q \equiv \Q'$.
A word $\u$ (often a single letter) is \defn{initial} or \defn{final} in $\Q$ if $\u$ occurs as a prefix or, respectively, as a suffix of some $\Q' \equiv \Q$.
We denote the word obtained from a word $\Q$ by removing an initial letter~$\s$ by $\sinv\Q$ and write $\Q\sinv$ for the removal of a final letter~$\s$.

We use the notation $\psi(s) \eqdef w^\wo \in \sref$ for $s \in \sref$ and extend this notation to elements in~$W$ and~$\Artinmon$ and also to words.
We denote the reverse of a word $\Q$ by $\rev(\Q)$ and extend this notation to elements in~$\Artinmon$.

\subsection{Classifications of Coxeter groups}
\label{sec:classification}

The \defn{Coxeter diagram} of $(W,\sref)$ is the graph on~$\sref$ with an edge $s-t$ if $m(s,t) \geq 3$ labelled by $m(s,t)$.
Usually, the label $m(s,t) = 3$ is omitted.
The Coxeter system is \defn{simply-laced} if $m(s,t) \in \{2,3\}$ for all $s,t \in \sref$ and \defn{crystallographic} if $m(s,t) \in \{2,3,4,6\}$ for all $s,t \in \sref$.

A Coxeter system is called \defn{irreducible} if its Coxeter diagram is connected.
Irreducible finite Coxeter systems are classified, the classification is shown in \Cref{fig:classification}.
See~\cite[Appendix~A1]{BB2005} for details.\nathanside{Regardless of whether or not you know this content, you should probably be reading something else.}
\begin{figure}[t]
  \begin{center}
    \begin{tabular}{ll}
      $A_n$    \quad& \dynkin[Coxeter, scale=2]{A}{} \\[10pt]
      $B_n$    \quad& \dynkin[Coxeter, scale=2]{B}{} \\[10pt]
      $D_n$    \quad& \dynkin[Coxeter, scale=2]{D}{} \\[10pt]
      $E_6$    \quad& \dynkin[Coxeter, scale=2]{E}{6} \\[10pt]
      $E_7$    \quad& \dynkin[Coxeter, scale=2]{E}{7} \\[10pt]
      $E_8$    \quad& \dynkin[Coxeter, scale=2]{E}{8} \\[10pt]
      $F_4$    \quad& \dynkin[Coxeter, scale=2]{F}{4} \\[10pt]
      $H_3$    \quad& \dynkin[Coxeter, scale=2]{H}{3} \\[10pt]
      $H_4$    \quad& \dynkin[Coxeter, scale=2]{H}{4} \\[10pt]
      $I_2(m)$ \quad& \dynkin[Coxeter, scale=2,,gonality=m]{I}{}
    \end{tabular}
  \end{center}
  \caption{Classification of the irreducible Coxeter groups and diagrams.}
  \label{fig:classification}
\end{figure}
\section{The weak order}
\label{sec:cox_length_support}

\subsection{Words in simple reflections}

The (Coxeter) \defn{length} of an element~$w$ in~$W$ or in $\Artinmon$ is the length~$\lengthS(w)$ of a shortest expression for~$w$ as a product of the generators in~$\sref$.  Examples are given in \Cref{fig:A2,fig:A22}.
An $\sref$-word $\s_1 \cdots \s_p$ is \defn{reduced} if $w = s_1 \cdots s_p$ and $p = \lengthS(w)$.
A factorization $w = u \cdot v$ is \defn{reduced} if $\lengthS(w)=\lengthS(u)+\lengthS(v)$.
In this case,~$u$ is \defn{initial} and~$v$ is \defn{final} in~$w$ and we set $\overline{u}w \eqdef u^{-1}w$, where we emphasize that $\overline{\bu} \bw \in \Artinmon$ for $\bw \in \Artinmon$.
A sequence $s_1,\dots,s_p \in \sref$ is an \defn{initial sequence} for~$w$ if $s_{i+1}$ is initial in $\sninv_i\dots\sninv_2\sninv_1 w s_1s_2\dots s_i$.
We use analogous notation for~$s$ final in~$w$ and final sequences for~$w$.

\medskip

E.~Brieskorn and K.~Saito proved the following essential lemma.\nathanside{Double induction, oh my God! $\Cap$}

\begin{lemma+}[{\cite[Lemma~2.1 \& Proposition~2.3]{brieskorn1972artin}}]
\label{lem:brieskorn_saito}
   Let $\bu,\bv \in \Artinmon$.
   \begin{enumerate}[(1)]
     \item If $\ba \bu \bb = \ba \bv \bb$ for $\ba,\bb \in \Artinmon$, then $\bu = \bv$.
     \item\label{lem:brieskorn_saito1} If $\bs \bu=\bt \bv$ for $\bs,\bt \in \asref$, then there exists $\bw \in \Artinmon$ such that $\bu=\alt{\bt|\bs}^{m(s,t)-1}\bw$ and $\bv=\alt{\bs|\bt}^{m(s,t)-1}\bw$. \qedhere
   \end{enumerate}
\end{lemma+}

Any reduced $\sref$-word of an element in~$W$ may be transformed to any other by a sequence of braid moves~\cite[Theorem~3.3.1]{BB2005}.  We define the \defn{support} $\supp(w)$ for $w \in W$ or $\Artinmon$ to be the set $\{ s_1,\ldots,s_p\} \subseteq \sref$ of simple reflections contained in any reduced word $\s_1\cdots\s_p$ for~$w$.  Support is well-defined, since reduced words are connected under braid moves, and braid moves preserve the set of simple reflections in a reduced word.

\medskip

Identifying the generating sets for $W$ and $\Artinmon$ gives a natural injection
\begin{equation}
  W \hookrightarrow \Artinmon.
\label{eq:artininjection}
\end{equation}
Since~$\Artinmon$ is subject to the braid relations, any two reduced $\sref$-words for $w \in W$ specify the same element $\bw \in \Artinmon$.  The unique \defn{longest element} in~$W$ is denoted by~$\wo$.
The corresponding element $\bwo \in \Artinmon$ is sometimes called the \defn{fundamental element} in Garside theory.
We denote its length by $N \eqdef \lengthS(\wo) = |\refl|$.

\subsection{The weak order}

The (right) \defn{weak order} on~$W$ is the partial order $\Weak(W)$ defined by $u \le w$ if~$u$ is initial in~$w$.
The weak order on the positive Artin monoid $\Weak(\Artinmon)$ is defined analogously.  We record the following fundamental fact about the weak order.

\begin{theorem+}
\label{thm:artinmon_lattice}
  For~$W$ a finite Coxeter group with corresponding Artin mo\-noid~$\Artinmon$, $\Weak(W)$ and $\Weak(\Artinmon)$ are lattices.
\end{theorem+}

The injection $W \hookrightarrow \Artinmon$ gives a poset isomorphism
\[
  \Weak(W) \cong [\bone,\bwo]_{\Weak(\Artinmon)} \subseteq \Weak(\Artinmon).
\]

\medskip

The \defn{meet} of two elements $w,w'$ in $\Weak(W)$ or in $\Weak(\Artinmon)$ is denoted by $w \wedge w'$, and their \defn{join} is denoted by $w \vee w'$.  We recall that~$w$ is called \defn{join-irreducible} if it covers exactly one other element.  The join-irreducible elements are exactly those elements~$w$ such that $w \neq w' \vee w''$ for $w',w'' < w$ and $w\ne e$.

\section{Descents and cover reflections}
\label{sec:descentsandcovers}

We define the \defn{left descent set}, \defn{right descent set}, \defn{left ascent set}, and \defn{right ascent set} of $w$ by
\begin{alignat*}{3}
  \DesSet_L(w) &\eqdef \set{s \in \sref }{ s \text{ initial in }  w },& \hspace{2em} \DesSet_R(w) &\eqdef \set{s \in \sref }{ s \text{ final in }  w }, \\
  \AscSet_L(w) &\eqdef \sref \setminus \DesSet_L(w),&  \hspace{2em} \AscSet_R(w) &\eqdef \sref \setminus \DesSet_R(w).
\end{alignat*}

\begin{proposition}
\label{prop:des_contained}
  Let~$s \in S$.
  For an element $w$ of $W$ or $\Artinmon$ such that $\lengthS(sw) > \lengthS(w)$, we have
  \[
    \DesSet_L(sw) \subseteq \{s\} \cup \DesSet_L(w).
  \]
\end{proposition}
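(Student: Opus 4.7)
The plan is to treat both cases uniformly by first showing that $s$ itself is always a left divisor of $sw$, and then invoking a least common multiple property for pairs of left divisors.

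First I would observe that under either hypothesis, $s \in \DesSet_L(sw)$. In the positive Artin monoid $\Artinmon(W)$, this is immediate from the definition since $sw = s \cdot w$ exhibits $s$ as a left divisor. In $W$, the assumption $\length(sw) > \length(w)$ means that $s \cdot (sw) = w$ has strictly smaller length than $sw$, which is exactly what it means for $s$ to left-divide $sw$.

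Next I would fix an arbitrary $t \in \DesSet_L(sw)$ with $t \neq s$, and reduce the statement to showing $t \in \DesSet_L(w)$. By the previous paragraph, both $s$ and $t$ are left divisors of $sw$. The key input is the existence of a left least common multiple for $\{s,t\}$, which in both settings is the longest element $\Delta_{s,t} = (st)^{m(s,t)/2}$ of the parabolic subgroup $\langle s, t \rangle$. In $W$ this is the standard fact that the left descent set of any element generates a parabolic subgroup whose longest element is itself a left divisor; in $\Artinmon(W)$ it is Garside's result that any two elements admit a left lcm, which for the atoms $s,t$ is precisely $\Delta_{s,t}$. Either way, we obtain $sw = \Delta_{s,t} \cdot u$ for some $u$ (in $W$ with lengths adding, in $\Artinmon$ as monoid elements).

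Since $\Delta_{s,t}$ admits a reduced expression beginning with $st$, we can write $sw = s t \cdot v$ with lengths adding. Left-cancelling $s$ (valid in $W$ because of the length condition, and in $\Artinmon(W)$ because the positive Artin monoid is left-cancellative) gives $w = t v$ with $\length(w) = 1 + \length(v)$, so $t$ left-divides $w$ and $t \in \DesSet_L(w)$. Since $t$ was arbitrary in $\DesSet_L(sw) \setminus \{s\}$, this proves $\DesSet_L(sw) \subseteq \{s\} \cup \DesSet_L(w)$.

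The main obstacle is verifying the lcm/parabolic divisibility step in the Artin monoid setting; one needs to cite Garside theory (or equivalently the fact that $\Artinmon(W)$ is a Garside monoid with fundamental element $\wo$, so that lcms of atoms are given by the parabolic longest elements) rather than appeal to the analogous well-known exchange-condition argument available in $W$. Everything else is bookkeeping on reduced expressions and left cancellation.
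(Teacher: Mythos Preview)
Your proof is correct and follows essentially the same approach as the paper: both arguments observe that $s \in \DesSet_L(sw)$, invoke the Brieskorn--Saito/Garside fact that if $s,t$ both left-divide $sw$ then so does their lcm $s \vee t = \Delta_{s,t}$, and then cancel the initial $s$ from a reduced expression of $\Delta_{s,t}$ beginning with $s$ to conclude that $t$ left-divides $w$.
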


\begin{proof}
  If $s,t \in \DesSet_L(u)$ for $u$ in $W$ or in $\Artinmon$, then $s \vee t = \alt{s|t}^{m(s,t)} = \alt{t|s}^{m(s,t)}$ is initial in~$u$ by~\Cref{lem:brieskorn_saito}\eqref{lem:brieskorn_saito1}.
  Applying this to the element $u = sw$ yields that $s \vee t$ is initial in~$sw$ for any $t \in \DesSet_L(sw)$.
  If $t \neq s$, we conclude that $\alt{t|s}^{m(s,t)-1}$ is initial in~$w$.
\end{proof}

For $w \in W$, the \defn{covered reflections} and the \defn{covering reflections} are, respectively,
\begin{equation}
\begin{aligned}
  \coveredref(w)  &\eqdef \set{ {}^w s }{ s \in \DesSet_R(w) }, \hfill \coveringref(w) &\eqdef \set{{}^w s  }{ s \in \AscSet_R(w) }.
\end{aligned}\label{eq:coverreflections}
\end{equation}\nathanside{Cue wild applause.  We also do not define motives.} 
We do not define colored versions of covered and covering reflections for elements of $\Artinmon$. \Cref{fig:A2} illustrates several examples.
\christianside{Protip: if you understand the examples, you understand the theory.}

\begin{figure}[t]
  \begin{center}
    \begin{tabular}{c|c|c|c|c|c|c|c|c}
      $w$ & $\sw{w}{st}$ & $\lengthS$ & $\lengthR$ & $\supp$ & $\DesSet_L$ & $\DesSet_R$ & $\coveredref$ & $\coveringref$\\
      \hline
      $e$ &
      $\S\T|
        \S\T|
        \S\T$ &
      0 & 0 & $-$ & $-$ & $-$ & $-$ & $s,t$ \\
     $s$ &
      $\s\T|
        \S\T|
        \S\T$&
      1 & 1 &$s$ &$s$ &$s$ &$s$ &~$u$ \\
     $t$ &
      $\S\t|
        \S\T|
        \S\T$&
      1 & 1 &$t$ &$t$ &$t$ &$t$ &~$u$ \\
      $st$ &
      $\s\t|
        \S\T|
        \S\T$&
      2 & 2 & $s,t$ &$s$ &$t$ &~$u$ &$t$ \\
      $ts$ &
      $\S\t|
        \s\T|
        \S\T$&
      2 & 2 & $s,t$ &$t$ &$s$ &~$u$ &$s$ \\
      $sts$ &
      $\s\t|
        \s\T|
        \S\T$&
      3 & 1 & $s,t$ & $s,t$ & $s,t$ & $s,t$ & $-$
    \end{tabular}
  \end{center}
  \caption{The $st$-sorting word, length, reflection length, left descents, right descents, covered reflections, and covering reflections of the elements of $\WA[3]$.}
  \label{fig:A2}
\end{figure}
  \begin{figure}[t]
    \begin{center}
      \begin{tabular}{c|c|c|c}
        $\garside$ & $\sw{w}{st}$ & $\DesSet_L$ & $\DesSet_R$ \\
        \hline
        $s\cdot s$ &
        $\s\T|\s\T|\S\T$ &
       $s$ &$s$ \\
        $s\cdot st$ &
        $\s\T|\s\t|\S\T$ &
       $s$ &$t$ \\
        $t\cdot t$ &
        $\S\t|\S\t|\S\T$ &
       $t$ &$t$ \\
        $t\cdot ts$ &
        $\S\t|\S\t|\s\T$ &
       $t$ &$s$ \\
        $st\cdot t$ &
        $\s\t|\S\t|\S\T$ &
       $s$ &$t$ \\
        $st\cdot ts$ &
        $\s\t|\S\t|\s\T$ &
       $s$ &$s$ \\
        $ts\cdot s$ &
        $\S\t|\s\T|\s\T$ &
       $t$ &$s$ \\
      \end{tabular}
      \quad
      \begin{tabular}{cc|c|c|c}
        & $\garside$ & $\sw{w}{st}$ & $\DesSet_L$ & $\DesSet_R$ \\
        \hline
        & $ts\cdot st$ &
        $\S\t|\s\T|\s\t$ &
       $t$ &$t$ \\
        & $sts\cdot s$ &
        $\s\t|\s\T|\s\T$ &
        $s,t$ &$s$ \\
        & $sts\cdot t$ &
        $\s\t|\s\t|\S\T$ &
        $s,t$ & $s,t$ \\
        & $sts\cdot st$ &
        $\s\t|\s\T|\s\t$ &
        $s,t$ & $s,t$ \\
        & $sts\cdot ts$ &
        $\s\t|\s\t|\s\T$ &
        $s,t$ & $s,t$ \\
        & $sts\cdot sts$ &
        $\s\t|\s\t|\s\t$ &
        $s,t$ & $s,t$ \\
        \\
      \end{tabular}
    \end{center}
    \caption{The $\s\t$-sorting word and descent sets of those elements in $\BA[3]$ having exactly two nontrivial Garside factors.}
    \label{fig:A22}
  \end{figure}

\section{Parabolic subgroups}

When $J \subseteq \sref$, we use the notation $W_J$ for the \defn{standard parabolic subgroup} generated by~$J$, and $\Artinmon_{\BJ}$ for the corresponding \defn{standard parabolic positive submonoid}.  A standard parabolic subgroup is called \defn{maximal} if it is generated by $\langle s \rangle\eqdef\sref \setminus \{s\}$ for some $s \in \sref$.  Conjugates of standard parabolic subgroups are called \defn{parabolic subgroups}.

The \defn{parabolic quotient} corresponding to a standard parabolic subgroup is
\begin{align*}
   W^{J}  \eqdef \set{w \in W }{ \DesSet_L(w) \cap J = \emptyset }.
\end{align*}
For $J \subseteq \sref$ and an element $w \in W$, we write $w_J \eqdef w \wedge \wo(J)$, where $\wo(J)$ is the longest element of~$W_J$.  Every $w \in W$ has a unique \defn{parabolic decomposition}
\begin{equation}
   w = w_J w^J \text{ for } w_J \in W_J \text{ and } w^J \in W^J.\label{eq:parabolic_decomposition_W}
\end{equation}

\section{Garside factorizations}
\label{sec:garside_factorizations}

The \defn{Garside factorization}\nathanside{That's Major Garside to you.} is a certain factorization of an element $\bw \in \Artinmon$ as a product of elements in $[\bone,\bwo]$, given as follows: set $\bw_1=\bw$.
For $i=1,2,3,\ldots$, as long as $\bw_{i} \neq \bone$, let
\[
  \bw^{(i)} = \bw_{i}\wedge\bwo, \quad \bw_{i+1} = (\bw^{(i)})^{-1} \bw_{i} 
\]
and set~$k = i$ for the last $i$ with $\bw_{i} \neq \bone$.
Then
\[
  \garside(\bw) = \bw^{(1)} \cdot \bw^{(2)} \cdot\ \cdots\ \cdot \bw^{(k)},
\]
where the \defn{Garside factors} $\bw^{(i)}$ are separated by a centered dot.  The \defn{Garside degree} $\deg(\bw)$ is defined to be~$k$.
By construction, every factor $\bw^{(i)}$ is initial in~$\bwo$ and so can be treated as an element $w^{(i)}\in W$.

\medskip

The following characterization of Garside factorizations appears in~\cite[Corollary~4.2]{Mil1999}.

\begin{theorem+}
\label{thm:garsidefactorization}
  A factorization $\bv_1 \cdot \bv_2 \cdot\ \cdots\ \cdot \bv_k$ with $\bv_i \leq \bwo$ is the Garside factorization of the element $\bw = \bv_1 \cdots \bv_k \in \Artinmon$ if and only if
  \begin{equation*}
    \DesSet_R(\bv_{i-1}) \supseteq \DesSet_L(\bv_{i}).\qedhere
  \end{equation*}
\end{theorem+}

\section{Coxeter elements and sorting words}
\label{sec:coxeter_elements}

A (standard) \defn{Coxeter element}~$c$ for $(W,\sref)$ is defined to be the product of the elements of~$\sref$ in any order.\nathanside{Coxeter was known to do 50 pushups a day, even into his 90s.  Perhaps now's a good time to see how many pushups you can do.}
All Coxeter elements in~$W$ are conjugate, and we denote their common order by the \defn{Coxeter number}~$h$.  For $\c=\s_1 \s_2 \cdots \s_n$ and $J \subseteq \sref$, define the \defn{restriction} $\restri{\c}{J}$ as the subword of $\c$ consisting of those simple reflections lying in~$J$.  The restriction of the corresponding element $c$ is defined similarly.

\medskip

Finite Coxeter groups have a \defn{bipartite decomposition} $\sref = \sref_L \sqcup \sref_R$ with the property that all reflections in~$\sref_L$ pairwise commute, as do all those in~$\sref_R$.  For irreducible finite Coxeter systems, this decomposition is unique up to the interchanging of $\sref_L$ and  $\sref_R$.  A Coxeter element is called \defn{bipartite} if it is the product of the reflections in~$\sref_L$ followed by the product of the reflections in~$\sref_R$, or vice versa.

We recall N.~Reading's definition of $\c$-sorting words~\cite{Rea2007}, and extend it to $\Artinmon$.

\nathanside{This definition is \emph{much} nicer than the notation suggests.}
\begin{definition}\label{def:c-sorting}
  Let~$w$ be an element of~$W$ or~$\Artinmon$.
  The \defn{$\c$-sorting word} $\sw{w}{c}=\restri{\c}{I_1} \cdots \restri{\c}{I_k}$ for~$w$ is the lexicographically first (as a sequence of positions) subword of $\c^{\infty} = (\s_1 \cdots \s_n)^\infty$ that is a reduced expression for~$w$.  (It is common to separate the different copies of $\s_1 \cdots \s_n$ by vertical bars when writing $\c$-sorting words.)
\end{definition}

The $\c$-sorting word is attached to a particular $\sref$-reduced word for~$c$ rather than to~$c$ itself.
However, since all reduced words for~$c$ are commutation equivalent, the different choices of reduced words for a fixed Coxeter element~$c$ give commutation equivalent sorting words.
\Cref{fig:A2,fig:A22} list the $\s\t$-sorting words of the six elements in~$\WA[3]$, and the 13 elements in $\BA[3]$ with exactly two nontrivial Garside factors.

This definition gives the following greedy procedure to compute the $\c$-sorting word of an element $w$ of $W$ or $\Artinmon$:
\[
  \sw{w}{c} = \begin{cases}
                \s\ \sw{u}{\coxrn} &\text{if } s \in \DesSet_L(w) \text{ with } w = su,\\
                \sw{w}{\coxrn} &\text{if } s \notin \DesSet_L(w).
              \end{cases}
\]
That is---for $s$ initial in $c$---the $\c$-sorting word for $w$ begins with $s$ if and only if~$s$ is a left descent of $w$.  The remainder of $\sw{w}{c}$ then coincides with the $\coxrn$-sorting word of an element of shorter or equal length.
\christianside{This is \emph{the} way to think of the sorting word.}

\medskip

The following lemmas summarize many previously known properties of sorting words.  The first lemma is immediate from the greedy procedure for computing $c$-sorting words.

\begin{lemma+}
\label{lem:stayinginitial}
  Let $w=s_1\cdots s_\ell \in \Artinmon$ and let $w \leq v \in \Artinmon$.
  If $\s_1\cdots\s_\ell$ is initial in $\c^\infty$ for a Coxeter element $c \in W$, then $\s_1\cdots\s_\ell$ is also initial in the $\c$-sorting word $\sw{v}{\c}$ of~$v$.
\end{lemma+}

The following lemma is surprisingly difficult to prove.
\nathanside{Even David Speyer needed seven pages.}
\begin{lemma+}[{\cite[Corollary~4.1]{speyer2009powers}}]
\label{lem:wosortingword}
  The $c$-sorting word $\cwo$ of the longest element $\wo \in W$ is initial in $\c^\infty$.
\end{lemma+}

\begin{example}
  We consider two examples in $\WA[5]$ for \Cref{lem:wosortingword}.
  First, let $\c=\s_2\s_4\s_1\s_3\s_5$.  Underlining the letters in $\cwo$ as a subword of $\c^\infty$ gives
  \[
    \scalemath{0.8}{
    \begin{array}{rccccc|ccccc|ccccc|ccccc|c}
       & \underline{\s_2} & \underline{\s_4} & \underline{\s_1} & \underline{\s_3} & \underline{\s_5} & \underline{\s_2} & \underline{\s_4} & \underline{\s_1} & \underline{\s_3} & \underline{\s_5} & \underline{\s_2} & \underline{\s_4} & \underline{\s_1} & \underline{\s_3} & \underline{\s_5} & \s_2 & \s_4 & \s_1 & \s_3 & \s_5 & \cdots\ .\\
    \end{array}
    }
  \]
  The element $\cwo$ is a prefix of $\c^\infty$.
  The Coxeter element $\c=\s_1\s_2\s_3\s_5\s_4$ gives $\cwo$ as the subword
  \[
    \scalemath{0.8}{
    \begin{array}{rccccc|ccccc|ccccc|ccccc|c}
       & \underline{\s_1} & \underline{\s_2} & \underline{\s_3} & \underline{\s_5} & \underline{\s_4} & \underline{\s_1} & \underline{\s_2} & \underline{\s_3} & \underline{\s_5} & \underline{\s_4} & \underline{\s_1} & \underline{\s_2} & \underline{\s_3} & \s_5 & \s_4 & \underline{\s_1} & \underline{\s_2} & \s_3 & \s_5 & \s_4 & \cdots\ \\
    \end{array}
    },
  \]
  which---though not a prefix---is still initial in $\c^\infty$.
\end{example}

We collect the several elementary properties of sorting words for later reference.

\begin{lemma}
\label{lem:reflection_order}
  Let~$c$ be a Coxeter element and let~$s$ be initial in~$c$.
  Let $\c=\c_1$ and $\c_2$ be two reduced $\sref$-words for~$c$ such that the first letter of~$\c$ is~$s$, and let $\coxr$ be a reduced $\sref$-word for~$\coxrn$.   Let $w \in \Artinmon$.
  Then \nathanside{Brace yourself!  This will keep the machine running smoothly.}
  \begin{enumerate}
    \item $\sw{w}{\c_1}\equiv \sw{w}{\c_2}$;
    \label{it:reflection_order1}

    \item $\c$ is initial in $\cwo$;
    \label{it:reflection_order2}

    \item $\psi(\c)$ is final in $\cwo$;
    \label{it:reflection_order3}

    \item $\psi(\cwo) \equiv \cwo[\psi(\c)]$;
    \label{it:reflection_order4}

    \item $\cwo \psi(\cwo) \equiv \c^h$;
    \label{it:reflection_order5}

    \item $\cwo[\rev(\psi(\c))] \equiv \rev(\cwo)$; and
    \label{it:reflection_order6}

    \item $\cwo[\coxr] \equiv \sinv\cwo\psi(\s)$.
    \label{it:reflection_order7}
  \end{enumerate}
\end{lemma}

\begin{proof}
  \eqref{it:reflection_order1} was shown in the discussion after~\Cref{def:c-sorting}.

  \eqref{it:reflection_order2} follows from the fact that $c \leq \wo$ in weak order together with the observation that $\c$ is lexicographically minimal in $\c^\infty$.

  For \eqref{it:reflection_order4}, we note that given two reduced words $\Q$ and $\Q'$ for~$\wo$, $\Q$ lexicographically precedes $\Q'$ inside $\c^\infty$ if and only if $\psi(\Q)$ precedes $\psi(\Q')$ inside $\psi(\c)^\infty$.
  As $\psi(\Q)$ and $\psi(\Q')$ are also reduced words for~$\wo$ and $\cwo$ is lexicographically minimal inside $\c^\infty$, we conclude that $\psi(\cwo)$ is lexicographically minimal inside $\psi(\c)^\infty$.

  For the other items, we rely on \Cref{lem:wosortingword}.

  For \eqref{it:reflection_order7}, we use that $\cwo = \s\q_2\ldots\q_N$ is initial in $\c^\infty$, and thus, $\q_2\ldots\q_N$ is initial in $(\coxr)^\infty$.
  Therefore, \Cref{lem:stayinginitial} implies that $\q_2\ldots\q_N$ is initial in $\cwo[\coxr]$.
  Only one letter---$\psi(s)$---is left to obtain the $\coxr$-sorting word, so that $\cwo[\coxr] \equiv \q_2\ldots\q_N \psi(\s)$.

  \eqref{it:reflection_order3} is obtained from~\eqref{it:reflection_order2} by applying~\eqref{it:reflection_order7}~$n$ times.
  Since~$\c$ is initial in $\cwo$ by~\eqref{it:reflection_order2}, $\overline{\c}\cwo\psi(\c) \equiv \cwo$.

  \eqref{it:reflection_order5} follows from~\eqref{it:reflection_order2},~\eqref{it:reflection_order3} and~\eqref{it:reflection_order4}:
  First, we have that $\cwo$ is initial in~$\c^\infty$ and $\cwo[\psi(\c)]$ is initial in~$\psi(\c)^\infty$.
  As $\psi(\c)$ is final in $\cwo$, this final $\psi(\c)$ is directly followed by the initial copy of $\psi(\c)$ in $\cwo[\psi(\c)]$ and we obtain that $\cwo\cwo[\psi(\c)]$ is initial in~$\c^\infty$.
  As~$\c$ is now final in $\cwo\cwo[\psi(\c)]$ and this $\c$ can only consist of a consecutive copy of~$\c$ inside~$\c^\infty$, this word equals~$\c^k$ for some~$k$.
  As its length is $nh = 2N = 2\lengthS(\wo)$, we conclude that $k=h$.

  Finally, \eqref{it:reflection_order6} is obtained from~\eqref{it:reflection_order2} and~\eqref{it:reflection_order3}. We have that $\cwo[\rev(\psi(\c))]$ is initial in $\big(\rev(\psi(\c))\big)^\infty$ and $\cwo$ is initial in $\c^\infty$.  As $\psi(\c)$ is final in $\cwo$, we deduce that $\rev(\cwo)$ is initial in $\big(\rev(\psi(\c))\big)^\infty$ as well.    For $s\in S$, write $a_s$ for the number of occurrences of $\s$ in $\cwo[\rev(\psi(\c))]$ and $b_s$ for the number of occurrences of $\s$ in $\rev(\cwo)$.

By forgetting all simple reflections other than $\s$ and $\t$, we note that a subword $\Q$ of $\c^\infty$ is initial if and only if it has the property that for each pair of noncommuting simple reflections $\s$ and $\t$ with $\s$ preceding $\t$ in $\c$, the occurrences of $\s$ and $\t$ alternate within $\Q$, starting with~$\s$.  

This applies to both $\cwo[\rev(\psi(\c))]$ and $\rev(\cwo)$. In particular, if $s,t$ are a pair of non-commuting reflections with $s$ appearing before $t$ in $\rev\psi(\c)$, then $a_t \leq a_s \leq a_t+1$, and similarly $b_t \leq b_s \leq b_t+1$.  Whether $a_s=a_t$ or $a_s=a_t+1$ can be determined from the fact that $\rev(\c)=\psi(\rev(\psi(\c)))$ is final in $\cwo[\rev(\psi(\c))]$.  Since $\rev(\c)$ is also final in $\rev(\cwo)$, $b_s=b_t$ if and only if $a_s=a_t$, and $b_s=b_t+1$ if and only if $a_s=a_t+1$.  It follows that the $n$-tuples $a$ and $b$ differ only by an overall additive constant.  The sum of the entries in $a$ is the length of $\cwo[\rev(\psi(\c))]$, and similarly the sum of the entries in $b$ is the length of $\rev(\cwo)$, and these two quantities are therefore equal.  Thus $a=b$, and since $\rev(\cwo)$ and $\cwo[\rev(\psi(\c))]$ are initial in the same word, they must coincide.  
\end{proof}

We leave it to the reader to check \Cref{lem:reflection_order} using \Cref{fig:coxsortex}.
\begin{figure}[t]
  \begin{center}
    \begin{tabular}{c|c|c|c}
      $\c$ & $\psi(\c)$ & $\cwo$ & $\rev(\cwo[\rev(\psi(c))])$\\
      \hline
      $\s_1\s_2\s_3$ & $\s_3\s_2\s_1$ & $\s_1\s_2\s_3\s_1\s_2\s_1$ &
                                  $\s_1\s_2\s_1\s_3\s_2\s_1$ \\
      $\s_1\s_3\s_2$ & $\s_3\s_1\s_2$ & $\s_1\s_3\s_2\s_1\s_3\s_2$ 
                                  &$\s_3\s_1\s_2\s_3\s_1\s_2$ \\
      $\s_3\s_1\s_2$ & $\s_1\s_3\s_2$ & $\s_3\s_1\s_2\s_3\s_1\s_2$
                                  &$\s_1\s_3\s_2\s_1\s_3\s_2$  \\
      $\s_2\s_1\s_3$ & $\s_2\s_3\s_1$ & $\s_2\s_1\s_3\s_2\s_1\s_3$ 
                                  &$\s_2\s_3\s_1\s_2\s_3\s_1$ \\
      $\s_2\s_3\s_1$ & $\s_2\s_1\s_3$ & $\s_2\s_3\s_1\s_2\s_3\s_1$
                                  &  $\s_2\s_1\s_3\s_2\s_1\s_3$\\
      $\s_3\s_2\s_1$ & $\s_1\s_2\s_3$ & $\s_3\s_2\s_1\s_3\s_2\s_3$ 
                                  & $\s_3\s_2\s_3\s_1\s_2\s_3$
    \end{tabular}
  \end{center}
  \caption{\label{fig:coxsortex}The $\sref$-words for the Coxeter elements in $\WA[4]$ and associated sorting words.}
\end{figure}

\section{The absolute order}

In addition to studying the group~$W$ using the simple reflections~$\sref$, one may also consider words in general reflections~$\refl$.
The \defn{reflection length} of an element~$w \in W$ is the length~$\lengthR(w)$ of a shortest expression for~$w$ as a product of reflections in $\refl$.
An $\refl$-word $\r_1 \cdots \r_p$ is \defn{reduced} if $w = r_1 \cdots r_p$ and $p = \lengthR(w)$.
The \defn{absolute order} $\Abs(W) = (W,\leq_\refl)$ is defined by
\begin{align*}
  u \leqref w &\Longleftrightarrow \text{ there exists } v\in W \text{ with } uv=w \text{ and }\lengthR(u)+\lengthR(v)=\lengthR(w) \\
              &\Longleftrightarrow \text{ there exists } v'\in W \text{ with } v'u=w \text{ and }\lengthR(v')+\lengthR(u)=\lengthR(w).
\end{align*}
The equivalence follows from invariance of $\lengthR$ under conjugation.

\subsection{Dual braid presentations}
\label{sec.dual_braid_rel}

The idea of using $\refl$ as a generating set has been exploited to develop alternative presentations for~$W$, $\Artingrp$, and~$\Artinmon$~\cite{BKL1998,brady2001partial,BW2002,Bes2003}.
We briefly recall this presentation here, following~\cite{Bes2003}.

For a Coxeter element $c \in W$, denote by $\reds(c)$ the set of its reduced $\refl$-words.
There is an action of the classical braid group $\Braidgrp_n$ on~$n$ strands on $\reds(c)$ (not to be confused with the Artin group~$\Artingrp$ of~$W$).
To avoid confusion, set $\boldsymbol\sigma_i$ be the generator of $\Braidgrp_n$ crossing strand $i+1$ over strand $i$, corresponding to $s_i = (i\ i+1) \in \WA[n]$.
This action is called the \defn{Hurwitz action}, and is given by
\[
  \boldsymbol\sigma_i(r_1,\dots,r_n) \mapsto (r_1,\dots,r_{i-1},\ r_{i+1},\ r_{i+1}^{-1}r_ir_{i+1},\ r_{i+2},\dots,r_n) \in \reds(c).
\]
The Hurwitz action action is transitive on $\reds(c)$.  For $c$ a Coxeter element, the Artin group $\Artingrp$ has a \defn{dual braid presentation} 
\[
  \Artingrp = \Big\langle \arefl : \br_1\br_2 = \br_2\br_3 \text{ for } r_1,r_2,r_3 \in \refl \text{ with } r_1r_2=r_2r_3 \leq_\refl c \Big \rangle. 
\]
A relation of the form $\br_1\br_2 = \br_2\br_3$ is called a \defn{dual braid relation}.

\section{Root systems}
\label{sec:geometry}

The study of finite Coxeter groups is closely related to the study of finite root systems.
We refer to \cite[Section~5]{Hum1990} for a detailed treatment of this relation.\nathanside{It's probably fine if you want think of everything using reflections.}

\subsection{Root systems and reflection arrangements}
Let \[\Delta \subseteq \Phi^+ \subseteq \Phi \subset V\] be a \defn{root system} inside a Euclidian vector space~$V$ associated to the finite Coxeter system $(W,\sref)$, where $\Delta = \set{ \alpha_s}{s \in \sref}$ denotes the \defn{simple roots} and $\Phi^+$ denotes the \defn{positive roots}.  We write $|\beta|$ for the positive root in $\{\pm \beta\}$.

As there is not necessarily a unique root system associated to a finite reflection group~$W$, we consider a Coxeter system to be given with a fixed associated root system.
For $J \subseteq \sref$, the \defn{standard parabolic root subsystem} is given by $\Phi_J \eqdef \Phi \cap \R \set{ \alpha_s  }{ s \in J }$ with positive roots $\Phi_J^+ \eqdef \Phi_J \cap \PhiP$.

There is a bijection $\refl \leftrightarrow \Phi^+$ obtained by sending a reflection $r \in \refl$ to the unique positive root~$\beta$ such that $r(\beta) = -\beta$, and write $r = s_\beta$ and $\beta = \alpha_r$ in this case.
We define the corresponding \defn{reflecting hyperplane} $H_r \eqdef \beta^\perp$ as the orthogonal complement of~$\beta$.  The \defn{reflection arrangement} is the collection of all such reflecting hyperplanes, and the \defn{fundamental chamber} is the connected component of the complement of the reflection arrangement defined by the intersection of the positive half spaces corresponding to the simple roots.
This fixes a bijection between the set of connected components $V \setminus \cup_{r \in \refl} H_r$ and~$W$; under this bijection, we may refer to a connected component by its corresponding element.
A \defn{gallery} is a walk on these connected components, where two components are adjacent if they share a common hyperplane, and any hyperplane is crossed at most once.

\begin{figure}[t]
  \begin{center}
    \begin{tikzpicture}[scale=1]
      \draw[line width=0.5mm] (0:2cm) -- (180:2cm);
      \draw[line width=0.5mm] (60:2cm) -- (240:2cm);
      \draw[line width=0.5mm] (120:2cm) -- (300:2cm);

      \draw[line width=0.2mm, dotted] (0:1.5cm) -- (120:1.5cm) -- (240:1.5cm) -- (0:1.5cm);
      \draw[line width=0.5mm] (60:2cm) -- (240:2cm);
      \draw[line width=0.5mm] (120:2cm) -- (300:2cm);

      \draw[line width=0.2mm, ->] (0:0cm) -- ( 30:1.5cm);
      \draw[line width=0.2mm, ->] (0:0cm) -- ( 90:1.5cm);
      \draw[line width=0.2mm, ->] (0:0cm) -- (150:1.5cm);
      \draw[line width=0.2mm, ->] (0:0cm) -- (210:1.5cm);
      \draw[line width=0.2mm, ->] (0:0cm) -- (270:1.5cm);
      \draw[line width=0.2mm, ->] (0:0cm) -- (330:1.5cm);

      \node at (270:1.7cm) {$\gamma$};
      \node at (210:1.7cm) {$\beta$};
      \node at (330:1.7cm) {$\alpha$};

      \draw[fill=gray, opacity=0.2] (0:0cm) -- ([shift=(240:2cm)]0,0) arc (240:300:2cm) -- (0:0cm);

      \node at (  0:2.5cm) {$H_{u}$};
      \node at (240:2.4cm) {$H_{s}$};
      \node at (300:2.4cm) {$H_{t}$};
    \end{tikzpicture}
  \end{center}
  \caption{The root system of type~$A_2$.}
  \label{fig:A2roots}
\end{figure}
\begin{example}
\label{ex:a2roots}
  The Coxeter group $\WA[3]$ can be realized as the \defn{dihedral group} of isometries of an equilateral triangle.
  Its root system contains the simple and positive roots
  \[
    \Delta = \{ \alpha,\beta\} \quad\text{and}\quad \PhiP = \{ \alpha, \gamma, \beta\},
  \]
  where we take $\alpha = e_2-e_1, \beta = e_3-e_2$, and $\gamma = e_3-e_1 = \alpha+\beta$.
  Here, $e_1,$ $e_2,$ and $e_3$ denote the standard basis of $\mathbb{R}^3$---but, as usual, we restrict to the hyperplane $x+y+z=0$.
  The corresponding reflections are
  \[
    s = s_\alpha, \quad t = s_\beta, \quad \text{and} \quad u = s_\gamma = s_\alpha s_\beta s_\alpha = s_\beta s_\alpha s_\beta.
  \]
  See \Cref{fig:A2roots} for an illustration of this example with shaded fundamental chamber.
\end{example}

\subsection{The geometry of the absolute order}
The \defn{fixed space} of an element $w \in W$ is defined to be
\[
  \fixd(w) \eqdef \operatorname{ker}(w-\id),
\]
while its \defn{moved space} is
\[
  \movd(w) \eqdef \operatorname{im}(w-\id).
\]
As~$W$ acts by orthogonal transformations, we have $\movd(w) = \fixd(w)^\perp$.  The following properties relate reflection length with moved and fixed spaces.

\nathanside{Just think about cycle decomposition in the symmetric group.}
\begin{lemma+}[{\cite[Section~2]{BW2002}}]
\label{lem:refllenprops}
  For $w,w_1,w_2,w_3 \in W$ and $r,r_1,r_2\in \refl$, we have
  \begin{enumerate}[(i)]
    \item\label{it:reflenprops1} $\lengthR(w)=\dim\movd(w)$,
    \item\label{it:reflenprops2} $w_1 \leqref w_2 \Rightarrow \movd(w_1) \subseteq \movd(w_2)$ and $\fixd(w_1) \supseteq \fixd(w_2)$,
    \item\label{it:reflenprops3} $w_1 \leqref w_2 \leqref w_3 \Rightarrow w_2^{-1}w_3 \leqref w_1^{-1}w_3$, 
    \item\label{it:reflenprops4} if $r_1\leqref w$ and $r_1\ne r_2$, then $r_1 r_2 \leqref w \Leftrightarrow r_2 \leqref r_1 w$, and
    \item\label{it:reflenprops5} $\movd(r) \subseteq \movd(w) \Rightarrow r \leqref w$.
    \qedhere
  \end{enumerate}
\end{lemma+}

\section{Inversions and colored inversions}
\label{sec:inversions}

We recall how Coxeter groups act on root systems, and extend this action to the corresponding Artin monoid.

\subsection{Inversion sets for Coxeter groups}
\label{sec:inv_sets}
The \defn{inversion set} of an element $w \in W$ is the set $\inv(w) \eqdef \PhiP \cap w(\Phi^-)$.
The Coxeter length of an element $w \in W$ is equal to the cardinality of its inversion set.
The choice of a reduced $\sref$-word $\s_1 \cdots \s_\ell$ for $w \in W$ induces a total order on the inversion set of~$w$
\begin{equation}
  \invs(\s_1 \cdots \s_\ell) \eqdef \big( \alpha_{s_1}, s_1(\alpha_{s_2}),\ldots, s_1 \cdots s_{\ell-1}(\alpha_{s_\ell})\big),\label{eq:inv_seq}
\end{equation}
so that
\[
  \inv(w) = \big\{ \alpha_{s_1}, s_1(\alpha_{s_2}), \ldots, s_1 \cdots s_{\ell-1}(\alpha_{s_\ell}) \big\} \subseteq \PhiP.
\]
The right weak order in~$W$ can be described in terms of inversion sets as
\begin{equation}
  w \le w' \Leftrightarrow \inv(w) \subseteq \inv(w'). \label{eq:rightweakorder}
\end{equation}
Subsets of positive roots that are inversion sets of elements were characterized by P.~Papi in~\cite{Pap1994}.
A subset $A \subseteq \PhiP$ is called \defn{biclosed} if, for $\alpha, \beta, \gamma \in \PhiP$ such that $\gamma = a\alpha + b\beta$ with $a,b \in \R_+$, we have
\begin{equation}
  \alpha,\beta \in A \Rightarrow \gamma \in A \text{ and } \gamma \in A \Rightarrow (\alpha \in A \text{ or } \beta \in A ). \label{eq:biclosed} 
\end{equation}
The map $w \mapsto \inv(w)$ is a bijection between elements in~$W$ and biclosed subsets of~$\PhiP$.

\subsection{Root orders and reflection orders}

The inversion sequence of a reduced $\sref$-word for~$\wo$ specifies a total order on all positive roots which we call a \defn{root order}.
Root orders on~$\PhiP$ were shown by P.~Papi to be exactly those orderings of~$\PhiP$ with the property that if $\alpha,\beta,\gamma=a\alpha+b\beta \in\PhiP$ with $a,b \in \R_+$ then~$\gamma$ is in between~$\alpha$ and~$\beta$ in the ordering, see~\cite{Pap1994}.
Every such root order induces a corresponding reflection order on all reflections.
Recall that an order~$\prec$ on~$\refl$ is a \defn{reflection order} as defined by M.~Dyer in~\cite[Definition~2.1]{Dye1993} if for any two reflections $r_1,r_2 \in \refl$ such that the positive roots $\alpha_{r_1},\alpha_{r_1r_2r_1},\ldots,\alpha_{r_2r_1r_2},\alpha_{r_2}$ are in the positive real span of $\alpha_{r_1}$ and $\alpha_{r_2}$, either
\[
  r_1 \prec r_1r_2r_1 \prec \ldots \prec r_2r_1r_2 \prec r_2
\]
holds, or the same statement with $r_2$ and $r_1$ swapped holds.
The reflection orders on~$\refl$ are exactly those orders that come from root orders.

We use the symbol $\leq_{\c}$ for the total order $\invs(\cwo)$ on positive roots and for the corresponding reflection order.  A reflection order is compatible with a Coxeter element~$c$ if it is commutation equivalent to $\invs(\cwo)$ for some $\sref$-word $\s_1\cdots\s_n$ for $c = s_1\cdots s_n$.\nathanside{Order or partial order?  You decide.}

We have the following addition to \Cref{lem:reflection_order}, proven in~\cite[Lemmas~3.7 and~3.8]{RS2011}.
\begin{lemma+}
\label{lem:reflection_order4}
  Let~$c$ be a Coxeter element and let~$s$ be initial in~$c$.
  If $r_1,r_2 \in \refl \cap W_{\langle s \rangle}$ then
  \[
    r_1 <_{c} r_2 \Leftrightarrow r_1 <_{\coxrn} r_2 \Leftrightarrow r_1 <_{\coxsn} r_2 \text{ in } W_{\langle s \rangle}. \qedhere
  \]
\end{lemma+}

\subsection{Inversion sets for positive Artin monoids}
\label{sec:inv-set-pos-Art}
If $\Q$ is not a reduced $\sref$-word for an element in~$W$, the definition of inversion sequence in~\eqref{eq:inv_seq} still makes sense, but may now contain negative or repeated roots.
We keep track of this additional information using \defn{colored positive roots}, defined as the set $\Phi^{(\infty)}\eqdef \set{\beta^{(k)} }{ \beta\in \PhiP, k \in \mathbb{N}}$ consisting of positive roots \defn{colored} by a nonnegative integer superscript.  We write $|\beta^{(k)}|$ for the uncolored positive root $\beta$.

For a set~$T$ of colored positive roots and a parabolic subgroup $W_J$ with $J \subseteq \refl$, we write $T_J \eqdef \set{\beta^{(i)} \in T }{ \beta \in \PhiPJ }$.
A simple reflection~$s \in \sref$ acts on a colored positive root by
\begin{equation}
  s(\beta^{(k)}) \eqdef \begin{cases}
                     \big[s(\beta)\big]^{(k)} & \text{if } \beta \neq \alpha_s \text{ and}\\
                     \beta^{(k+1)} & \text{if } \beta = \alpha_s
                   \end{cases}\ . \label{eq:colored_action}
\end{equation}
The \defn{colored inversion sequence} and \defn{colored reflection sequence} of an $\sref$-word $\Q = \s_1\cdots\s_p$ are defined to be
\begin{equation}
\begin{aligned}
  \invs(\Q) &\eqdef \left( \beta_1^{(m_1)}, \ldots, \beta_p^{(m_p)} \right), \text{ and}\\
  \invs_\refl(\Q) &\eqdef \left( r_1^{(m_1)}, \ldots, r_p^{(m_p)} \right),
\end{aligned} \label{eq:colored_inversion_sequence}
\end{equation}
where $\beta_i^{(m_i)}=s_1\cdots s_{i-1}(\alpha_{s_i}^{(0)})$ and $r_i = r_{\beta_i}$.


The \defn{colored inversion set} of an element $\bw \in \Artinmon$ is the set
\[
  \inv(\bw) \eqdef \big\{ \beta_1^{(m_1)}, \ldots, \beta_\ell^{(m_\ell)} \big\}
\]
for any reduced $\sref$-word $\s_1 \cdots \s_\ell$ for $\bw$, as in~\eqref{eq:colored_inversion_sequence}.

\begin{lemma}
\label{lem:coloredinversionsArtin}
  The colored inversion set of $\bw \in \Artinmon$ is well-defined.
\end{lemma}\nathanside{Well, defined.}
\begin{proof}
  This follows, as in the case of $w \in W$, from the fact that any two words for $\bw \in \Artinmon$ are obtained from each other by braid moves.
  The situation therefore reduces to checking that the colored inversion set stays unchanged under a braid move, which is immediate.
\end{proof}

The definition of colored inversion sequence can be rephrased as follows.

\begin{lemma}
\label{lem:uncoloredinversions}
  Let $\Q = \s_1\cdots\s_p$ be an $\sref$-word with $\invs(\Q) = \left( \beta_1^{(m_1)}, \ldots, \beta_p^{(m_p)} \right)$.
  Then the following two properties hold:
  \begin{enumerate}[(i)]
    \item\label{it:uncoloredinversions1} For any $1 \leq i \leq p$, we have $\beta_i = \big| s_1\cdots s_{i-1}(\alpha_{s_i}) \big| \in \PhiP$.
    \item\label{it:uncoloredinversions2} For any $\beta \in \PhiP$ let $1 \leq k_0 < \ldots < k_a \leq p$ be all indices such that $\beta_{k_i} = \beta$.
    Then $m_{k_i} = i$ for $0 \leq i \leq a$.
  \end{enumerate}
\end{lemma}
\begin{proof}
  Item~\eqref{it:uncoloredinversions1} follows from the observation that the action of~$s \in \sref$ in~\eqref{eq:colored_action} differs from the usual action of~$s$ on roots only in the second case where it changes the color instead of the sign.

   We now turn to item~\eqref{it:uncoloredinversions2}.  For $w \in W$ and positive roots $\alpha \neq \beta$,
  \begin{equation}
    |w(\beta)| \neq |w(\alpha)|.\label{eq:wbetaneqwalpha}
  \end{equation}
  Fix $\beta \in \PhiP$ and let $1 \leq k_0 < \ldots < k_a \leq p$ be the indices for which $\beta_{k_i} = \beta$ as in the statement.  Fix $0\leq i\leq a$.
  By~\eqref{eq:wbetaneqwalpha}, we obtain that, when we calculate the color of $\beta_{k_i}^{(m_{k_i})}=s_1\cdots s_{{k_i}-1}(\alpha_{s_{k_i}}^{(0)})$, we add one exactly in the positions $k_0,\ldots,k_{i-1}$, implying that $m_{k_i} = i$, as desired.
\end{proof}

This description of colored inversion sequences has two immediate consequences.
First, no colored roots inside $\invs(\Q)$ for $\Q = \s_1\cdots\s_p$ appear multiple times, \ie, $|\inv(\bw)| = p$ for $\bw = \bs_1 \cdots \bs_p \in \Artinmon$.
Second, the colored inversion sequence can be computed by first doing the calculation without colors---recording a sequence of positive roots---and then filling in the colors so the colors of each positive root that appears increase consecutively starting from zero.
We may therefore suppress the colors in colored inversion sequences and write
\[
  \invs(\s_1\cdots\s_p) = ( \beta_1, \ldots, \beta_p )
\]
with $\beta_i = | s_1\cdots s_{i-1}(\alpha_{s_i})|$.
In the case that $\s_1\cdots\s_p$ is a reduced word for $w \in W$, this agrees with~\eqref{eq:inv_seq}.  To refer to the words $\left( r_1^{(m_1)}, \ldots, r_p^{(m_p)} \right)$ and $(r_1,\ldots,r_p)$, we also use the notation $\r_1^{(m_1)}\cdots\r_p^{(m_p)}$ and $\r_1\cdots\r_p$, respectively.

\begin{example}
\label{ex:inversionsets}
  The reduced $\sref$-word $\s\t\s$ for the element $\wo \in \WA[3]$ has reflection sequence $\invs_\refl(\s\t\s)=\s\u\t$ and inversion sequence $\invs(\s\t\s) = \left(\alpha,\gamma,\beta\right)$.

  The $\sref$-word $\s\t\s\s\t\s$ specifies the element $\bwom[2] \in \BA[3]$ with colored reflection sequence
  \begin{align*}
    \invs_\refl(\s\t\s\s\t\s) &= \s^{(0)}\u^{(0)}\t^{(0)}\t^{(1)}\u^{(1)}\s^{(1)}
  \intertext{and colored inversion sequence}
    \invs(\s\t\s\s\t\s) &= \left(\alpha^{(0)},\gamma^{(0)},\beta^{(0)},\beta^{(1)},\gamma^{(1)},\alpha^{(1)}\right).
  \end{align*}
  In contrast to the situation for the Coxeter group in~\Cref{sec:inv_sets}, colored inversion sets do not generally distinguish positive braids.\nathanside{Bad hash.}
  For example, $\bs\bs\bt\bt \neq \bt\bt\bs\bs$ in $\BA[3]$, even though
  \[
    \inv(\bs\bs\bt\bt) = \inv(\bt\bt\bs\bs) = \{\alpha^{(0)},\alpha^{(1)},\beta^{(0)},\beta^{(1)}\}.
  \]
\end{example}

Nevertheless, we have the following lemma.

\begin{lemma}
\label{lem:coloredinversionsprojection}
  Let $\bw_1,\bw_2 \in \Artinmon$ with $\inv(\bw_1) = \inv(\bw_2)$, and let $w_1, w_2$ denote their images inside~$W$.
  Then $w_1 = w_2$.
\end{lemma}

\begin{proof}
A reduced $\sref$-word for~$w \in W$ can be obtained from a (possibly non-reduced $\sref$-word) $\s_1\cdots\s_p$ by applying braid moves and by removing two consecutive equal letters $\s\s$.
  A word $\Q$ for $w \in W$ is reduced if and only if its colored inversion set $\inv(\Q)$ only has the color~$0$.
  By \Cref{lem:coloredinversionsArtin}, braid moves do not change the colored inversion set.
  On the other hand, removing two consecutive letters $\s\s$ removes the same positive root from the colored inversion set twice, with two consecutive colors.

  Let $\bw \in \Artinmon$ and write $w$ for its image in~$W$.
  Then $\inv( w )$ is given by all positive roots in $\inv( \bw )$ that appear an odd number of times.
  As elements in~$W$ are uniquely determined by their inversion sets, this shows that $w$ only depends on $\inv(\bw)$ and not on~$\bw$ itself.
\end{proof}

\section{Shards and the shard intersection order}
\label{sec:intersection_lattice}

The set of all parabolic subgroups, when ordered by inclusion, forms a lattice $\PW$.
On the other hand, the \defn{intersection lattice} $\LW$ is the lattice of flats, \ie, of all intersections of the hyperplanes in the reflection arrangement of~$W$ ordered by reverse inclusion.
These two lattices are well-known to be related in the following way.

\begin{theorem+}[\cite{BI1999}]\label{thm:intersection_lattice_parabolic}
  There is an order-preserving bijection \begin{align*}\LW &\cong \PW \\ X &\mapsto W_X \\ V^U &\mapsfrom U, \end{align*} where $W_X \eqdef \bigset{w \in W }{ X \subseteq \fixd(w)}$ and $V^U \eqdef \bigcap_{w \in U} \fixd(w).$
\end{theorem+}

\subsection{Shards}

In~\cite{Rea2011}, N.~Reading defined a delicate slicing procedure on simplicial hyperplane arrangements that cuts hyperplanes into several pieces called \defn{shards}.  We are interested in this construction in the case of a reflection arrangement.
Recall the identification of chambers and elements in~$W$ from~\Cref{sec:geometry}.\nathanside{Seven years bad luck.  Until your next sabbatical.}

A hyperplane~$H$ in a subarrangement of the reflection arrangement is called \defn{basic} if the connected region containing the fundamental chamber is bounded by~$H$ in the subarrangement.
For any two hyperplanes $H,H'$, define $\mathcal{A}(H,H')$ to be the subarrangement consisting of all hyperplanes containing $H \cap H'$.
One says that~$H'$ \defn{cuts}~$H$ if~$H'$ is a basic hyperplane of $\mathcal{A}(H,H')$ while~$H$ is not.
\begin{figure}[t]
  \begin{center}
    \begin{tikzpicture}[scale=1]
      \draw[line width=0.5mm] (0:2cm) -- (180:2cm);
      \draw[white,fill=white] (0,0) circle (.25cm);
      \draw[line width=0.5mm] (60:2cm) -- (240:2cm);
      \draw[line width=0.5mm] (120:2cm) -- (300:2cm);
      \node at (  0:2.5cm) {$H_{sts}$};
      \node at (180:2.5cm) {$H'_{sts}$};
      \node at (240:2.4cm) {$H_{s}$};
      \node at (300:2.4cm) {$H_{t}$};

      \node at (270:1.2cm) {$\one$};
      \node at (210:1.2cm) {$s$};
      \node at (330:1.2cm) {$t$};
      \node at (150:1.2cm) {$st$};
      \node at ( 30:1.2cm) {$ts$};
      \node at ( 90:1.2cm) {$sts$};
    \end{tikzpicture}
  \end{center}
  \caption{The shards of type~$A_2$.}
  \label{fig:shardA2}
\end{figure}
In this way, all hyperplanes are cut into \defn{shards}, defined as the closures of the connected pieces
\[
  H \setminus \bigcup_{H' \text{ cuts } H} H'
\]
of hyperplanes. \Cref{fig:shardA2} illustrates the slicing of the reflection arrangement of type~$A_2$ into shards.

A \defn{lower shard} for an element $w \in W$ is the shard~$\shard$ inside a hyperplane $H_r$ such that $r \in \coveredref(w)$ and the gallery from~$w$ to $r w$ crosses $\shard$.
It was shown in~\cite[Proposition~3.3]{Rea2011} that shards are in bijection with the join-irreducible elements of the weak order $\Weak(W)$.
A shard~$\shard$ is associated to the join-irreducible element~$w$ that has~$\shard$ as its unique lower shard.
In type~$A_2$, the join-irreducible elements are $s,t,st,ts$, which correspond in the obvious way to the four shards shown in \Cref{fig:shardA2}.

\begin{lemma}
\label{lem:gallery_shard}
  Any gallery from~$e$ to~$w$ crosses every lower shard of~$w$.
\end{lemma}

\begin{proof}
  Let~$H$ be the hyperplane corresponding to a lower shard~$\shard$ of~$w$ and consider a different shard $\shard'$ in~$H$.
  Then there is a hyperplane~$H'$ which cuts~$H$ such that~$\shard$ and~$e$ are on one side of~$H'$ and $\shard'$ is on the other side.
  Then it is clear that a gallery from~$e$ to~$w$ will not cross~$H'$ and so will not cross $\shard'$.  On the other hand, since $e$ and $w$ are on opposite sides of $H$, it must cross some shard in $H$.  Therefore it must cross $\shard$.
\end{proof}

\subsection{The shard intersection order}
The \defn{shard intersection order} is the set $\Shard(W)$ of all intersections of shards, ordered by reverse inclusion.
N.~Reading constructed in~\cite[Proposition~4.7]{Rea2011} a bijection between~$W$ and the set of shard intersections.

\begin{theorem+}
\label{thm:lower_shards_and_elements}
  There is a bijection
  \begin{align*}
    \shardbij: W &\bij \Shard(W)\\
    w &\longmapsto \bigcap_{\shard \text{ a lower shard for } w} \shard. \qedhere
  \end{align*}
\end{theorem+}

\Cref{fig:shardA2} shows the correspondence between $s,t,st,ts$ and the four shards.
To complete the bijection, we observe that~$\one$ corresponds to the empty intersection, while~$sts$ corresponds to the intersection $\{0\} = H_s \cap H_t$.

\medskip

Since both shards and shard intersections have purely group-theoretic interpretations as elements of~$W$, it is reasonable to ask for a definition of the shard intersection order directly on the group~$W$.

\begin{definition}
\label{def:shard_on_group}
  The \defn{shard intersection order} on~$W$ is defined by $u \leqsh v$ if
  \[
    \inv(u) \subseteq \inv(v) \text{ and } W_{\coveredref(u)} \subseteq W_{\coveredref(v)}.
  \]
\end{definition}

This definition is justified by the following theorem, which has not previously appeared in the literature.

\begin{theorem}
\label{thm:shard_intersection_order}
  Let $u,v \in W$.
  Then
  \[
    u \leqsh v \quad\Leftrightarrow\quad \shardbij(u) \subseteq \shardbij(v)
  \]
\end{theorem}\nathanside{Inversion set + cover reflections = shards.}

By \eqref{eq:rightweakorder}, $\inv(u) \subseteq \inv(v)$ if and only if $u \leq v$ in weak order---so this describes the shard intersection order on~$W$ explicitly as a weakening of the weak order, as shown in~\cite[Proposition~4.7 (ii)]{Rea2011}.
\begin{figure}[t]
  \begin{center}
    \begin{tikzpicture}[yscale=1.5]
      \tikzstyle{rect}=[rectangle,draw,opacity=.5,fill opacity=1]
      \tikzstyle{sort}=[fill=black!20]

      \node[rect,sort] (e)   at (0,0) {\shortstack{$\one$ \\ $\emptyset$}};
      \node[rect,sort] (1)   at (-3,1) {\shortstack{$s$ \\ $\{s\}$}};
      \node[rect,sort] (12)  at (-1,1) {\shortstack{$st$ \\ $\{u\}$}};
      \node[rect] (21)  at ( 1,1) {\shortstack{$ts$ \\ $\{u\}$}};
      \node[rect,sort] (2)   at ( 3,1) {\shortstack{$t$ \\ $\{t\}$}};
      \node[rect,sort] (121) at ( 0,2) {\shortstack{$sts$ \\ $\{s,t\}$}};
      \draw (e) -- (1)  -- (121);
      \draw (e) -- (2)  -- (121);
      \draw (e) -- (12) -- (121);
      \draw (e) -- (21) -- (121);
    \end{tikzpicture}
  \end{center}
  \caption{The shard intersection order $\Shard(\WA[3])$ with each element labelled by its cover reflections.  The $st$-sortable elements are shaded.  These are defined in \Cref{sec:sortable_elements}.}
  \label{fig:shardorderA2}
\end{figure}
\Cref{fig:shardorderA2} shows the shard intersection order of type~$A_2$ with each element labelled by its cover reflections.

\begin{proof}[Proof of \Cref{thm:shard_intersection_order}]
  We first show that the two properties $\inv(u) \subseteq \inv(v)$ and $W_{\coveredref(u)} \subseteq W_{\coveredref(v)}$ imply $u \leqsh v$.
  By the correspondence in \Cref{thm:intersection_lattice_parabolic} between the lattice of parabolic subgroups and the intersection lattice of the hyperplane arrangement of~$W$, $W_{\coveredref(u)} \subseteq W_{\coveredref(v)}$ implies that 
  \begin{equation}
    \bigcap_{H \in \coveredref(u)} H \supseteq \bigcap_{H \in \coveredref(v)} H.\label{eq:covered_ref}
  \end{equation}
  If we in addition have $\inv(u) \subseteq \inv(v),$ then there exists a gallery from~$e$ to~$u$ to~$v$.
  Let $\shard$ be the union of the set of shards that this gallery crosses, and note that there is exactly one shard for each hyperplane in $\{ H_r \mid r \in \inv(v)\}$ (since each hyperplane is crossed at most once).
  For any chamber~$r$ with lower hyperplane~$H$, the entire facet of~$r$ corresponding to~$H$ is part of the same shard.

  By \Cref{lem:gallery_shard}, taking the intersection of both sides of~\eqref{eq:covered_ref} with $\shard$ implies that the intersection of the lower shards of~$u$ contains the intersection of the lower shards of~$v$.

  We now show that  $u \leqsh v$ implies that $W_{\coveredref(u)} \subseteq W_{\coveredref(v)}$ and $\inv(u) \subseteq \inv(v)$.
  By~\cite[Proposition~5.5]{Rea2011}, we know that $W_{\coveredref(u)} \subseteq W_{\coveredref(v)}$, since sending shards to their containing hyperplane induces an order-preserving map from the shard intersection order to the intersection lattice of~$W$.
  Finally, by~\cite[Proposition~4.7 (ii)]{Rea2011}, sending regions to the intersection of their lower shards is an order-preserving map from the shard intersection order to the weak order.
\end{proof}

We recall the following proposition.

\begin{proposition+}[{\cite[Proposition~1.2]{Rea2011}}]
\label{prop:shard_interval_isomorphism}
  \[
    [\one,w]_{\Shard(W)} \cong \Shard(W_{\coveredref(w)})\cong \Shard(W_{\DesSet_R(w)}). \qedhere
  \]
\end{proposition+}

The isomorphism $[\one,w]_{\Shard(W)} \cong \Shard(W_{\coveredref(w)})$ is given by sending~$u \leqsh w$ to $u_{\coveredref(w)}$.
The bijection $r \mapsto r^{w}$ from $\coveredref(w)$ to $\DesSet_R(w)$
 induces the poset isomorphism
\begin{align*}
  W_{\coveredref(w)} &\cong   W_{\DesSet_R(w)} \\
  u &\mapsto u^{w},
\end{align*}
from which the isomorphism $[\one,w]_{\Shard(W)} \cong \Shard(W_{\DesSet_R(w)})$ follows.

\section{The \mhead-eralized weak order}
\label{sec:weak order}

By~\eqref{eq:artininjection}, a finite Coxeter group~$W$ injects into its Artin monoid~$\Artinmon$ as the interval
$
  \Weak(W) \cong [\bone,\bwo]_{\Weak(\Artinmon)}.
$
This injection suggests the following $m$-eralization, previously considered by P.~Dehornoy in an enumerative context~\cite{Deh2007}.\nathanside{If you like permutations, then you will \emph{love} the $m$-eralized weak order!}

\begin{definition}
\label{def:mweak}
  The (right) \defn{$m$-eralized weak order} is defined as the interval
  \[
    \Weakm(W)\eqdef[\bone,\bwom]_{\Weak(\Artinmon)}.
  \]
  We denote the elements of $\Weakm(W)$ by $\Wm$.
\end{definition}

\Cref{fig:weakA22} illustrates the Hasse diagram of $\Weakm[2](\WA[3])$.
\begin{figure}[t]
  \begin{center}
    \begin{tikzpicture}[scale=1.2]
      \tikzstyle{rect}=[rectangle,draw,opacity=.5,fill opacity=1]
      \tikzstyle{sort}=[fill=black!20]
      \node[rect,sort] (e)   at (0,0) {$\one$};
      \node[rect,sort] (s)   at (-1,1) {$s$};
      \node[rect,sort] (t)   at ( 1,1) {$t$};
      \node[rect,sort] (st)  at (-1,2) {$st$};
      \node[rect] (ts)  at ( 1,2) {$ts$};
      \node[rect,sort] (sts) at ( 0,3) {$sts$};
      \node[rect,sort] (stss)   at ( 1,4) {$sts\cdot s$};
      \node[rect,sort] (stst)   at (-1,4) {$sts\cdot t$};
      \node[rect] (stsst)  at ( 1,5) {$sts\cdot st$};
      \node[rect,sort] (ststs)  at (-1,5) {$sts\cdot ts$};
      \node[rect,sort] (stssts) at ( 0,6) {$sts\cdot sts$};
      \node[rect,sort] (ss)   at (-2,2) {$s\cdot s$};
      \node[rect,sort] (tt)   at ( 2,2) {$t\cdot t$};
      \node[rect] (sst)   at (-3,3) {$s\cdot st$};
      \node[rect] (tts)   at ( 3,3) {$t\cdot ts$};
      \node[rect,sort] (stt)  at (-2,3) {$st\cdot t$};
      \node[rect] (tss)  at ( 2,3) {$ts\cdot s$};
      \node[rect] (stts)  at (-3,4) {$st\cdot ts$};
      \node[rect] (tsst)  at ( 3,4) {$ts\cdot st$};

      \draw (e) to (s) to (st) to (sts) to (stss) to (stsst) to (stssts);
      \draw (e) to (t) to (ts) to (sts) to (stst) to (ststs) to (stssts);
      \draw (s) to (ss) to (sst) to (stst);
      \draw (t) to (tt) to (tts) to (stss);
      \draw (st) to (stt) to (stts) to (ststs);
      \draw (ts) to (tss) to (tsst) to (stsst);
    \end{tikzpicture}
  \end{center}
  \caption{The $m$-eralized weak order $\Weakm[2](\WA[3])$ for $m=2$.  The $st$-sortable elements are shaded.  These are defined in \Cref{sec:sortable_elements}.}
  \label{fig:weakA22}
\end{figure}

\begin{remark}
\label{rem:weakm_in_B+}
  Both $\Weak(W) = \Weakm[1](W)$ and $\Weak(\Artinmon) = \Weakm[\infty](W)$ are known to have beautiful rank-generating functions, which follow respectively from invariant theory~\cite[Section 3.9]{Hum1990} and from an inclusion-exclusion argument~\cite[Lemma 2.1]{SD2008}.
  The corresponding rank-generating functions are given by
\[
    \Weak(W;q)          = \prod_{i=1}^n [d_i]_q \quad \text{ and }\quad
    \Weak(\Artinmon;q) = \left(\sum_{J \subseteq S} (-1)^{|J|} q^{\lengthS(w_0(J))}\right)^{-1}.
\]
  The rank generating function $\Weakm[2](\WA[4];q)$ is an irreducible polynomial over $\QQ$ of degree~$12$, suggesting that no nice formula may exist for general $m$.
\end{remark}

\subsection{The \mhead-eralized weak order and Garside degree}
We now show that consideration of the Garside degree of $\bw \in \Artinmon$ is enough to determine membership in $\Wm$.    We first require a technical lemma, extending~\cite[Lemma~2.10]{EM1994} to all spherical Artin monoids.

\begin{lemma}
\label{lem:lemma210}
  Let $\bu,\bw \in \Artinmon$ such that $\bu \bw \geq \bwo \in \Artinmon$, and let $\bw^{(1)}$ be the first Garside factor of~$\bw$.
  Then $\bu \bw^{(1)} \geq \bwo$.
\end{lemma}
\begin{proof}
  We argue by induction on $\lengthS(\bu)$, the base case when $\bu=\bone$ being trivial.
  Otherwise, let $\bu=\bu'\bs$ and $\bw'=\bs\bw$ for some $\bs \in \asref$, so that $\bu'\bw' = \bu\bw \geq \bwo$.   By induction, $\bu' (\bw')^{(1)} \geq \bwo$.
  Since $\bs \in \DesSet_L(\bw')$, the first Garside factor $(\bw')^{(1)}=\bw' \wedge \bwo$ has $\bs \in \DesSet_L((\bw')^{(1)})$.
  We may therefore write $(\bw')^{(1)}=\bs \bv$ for some $\bv \leq \bwo$ with $\bv \leq \bw$.
  Since $\bw^{(1)}=\bw \wedge\bwo$, $\bv \leq \bwo$, and $\bv \leq \bw$, we have that $\bv \leq \bw^{(1)}$.
  Then $\bu\bw^{(1)} \geq \bu \bv = \bu' (\bw')^{(1)} \geq \bwo$.
\end{proof}

We now extend~\cite[Theorem~2.11]{EM1994} to all spherical Artin monoids.

\begin{proposition}
\label{prop:weak_degree_characterization}
  Let $\bu \in \Artinmon$.
  Then $\bu \in \Wm$ if and only if~$\bu$ has Garside degree at most~$m$.
\end{proposition}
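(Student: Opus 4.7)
The plan is to prove each implication by induction, with \Cref{lem:lemma210} used to bound Garside degrees.

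For the ``if'' direction, I will induct on $k = \deg(u)$ to show $u \leq \wo^k$, which gives $u \leq \wom$ since $\wo^k \leq \wom$ whenever $k \leq m$. The base $k=0$ is trivial. For the step, write the Garside normal form $u = u^{(1)} \cdot u'$ with $\deg(u') = k-1$; by the induction hypothesis $u' z' = \wo^{k-1}$ for some $z' \in \Artinmon$, so $u z' = u^{(1)} \wo^{k-1}$ and it therefore suffices to show $u^{(1)} \wo^{k-1} \leq \wo^k$. Setting $y_1 \in [\one, \wo]$ with $u^{(1)} y_1 = \wo$, and letting $\tau$ denote conjugation by $\wo$ (the diagram automorphism, which restricts to an automorphism of $\Artinmon$ since it permutes $\sref$), one computes $u^{(1)} \wo^{k-1} \cdot \tau^{-(k-1)}(y_1) = u^{(1)} y_1 \wo^{k-1} = \wo^k$, as required.

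For the ``only if'' direction, I will induct on $m$, the base $m=0$ being immediate. The central obstacle is that \Cref{lem:lemma210} naturally controls the Garside factorization of the \emph{right} factor in a product of the form $uw$, whereas we need instead to bound the \emph{left} factor $u$ in $uv = \wom$. I pivot to the setting of the lemma by applying the reversal anti-automorphism $\rev$ of $\Artinmon$, which is well-defined since each braid relation is a palindrome and which fixes $\wo$ (and hence $\wom$) because $\wo^{-1} = \wo$ in $W$. This yields $\rev(v)\rev(u) = \wom$. Letting $r = (\rev u)^{(1)}$ denote the first Garside factor of $\rev u$, \Cref{lem:lemma210} applied to $\rev(v)\rev(u) \geq \wo$ gives $\rev(v) r = \wo\, x_1$ for some $x_1 \in \Artinmon$; substituting back and left-cancelling $\wo$ yields $x_1 \cdot (\rev u)^{(2)} \cdots (\rev u)^{(k')} = \wo^{m-1}$, so the tail of the Garside normal form of $\rev u$ lies below $\wo^{m-1}$. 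The induction hypothesis then bounds its degree $k'-1$ by $m-1$, whence $k' \leq m$. Since $\rev$ preserves Garside degree (a left-greedy normal form of $u$ with $k$ factors yields a right-greedy normal form of $\rev u$ with $k$ factors, and both conventions produce the same number of factors), we conclude $\deg(u) = \deg(\rev u) = k' \leq m$.
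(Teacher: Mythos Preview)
Your proof is correct and follows the same underlying strategy as the paper's: peel off one Garside factor at a time using \Cref{lem:lemma210} and induct. The ``if'' direction is essentially identical to the paper's argument; your formulation via $\tau^{-(k-1)}(y_1)$ in fact directly exhibits $u$ as a \emph{left} divisor of $\wo^k$, which is marginally cleaner than the paper's ``push $\wo$ to the right'' phrasing (which first produces a right divisor).

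For the ``only if'' direction, however, the detour through $\rev$ buys you nothing. The paper simply uses that left divisors of $\wo^r$ are also right divisors (the content of \Cref{lem:leftandright}, an immediate consequence of $s\,\wo = \wo\,\psi(s)$), writes the element as a right factor $uw = \wo^r$, and applies \Cref{lem:lemma210} directly to $w$. Your $\rev$ pivot was intended to avoid this left/right issue, but it resurfaces at the induction step: you obtain that the tail $(\rev u)^{(2)} \cdots (\rev u)^{(k')}$ \emph{right}-divides $\wo^{m-1}$, whereas the induction hypothesis concerns membership in $W^{(m-1)}$, i.e.\ \emph{left} divisibility---so you need \Cref{lem:leftandright} anyway. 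Once that is granted, both the $\rev$ manoeuvre and the closing appeal to $\deg(u) = \deg(\rev u)$ (which itself requires knowing that left- and right-greedy normal forms have the same length) become superfluous; the paper's direct route is shorter.
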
\nathanside{So you can think of an element of $\Wm$ as an $m$-tuple of elements in $W$ satisfying \Cref{thm:garsidefactorization}.}

\begin{proof}
  We show that
  \begin{enumerate}[(i)]
    \item $\bw \leq \bwom[r] \Rightarrow \deg(\bw) \leq r$ and \label{it:garside1}
    \item $\deg(\bw) = r \Rightarrow \bw \leq \bwom[r]$. \label{it:garside2}
  \end{enumerate}
  We prove~\eqref{it:garside1} by induction on~$r$.
  The base case $r=0$ is trivial, so we assume that $r \geq 1$ and let $\garside(\bw) = \bw^{(1)}\cdot \bw^{(2)}\cdot\ \cdots\ \cdot \bw^{(k)}$.
  Since $\bw \leq \bwom[r]$, there exists $\bu \in \Artinmon$ with $\bu \bw = \bwom[r]$.
  Then $\bu \bw \geq \bwo,$ so that $\bu \bw^{(1)} = \bwo \bu'$ by \Cref{lem:lemma210}.
  We can therefore write $\bu \bw = \bwo \bu' \bw^{(2)} \cdots \bw^{(k)},$ from which it follows that $\bu' \bw^{(2)} \cdots \bw^{(k)} = \bwom[r-1]$, giving $\bw^{(2)} \cdots \bw^{(k)} \leq \bwom[r-1]$.
  By induction, we obtain that $k-1 \leq r-1$ implying that $\deg(\bw) = k \leq r$.

  On the other hand, we show~\eqref{it:garside2} as follows.
  Let $\garside(\bw) = \bw^{(1)}\cdot \bw^{(2)}\cdot\ \cdots\ \cdot \bw^{(r)}$
  and let $\bu_{1} \in \Artinmon$ be the element such that $\bu_1 \bw^{(1)} = \bwo \in \Artinmon$.
  Push this copy of~$\bwo$ to the right to obtain the factorization
		\[\bu_1 \bw = (\bw^{(2)})^{\bwo} (\bw^{(3)})^{\bwo} \cdots  (\bw^{(k)})^{\bwo} \bwo.\]
  Iterating, we obtain the desired conclusion.
\end{proof}

\subsection{Lattice properties of \mhead-eralized weak order}

We conclude the discussion of $\Weakm(W)$ with some of its poset-theoretic properties.

\begin{theorem}
\label{thm:weak_lattice}
  $\Weakm(W)$ is a rank-symmetric, self-dual lattice.
\end{theorem}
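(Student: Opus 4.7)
The plan is to obtain all three properties---lattice, self-duality, and rank-symmetry---from the ambient structure of $\Weak(\Artinmon)$ together with a single order-reversing involution.

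For the lattice property, observe that $\Weakm(W) = [\one,\wom]$ is a closed interval in the lattice $\Weak(\Artinmon)$ of \Cref{thm:artinmon_lattice}. Any closed interval in a lattice is itself a lattice under the inherited meet and join, since $u, v \leq \wom$ gives $u \vee v \leq \wom$ and $u, v \geq \one$ gives $u \wedge v \geq \one$. This yields the lattice structure immediately.

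For self-duality I would construct an order-reversing involution $\phi \colon \Weakm(W) \to \Weakm(W)$ from two natural operations on the monoid. First, for $u \leq \wom$ in $\Artinmon$, let $u^{-1}\wom$ denote the unique positive element $v$ with $uv = \wom$; this is well-defined by left-cancellativity. Second, the word-reversal map $\rev\colon s_1 \cdots s_k \mapsto s_k \cdots s_1$ descends to an anti-automorphism of $\Artinmon$, since each braid relation $(ss')^{m(s,s')/2} = (s's)^{m(s,s')/2}$ is setwise preserved under reversal; moreover $\rev(\wom) = \wom$, because a reduced word for $\wo$ reverses to a reduced word for $\wo^{-1} = \wo$ and $\wom = \wo^m$. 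Define $\phi(u) := \rev(u^{-1}\wom)$. I would then verify: (a) well-definedness, since $u^{-1}\wom$ right-divides $\wom$, so $\rev(u^{-1}\wom)$ left-divides $\rev(\wom) = \wom$, giving $\phi(u) \in \Weakm(W)$; (b) order-reversal, since if $v = uc$ in $\Artinmon$, then cancelling $u$ in $\wom = uc \cdot v^{-1}\wom = u \cdot u^{-1}\wom$ yields $u^{-1}\wom = c \cdot v^{-1}\wom$ in $\Artinmon$, and applying $\rev$ shows $\rev(v^{-1}\wom)$ left-divides $\rev(u^{-1}\wom)$, i.e.\ $\phi(v) \leq \phi(u)$; and (c) the involution property, since extending $\rev$ to the Artin group $B(W)$ one computes $\phi(u)^{-1}\wom = \rev\bigl((u^{-1}\wom)^{-1}\bigr)\wom = \rev(\wom^{-1}u)\wom = \rev(u)\wom^{-1}\wom = \rev(u)$, so $\phi(\phi(u)) = \rev(\rev(u)) = u$.

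Rank-symmetry then follows at once: $\length$ is additive in $\Artinmon$ and invariant under $\rev$, so $\length(\phi(u)) = \length(u^{-1}\wom) = mN - \length(u)$ and $\phi$ bijects the rank-$i$ level with the rank-$(mN - i)$ level. The main subtlety lies in the opening moves of the involution construction---confirming that $\rev$ genuinely descends to $\Artinmon$ and fixes $\wom$, and that $u^{-1}\wom$ is a well-defined positive element---once these structural points are in hand, the verification of (a), (b), (c) is short manipulation in $\Artinmon$ and its group of fractions.
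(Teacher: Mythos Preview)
Your argument is correct and follows essentially the same route as the paper: the lattice property comes from being an interval in $\Weak(\Artinmon)$, and self-duality plus rank-symmetry come from an anti-automorphism built by composing a complement with $\rev$. The paper uses the left complement $w \mapsto \wom w^{-1}$ (which requires first checking, via its \Cref{lem:leftandright}, that every left divisor of $\wom$ is also a right divisor) and then applies $\rev$; you use the right complement $u \mapsto u^{-1}\wom$ and then $\rev$, which sidesteps that lemma entirely. The two resulting anti-automorphisms differ only by the diagram automorphism $\psi^m$, so both work, and your version is marginally more economical.
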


Just as the rank-symmetry of $\Weak(W)$ is demonstrated using the anti-auto\-morphism given by acting by the longest element~$\wo$, we prove \Cref{thm:weak_lattice} by showing that $\Weakm(W)$ has an anti-automorphism given by acting by~$\bwom$.

\begin{lemma}
\label{lem:leftandright}
  The left and the right factors of $\bwom \in \Artinmon$ coincide.
\end{lemma}

\begin{proof}
  For $\bs \in \asref$, it holds that $\bwom \bs =  \psi^m(\bs) \bwom \in \Artinmon$
  Any reduced expression $\bu \bs_1 \cdots \bs_k = \bwom \in \Artinmon$, $\psi^m(\bs_1) \cdots \psi^m(\bs_k) \bu$ is again a reduced expression for~$\bwom$.
\end{proof}

\begin{lemma}
\label{lem:antiautoleftright}
  The map $\bw \mapsto \bwom \overline{\bw}$ on $\Artinmon$ is an anti-isomorphism from right to left weak order when restricted to $\Weakm(W)$.
\end{lemma}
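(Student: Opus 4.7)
The plan is to first verify that $\phi$ is a well-defined bijection $\Weakm(W) \to \Weakm(W)$, and then to check that it sends right-weak-order covers to reversed left-weak-order covers (and conversely).

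For well-definedness: any $w \in \Weakm(W)$ left-divides $\wom$, and by Lemma~\ref{lem:leftandright} also right-divides it. Hence there is a unique $\phi(w) \in \Artinmon$ with $\phi(w)\cdot w = \wom$, namely the element $\wom w^{-1}$ computed in $B(W)$; this $\phi(w)$ is manifestly a left divisor of $\wom$, so it again lies in $\Weakm(W)$. The map $\phi$ is injective by cancellativity in $B(W)$, and surjective because the preimage of $w \in \Weakm(W)$ is $w^{-1}\wom$, which lies in $\Artinmon$ (since $w$ left-divides $\wom$) and in $\Weakm(W)$ (as a right divisor of $\wom$, hence by Lemma~\ref{lem:leftandright} also a left divisor).

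For the order-reversing property, since each weak order is the transitive closure of its cover relations, it suffices to show that $\phi$ sends right covers to reversed left covers and that $\phi^{-1}$ does the symmetric thing. Given a right cover $w_1 \lessdot_R w_2$ with $w_2 = w_1 s$ for some $s \in \sref$, I compute in $B(W)$:
\[
  \phi(w_1)\phi(w_2)^{-1} \;=\; \wom w_1^{-1} \cdot w_1 s \wom^{-1} \;=\; \wom s\wom^{-1}.
\]
Conjugation by $\wom$ permutes $\sref$: iterating the identity $\wo s = \psi(s)\wo$ from the proof of Lemma~\ref{lem:leftandright} gives $\wom s\wom^{-1} = \psi^m(s) \in \sref$, where $\psi(s) := \wo s \wo$. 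Hence $\phi(w_1) = \psi^m(s)\cdot\phi(w_2)$ in $B(W)$, and a length comparison ($\length(\phi(w_1)) = mN-\length(w_1) = \length(\phi(w_2))+1$) confirms this is a reduced factorization in $\Artinmon$, so $\phi(w_2)\lessdot_L \phi(w_1)$. The analogous calculation for $\phi^{-1}\colon w\mapsto w^{-1}\wom$ applied to a left cover $w_1 \lessdot_L w_2 = tw_1$ yields $\phi^{-1}(w_2)^{-1}\phi^{-1}(w_1) = \wom^{-1} t \wom = \psi^m(t) \in \sref$, hence $\phi^{-1}(w_2)\lessdot_R \phi^{-1}(w_1)$.

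The main subtlety is ensuring that the formal quotient $\wom w^{-1}$ really lives in the monoid $\Artinmon$ rather than strictly in the group $B(W)$; this is exactly what Lemma~\ref{lem:leftandright} guarantees. Beyond that, the argument is a direct lift of the classical anti-automorphism $w\mapsto \wo w^{-1}$ on $(W,\leq_R)$, with an extra bookkeeping step to track the iterated diagram automorphism $\psi^m$ that arises from conjugation by $\wom$ in $B(W)$.
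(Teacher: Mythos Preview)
Your proof is correct and follows essentially the same approach as the paper: both verify well-definedness via Lemma~\ref{lem:leftandright}, then check on cover relations that $\phi(w_1) = \psi^m(s)\,\phi(w_2)$ whenever $w_2 = w_1 s$, using the length computation to confirm this is a reduced left factorization. You are somewhat more explicit than the paper about bijectivity and about treating the inverse direction, but the argument is the same.
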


\begin{proof}
  Set $\phi(\bw) = \bwom \overline{\bw}$.
  By \Cref{lem:leftandright}, $\bw \in \Wm$ is both initial and final in~$\bwom$.
  We also have
  \[
    \lengthS(\bwom \overline{\bw}) = \lengthS(\overline{\bw} \wom) = \lengthS(\bwom)-\lengthS(\bw) = mN - \lengthS(\bw).
  \]
  To show that the map reverses the order, it suffices to consider the case for $\bw \leq \bw \bs \in \Wm$.
  Then $\lengthS(\phi(\bw)) = \lengthS(\bwom)-\lengthS(\bw)$ and
  \[
    \lengthS(\phi(\bw \bs)) = \lengthS(\bwom)-\lengthS(\bw \bs) = \lengthS(\bwom)-\lengthS(\bw)-1,
  \]
  so that $\lengthS(\phi(\bw \bs))+1 = \lengthS(\phi(\bw))$.
  Furthermore,
  \[
    \phi(\bw \bs) = \bwom \overline{\bs} \overline{\bw} =  \overline{\psi^m(\bs)} \bwom \overline{\bw} = \overline{\psi^m(\bs)} \phi(\bw),
  \]
  so that $\phi(\bw) = \psi^m(\bs) \phi(\bw \bs)$.
\end{proof}

Representing an element of~$\Artinmon$ as an $\sref$-word, the reverse map converts between left weak order and right weak order.

\begin{lemma}
\label{lem:autoleftright}
  The map~$\rev$ is an isomorphism from left weak order to right weak order that preserves the interval $\Weakm(W)$.
\end{lemma}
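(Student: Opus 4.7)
The plan is to establish four small ingredients and combine them.

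\medskip

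\noindent\textbf{Well-definedness.} First I would check that $\rev$ is a well-defined anti-involution of $\Artinmon(W)$. Since $\rev$ acts on representing words, this reduces to verifying that reversal maps braid relations to braid relations: the reverse of $(ss')^{m(s,s')/2}$ is $(s's)^{m(s,s')/2}$, which is immediate in the even-length case and follows from palindromicity in the odd-length case. Hence $\rev : \Artinmon \to \Artinmon$ is a bijection satisfying $\rev(uv) = \rev(v)\rev(u)$, $\rev^2 = \mathrm{id}$, and $\length(\rev(w)) = \length(w)$.

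\medskip

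\noindent\textbf{Order translation.} Next I would directly convert left divisibility to right divisibility. If $u \leq_L w$, then $w = xu$ for some $x \in \Artinmon$ with $\length(x) + \length(u) = \length(w)$; applying $\rev$ yields $\rev(w) = \rev(u)\rev(x)$ with matching lengths, so $\rev(u) \leq_R \rev(w)$. The reverse implication uses $\rev^2 = \mathrm{id}$. Thus $\rev$ is a poset isomorphism from left weak order on $\Artinmon$ to right weak order on $\Artinmon$.

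\medskip

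\noindent\textbf{Fixing \boldmath$\wom$.} To see that the interval $\Wm$ is preserved, I would show $\rev(\wom) = \wom$ in $\Artinmon$. For any reduced word $s_1 \cdots s_N$ for $\wo \in W$, the reverse $s_N \cdots s_1$ is again a reduced word for $\wo^{-1} = \wo$; since all reduced words for a fixed element of $W$ determine the same element of $\Artinmon$, we obtain $\rev(\wo) = \wo$ in $\Artinmon$. As $\wom = \wo \cdot \wo \cdots \wo$ ($m$ copies) in $\Artinmon$ and $\rev$ is anti-multiplicative, it follows that $\rev(\wom) = \wom$.

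\medskip

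\noindent\textbf{Combining.} If $w \in \Wm$, so $w$ is a left divisor of $\wom$, then $\rev(w)$ is a right divisor of $\rev(\wom) = \wom$; by \Cref{lem:leftandright} this makes $\rev(w)$ a left divisor of $\wom$, hence $\rev(w) \in \Wm$. Restricting the isomorphism from the second step to $\Wm$ completes the proof. None of the steps presents a substantive obstacle; the only careful verification is the braid-relation check in the first step (together with the appeal to \Cref{lem:leftandright} to shuttle between left and right divisors of $\wom$).
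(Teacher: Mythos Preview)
Your proposal is correct and follows essentially the same approach as the paper's proof, just with considerably more detail: the paper's argument is a two-line sketch that uses $\rev(sw)=\rev(w)s$ for the order translation and then, implicitly via \Cref{lem:leftandright}, passes from ``$w$ left-divides $\wom$'' to ``there exists $u$ with $uw=\wom$'' before applying $\rev$. Your version makes each of these steps (well-definedness, $\rev(\wom)=\wom$, and the appeal to \Cref{lem:leftandright}) explicit, which is a fine expansion of the same argument.
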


\begin{proof}
  Since $\rev(\bs\bw)=\rev(\bw)\bs$, $\rev$ is an isomorphism from left weak order to right weak order.
  If $\bw \leq \bwom$, then there exists $\bu \in \Artinmon$ such that $\bu\bw = \bwom$ so that $\rev(\bw)\rev(\bu)=\rev(\bu\bw)=\rev(\bwom)=\bwom$ and $\rev(\bw)\leq \bwom$.
\end{proof}

\begin{proposition}
  The composition $(\rev \circ \phi)$ is an anti-isomorphism of the $m$-eralized weak order $\Weakm(W)$.
\end{proposition}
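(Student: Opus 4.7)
The plan is to derive this proposition directly as a formal consequence of the two preceding lemmas, with no additional work beyond checking that the domains and codomains line up. First I would observe that Lemma~\ref{lem:antiautoleftright} gives $\phi(w) = \wom w^{-1}$ as an order-reversing bijection from $\Wm$ equipped with right weak order onto $\Wm$ equipped with the (restricted) left weak order; the fact that the image lies in $\Wm$ is exactly the two-sided divisibility content of Lemma~\ref{lem:leftandright}, combined with the length computation from the proof of Lemma~\ref{lem:antiautoleftright}. Next I would invoke Lemma~\ref{lem:autoleftright} to get $\rev$ as an order-preserving bijection from $\Wm$ with the left weak order onto $\Wm$ with the right weak order.

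Once both restrictions are in place, the composition $\rev \circ \phi$ is a well-defined self-map of the set $\Wm$. For the anti-isomorphism claim, I would appeal to the elementary general fact that the composition of an order-reversing bijection with an order-preserving bijection is order-reversing: if $w_1 \leq w_2$ in right weak order on $\Wm$, then $\phi(w_1) \geq \phi(w_2)$ in left weak order by Lemma~\ref{lem:antiautoleftright}, and hence $\rev(\phi(w_1)) \geq \rev(\phi(w_2))$ in right weak order by Lemma~\ref{lem:autoleftright}; the reverse implication follows symmetrically because both maps are bijections.

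I do not anticipate any genuine obstacle: the proposition is essentially a bookkeeping assembly of the two lemmas. The only point that merits explicit mention in the write-up is the compatibility of the intermediate order (left weak restricted to $\Wm$), which is implicit in both lemma statements and requires no further verification.
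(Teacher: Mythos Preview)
Your proposal is correct and matches the paper's own proof, which simply reads ``This follows from \Cref{lem:antiautoleftright,lem:autoleftright}.'' You have just unpacked that one-line citation into the explicit composition argument, which is exactly the intended reasoning.
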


\begin{proof}
  This follows from \Cref{lem:antiautoleftright,lem:autoleftright}.
\end{proof}

\begin{proof}[Proof of \Cref{thm:weak_lattice}]
  Since it is an interval, $\Weakm(W)$ inherits the lattice property of \Cref{thm:artinmon_lattice} from $\Weak(\Artinmon)$.   Self-duality and rank-symmetry are a direct consequence of the existence of the anti-isomorphism $(\rev \circ \phi)$ and the fact the $\Weakm(W)$ is graded by length.
\end{proof}





\chapter{Subword complexes}
\label{sec:chapter_subword_complexes}

In this chapter, we define and study several variants of subword complexes.\nathanside{We're not done with the backgound?}
We begin by reviewing properties of simplicial complexes (\Cref{sec:simplicial}).
Additional background can be found in~\cite[Section 9]{bjorner1995topological} and its references.
We then extend the theory of subword complexes to positive Artin monoids (\Cref{sec:subword_complexes,,sec:root_configurations}), and introduce dual subword complexes (\Cref{sec:dual_subword_complexes}) and Coxeter initial subword complexes (\Cref{sec:initial_subword_complexes}).
We conclude with topological properties of Coxeter initial subword complexes (\Cref{sec:topologysubword}).

\section{Simplicial complexes}
\label{sec:simplicial}

A \defn{simplicial complex} with ground set $[m] = \{1,\ldots,m\}$ is a collection $\mathcal{C} \subset 2^{[m]}$ such that
\[
  A \subseteq B \in \mathcal{C} \Rightarrow A \in \mathcal{C}.
\]
The elements of $\mathcal{C}$ are called \defn{faces}, the containment-wise maximal faces are called \defn{facets}.  We regularly identify a simplicial complex with its set of facets. 

Faces of cardinality one are called \defn{vertices}, and we denote a vertex $\{i\}$ simply by~$i$.
Every face~$F \in \mathcal{C}$ together with all its subfaces $G \subseteq F$ again forms a simplicial complex $2^F$.
Simplicial complexes of this form are called \defn{simplices}.

The \defn{dimension} of a face $F$ is given by $\dim(F) \eqdef |F| -1$ and the dimension of $\mathcal{C}$ is the maximal dimension among its faces.
The complex $\mathcal{C}$ is \defn{pure} if all its facets have the same dimension, and it is called \defn{flag} if all its containment-wise minimal non-faces have cardinality two.
The \defn{one-skeleton} of a simplicial complex is its subcomplex consisting of its $0$- and $1$-dimensional faces.
A simplicial complex is flag if and only if it is the clique complex of its one-skeleton.

\subsection{Shellability and vertex-decomposability}

If $\mathcal{C}$ is pure, than it is said to be \defn{shellable} if there is a linear \defn{shelling order} $F_1,\ldots,F_k$ of its facets such that for $2 \leq i \leq k$, we have
\[
  F_i \cap \Big( \bigcup_{j < i} F_j \Big)
\]
is a nonempty union of facets of the boundary of the simplex~$2^{F_i}$.
A shellable simplicial complex is contractible or has the homotopy type of a wedge of spheres.

\medskip

Define the \defn{deletion} of a vertex~$i$ of $\mathcal{C}$ by
\[
  \del_i(\mathcal{C}) \eqdef \set{ B \in \mathcal{C}}{ i \notin B},
\]
and the \defn{link} of $i$ by
\[
  \link_i(\mathcal{C}) \eqdef \set{ B \in \mathcal{C}}{ i \notin B, B \cup \{i\} \in \mathcal{C}}.
\]
Finally, a pure complex $\mathcal{C}$ of dimension~$d$ is called \defn{vertex-decomposable} if either $\mathcal{C} = \{\emptyset\}$, or if there exists a vertex~$i \in\mathcal{C}$ such that
\begin{enumerate}[(i)]
  \item $\del_i(\mathcal{C})$ is pure of dimension~$d$ and again vertex-decomposable, and
  \item $\link_i(\mathcal{C})$ is pure of dimension~$d-1$ and again vertex-decomposable.
\end{enumerate}

The lexicographic order on facets of a vertex-decomposable complex (regarded as increasing tuples of vertices) induced by the vertex ordering arising from the decomposition is a shelling order.
We refer to~\cite{BW1996,BW1997} and the references therein for a detailed treatment of shellable and vertex-decomposable simplicial complexes.

\section{Subword complexes}
\label{sec:subword_complexes}

Subword complexes were introduced by A.~Knutson and E.~Miller in~\cite{KM2005,KM2004} in the context of Gr\"obner geometry.  
In this monograph, we consider the following generalization.


\begin{definition}
\label{def:subwordsS}
  Let $\Q=\s_1\cdots \s_p$ be an $\sref$-word, let $w \in W$, and let $a = \lengthS(w)+2g$ for some $g \geq 0$.
  The \defn{subword complex} $\subwordsS(\Q,w,a)$ is the simplicial complex with facets given by subsets of (positions of) letters in~$\Q$ whose complements form an $\sref$-word for~$w$ of length~$a$. We call $\Q$ the \defn{search word} and we write $\subwordsS(\Q,w)$ for $\subwordsS(\Q,w,\lengthR(w))$.
\end{definition}

 When $g=0$, $\subwordsS(\Q,w)$ recovers A.~Knutson and E.~Miller's original definition.   Subword complexes for $g=0$ were studied in connection to Catalan combinatorics in~\cite{SS2012,CLS2011,PS20112}, but related objects have appeared in a number of contexts, including in S.~Billey, W.~Jockush, and R.~Stanley's remarkable formula for Schubert polynomials~\cite{billey1993some} and in J.~Morse and A.~Schilling's crystal model for~$\mathfrak{sl}_n$~\cite{morse2016crystal}.

\medskip

When two search words are commutation equivalent, they evidently yield isomorphic complexes (see \cite[Proposition~3.8]{CLS2011}).\nathanside{Deal with it.}

\begin{proposition+}
\label{prop:isomorphicsubwordcomplexes1}
  Let $\Q \equiv \Q'$ be commutation equivalent $\sref$-words.
  Then there is a canonical isomorphism $\subwordsS(\Q,w,a) \cong \subwordsS(\Q',w,a)$.  This is given by the canonical identification of letters in $\Q$ and $\Q'$.  (To have this canonical identification, we forbid two copies of the same letter from commuting.)
\end{proposition+}

\begin{example}
  \label{ex:subwordA22-first}
  The facets of the subword complex $\subwordsS(\s\t\s\t\s,\wo)$ for $\WA[3]$ are given by
  \[
    \big\{ \{1,2\},\{2,3\},\{3,4\},\{4,5\},\{1,5\}\big\},
  \]
  as the complements of the subword in positions $i$ and $i+1$ for $i \in \{1,2,3,4\}$ give the reduced $\sref$-word $\s\t\s$, while the complement of the subword in positions $1$ and $5$ gives the reduced word $\t\s\t$.
  The first column of \Cref{fig:assocA22} on page~\pageref{fig:assocA22} lists the $12$ facets of the subword complex $\subwordsS(\s\t\s\t\s\t\s\t,e,6)$ for $\WA[3]$.
\end{example}

\subsection{Alternative possible definitions for subword complexes}
\label{sec:alternative_subword_complexes}

As we have seen in \Cref{ex:inversionsets}, an element of $\Artinmon$ is not uniquely determined by its colored inversion set, which suggests two further generalizations of \Cref{def:subwordsS} for a given search word $\Q$:
\begin{enumerate}[$\bullet$]
  \item For $\bw \in \Artinmon$, let $\subwordsSB(\Q,\bw)$ be the \defn{Artin subword complex} with facets given by subsets of (positions of) letters in~$\Q$ whose complements form an $\sref$-word for~$\bw$.
  \item For a set~$X$ of colored positive roots, let $\subwordsSC(\Q,X)$ be the \defn{inversion set subword complex} with facets given by (positions of) letters in~$\Q$ whose complement is a word for a $\bw \in \Artinmon$ with $\inv(\bw) = X$.
\end{enumerate}

We choose to work with \Cref{def:subwordsS} because central properties of subword complexes do not generally hold for the other definitions (\Cref{lem:flipsfromrootvector}, \Cref{prop:dualsubwordcomplexes}).  In the special circumstances of \Cref{sec:initial_subword_complexes}, all three will coincide.

\begin{proposition}
\label{prop:subwordcomplexesinjection}
  Let $\Q=\s_1\cdots \s_p$ be a search word and let $\bw \in \Artinmon$.
  There are injections
  \[
    \subwordsSB(\Q,\bw) \hookrightarrow \subwordsSC(\Q,\inv(\bw)) \hookrightarrow \subwordsS(\Q,w, \lengthS(\bw)),
  \]
  where $\inv(\bw)$ is the colored inversion set of~$\bw$ and $w$ is the projection of~$\bw$ in~$W$.
\end{proposition}
\christianside{They want to be the same, but they simply aren't in general.}

We will discuss in \Cref{sec:initial_subword_complexes} that these notions of subword complex coincide in the situations we consider in this monograph.

\begin{proof}[Proof of \Cref{prop:subwordcomplexesinjection}]
  The first injection is clear, as the facets of $\subwordsSC(\Q,\inv(\bw))$ are given by the disjoint union of the facets of all $\subwordsSB(\Q,\bw')$ such that $\inv(\bw') = \inv(\bw)$.
  On the other hand, this argument also shows that this injection is a bijection if and only if there is a unique $\bw \in \Artinmon$ with $\inv(\bw) = X$ for a given colored inversion set~$X$.

  The second embedding follows from \Cref{lem:coloredinversionsprojection}.
  In this case, one also has that the facets of $\subwordsS(\Q,w,a)$ for $a = \lengthS(\bw)$ are given by the disjoint union of the facets of all $\subwordsSC(\Q,X)$ such that the colored inversion set $X$ has cardinality $a$, contains the roots in $\inv(\bw)$ an odd number of times, and contains all other roots an even number of times (as described at the end of the proof of \Cref{lem:coloredinversionsprojection}).
\end{proof}

\begin{example}
  For $\WA[3]$, for any search word~$\Q$
  \[
    \subwordsSC(\Q,\{\alpha^{(0)},\alpha^{(1)},\beta^{(0)},\beta^{(1)}\}) = \subwordsSB(\Q,\bs\bs\bt\bt) \sqcup \subwordsSB(\Q,\bt\bt\bs\bs),
  \]
  where the disjoint union should be interpreted on the level of facets.
\end{example}

\section{Root configurations}
\label{sec:root_configurations}

\christianside{Skip this section if you know root configurations for subword complexes. It's still the same story.}\nathanside{Same good taste, but now with 50\% less sodium!}
Fix a subword complex $\subwordsS(\Q,w)$ with $\Q=\s_1 \cdots \s_p$. Following~\cite[Definition~3.2]{CLS2011} and~\cite[Definition~3.1]{PS20112}, we modify the colored inversion sequence of the $\sref$-word $\Q$ to account for the choice of a facet $I = \{ i_1 < \ldots < i_{p-a}\} \subseteq \{1,2,\ldots,p\}$.
The \defn{root vector} of~$I$ is the tuple of colored positive roots $\big(\Root{I}{1},\ldots,\Root{I}{p} \big)$ defined by
\[
  \Root{I}{i} \eqdef s_1\cdots\widehat s_{i_1}\cdots\widehat s_{i_k}\cdots s_{i-1}\big(\alpha_{s_i}^{(0)}\big),
\]
where the letters $\s_{i_1},\ldots,\s_{i_k}$ with $I \cap [i-1] = \{i_1 < \ldots < i_k\}$ are omitted.
The \defn{root configuration} of the facet~$I$ is the set
\begin{equation}
  \Roots{I} = \bigset{ \Root{I}{i} }{ i \in I }, \label{eq:root_configuration}
\end{equation}
which is totally ordered by~$I$ as a subset of positions of $\Q$.
The root configurations of the facets of the subword complex from \Cref{ex:subwordA22-first} are listed in the fourth column of \Cref{fig:assocA22} on page \pageref{fig:assocA22}.

\begin{lemma}
\label{lem:uncoloredrootvector}
  Let $\Root{I}{\cdot} = \big(\beta_1^{(m_1)},\ldots,\beta_p^{(m_p)}\big)$ be a root vector.
  \begin{enumerate}[(i)]
    \item\label{it:uncoloredrootvector1} The map $\Root{I}{\cdot} : [p]\setminus I \bij \inv( \s_1\cdots\widehat\s_{i_1}\cdots\widehat\s_{i_{p-a}}\cdots\s_{p} )$ is a bijection.
    Moreover, \Cref{lem:uncoloredinversions}\eqref{it:uncoloredinversions2} shows how to recover the colors $m_i$ for $i \notin I$.
    \item\label{it:uncoloredrootvector2} For $i \in I$, the color $m_i$ equals the number of indices $j < i$ with $j \notin I$ and $\beta_j = \beta_i$.
  \end{enumerate}
\end{lemma}

\begin{proof}
  Item~\eqref{it:uncoloredrootvector1} is a direct consequence of the definition.
  Item~\eqref{it:uncoloredrootvector2} follows from the observation that $\Root{I}{i}$ only depends on~$I \cap [i-1]$.
  This means that $\Root{I}{i}$ for $i \in I$ is given by $\beta_i^{(m_i)}$ with $\beta_i = | w(\alpha_{s_i}) | \in \PhiP$ for $w = s_1 \cdots \widehat s_{i_1} \cdots \widehat s_{i_k} \cdots s_{i-1} \in W$ and the color $m_i$ is equal to the number of occurrences of the root $\beta_i$ in position $j$ with $j<i$ and $j \not \in I$.
\end{proof}

\subsection{Flips}
\label{sub:flips}

We say that two facets~$I$ and~$J$ are \defn{adjacent} if $I \setminus \{i\} = J \setminus \{j\}$ for some positions $1\leq i,j \leq p$ with $i \neq j$.
If~$I$ and~$J$ are adjacent, the \defn{flip} from~$I$ to~$J$ is \defn{increasing} if $i<j$ and \defn{decreasing} if $i>j$.
The \defn{direction} of the flip is given by the root vector $\Root{I}{i}$.  

The root configuration captures the notions of increasing and decreasing flips in subword complexes, described for $g=0$ in~\cite{CLS2011,PS20112}.   Note that when $g=0$, each vertex can be flipped in only one way~\cite{CLS2011, PS20112}.  This is no longer true when $g>0$---it may be possible to find three distinct facets $I$, $J$, $K$ with
$I\setminus\{i\} = J\setminus \{j\}=K\setminus\{k\}$.

\begin{lemma}
\label{lem:rootupdate}
  Let $I$~and~$J$ be two adjacent facets of~$\subwordsS(\Q,w)$ with~$I \setminus i = J \setminus j$ and write $\beta=|\Root{I}{i}|$.
  Then $|\Root{J}{k}|=|s_{\beta}(\Root{I}{k})|$ for $\min(i,j) \le k \le \max(i,j)$ as uncolored roots, and $\Root{J}{k} = \Root{I}{k}$ otherwise.
\end{lemma}
\begin{proof}
  This is straightforward from the definition and from the description of colors in~\Cref{lem:uncoloredrootvector}.
\end{proof}

\begin{lemma}[{see also~\cite[Lemma~3.3(2)]{PS20112}}]
\label{lem:roots&flips}
  Let~$I$ be a facet of the subword complex~$\subwordsS(\Q,w,a)$ with $i \in I$, and write $\beta^{(k)}=\Root{I}{i}$.
  If $J$ is adjacent to~$I$ with $I \setminus \{i\} = J \setminus \{j\}$, then~$\Root{I}{j} = \beta^{(\ell)}$ for some color $\ell$.
  Furthermore, $i < j$ if and only if $k \leq \ell$.
\end{lemma}
\begin{proof}
  Assume that $i<j$, the case $i>j$ being completely analogous.
  Let $\s_1\dots\s_p$ be the word obtained from~$\Q$ by removing all letters in $I \cap J = I\setminus i = J\setminus j$ and let the two remaining letters from~$I$ and~$J$ in~$\s_1\dots\s_p$ be $\s_{i'}$ and $\s_{j'}$, respectively.
  Since $i<j$, we also have $i' < j'$.
  By definition, we have $p = a+1$ and
  \[
    w = s_1 \cdots \widehat s_{i'} \cdots s_p = s_1 \cdots \widehat s_{j'} \cdots s_p
  \]
  and obtain $s_{i'+1} \cdots s_{j'-1} s_{j'} = s_{i'} s_{i'+1} \cdots s_{j'-1}$.
  This gives $s_{i'}^\rho = s_{j'}$ for $\rho = s_{i'+1}\cdots s_{j'-1} \in W$, implying that $\alpha_{s_{i'}} = \rho(\alpha_{s_{j'}})$ and
  \[
    | \Root{I}{i} | = | s_1 \cdots s_{i'-1}(\alpha_{s_{i'}})| = | s_1 \cdots s_{i'-1} s_{i'+1} \cdots s_{j'-1}(\alpha_{s_{j'}}) | = | \Root{I}{j} |.
  \]
  The statement about the colors follows from \Cref{lem:uncoloredrootvector}\eqref{it:uncoloredrootvector2}.
\end{proof}

\begin{lemma}
\label{lem:flipsfromrootvector}
  Let~$I$ be a facet of the subword complex~$\subwordsS(\Q,w,a)$ and let $i \in I$ and $j \notin I$ such that $|\Root{I}{i}| = | \Root{I}{j} |$.
  Then $(I \setminus \{i\} ) \cup \{j\}$ is again a facet of $\subwordsS(\Q,w,a)$.
\end{lemma}

\begin{proof}
  The proof is the reverse of the argument in the proof of \Cref{lem:roots&flips}.
  Assume again that $i<j$, the case $i>j$ being completely analogous.
  Let $\s_1\cdots\s_p$ be the word obtained from~$\Q$ by removing all letters in $I\setminus i$, let $\s_{i'}$ be the remaining letter from~$I$ inside~$\s_1\dots\s_p$, and let $\s_{j'}$ with $i' < j'$ be the letter in $\Q$ corresponding to $j \notin I$.
  We then have $w = s_1\cdots \widehat s_{i} \cdots s_p$ and
  \[
    | s_1\cdots s_{i'-1}(\alpha_{s_{i'}}) | = |\Root{I}{i}| = | \Root{I}{j} | = | s_1\cdots \widehat s_{i'} \cdots s_{j'-1}(\alpha_{s_{j'}}) |.
  \]
  Thus, $s_{i'} = \rho s_{j'} \rho^{-1}$ for $\rho = s_{i'+1} \cdots s_{j'-1}$.
  This gives
  \begin{align*}
    w &= s_1 \cdots \widehat s_{i'} \cdots s_p \\
      &= s_1 \cdots s_{i'-1} s_{i'} s_{i'} s_{i'+1} \cdots s_p \\
      &= s_1 \cdots s_{i'} \rho s_{j'} \rho^{-1} \rho s_{j'} \cdots s_p \\
      &= s_1 \cdots \widehat s_{j'} \cdots s_p. \qedhere
  \end{align*}
\end{proof}

\begin{lemma}[{see also~\cite[Lemma~3.4 and Remark~3.5]{PS20112}}]
\label{lem:rootconfigurationdetermination}
  A facet~$I$ of the subword complex~$\subwordsS(\Q,w,a)$ is uniquely determined by its root configuration~$\Roots{I}$.
\end{lemma}
\begin{proof}
  The proof is completely analogous to the proof of \cite[Lemma~3.4]{PS20112}.
  Let $I$~and~$J$ be two distinct facets of~$\subwordsS(\Q,w,a)$, and let~$i$ be the smallest index which is in $I \cup J$ but not in $I \cap J$.
  Assume without loss of generality that $i \in I$.
  We then have that $\Root{I}{i} = \Root{J}{i}$.
  Also, \Cref{lem:uncoloredrootvector}\eqref{it:uncoloredrootvector2} shows that $\Root{J}{i} = \Root{J}{j}$ implies $j \leq i$.
  Consequently, $\Root{I}{i}$ appears at least once more in~$\Roots{I}$ than in~$\Roots{J}$.
\end{proof}

\begin{definition}
  The \defn{increasing flip graph} $\flipGraph$ of $\subwordsS(\Q,w,a)$ is the directed graph whose vertices are the facets of $\subwordsS(\Q,w,a)$ with directed edges given by increasing flips.  The \defn{increasing flip poset} is the transitive closure of the increasing flip graph.
\end{definition}

\Cref{fig:cambassocA22} on page~\pageref{fig:cambassocA22} illustrates the increasing flip poset in~$\WA[3]$ for the subword complex $\subwordsS(\s\t\s\t\s\t\s\t,e,6)$.
Increasing flips are drawn upwards.

\section{Dual subword complexes}
\label{sec:dual_subword_complexes}

\begin{definition}
\label{def:subwordsT}
  Let $\Q$ be an $\refl$-word, let $w \in W$, and let $a = \lengthR(w)+2g$ for some $g \geq 0$.
  The \defn{dual subword complex} $\subwordsR(\Q,w,a)$ is the simplicial complex with facets being all subsets of (positions of) letters in~$\Q$ that are an $\refl$-word for~$w$ of length~$a$.  We write $\subwordsR(\Q,w)$ for $\subwordsR(\Q,w,\lengthR(w))$.
\end{definition}\nathanside{These want to be defined in the dual braid group. They usually can't, unless they're of a very special form\dots}

The analogue of~\Cref{prop:isomorphicsubwordcomplexes1} evidently holds for dual subword complexes.

\subsection{Duality of subword complexes}

\Cref{def:subwordsS,def:subwordsT} are related by the following proposition, which associates to any subword complex an isomorphic dual subword complex.
In fact, both complexes coincide as abstract complexes on the ground set $\{1,\ldots,p\}$.

\begin{proposition}
\label{prop:dualsubwordcomplexes}
  Let $\Q = \s_1\cdots \s_p$ be an $\sref$-word, let $w \in W$, and let $a = \lengthS(w)+2g$ for some $g \geq 0$.
  Then
  \[
    \subwordsS(\Q,w,a) = \subwordsR(\invs_\refl(\Q),w',b),
  \]
  where $b = p-a$ and $w' = w s_p\cdots s_1 \in W$.
\end{proposition}
\begin{proof}
  This proof uses the same argument as the proofs of~\cite[Lemma~3.2]{IS2010} and of~\cite[Proposition~2.8]{CLS2011}.
  Let $\invs_\refl(\Q) = (\r_1,\ldots, \r_p)$ and let $I=\{i_1,\cdots,i_b\}$ with $1 \leq i_1 \leq \ldots \leq i_{b} \leq p$. 
  For each $j$ from $1$ to $b$, replace $\widehat{s}_{i_j}$ by $s_{i_j} s_{i_j}$ and move one copy to the left by conjugation.
  We obtain
  \[
    s_1 \cdots \widehat s_{i_1} \cdots \widehat s_{i_b} \cdots s_p = r_{i_1}\cdots r_{i_b} s_1\cdots s_p.
  \]
  Therefore, the set $\{i_1,\ldots,i_{b}\}$ is a facet of $\subwordsS(\Q,w,a)$ if and only if
  \[
    w = r_{i_1}\cdots r_{i_b} s_1\cdots s_p
  \]
  or, equivalently, $r_{i_1}\cdots r_{i_b} = w s_p \cdots s_1$.
  This is the case if and only if $\{i_1,\ldots,i_b\}$ is a facet of $\subwordsR(\invs_\refl(\Q),w',b)$.
\end{proof}

Even though subword complexes and their associated dual subword complexes are identical because we are working in the Coxeter group $W$, they provide two different viewpoints in $\Artingrp$.  This will be essential in the proof of~\Cref{thm:subwordcomplexesequality}.

\begin{example}
  \label{ex:subwordA22}
  As in \Cref{ex:subwordA22-first}, let $\Q = \s\t\s\t\s\t\s\t$ be an $\sref$-word with $\invs_\refl(\Q) = \s\u\t\s\u\t\s\u$.
  The~$12$ facets of $\subwordsR(\invs_\refl(\Q),su)$ are listed in the second column of \Cref{fig:assocA22} on page~\pageref{fig:assocA22}.
\end{example} 

\section{Coxeter initial subword complexes}
\label{sec:initial_subword_complexes}

We now turn to a class of subword complexes that behave particularly nicely, and which play a central role in the $m$-eralized noncrossing theory.

\begin{theorem}
\label{thm:subwordcomplexesequality}
  Let~$c$ be a Coxeter element with $\sref$-word $\c=\s_1\dots \s_n$, and let $\Q = \s_1\dots \s_n \s_{n+1} \dots\s_p$ be initial in $\c^\infty$.
  Set $\bw = \bs_{n+1}\dots \bs_p \in \Artinmon$ and its projection $w = s_{n+1}\dots s_p \in W$.
  Then
  \[
    \subwordsSB(\Q,\bw) = \subwordsSC(\Q,\inv(\bw)) = \subwordsS(\Q,w,p-n). \qedhere
  \]
\end{theorem}\nathanside{\dots and in that special case it doesn't matter which one you use!  (Learn about the dual braid monoid before reading the proof.)}

We call subword complexes of the form given in this theorem \defn{$c$-initial subword complexes}.

\medskip

We use the relationship between subword and dual subword complexes given in~\Cref{prop:dualsubwordcomplexes} to prove this theorem.
The core is a lift of~\Cref{prop:dualsubwordcomplexes} to the Artin group $\Artingrp$.
This uses a certain embedding of the reflections $\refl \subseteq W$ into~$\Artingrp$ which depends on the choice of a Coxeter element~$c$.
This construction is essentially due to D.~Bessis~\cite{Bes2003}, and we emphasize that this embedding is \emph{not} the injection $\refl \subseteq W \hookrightarrow \Artinmon \subseteq \Artingrp$ defined in~\eqref{eq:artininjection}.

\medskip

Let $\Q = \s_1\dots\s_p$ be initial in $\c^\infty$ and set
\begin{equation}
  \label{eq:braid_reflections}
  \br_i \eqdef \bs_1\dots\bs_{i-1}\bs_i\bs_{i-1}^{-1}\dots\bs_1^{-1} \in \Artingrp
\end{equation}
using the identification between $\sref$ and $\asref$.

\begin{proposition}
\label{prop:braidequality}
  Let~$c = s_1\dots s_n$ be a Coxeter element with $\sref$-word $\c$ and let $\bc = \bs_1\dots\bs_n \in \Artingrp$.
  Let $\Q = \s_1\dots\s_p$ be a word initial in $\c^\infty$ starting with $\s_1\dots\s_n$, and let $\invs_\refl(\Q) = \r_1\dots\r_p$ be the inversion sequence of~$\Q$.
  Let $1 \leq i_1 < \dots < i_n \leq p$ such that $r_{i_n}\dots r_{i_1} = c \in W$.
  Then
  \begin{equation}
    \br_{i_n}\dots \br_{i_1} = \bc \in \Artingrp.
    \label{eq:coxeterinartin}
  \end{equation}
  Equivalently,
  \[
    \bs_1 \dots \widehat \bs_{i_1} \dots \widehat \bs_{i_n} \dots \bs_p = \bc^{-1} \bs_1\dots\bs_p \in \Artinmon.
  \]
\end{proposition}

\begin{proof}
  This proposition follows for bipartite Coxeter elements from~\cite[Theorem~2.2.5]{Bes2003}, we only provide a sketch of the argument here.
  Assume the Coxeter element~$c$ to be bipartite.
  D.~Bessis shows that the elements
  \[
    \arefl = \big\{ \bc^k \bs \bc^{-k} \mid k \in \mathbb{Z}, \bs \in \asref\big\} \subset \Artingrp
  \]
  are in bijection with $\refl \subseteq W$ and that these satisfy the dual braid relations as defined in \Cref{sec.dual_braid_rel}.
  Since~$c$ is bipartite and $\Q$ is initial in~$\c^\infty$, we have $\{ \br_1,\dots,\br_p \} \subseteq \arefl$ for $\br_i$ as defined in~\eqref{eq:braid_reflections}, implying that these also satisfy the dual braid relations.

  We now deduce~\eqref{eq:coxeterinartin} for bipartite Coxeter elements by recalling that the Hurwitz action is transitive on $\reds(c)$. Therefore
  $r_{i_n}\dots r_{i_1}$ and $s_1\dots s_n$ are related by dual braid relations, implying that $\br_{i_n}\dots \br_{i_1}$ and $\bs_1\dots\bs_n$
  are also related by dual braid relations.
  As these relations are satisfied in~$\Artingrp$, we obtain that $\br_{i_n}\dots \br_{i_1} = \bc$, as desired.
  We finally compute
  \[
    \bs_1 \dots \widehat \bs_{i_1} \dots \widehat \bs_{i_n} \dots \bs_p
    = \br_{i_1}^{-1}\dots \br_{i_n}^{-1} \bs_1\dots\bs_p = \bc^{-1} \bs_1\dots\bs_p.
  \]

  The proposition for general Coxeter elements now follows from the claim that if the proposition holds for the Coxeter element~$c$ than it also holds for the Coxeter element $\coxrn$ because every Coxeter element can be obtained from any other by such a procedure.

  We obtain this reduction as follows.
  Let $\Q$ be an $\sref$-word and let $\widehat\Q$ be the word obtained from $\Q$ by removing the first letter.
  Let~$c$ be a Coxeter element with word $\c=\s_1\dots \s_n$.
  \Cref{lem:reflection_order}\eqref{it:reflection_order7} then implies that $\Q$ is initial in $\c^\infty$ if and only if $\widehat\Q$ is initial in $\coxr^\infty$, so we assume both to be initial.

  Given indices $2 \leq i_1 < \dots < i_n \leq p$ such that $r_{i_n}\dots r_{i_1} = c \in W$.
  If the conclusion holds for the Coxeter element~$c$, then
  $
    \br_{i_n}\dots\br_{i_1} = \bc \in \Artingrp.
  $
  We obtain that
  \[
    \br_{i_n}^{\bs_1}\dots \br_{i_1}^{\bs_1} = \bs_1^{-1}\br_{i_n}\dots\br_{i_1}\bs_1 = \bs_1^{-1}\bc\bs_1 = \bs_2\dots\bs_n\bs_1 \in \Artingrp.
  \]
  This implies the conclusion also for the Coxeter element~$\coxrn$.
\end{proof}

\begin{proof}[Proof of \Cref{thm:subwordcomplexesequality}]
  This is a direct consequence of \Cref{prop:dualsubwordcomplexes} using \Cref{prop:braidequality}.
\end{proof}

\begin{example}
\label{ex:initialsubwordcomplex}
  Let $c = st \in \WA[3]$ be a Coxeter element and consider the word $\Q = \s\t\s\t\s\t$ with reflection sequence $\invs_\refl(\Q) = \s\u\t\s\u\t$.
  \christianside{This example is worth going through to avoid confusion later.}
  Then
  \begin{align*}
    \subwordsR(\invs_\refl(\Q),c^{-1}) &= \{ 12, 15, 23, 26, 34, 45, 56 \} \\
    \subwordsR(\invs_\refl(\Q),c\hspace*{11pt} ) &= \{ 13, 16, 24, 35, 46 \}.
  \end{align*}
  The first is a $c$-initial subword complex and we have
  \begin{align*}
    \subwordsR(\invs_\refl(\Q),c^{-1})
      &= \subwordsS(\Q,c^{-1},4) \\
      &= \subwordsSC(\Q,\{\alpha^{(0)},\gamma^{(0)},\beta^{(0)},\alpha^{(1)}\}) \\
      &= \subwordsSB(\Q,\bs\bt\bs\bt).
  \end{align*}
  On the other hand, $\subwordsR(\invs_\refl(\Q),c) = \subwordsS(\Q,c,4)$ is \emph{not} $c$-initial and does not coincide with the corresponding Artin subword complex.
\end{example}

\section{Topology of Coxeter initial subword complexes}
\label{sec:topologysubword}

When $g=0$, A.~Knutson and E.~Miller showed in \cite[Theorem~2.5 \& Corollary~3.8]{KM2004} that subword complexes are vertex-decomposable spheres or balls.  Subword complexes are generally neither spheres nor balls for $g>0$, and are not even necessarily vertex-decomposable. 

\begin{example}
\label{ex:artinsubwordcomplex}
  Consider the word $\Q = \t\s\s\t\t\s$ for $\WA[3]$ and the element $\bw = \bs\bs\bt\bs = \bs\bt\bs\bt = \bt\bs\bt\bt \in \BA[3]$.
  Then the subword complex is
  \begin{align*}
    \subwordsS(\Q,w,4) &= \{ 14,15,23,26,36,45 \},
    \intertext{while the Artin subword complex is}
    \subwordsSB(\Q,\bw) &= \{ 14,15,26,36 \}.
  \end{align*}
  Since the only words with colored inversion set $\inv(\bw) = \{ \alpha^{(0)},\alpha^{(1)},\beta^{(0)},\gamma^{(0)}\}$ are reduced $\sref$-words for $\bw$, we therefore have $\subwordsSB(\Q,\bw) = \subwordsSC(\Q,\inv(\bw))$.
  These simplicial complexes are disconnected but of positive dimension, and are therefore not vertex-decomposable.
\end{example}

\begin{theorem}
\label{thm:vertex-decomposability}
  Let~$c$ be a Coxeter element with $\sref$-word $\c=\s_1\dots \s_n$, and let $\Q = \s_1\dots \s_n \s_{n+1} \dots\s_p$ be initial in $\c^\infty$.
  Set $w = s_{n+1}\dots s_p \in W$.
  Then the Coxeter initial subword complexes $\subwordsS(\Q,w,p-n)$ is vertex-decomposable.
  Moreover, the lexicographic order on the facets is a shelling order.
\end{theorem}
\christianside{You see---these behave like subword complexes always did.}
\nathanside{word.}

\begin{proof}
  We show that both the link and the deletion of the first vertex are vertex-decomposable subword complexes by simultaneous induction on the rank of~$W$ and on the length of~$\Q$.
  Without loss of generality, we may assume that $s_{n+1} = s_1$.
  The base case of the empty word $\Q$ is clear, so we may assume $p > n$.

  We first address the link.
  The facets of the link of the first vertex of the complex $\subwordsS(\Q,w,p-n)$ are given by
  \[
    \bigset{ I \setminus 1 }{ I \text{ a facet of } \subwordsS(\Q,w,p-n) \text{ with } 1 \in I }.
  \]
  Let $\widehat\Q$ be the restriction of $\Q$ to those letters $\s_i$ with ${}^{s_1 \cdots s_{i-1}}s_i \in W_{\langle s \rangle}$.  Then it is clear that
  \[
    \subwordsR(\Q, s_n\cdots s_2) \cong \subwordsR(\widehat\Q, s_n\cdots s_2).
  \]
  This complex is vertex-decomposable by induction on the rank of~$W$.

  We next consider the deletion.
  Let~$I$ be a facet of $\subwordsS(\Q,w,p-n)$ with $1 \in I$, so that $I \setminus 1$ is a face of the deletion of the vertex~$1$.
  We show that there is a facet~$J$ of $\subwordsS(\Q,w,p-n)$ with $J \setminus j = I \setminus 1$ and $j > 1$.
  Observe that $\Root{I}{1} = \alpha_{s_1}^{(0)}$.
  By \Cref{lem:roots&flips}, we thus have to show that $\alpha_{s_1}$ appears in the inversion sequence of $\Q$ with the letters in positions~$I$ deleted.
  This follows from \Cref{thm:subwordcomplexesequality} because the inversion sequence of $\Q$ with the letters in positions~$I$ deleted is the same as the inversion sequence of $\s_{n+1}\dots\s_p$.
  By construction, $s_{n+1} = s_1$ and thus $\invs_\refl(\s_{n+1}\dots\s_p)$ starts with $\alpha_{s_1}$.
\end{proof}

\begin{corollary}
\label{cor:wedgeofspheres}

  Let~$c$ be a Coxeter element with $\sref$-word $\c=\s_1\dots \s_n$, and let $\Q = \s_1\dots \s_n \s_{n+1} \dots\s_p$ be initial in $\c^\infty$.
  Set $w = s_{n+1}\dots s_p \in W$.
  Let $\widehat\Q$ of length~$q$ be obtained from $\Q$ by removing the longest initial subword that is also initial in $\cwo$.
  Then $\subwordsS(\Q,w,p-n)$ has the homotopy type of a wedge of~$k$ spheres where~$k$ is the number of facets of $\subwordsS(\widehat\Q,w,q-n)$.
\end{corollary}

Observe in this corollary, that $\cwo$ is initial in $\c^\infty$ by \Cref{lem:reflection_order}\eqref{it:reflection_order5}.
Therefore, $\widehat\Q$ is obtained from $\Q$ by removing the intersection of $\cwo$ and $\Q$ as initial segments of $\c^\infty$.

\begin{proof}
  Because the complex is vertex-decomposable and every facet contains~$n$ positions, it has the homotopy type of a wedge of $(n-1)$-dimensional spheres.
  We count the number of such spheres using the technique of \emph{homology facets}, developed by A.~Bj{\"o}rner and M.~Wachs in~\cite{BW1996} (see also~\cite[Theorem~3.1.3]{Wac2007}).
  The number of spheres is given by the number of facets whose entire boundary is contained in the union of the earlier facets in the lexicographic shelling order of \Cref{thm:vertex-decomposability}.

  We show that the homology facets of $\subwordsS(\Q,w,p-n)$ are exactly those facets that do not contain a position corresponding to one of the letters in the initial copy of $\cwo$.
  These facets are in canonical bijection with facets of $\subwordsS(\widehat\Q,w,q-n)$.

  We first consider the case when~$I$ is a facet of $\subwordsS(\Q,w,p-n)$ that does not contain a position corresponding to a letter of the initial $\cwo$.
  Then $k>0$ for $\beta^{(k)} = \Root{I}{i}$ for any $i \in I$.
  Since the root vectors in this initial copy of $\cwo$ are the positive zero-colored roots, we can use \Cref{lem:flipsfromrootvector} to flip any position $i \in I$ into a letter in this initial copy of $\cwo$ to obtain another facet~$J$ with $I \setminus i = J \setminus j$ for some position~$j$.
  Since $j < i$ by construction, the obtained facet~$J$ is lexicographically smaller and contains the boundary face $I \setminus i$ of~$I$.
  The facet~$I$ is therefore a homology facet.

  Suppose now that~$I$ is a facet containing a position~$i$ of a letter in this initial $\cwo$.
  The root vector~$\beta^{(k)} = \Root{I}{i}$ is now colored by $k=0$, and the position~$i \in I$ cannot be flipped to a lexicographically smaller facet.
  The boundary face $I \setminus i$ is therefore not contained in any lexicographically smaller facet, and~$I$ is thus not a homology facet.
\end{proof}

\chapter{Noncrossing partitions}
\label{sec:noncrossing_partitions}

In this chapter, we study $m$-eralized noncrossing partitions.
We begin by reviewing known constructions (\Cref{sec:classicalnc,sec:ncm_chains_deltas_subwords}).
Detailed background and historical information can be found in~\cite{Arm2006}.
We then describe the theory in terms of dual subword complexes and conclude with a definition of the Cambrian recurrence and Cambrian poset on $m$-eralized noncrossing partitions (\Cref{sec:colored_facts}).

\section{Classical noncrossing partitions}
\label{sec:classicalnc}

A set partition of $\{1,\ldots,n\}$ is called \defn{noncrossing} if the convex hulls of its blocks do not overlap when drawn on a circle.\nathanside{This sentence is clarified by the picture, until you notice that we haven't drawn convex hulls.}  G.~Kreweras introduced and studied the lattice of noncrossing partitions ordered by containment in~\cite{Kre1972}---the noncrossing partition lattice of $\{1,2,3,4\}$ is illustrated below.

\begin{center}
  \resizebox{\textwidth}{!}{
  \begin{tikzpicture}
    \tikzstyle{rect}=[rectangle,draw,opacity=.5,fill opacity=1];

    \polygon{( 0,0)}{e}{4}{0.5}{1.4}{$ $,$ $,$ $,$ $};

    \polygon{(-5,3)}{12}{4}{0.5}{1.4}{$ $,$ $,$ $,$ $};
      \draw[fill=black,fill opacity=0.1] (121) to[bend right=30] (122) to[bend left=90, looseness=1.5] (121);
    \polygon{( 1,3)}{13}{4}{0.5}{1.4}{$ $,$ $,$ $,$ $};
      \draw[fill=black,fill opacity=0.1] (131) to[bend right=30] (133) to[bend right=30] (131);
    \polygon{(-1,3)}{34}{4}{0.5}{1.4}{$ $,$ $,$ $,$ $};
      \draw[fill=black,fill opacity=0.1] (343) to[bend right=30] (344) to[bend left=90, looseness=1.5] (343);
    \polygon{( -3,3)}{23}{4}{0.5}{1.4}{$ $,$ $,$ $,$ $};
      \draw[fill=black,fill opacity=0.1] (232) to[bend right=30] (233) to[bend left=90, looseness=1.5] (232);
    \polygon{( 3,3)}{24}{4}{0.5}{1.4}{$ $,$ $,$ $,$ $};
      \draw[fill=black,fill opacity=0.1] (242) to[bend right=30] (244) to[bend right=30] (242);
    \polygon{( 5,3)}{14}{4}{0.5}{1.4}{$ $,$ $,$ $,$ $};
      \draw[fill=black,fill opacity=0.1] (144) to[bend right=30] (141) to[bend left=90, looseness=1.5] (144);

    \polygon{(-5,6)}{12-34}{4}{0.5}{1.4}{$ $,$ $,$ $,$ $};
      \draw[fill=black,fill opacity=0.1] (12-341) to[bend right=30] (12-342) to[bend left=90, looseness=1.5] (12-341);
      \draw[fill=black,fill opacity=0.1] (12-343) to[bend right=30] (12-344) to[bend left=90, looseness=1.5] (12-343);
    \polygon{(-3,6)}{123}{4}{0.5}{1.4}{$ $,$ $,$ $,$ $};
      \draw[fill=black,fill opacity=0.1] (1231) to[bend right=30] (1232) to[bend right=30] (1233) to[bend left=30] (1231);
    \polygon{( 1,6)}{134}{4}{0.5}{1.4}{$ $,$ $,$ $,$ $};
      \draw[fill=black,fill opacity=0.1] (1343) to[bend right=30] (1344) to[bend right=30] (1341) to[bend left=30] (1343);
    \polygon{(-1,6)}{234}{4}{0.5}{1.4}{$ $,$ $,$ $,$ $};
      \draw[fill=black,fill opacity=0.1] (2342) to[bend right=30] (2343) to[bend right=30] (2344) to[bend left=30] (2342);
    \polygon{( 3,6)}{124}{4}{0.5}{1.4}{$ $,$ $,$ $,$ $};
      \draw[fill=black,fill opacity=0.1] (1244) to[bend right=30] (1241) to[bend right=30] (1242) to[bend left=30] (1244);
    \polygon{( 5,6)}{14-23}{4}{0.5}{1.4}{$ $,$ $,$ $,$ $};
      \draw[fill=black,fill opacity=0.1] (14-232) to[bend right=30] (14-233) to[bend left=90, looseness=1.5] (14-232);
      \draw[fill=black,fill opacity=0.1] (14-234) to[bend right=30] (14-231) to[bend left=90, looseness=1.5] (14-234);

    \polygon{( 0,9)}{1234}{4}{0.5}{1.4}{$ $,$ $,$ $,$ $};
    \draw[fill=black,fill opacity=0.1] (12341) to[bend right=30] (12342) to[bend right=30] (12343) to[bend right=30] (12344) to[bend right=30] (12341);

    \draw [black] (e) to (12);
    \draw [black] (e) to (13);
    \draw [black] (e) to (14);
    \draw [black] (e) to (23);
    \draw [black] (e) to (24);
    \draw [black] (e) to (34);

    \draw [black] (12) to (123);
    \draw [black] (12) to (124);
    \draw [black] (12) to (12-34);

    \draw [black] (13) to (123);
    \draw [black] (13) to (134);

    \draw [black] (24) to (124);
    \draw [black] (24) to (234);

    \draw [black] (14) to (124);
    \draw [black] (14) to (134);
    \draw [black] (14) to (14-23);

    \draw [black] (23) to (123);
    \draw [black] (23) to (234);
    \draw [black] (23) to (14-23);

    \draw [black] (34) to (134);
    \draw [black] (34) to (234);
    \draw [black] (34) to (12-34);

    \draw [black] (123) to (1234);
    \draw [black] (124) to (1234);
    \draw [black] (134) to (1234);
    \draw [black] (234) to (1234);
    \draw [black] (12-34) to (1234);
    \draw [black] (14-23) to (1234);
  \end{tikzpicture}
  }
\end{center}

\subsection{Noncrossing partitions for Coxeter groups}

In~\cite{Rei1997}, V.~Reiner interpreted the noncrossing partition lattice as a type~$A$ phenomenon using the identification of the lattice of set partitions with the intersection lattice of the type~$A_{n-1}$ reflection arrangement.
Together with C.~Athanasiadis, he extended this notion to types~$B_n$ and~$D_n$ in~\cite{Rei1997,AR2004}.
Noncrossing partitions were subsequently defined in full generality for finite Coxeter systems independently by T.~Brady and C.~Watt in~\cite{BW2002} and by D.~Bessis in~\cite{Bes2003}.

Let $c \in W$ be a Coxeter element.
An element $w \in W$ is a \defn{$c$-noncrossing partition} if $w \leqref c$, and we denote the set of all $c$-noncrossing partitions by $\NC(W,c)$.  The following theorem was proven independently by different methods by D.~Bessis~\cite[Fact~2.3.1]{Bes2003}, T.~Brady and C.~Watt~\cite[Theorem~7.8]{BW2008}, C.~Ingalls and H.~Thomas~\cite[Theorem 4.2]{IT2009}, and N.~Reading~\cite[Corollary 8.6]{Rea2011}.

\begin{theorem+}
  The $c$-noncrossing partitions are a lattice under absolute order.
\end{theorem+}

We denote this \defn{$c$-noncrossing partition lattice} by
\[
  \NCL(W,c)\eqdef[\one,c]_{\Abs(W)}.
\]
In type~$A_{n-1}$ with $c = (1,2,\ldots,n) \in \WA[n]$, the original lattice of noncrossing set partitions of $\{1,\ldots,n\}$ is recovered by sending a permutation to the set partition given by its cycles.
\Cref{fig:ncA3} gives an example for $c = (1,2,3,4) \in \WA[4]$.

\medskip

Noncrossing partitions have the following useful characterization, which appeared in a slightly different form in~\cite[Lemma~4.8]{BW2008}.

\begin{proposition}
\label{prop:ncpairs}
  Let $r_1,\ldots,r_k \in \refl$.
  Then
  \[
    \begin{array}{c}
      r_1 \cdots r_k \leqref c \text{ with }\lengthR(r_1 \cdots r_k) = k \\
      \Leftrightarrow  \\
      r_1,\dots,r_k \text{ are pairwise distinct and } r_a r_b \leqref c
      \text{ for all } 1 \leq a < b \leq k. 
    \end{array}
  \]
\end{proposition}
\nathanside{Go back and read the lemma about fixed and moved spaces.}

For clarity of the argument, we extract the following lemma before proving~\Cref{prop:ncpairs}.

\begin{lemma}
\label{lem:nclen2}
  Let $r_1,r_2 \in \refl$ with $r_1 \neq r_2$.
  Let $\beta_1,\beta_2 \in \PhiP$ such that $r_1 = s_{\beta_1}$ and $r_2 = s_{\beta_2}$ and let~$\mu$ be a generator of $\fixd(r_1 c)$ such that $\langle \beta_1, \mu\rangle = 1$.
  Then
  \[
    r_1 r_2 \leqref c \Leftrightarrow \langle \mu, \beta_2 \rangle = 0.
  \]
\end{lemma}
\christianside{Geometry is the key.}
\nathanside{Thanks, Brady and Watt!}
\begin{proof}
  By \Cref{lem:refllenprops}\eqref{it:reflenprops1}, $\fixd(r_1 c)=\movd(r_1 c)^\perp$ is one-dimensional.  By \Cref{lem:refllenprops}\eqref{it:reflenprops5}, $\beta_1$ is not perpendicular to $\fixd(r_1 c)$, so that $\mu$ is well-defined:
\begin{align*}
  r_1 \not \leqref r_1c\ \Rightarrow\ \movd(r_1) \not \subseteq \movd(r_1c) \ \Rightarrow\  \fixd(r_1c) \not \subseteq \fixd(r_1)\ \Rightarrow\ \beta_1 \not \perp \fixd(r_1c).
\end{align*}
  Then
  \begin{align*}
    r_1 r_2 \leqref c \ \Leftrightarrow\  r_2 \leqref r_1 c
                      \ \Leftrightarrow\  \movd(r_2) \subseteq \movd(r_1 c)
                      \ \Leftrightarrow\  \langle \beta_2, \mu \rangle = 0,
  \end{align*}
  as desired.
  The first implication follows from \Cref{lem:refllenprops}\eqref{it:reflenprops4}, the second from~\eqref{it:reflenprops2} and~\eqref{it:reflenprops5}, and the third from the definitions of~$\beta_2$ and~$\mu$.
\end{proof}

\begin{proof}[Proof of \Cref{prop:ncpairs}]
  The forward implication is clear: the distinctness of the reflections follows from the fact that $\lengthR(r_1\dots r_k)=k$, and for any $1 \leq a < b \leq k$, the pair $r_a r_b$ can be moved to the left by conjugation to reveal that $r_a r_b \leqref r_1 \cdots r_k \leqref c$.  

  The reverse implication is more delicate.
  As in~\Cref{lem:nclen2}, define $\beta_a \in \PhiP$ such that $r_a = r_{\beta_a}$ and let $\mu_a$ be the generator of $\fixd(r_a c)$ such that $\langle \beta_a, \mu_a \rangle = 1$.  (As above, $\mu_a$ is well-defined because $\beta_a$ is not perpendicular to $\fixd(r_a c)$.)

  Consider the $(k \times k)$-matrix $\langle \alpha_a, \mu_b \rangle_{1 \leq a,b \leq k}$.
  By \Cref{lem:nclen2} and the assumption that $r_a r_b \leqref c$ with $r_a \neq r_b$ for all $1 \leq a < b \leq k$, this matrix is upper-triangular with ones on the main diagonal.
  Therefore, $\{\mu_1,\ldots,\mu_k\}$ are linearly independent.

  We complete the proof by showing by induction that $r_1 \cdots r_{k'} \leqref c$ for $1 \leq k' \leq k$.
  The case $k'=1$ is trivial.
  Assume that $k' \geq 2$.
  By induction, for $1 \leq a < k'$, we have
  \begin{align*}
    r_a \leqref r_1 \cdots r_{k'-1} \leqref c
      &\Rightarrow r_a c \geqref r_{k'-1}\cdots r_1 c \\
      &\Rightarrow \fixd(r_a c) \subseteq \fixd(r_{k'-1}\cdots r_i c) \\
      &\Rightarrow \mu_a \in \fixd(r_{k'-1}\cdots r_1 c).
  \end{align*}
  Now $\{\mu_1,\ldots,\mu_{k-1}\}$ are linearly independent by the calculation above and it follows from the inductive hypothesis that $\dim\fixd(r_{k'-1}\cdots r_1 c) = k'-1$. Therefore, $\{\mu_1,\ldots,\mu_{k'-1}\}$ is a basis of $\fixd(r_{k'-1}\cdots r_1 c)$.
  As $\beta_{k'}$ is orthogonal to $\mu_a$ for $a < k'$, we have that $\beta_{k'} \in \movd(r_{k'-1}\cdots r_1 c)$.
  This proves that $r_{k'} \leqref r_{k'-1}\cdots r_1 c$, so that $r_1 \cdots r_{k'} \leqref c$.
\end{proof}

\subsection{The Kreweras complement}

The \defn{Kreweras complement} is the bijection on noncrossing partitions
\begin{equation}
  \begin{aligned}
    \Krew: \NC(W,c) &\to \NC(W,c) \\
    w        &\mapsto c w^{-1}.
  \end{aligned}
  \label{eq:krewerasclassical}
\end{equation}
  This operation is an anti-auto\-morphism of the lattice $\NCL(W,c)$---if $w \leqref wr$ for some $r \in \refl$, then $\Krew(wr)=\Krew(w) \cdot {}^w r \leqref \Krew(w)$.

\medskip

In the graphical description of noncrossing partitions, the Kreweras complement sends a noncrossing partition to the coarsest ``complementary'' noncrossing partition, as indicated below.  Its square $\Krew^2(w) = {}^c w$ is conjugation by~$c$, which corresponds to the cyclic rotation $i \mapsto i+1$.  

\begin{figure}[h]
  \begin{center}
  \begin{tikzpicture}
    \polygon{(-4,0)}{A}{16}{1.3}{4}{1,,2,,3,,4,,5,,6,,7,,8,};
    \draw[fill=black,fill opacity=0.1] (A1) to[bend right=30] (A3) to[bend right=30] (A9) to[bend left=10] (A1);
    \draw[fill=black,fill opacity=0.1] (A11) to[bend right=0] (A15) to[bend left=15] (A11);

    \polygon{( 0,0)}{B}{16}{1.3}{4}{1,$ $,2,$ $,3,$ $,4,$ $,5,$ $,6,$ $,7,$ $,8,$ $};
    \draw[fill=black,fill opacity=0.1] (B1) to[bend right=30] (B3) to[bend right=30] (B9) to[bend left=10] (B1);
    \draw[fill=black,fill opacity=0.1] (B11) to[bend left=2] (B15) to[bend left=15] (B11);

    \draw[fill=black,opacity=0.5,fill opacity=0.05] (B4) to[bend right=30] (B6) to[bend right=30] (B8) to[bend left=30] (B4);
    \draw[fill=black,opacity=0.5,fill opacity=0.05] (B10) to[bend left=10] (B16) to[bend left=15] (B10);
    \draw[fill=black,opacity=0.5,fill opacity=0.05] (B12) to[bend right=2] (B14) to[bend left=25] (B12);

    \polygon{( 4,0)}{C}{16}{1.3}{4}{,2,,3,,4,,5,,6,,7,,8,,1};
    \draw[fill=black,fill opacity=0.2] (C4) to[bend right=30] (C6) to[bend right=30] (C8) to[bend left=30] (C4);
    \draw[fill=black,fill opacity=0.2] (C10) to[bend left=10] (C16) to[bend left=15] (C10);
    \draw[fill=black,fill opacity=0.2] (C12) to[bend right=2] (C14) to[bend left=25] (C12);

    \draw[|->] (A.south east) to[bend right=25] (C.south west);
    \node at (0,-2.3) {$\Krew[(12345678)]$};
  \end{tikzpicture}
  \end{center}
\label{fig:graphicalkreweras}
\end{figure}

\subsection{Noncrossing partitions as dual subword complexes}

 C.~Atha\-na\-siadis, T.~Brady, and C.~Watt used the edge labelling of the noncrossing partition lattice by reflections to prove that $\NCL(W,c)$ is \emph{EL-shellable}~\cite{ABW2007}.
 This edge labelling gives a unique factorization of each element $w \in \NC(W,c)$ into reflections that increase with respect to the reflection order $\leq_{\c}$, as shown in~\cite[Theorem 3.5]{ABW2007}.
Such factorizations were also considered by N.~Reading in~\cite[Remark~6.8]{Rea20072}.
\begin{proposition+}
\label{prop:nc_unique_factorization}
  Each $w \in \NC(W,c)$ has a unique reduced $\refl$-word $\r_1 \r_2 \cdots \r_p$ with $r_1<_{\c} r_2<_{\c} < \cdots <_{\c} r_p$.
\end{proposition+}

Since $\Krew(w) \cdot w = c$, by combining a noncrossing partition with its Kreweras complement and invoking the unique factorization of \Cref{prop:nc_unique_factorization}, we obtain a refined description of noncrossing partitions considered in~\cite{Arm2006,Tza2008,BRT2012}.
We $m$-eralize this description in \Cref{def:nc_fuss_subwords} below.

\begin{proposition}
\label{prop:nc_subwords}
  There is a canonical bijection between $c$-non\-crossing partitions $\NC(W,c)$ and facets of the dual subword complex
  \begin{align*}
    \DeltaNC(W,c) \eqdef \subwordsR\big(\invs_\refl(\c^h), c\big),
  \end{align*}
  for any reduced $\sref$-word $\c$ for~$c$.
  \nathanside{Having access to individual reflections is going to come in handy.}
\end{proposition}

The construction of $\DeltaNC(W,c)$ depends on the chosen reduced word $\c$ for~$c$, but the complexes for all possible choices are canonically isomorphic by \Cref{lem:reflection_order}\eqref{it:reflection_order1} and \Cref{prop:isomorphicsubwordcomplexes1}.
We therefore prefer to attach this complex to the Coxeter element~$c$ itself rather than to any specific reduced word.

\begin{proof}[Proof of \Cref{prop:nc_subwords}]
  The map $w \mapsto (\Krew(w),w)$ is a bijection from $\NC(W,c)$ to the set of pairs $(\delta_0,\delta_1)$ such that $\delta_0, \delta_1 \in \NC(W,c)$, $\delta_0 \delta_1 = c$, and $\lengthR(\delta_0)+\lengthR(\delta_1)=\lengthR(c)$.
  Let $(\delta_0(c),\delta_1(c))$ be the pair obtained by applying the unique factorizations from \Cref{prop:nc_unique_factorization} to each of $\delta_0$ and $\delta_1$, each specifying certain positions in $\invs_\refl(\cwo)$.
  By construction, this yields a bijection to facets of a dual subword complex
  \[
    \NC(W,c) \bij \subwordsR(\invs_\refl(\cwo)\invs_\refl(\cwo), c).
  \]
  Finally, it follows from \Cref{lem:reflection_order}\eqref{it:reflection_order5} that
  \[
    \invs_\refl(\cwo) \invs_\refl(\cwo) = \invs_\refl \left(\cwo \psi(\cwo) \right) \equiv \invs_\refl(\c^h). \qedhere
  \]
\end{proof}

\begin{example}
  We illustrate \Cref{prop:nc_subwords} for $\WA[4]$ in \Cref{fig:ncA3}, using
  \begin{align*}
    \invs_\refl(c^h) &\equiv \invs_\refl(\cwo) \invs_\refl(\cwo) \\
      &= (12)(13)(14)(23)(24)(34) \cdot (12)(13)(14)(23)(24)(34).
  \end{align*}
  \begin{figure}[t]
    \begin{center}
      \resizebox{\textwidth}{!}{
      \begin{tikzpicture}
        \tikzstyle{rect}=[rectangle,draw,opacity=.5,fill opacity=1,inner sep=3pt,outer sep=0pt,rounded corners=0.1cm];

        \node[rect] (e)     at ( 0,0) {\scriptsize $(12)(23)(34) \cdot e$};

        \node[rect] (12)    at (-5,3) {\scriptsize $(13)(34)    \cdot (12)$};
        \node[rect] (13)    at ( 1,3) {\scriptsize $(14)(23) \cdot (13)$};
        \node[rect] (34)    at (-1,3) {\scriptsize $(12)(23)    \cdot (34)$};
        \node[rect] (23)    at (-3,3) {\scriptsize $(12)(24)    \cdot (23)$};
        \node[rect] (24)    at ( 3,3) {\scriptsize $(12)(34) \cdot (24)$};
        \node[rect] (14)    at ( 5,3) {\scriptsize $(23)(34)    \cdot (14)$};

        \node[rect] (12-34) at (-5,6) {\scriptsize $(13) \cdot (12)(34)$};
        \node[rect] (123)   at (-3,6) {\scriptsize $(14) \cdot (12)(23)$};
        \node[rect] (134)   at ( 1,6) {\scriptsize $(23) \cdot (13)(34)$};
        \node[rect] (234)   at (-1,6) {\scriptsize $(12) \cdot (23)(34)$};
        \node[rect] (124)   at ( 3,6) {\scriptsize $(34) \cdot (12)(24)$};
        \node[rect] (14-23) at ( 5,6) {\scriptsize $(24) \cdot (14)(23)$};

        \node[rect] (1234)  at ( 0,9) {\scriptsize $e \cdot (12)(23)(34)$};

        \draw [black] (e) to (12);
        \draw [black] (e) to (13);
        \draw [black] (e) to (14);
        \draw [black] (e) to (23);
        \draw [black] (e) to (24);
        \draw [black] (e) to (34);

        \draw [black] (12) to (123);
        \draw [black] (12) to (124);
        \draw [black] (12) to (12-34);

        \draw [black] (13) to (123);
        \draw [black] (13) to (134);

        \draw [black] (24) to (124);
        \draw [black] (24) to (234);

        \draw [black] (14) to (124);
        \draw [black] (14) to (134);
        \draw [black] (14) to (14-23);

        \draw [black] (23) to (123);
        \draw [black] (23) to (234);
        \draw [black] (23) to (14-23);

        \draw [black] (34) to (134);
        \draw [black] (34) to (234);
        \draw [black] (34) to (12-34);

        \draw [black] (123) to (1234);
        \draw [black] (124) to (1234);
        \draw [black] (134) to (1234);
        \draw [black] (234) to (1234);
        \draw [black] (12-34) to (1234);
        \draw [black] (14-23) to (1234);
      \end{tikzpicture}
      }
    \end{center}
    \caption{\label{fig:ncA3}The noncrossing partition lattice $\NCL \big(\WA[4],(1234)\big)$.  Each noncrossing partition is represented by its corresponding $1$-delta sequence.}
\end{figure}
\end{example}

\subsection{The Cambrian recurrence}
 For~$s$ initial in~$c$, there is an isomorphism

\begin{align*}
  \NCL(W,c) &\bij \NCL(W,\coxrn) \\
  w         &\longmapsto \sninv ws.
\end{align*}
A modification of this conjugation map produces a simple defining recurrence for $\NC(W,c)$ called the \defn{$c$-Cambrian recurrence}~\cite[Theorem~6.1]{Rea2006}.
\begin{proposition+}
\label{prop:nc_recurrence}
  Let~$s$ be initial in~$c$.
  Then
  \[
    w \in \NC(W,c) \Leftrightarrow
    \begin{cases}
      sw \in \NC(W_{\langle s \rangle},\coxsn) & \text{if } sw \leqref w \\
      \sninv ws \in \NC(W,\coxrn) & \text{otherwise}
    \end{cases}\ .
  \]
  \qedherecases
\end{proposition+}
\christianside{This \emph{natural} recurrence shows up everywhere in noncrossing Catalan combinatorics!}

\section{\mhead-eralized noncrossing partitions}
\label{sec:ncm_chains_deltas_subwords}

We now review the $m$-eralization of noncrossing partitions.  Generalizing a construction given by P.~Edelman in~\cite{Ede1980} from type~$A$ to all finite Coxeter groups, D.~Armstrong defined $m$-eralized $c$-noncrossing partitions as $m$-multichains in absolute order.

\subsection{$m$-eralized noncrossing partitions as chains}

\begin{definition}[{\cite[Definition~3.2.2(1)]{Arm2006}}]
\label{def:nc_multichains}
The \defn{$m$-eralized $c$-noncrossing partitions} are the~$m$-multichains
\[
  \NCm(W,c) \eqdef \bigset{ (w_1 \geqref w_2 \geqref \cdots \geqref w_m) }{ w_i \in \NC(W,c) }.
\]
The \defn{support} of $(w_1 \geqref w_2 \geqref \cdots \geqref w_m)$ is $\supp(w_1)\subseteq \sref$.
\end{definition}

\subsection{$m$-eralized noncrossing partitions as delta sequences}

D.~Armstrong also gave an equivalent construction encoding the intervals between consecutive elements of the $m$-multichain.  This $m$-eralizes the pairing of a noncrossing partition with its Kreweras complement.

\begin{definition}[{\cite[Definition~3.2.2(2)]{Arm2006}}]
\label{def:nc_delta}
  A sequence $\delta=\left(\delta_0,\delta_1,\ldots,\delta_m\right)$ with $\delta_i \in \NC(W,c)$ is an \defn{$m$-delta sequence} if
  \[
    \delta_0 \delta_1 \cdots \delta_m = c \quad\text{and}\quad \sum_{i=0}^m \lengthR(\delta_i) = \lengthR(c).
  \]
 The \defn{support} of $\delta$ is defined to be $\supp(\delta)\eqdef \supp(\delta_1\cdots\delta_m) \subseteq \sref$, and we denote the set of all such sequences by $\deltaNCm(W,c)$.
\end{definition}

The following proposition relates \Cref{def:nc_multichains,def:nc_delta}, and is immediate from the definitions.

\begin{proposition+}[{\cite[Lemma~3.2.4]{Arm2006}}]
\label{prop:nc_delta}
  There is a canonical bijection
  \begin{align*}
    \NCm(W,c) &\canbij \deltaNCm(W,c) \\
     \big( w_1 \geqref \ldots \geqref w_m \big) &\longmapsto \big(cw_1^{-1},w_1w_2^{-1},\ldots,w_{m-1}w_m^{-1},w_m \big)\\
 \big( \delta_1\cdots\delta_m \geqref \cdots \geqref \delta_{m-1}\delta_m \geqref \delta_m \big) &\longmapsfrom  \big(\delta_0,\ldots,\delta_m\big).
 \qedhere
  \end{align*}
\end{proposition+}

D.~Armstrong $m$-eralized the lattice $\NCL(W,c)$ by considering $\deltaNCm(W,c)$ under componentwise absolute order.
\begin{equation}
  \left(\delta_0,\delta_1,\ldots,\delta_m\right) \leqref \left(\delta'_0,\delta'_1,\ldots,\delta'_m\right) \Leftrightarrow \delta_i \leqref \delta'_i \text{ for all } 1 \leq i \leq m.
\label{eq:ncl_meet_semilattice}
\end{equation}
(Note that we do not compare the zero-th components.)  This order on $\deltaNCm(W,c)$ is a graded meet semilattice~\cite[Theorem 3.4.4]{Arm2006}.  For $\WA$ it coincides with the refinement order on \defn{$m$-shuffle noncrossing partitions}---the noncrossing partitions of $m(n+1)$ such that the elements of each block are congruent modulo~$m$~\cite[Section~4.3.1, Figure~4.6]{Arm2006}.

\medskip

It is straightforward to $m$-eralize the \defn{Kreweras complement} by
\[
  \Krew(\delta) \eqdef \left( c\delta_mc^{-1}, \delta_0, \delta_1, \ldots, \delta_{m-1} \right),
\]
so that $\order(\Krew) = (m+1)h$.
On the combinatorial model of $m$-shuffle noncrossing partitions, $\Krew^{m+1}$ acts as a rotation by~$m$.

\medskip

\subsection{\mhead-eralized noncrossing partitions as dual subword complexes}
\label{sec:colored_facts}

Using dual subword complexes, we generalize \Cref{prop:nc_subwords} and refine $m$-delta sequences to the level of reflections.
\begin{definition}
\label{def:nc_fuss_subwords}
  Define the dual subword complex
  \[
    \DeltaNCm(W,c) \eqdef \subwordsR(\invs_\refl(\cwom[c][m+1]), c).
  \]
\end{definition}

As in \Cref{prop:nc_subwords}, the construction depends on the chosen reduced $\sref$-word but the complexes for all possible choices are canonically isomorphic---so we again prefer to attach this complex to the element~$c$ itself.\nathanside{Deal with it.}
As seen in \Cref{ex:initialsubwordcomplex}, the dual subword complexes for $m$-eralized noncrossing partitions are \emph{not} Coxeter initial subword complexes.

\medskip

It is convenient to represent a facet~$I$ of $\DeltaNCm(W,c)$ as a word of colored reflections $\r_1^{(i_1)}\cdots \r_n^{(i_n)}$, where
\begin{itemize}
  \item a reflection $r_j$ is colored according to the copy of $\invs_\refl(\cwo)$ to which it belongs,
  \item $r_a<_{\c} r_b$ if $a<b$ and $i_a=i_b$, and
  \item $r_1 \cdots r_n = c$.
\end{itemize}
For notational simplicity, we write $r \in I$ to mean that $r \in \{r_1,\ldots,r_n\}$.  The \defn{support} of $I$ is defined to be $\supp(I)\eqdef \supp(\prod r_k) \subseteq \sref$, where the product is over all roots $r_k \in I$ with color $i_k>0$.

\begin{theorem}
\label{prop:nc_fuss_subwords}
  There is a canonical bijection
  \[
     \deltaNCm(W,c) \canbij \DeltaNCm(W,c).
   \]
\end{theorem}
\begin{proof}
  As in the proof of \Cref{prop:nc_subwords}, apply the factorization of \Cref{prop:nc_unique_factorization} separately to each component of the $m$-delta sequence.
\end{proof}

\Cref{prop:nc_delta,prop:nc_fuss_subwords} give canonical preserving bijections between the three variants of noncrossing partitions, and we move freely between them.
\Cref{fig:ncA22} shows all $12$ elements of
\[
  \NCm[2](\WA[3],st)\ \canbij\ \deltaNCm[2](\WA[3],st)\ \canbij\ \DeltaNCm[2](\WA[3],st)
\]
with their supports.
\christianside{Each noncrossing partition definition has a slightly different flavor\dots}\nathanside{\dots and equally delicious in its own unique way.}
\begin{figure}[t]
  \begin{center}
    \begin{tabular}{c|c|c|c}
      $\NCm[2](\WA[3],st)$ & $\deltaNCm[2](\WA[3],st)$ & $\DeltaNCm[2](\WA[3],st)$ & $\supp(w)$ \\
      \hline
      $e \geqref e$ &
      $(st , e , e)$ &
      $\s\U\t.\S\U\T.\S\U\T$ &
      $-$
      \\
      $st \geqref st$ &
      $(e , e , st)$ &
      $\S\U\T.\S\U\T.\s\U\t$ &
      $s,t$
      \\
      $st \geqref s$ &
      $(e , u , s)$ &
      $\S\U\T.\S\u\T.\s\U\T$ &
      $s,t$
      \\
      $u \geqref  e$ &
      $(t , u , e)$ &
      $\S\U\t.\S\u\T.\S\U\T$ &
      $s,t$
      \\
      \hline
      $s \geqref  e$ &
      $(u , s , e)$ &
      $\S\u\T.\s\U\T.\S\U\T$ &
      $s$
      \\
      $t \geqref t$ &
      $(s , e , t)$ &
      $\s\U\T.\S\U\T.\S\U\t$ &
      $t$
      \\
      $st \geqref u$ &
      $(e , t , u)$ &
      $\S\U\T.\S\U\t.\S\u\T$ &
      $s,t$
      \\
      $st \geqref e$ &
      $(e , st , e)$ &
      $\S\U\T.\s\U\t.\S\U\T$ &
      $s,t$
      \\
      \hline
      $t \geqref e$ &
      $(s , t , e)$ &
      $\s\U\T.\S\U\t.\S\U\T$ &
      $t$
      \\
      $u \geqref  u$ &
      $(t , e , u)$ &
      $\S\U\t.\S\U\T.\S\u\T$ &
      $s,t$
      \\
      \hline
      $s \geqref s$ &
      $(u , e , s)$ &
      $\S\u\T.\S\U\T.\s\U\T$ &
      $s$
      \\
      $st \geqref t$ &
      $(e , s , t)$ &
      $\S\U\T.\s\U\T.\S\U\t$ &
      $s,t$
    \end{tabular}
  \end{center}
  \caption{The three variants of the $m$-eralized $st$-noncrossing partitions for $\WA[3]$ with $m=2$, together with their supports.  They are arranged according to their orbits under Cambrian rotation, defined in~\Cref{sec:nc-m-cambrian-recurrence}.}
  \label{fig:ncA22}
\end{figure}

\section{The Cambrian rotation and recurrence}
\label{sec:nc-m-cambrian-recurrence}

For~$s$ initial in~$c$, define the bijection
\begin{align*}
  \Shift_{s}: \DeltaNCm(W,c) &\bij \DeltaNCm(W,\coxrn)\\
\r_1^{(i_1)}\r_2^{(i_2)}\cdots \r_n^{(i_n)} &\longmapsto
    \begin{cases}
      \r_2^{(i_2)}\cdots \r_n^{(i_n)} \s^{(m)} & \text{if } r_1^{(i_1)} = s^{(0)} \\
      \t_1^{(j_1)}\cdots \t_n^{(j_n)} & \text{otherwise}
    \end{cases}\ ,
\end{align*}
where $t_k=r_k^s \text{ and } j_k=\begin{cases} i_k & \text{if } t_k \neq s \\ i_k-1 & \text{if } t_k = s\end{cases}$.  By~\Cref{lem:reflection_order}\eqref{it:reflection_order7} and since $\s$ is final in $\invs_\refl(\cwo[\coxrn])$, the ordering on the reflections in the word in the image of this map is compatible with $\coxrn$. 

\medskip

\begin{example}
\label{ex:ncshiftorbit}
  Alternately applying $\Shift_s$ and $\Shift_t$ to $\s^{(0)}\t^{(0)} \in \DeltaNCm[2](\WA[3],st)$ gives the orbit
  \[
    \begin{array}{ccccc}
      \s\U\t.\S\U\T.\S\U\T
      &\xmapsto{\Shift_s}&
      \t\U\S.\T\U\S.\T\U\s
      &\xmapsto{\Shift_t}&
      \S\U\T.\S\U\T.\s\U\t \\
      &\xmapsto{\Shift_s}&
      \T\U\S.\T\U\s.\T\u\S
      &\xmapsto{\Shift_t}&
      \S\U\T.\S\u\T.\s\U\T \\
      &\xmapsto{\Shift_s}&
      \T\U\S.\t\U\s.\T\U\S
      &\xmapsto{\Shift_t}&
      \S\U\t.\S\u\T.\S\U\T \\
      &\xmapsto{\Shift_s}&
      \T\u\S.\t\U\S.\T\U\S
      &\xmapsto{\Shift_t}&
      \s\U\t.\S\U\T.\S\U\T
    \end{array}.
  \]
\end{example}

\begin{definition}
\label{def:nc_cambrian_rotation}
  For $\c = \s_1\cdots \s_n$, the \defn{$m$-eralized $c$-Cambrian rotation} is
  \[
    \Camb_\c \eqdef \Shift_{s_n} \circ \cdots \circ \Shift_{s_1} : \DeltaNCm(W,c) \bij \DeltaNCm(W,c).
  \]
\end{definition}
As usual, this composition does not depend on the chosen reduced word $\c$.
The elements in \Cref{fig:ncA22} are arranged according to their orbits under Cambrian rotation.

\medskip

A modification of the first case in the definition of the shift operator gives an inductive characterization of $\DeltaNCm(W,c)$ called the \defn{$m$-eralized $c$-Cambrian recurrence}.

\begin{proposition}\label{prop:nc_cambrian_recurrence}
  Let~$s$ be initial in~$c$ and let $I = \r_1^{(i_1)}\r_2^{(i_2)}\cdots \r_n^{(i_n)}$.
  Then
  \[
    I \in \DeltaNCm(W,c)
    \Leftrightarrow
    \begin{cases}
      \r_2^{(i_2)}\cdots \r_n^{(i_n)} \in \DeltaNCm(\Wres,\coxsn) & \text{if } r_1^{(i_1)} = s^{(0)} \\
      \Shift_s(I) \in \DeltaNCm(W,\coxrn) & \text{otherwise}
    \end{cases}\ .
  \]
\end{proposition}

\begin{proof}
 This follows from \Cref{lem:reflection_order}\eqref{it:reflection_order7} and \Cref{lem:reflection_order4}.
\end{proof}

\begin{mywarning*}
  Some care must be taken to correctly run this recurrence in reverse.
  It might seem that the facet $\s^{(0)}\r_2^{(i_2)}\cdots \r_n^{(i_n)} \in \DeltaNCm(W,c)$ is produced by both $\r_2^{(i_2)}\cdots \r_n^{(i_n)} \in \DeltaNCm(\Wres,\coxsn)$ and by $\r_2^{(i_2)}\cdots \r_n^{(i_n)} \s^{(m)} \in \DeltaNCm(W,\coxrn)$.  But facets of $\DeltaNCm(W,\coxrn)$ of the form $\r_2^{(i_2)}\cdots \r_n^{(i_n)} \s^{(m)}$ are not in the image of the recurrence, and so it is crucial not to use them when running the recurrence in reverse.

  We emphasize here that when reversing any of the Cambrian recurrences in \Cref{prop:assoc_cambrian_recurrence} and in \Cref{prop:sort_cambrian_recurrence,,prop:msort_cambrian_recurrence,,prop:aligned_cambrian_recurrence,,prop:skipset}, the elements on the right hand side are required to have the form specified by the recurrence.\nathanside{For later: the parabolic subgroup gets Cambrian rotated to the top of the Cambrian lattice.}
\end{mywarning*}

\begin{example}
\label{ex:ncdeltacambrian}
  The Cambrian recurrence for the $(e,s,t) \in \deltaNCm[2](\WA[3],st)$ is computed as
  \[
    \underbrace{(e,s,t)}_{st}
    \hspace*{-1pt}\mapsto\hspace*{-1pt}
    \underbrace{(s,e,u)}_{ts}
    \hspace*{-1pt}\mapsto\hspace*{-1pt}
    \underbrace{(u,e,s)}_{st}
    \hspace*{-1pt}\mapsto\hspace*{-1pt}
    \underbrace{(t,s,e)}_{ts}
    \hspace*{-1pt}\mapsto\hspace*{-1pt}
    \underbrace{(e,s,e)}_{s}
    \hspace*{-1pt}\mapsto\hspace*{-1pt}
    \underbrace{(s,e,e)}_{s}
    \hspace*{-1pt}\mapsto\hspace*{-1pt}
    \underbrace{(e,e,e)}_{\one},
  \]
where the subscript identifies the (parabolic) Coxeter element.
\end{example}

\section{Cambrian posets}
\label{sec:nc_cambrian_posets}

In this section, we define a new poset structure $m$-eralizing N.~Reading's Cambrian lattices.

\subsection{The weak order on covering and covered reflections}

Starting with $\coveringref(\one) = \sref$ and $\coveredref(\one) = \emptyset$, we build $\Weak(W)$ on the covering and covered reflections of the elements in~$W$ by describing cover relations.  Supposing that $\coveringref(w)$ and $\coveredref(w)$ are known, there will be one cover of $w$ for each $r \in \coveringref(w)$.  Choose any $r \in \coveringref(w)$ and let $\alpha_r$ be the associated positive root.
The element $wr$ covering $w$ has $\coveringref(rw)$ and $\coveredref(rw)$ given by
\begin{align*}
  \coveringref(rw) &= \set{ u^r }{ u \in \coveringref(w), r(\alpha_u) > 0 } \cup  \set{ u^r }{ u \in \coveredref(w), r(\alpha_u) < 0 },\\
  \coveredref(rw) &= \set{ u^r }{ u \in \coveredref(w), r(\alpha_u) > 0 } \cup  \set{ u^r }{ u \in \coveringref(w), r(\alpha_u) < 0 }.
\end{align*}
Finally, an element $w \in W$ can be reconstructed from its covered and covering reflections---these tell us exactly which hyperplanes bound the corresponding chamber, which determines~$w$ up to multiplication by~$\wo$.  Since we have distinguished covering and covered reflections, we have therefore uniquely specified~$w$.

\subsection{Cambrian posets}

We mimic this characterization of $\Weak(W)$ with a small twist.
Delta sequences in $\deltaNCm[1](W,c)$ have two components, $\FNC = (\delta_0,\delta_1)$.
The analogy we wish to draw is that the factorization of $\delta_0$ of \Cref{prop:nc_unique_factorization} should be thought of as a variant of $\coveringref(w)$, while the factorization of $\delta_1$ behaves like $\coveredref(w)$.  We now define the cover relations, jumping immediately to the definition for general~$m$.

\medskip

Let~$I=\r_1^{(i_1)}\cdots\r_a^{(i_a)}\cdots \r_n^{(i_n)}$ be a facet of $\DeltaNCm(W,c)$.  If~$i_a \neq m$, the \defn{increasing flip} of~$I$ in the direction $r_a$ is given by
\begin{equation}
  \flip^{\uparrow}_{r_a}(I)\eqdef\r_1^{(i_1)}\cdots\r_{a-1}^{(i_{a-1})}\ \underbrace{\t_{a+1}^{(i'_{a+1})} \cdots \t_{b}^{(i'_b)}\ \r_a^{(i_a+1)}}_{\text{modified}}\ \r_{b+1}^{(i_{b+1})}\cdots\r_n^{(i_n)},
  \label{eq:increasingflip}
\end{equation}
where $b$ is chosen maximally so that $r_b^{(i_b)} <_{\c} r_a^{(i_a+1)}$.  We therefore have
\begin{align*}
  \underbrace{r_{a+\ell} <_{\c} \cdots  <_{\c} r_b}_{\text{color }i_a+1} &<_{\c} r_a <_{\c} \underbrace{r_{a+1} <_{\c} \cdots  <_{\c} r_{a+\ell-1}}_{\text{color }i_a},
\end{align*}
and for $a+1 \leq j \leq b$, the modified portion of the colored factorization is given by $t_j=r_j^{r_a}$ and
\[
  i'_{j} =  \begin{cases}
              i_{j} & \text{if } r_a(\alpha_{r_{j}}) \in \PhiP \\
              i_j - 1 & \text{if } r_a(\alpha_{r_{j}}) \in \Phi^-
            \end{cases}\ .
\]\nathanside{An innocent composition of Hurwitz moves that somehow mirrors some rather risqu\'e quiver mutations.}

The \defn{decreasing flip} $\flip^{\downarrow}_{r_a}(I)$ for~$i_a\neq 0$ is defined analogously.
Examples of flips are illustrated in \Cref{fig:cambncA22} on page~\pageref{fig:cambncA22}.
Before showing that this notion of flips is well-defined on $m$-eralized noncrossing partitions, we remark that this is \emph{not} the notion of flip we defined for subword complexes in \Cref{sub:flips}.

\begin{proposition}
\label{prop:nc_flip_is_subword}
  If $I=\r_1^{(i_1)}\cdots\r_a^{(i_a)}\cdots \r_n^{(i_n)}$ is a facet of $\DeltaNCm(W,c)$ with $i_a \neq m$, then $\Cambupflip(I) \in \DeltaNCm(W,c)$.
\end{proposition}

\begin{proof}
  For $\Cambupflip(I)$ to be a subword, we must check that---up to commutation of commuting letters---it respects the reflection order $\leq_{\c}$.
  The idea is simple.
  We rotate $\cwo$ so that $r_a$ is conjugated to a simple reflection, perform the flip in this simple system where it is straightforward to check the condition, and then rotate back.
  By assumption, we have that the reflections in the colored factorization
  \begin{equation}
  \r_a^{(i_a)} \r_{a+1}^{(i_{a+1})} \cdots \r_b^{(i_b)} \label{eq:r1}
  \end{equation}
  are ordered by
    $r_{a+\ell} \cdots <_{\c} r_b <_{\c} r_a <_{\c} r_{a+1} <_{\c} \cdots <_{\c} r_{a+\ell-1}.$

  Let~$\sq{w}=\s_1 \s_2 \cdots \s_{i-1}$ be the prefix of $\cwo=\s_1\cdots \s_N$ so that ${}^w s_i=r_a$.   Applying \Cref{lem:reflection_order}\eqref{it:reflection_order7}, there is some Coxeter element $c'$ so that the cyclic rotation $\s_i \cdots \s_N \s_1 \cdots \s_{i-1} \equiv \cwo[\c']$.
  Conjugating~\eqref{eq:r1} by $w$ shifts all reflections into the same color giving the colored factorization
  \begin{equation}
    \s_i^{(i_a)} (r_{a+1}^w)^{(i_a)} \cdots (r_{b}^w)^{(i_a)}. \label{eq:r2}
  \end{equation}
  Up to commutation of commuting letters, this factorization respects the reflection order $\leq_{\c'}$.   By \Cref{lem:reflection_order4}, conjugating~\eqref{eq:r2} by $s_i$ gives us the colored factorization
  \begin{equation}
    \underbrace{(r_{a+1}^{ws_i})^{(i_a)} \cdots (r_{b}^{ws_i})^{(i_a)}}_{\text{in } W_{\langle s_i \rangle}} \s_i^{(i_a+1)}\label{eq:r3}
  \end{equation}
  that respects the reflection order $\leq_{\sinv\c'\s}$, up to commutations.  For any $\beta\in \Phi^+_{\langle s \rangle}$, $s_i(\beta) = \beta+k\alpha_i$ for $k\geq 0$.  By \Cref{lem:reflection_order4}, $s_i(\beta)$ therefore occurs weakly earlier in the orders $\leq_{\c'}$ and $\leq_{\sinv\c'\s}$.

  We now undo the applications of \Cref{lem:reflection_order}\eqref{it:reflection_order7} to recover $\cwo$, which has the effect of conjugating~\eqref{eq:r3} by~$w$.
  Since ${}^w s_i=r_a$, the colored factorization
  $\t_{a+1}^{(i'_{a+1})} \cdots \t_b^{(i'_b)} \r_a^{(i_a)+1}$
  respects the reflection order $\leq_\c$.  Because reflections have only moved weakly earlier in $\leq_\c$ as a result of the sequence of conjugations, it is only possible for colors to remain the same or decrease by one.  A color decreases by one exactly when the corresponding root changed sign when reflected by ${}^w s_i = r_a$, as desired.
\end{proof}

By analogy to the weak order, flips define the cover relations of an $m$-eralized $c$-Cambrian poset.

\begin{definition}
\label{def:nc_cambrian_poset}
  The \defn{$m$-eralized $c$-Cambrian poset} $\Cambncm(W,c)$ is the partial order on the elements of $\DeltaNCm(W,c)$ with minimal element $\s_1^{(0)}\cdots \s_n^{(0)}$ and with covering relations
  \[
    \r_1^{(i_1)}\cdots \r_a^{(i_a)}\cdots \r_n^{(i_n)} \lessdot \Cambupflip \left(\r_1^{(i_1)}\cdots \r_a^{(i_a)}\cdots \r_n^{(i_n)}\right)
  \]
  for $i_a<m$.
  Cover relations are labelled by the colored reflection $r_a^{(i_a)}$.
\end{definition}

\begin{mywarning*}
  Note that this notion of flip does \emph{not} come from the flip on the subword complex corresponding to $\DeltaNCm(W,c)$ defined in~\Cref{prop:dualsubwordcomplexes}.
  \christianside{We never use the subword complex flip for noncrossing partitions, so don't confuse these two notions of flip!}
\end{mywarning*}

It follows from the definitions that $\Shift_s$ and $\Cambupflip[r]$ indeed interact nicely, as the following proposition shows.
\begin{proposition}
\label{prop:nc_cambrian_graph_isomorphism}
  For~$s$ initial in~$c$, let $I=\r_1^{(i_1)}\cdots \r_a^{(i_a)}\cdots \r_n^{(i_n)} \in \DeltaNCm(W,c)$, and suppose $I \lessdot \Cambupflip[r_a] \left(I\right)$.
  Then for $b+c=m+1$,
  \[
    \begin{cases}
      \left(\Cambdownflip[r_a] \right)^b \circ \Shift_s(I)= \Shift_s \circ \left(\Cambupflip[r_a]\right)^c (I)  & \text{if } r_1^{(i_1)} = s^{(0)} \text{ and } r_a = s\\
      \Cambupflip[r_a] \circ \Shift_s(I) = \Shift_s \circ \Cambupflip[r_a](I)  & \text{if } r_1^{(i_1)} = s^{(0)} \text{ and } r_a \neq s\\
      \Cambupflip[r_a^s] \circ \Shift_s(I) = \Shift_s \circ \Cambupflip[r_a](I) & \text{otherwise}.
    \end{cases}
  \]
\end{proposition}

\begin{proof}
  This follows immediately from the definition of $\Shift_s$ and the definition of $\flip^\uparrow_r$.
\end{proof}

\Cref{fig:cambncA22} illustrates the $m$-eralized $c$-Cambrian poset for $\WA[3]$ for $m=2$.
\begin{figure}[t!]
  \begin{center}
    \begin{tikzpicture}[scale=1.3]
      \tikzstyle{rect}=[rectangle,draw,opacity=.5,fill opacity=1]
      \tikzstyle{sort}=[]
      \tikzstyle{flip1}=[inner sep = 1pt,circle,draw,opacity=.5,fill opacity=1,fill=black!10]
      \tikzstyle{flip2}=[inner sep = 1pt,circle,draw,opacity=.5,fill opacity=1,fill=black!40]
        \node[rect,sort] (e)   at (0,0)
          {$\s\U\t.\S\U\T.\S\U\T$};
        \node[rect,sort] (s)   at (-1,1)
          {$\S\u\T.\s\U\T.\S\U\T$};
        \node[rect,sort] (t)   at ( 1,1)
          {$\s\U\T.\S\U\t.\S\U\T$};
        \node[rect,sort] (st)  at (-1,2)
          {$\S\U\t.\S\u\T.\S\U\T$};
        \node[rect,sort] (sts) at ( 0,3)
          {$\S\U\T.\s\U\t.\S\U\T$};
        \node[rect,sort] (stss)   at ( 1,4)
          {$\S\U\T.\s\U\T.\S\U\t$};
        \node[rect,sort] (stst)   at (-1,4)
          {$\S\U\T.\S\u\T.\s\U\T$};
        \node[rect,sort] (ststs)  at (-1,5)
          {$\S\U\T.\S\U\t.\S\u\T$};
        \node[rect,sort] (stssts) at ( 0,6)
          {$\S\U\T.\S\U\T.\s\U\t$};
        \node[rect,sort] (ss)   at (-3,2)
          {$\S\u\T.\S\U\T.\s\U\T$};
        \node[rect,sort] (tt)   at ( 2,2)
          {$\s\U\T.\S\U\T.\S\U\t$};
        \node[rect,sort] (stt)  at (-3,3)
          {$\S\U\t.\S\U\T.\S\u\T$};

        \draw (e) to node[flip1] {\scriptsize~$s$} (s) to node[flip1] {\scriptsize $u$} (st) to node[flip1] {\scriptsize~$t$} (sts) to node[flip2] {\scriptsize~$t$} (stss) to node[flip2] {\scriptsize~$s$} (stssts);
        \draw (e) to node[flip1] {\scriptsize~$t$} (t) to node[flip1] {\scriptsize~$s$} (sts) to node[flip2] {\scriptsize~$s$} (stst) to node[flip2] {\scriptsize $u$} (ststs) to node[flip2] {\scriptsize~$t$} (stssts);
        \draw (s) to node[flip2] {\scriptsize~$s$} (ss) to[bend right=20] node[flip1] {\scriptsize $u$} (stst);
        \draw (t) to node[flip2] {\scriptsize~$t$} (tt) to node[flip1] {\scriptsize~$s$} (stss);
        \draw (st) to node[flip2,pos=0.35] {\scriptsize $u$} (stt) to[bend left=20] node[flip1] {\scriptsize~$t$} (ststs);
    \end{tikzpicture}
  \end{center}
  \caption{
    The flip poset $\Cambncm[2](\WA[3],st)$ on the noncrossing partitions $\NCm[2](\WA[3],st)$.
    The colored reflections labelling the edges correspond to the direction of the flips, with the color indicated by the shading.}
  \label{fig:cambncA22}
\end{figure}
We define a Cambrian graph by allowing longer flips that send a reflection from the $i$\th\ copy of $\wo(c)$ to the $j$\th\ copy for $i<j$.

\begin{definition}
\label{def:nc_cambrian_graph}
  Fix $\c=\s_1\cdots \s_n$ a reduced $\sref$-word for a Coxeter element~$c$.
  The \defn{$m$-eralized $c$-Cambrian graph} $\GCambncm(W,c)$ is the directed graph on the elements of $\DeltaNCm(W,c)$ with source $\s_1^{(0)}\cdots \s_n^{(0)}$ and edges
  \[
    \r_1^{(i_1)}\cdots \r_a^{(i_a)}\cdots \r_n^{(i_n)} \to \left(\Cambupflip\right)^k \left(\r_1^{(i_1)}\cdots \r_a^{(i_a)}\cdots \r_n^{(i_n)}\right)
  \]
  for $k>0$ with $i_a+k\leq m$.
\end{definition}
\christianside{The Cambrian lattice defined directly on noncrossing partitions!}

\begin{corollary}
  Let $c,c'$ be two Coxeter elements.
  Then there is an \emph{undirected} graph isomorphism $\GCambncm(W,c) \bij \GCambncm(W,c')$.
\end{corollary}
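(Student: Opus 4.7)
The plan is to build the isomorphism stepwise using the shift operations from \Cref{sec:nc-m-cambrian-recurrence}. I first recall that any two standard Coxeter elements $c,c'$ of $W$ are linked by a sequence $c=c_0,c_1,\ldots,c_\ell=c'$ with $c_{j+1}=s_j^{-1}c_js_j$ for some $s_j$ initial in $c_j$: indeed, standard Coxeter elements correspond bijectively to acyclic orientations of the Dynkin diagram (a tree), and any two acyclic orientations of a tree are connected by a sequence of source-flips, each corresponding to conjugation by an initial simple reflection. This reduces the corollary to the single-step case $c_{j+1}=s_j^{-1}c_js_j$.

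For each such step, I would show that $\Shift_{s_j}\colon\deltaNCm(W,c_j)\to\deltaNCm(W,c_{j+1})$ induces an undirected graph isomorphism $\GCambncm(W,c_j)\bij\GCambncm(W,c_{j+1})$. It is a vertex bijection by its construction. To see it sends edges to edges, consider an edge $\{\FNC,(\flip^\uparrow_r)^k(\FNC)\}$ of $\GCambncm(W,c_j)$ given by a reflection $r$ in color $i$ of $\FNC$ with $i+k\le m$. Applying \Cref{prop:nc_cambrian_graph_isomorphism} iteratively along the chain $\FNC,\flip^\uparrow_r(\FNC),\ldots,(\flip^\uparrow_r)^k(\FNC)$ and tracking the color of the moving reflection under $\Shift_{s_j}$, the two images differ by moving a single reflection from one color to a strictly larger color. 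In the generic subcases ($r\ne s_j$ or $s_j\notin\delta_0$), the image edge has length $k$; in the exceptional subcase ($r=s_j$ and $s_j\in\delta_0$), the first step introduces a ``wraparound'' via case~1 of \Cref{prop:nc_cambrian_graph_isomorphism}, so that the image edge has length $m-k+1$ instead. Either way, this is a single valid edge of $\GCambncm(W,c_{j+1})$ in the sense of \Cref{def:nc_cambrian_graph}.

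The main obstacle will be verifying bijectivity on edges. Edge injectivity follows at once from the fact that $\Shift_{s_j}$ is a vertex bijection and edges are unordered vertex pairs. For edge surjectivity, I would run the analogous analysis using a natural left-inverse of $\Shift_{s_j}$ (which appends an $s_j$ on the right and conjugates back); under this inverse, the long edges of length $m$ at color~$0$ on one side correspond exactly to the short edges of length~$1$ at color~$0$ on the other, and the generic edges match symmetrically, so every edge of $\GCambncm(W,c_{j+1})$ is accounted for. Composing the resulting isomorphisms along $c=c_0\to c_1\to\cdots\to c_\ell=c'$ then produces the desired isomorphism $\GCambncm(W,c)\bij\GCambncm(W,c')$.
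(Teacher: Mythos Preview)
Your proposal is correct and takes essentially the same approach as the paper: both build the isomorphism by composing the shift operations $\Shift_{s_j}$ along a sequence of initial-letter conjugations connecting $c$ to $c'$, relying on \Cref{prop:nc_cambrian_graph_isomorphism} to see that each $\Shift_s$ preserves (undirected) edges. The paper's proof is a one-line citation of this composition, whereas you spell out the edge-tracking (including the wraparound in the exceptional case) in detail; this is exactly the content the paper leaves implicit.
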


\begin{proof}
  Since any two Coxeter elements $c, c'$ are conjugate in~$W$, there exists an initial sequence $s_1,\dots,s_p$ for~$c$ such that $c' = \sninv_p\cdots\sninv_1cs_1\cdots s_p$.
  The undirected graph isomorphism is induced by the composition of the shift operations for the sequence $s_1,\ldots,s_p$.
\end{proof}

\section{Some enumerations}

We collect the following enumeration results for $m$-eralized $c$-noncrossing partitions and their Cambrian lattices.

\begin{theorem}
\label{thm:m-counting}
  We have
  \begin{equation*}
    (1-q)^{n+1}\sum_{m=0}^\infty  \big|\NCm(W,c)\big| q^m = \sum_{\r_1\cdots \r_n \in \reds(c)} q^{\mathrm{des}(\r_1\cdots \r_n)}
  \end{equation*}
  where $\des(\r_1,\ldots,\r_n)$ is the number of descents $r_i >_c r_{i+1}$ in the reflection order induced by the Coxeter element~$c$.
\end{theorem}
\nathanside{Is this trivial? It feels trivial.}
\begin{proof}
  This is a direct consequence of \Cref{prop:nc_fuss_subwords}, after observing that this formula is equivalent to
  \[
    \big|\NCm(W,c)\big| = \sum_{\r_1\cdots\r_n \in \reds(c)} \binom{m+1+\asc(\r_1\cdots\r_n)}{n}
  \]
  where $\asc(\r_1,\ldots,\r_n)$ is the number of ascents $r_i <_c r_{i+1}$.
  Given a reduced $\refl$-word $\r_1\cdots\r_n$ of~$c$, one can place each reflection~$r_i$ into one copy of the $m+1$ copies of $\invs_\refl(\cwo)$ inside the search word $\invs_\refl(\cwom[c][m+1])$, and two consecutive reflections~$r_i$ and~$r_{i+1}$ possibly into the same copy if and only if $r_i <_c r_{i+1}$.
  This shows that the number of ways to place the $\refl$-word $\r_1\cdots\r_n$ into the search word equals the number of tuples $1 \leq a_1 \leq \ldots \leq a_n \leq m+1$ with weak inequalities $a_i \leq a_{i+1}$ if $r_i <_c r_{i+1}$ and strict inequalities otherwise.
  The statement then follows by summing over all factorizations as such tuples $1 \leq a_1 \leq \ldots \leq a_n \leq m+1$ are clearly counted by the above binomial.
\end{proof}

\begin{theorem}
\label{prop:cambnc}
  We have that 
  \begin{enumerate}[(i)]
    \item the posets $\Cambncm(W,c)$ and $\Cambncm(W,\psi(c))$ are dual, and
    \label{eq:cambncitem1}

    \item the number of elements of $\Cambncm(W,c)$ with~$k$ upper covers equals the number of elements $(w_1,\ldots,w_m) \in \NCm(W,c)$ such that $\lengthR(w_1) = k$.
    \label{eq:cambncitem3}
  \end{enumerate}
\end{theorem}

\begin{proof}
  The first item~\eqref{eq:cambncitem1} follows from \Cref{lem:reflection_order}\eqref{it:reflection_order6}.
  The second item~\eqref{eq:cambncitem3} follows from \Cref{def:nc_cambrian_poset}.
\end{proof}

\chapter{Cluster complexes}
\label{sec:clusters_and_subwords}

In this chapter, we study $m$-eralized cluster complexes.
We review the theory of cluster complexes as subword complexes (\Cref{sec:classicalcluster}) and define $m$-eralized compatibility relations and $m$-eralized cluster complexes (\Cref{sec:gen_cs_clusters,,sec:m-cluster-subword}).
We define the Cambrian recurrence and the Cambrian poset on $m$-eralized cluster complexes (\Cref{sec:m-assoc-cambrian-recurrence}).  We use this recurrence to show that our definition recovers known constructions of generalized cluster complexes for bipartite Coxeter elements (\Cref{sec:subwords_and_compatiblity}).
We conclude with topological properties of cluster complexes in our framework (\Cref{sec:clustertopology}), and give a natural bijection to noncrossing partitions (\Cref{sec:m-assoc-ncp}).

\section{Classical cluster complexes}
\label{sec:classicalcluster}

Let $\Phipm = \PhiP \cup -\Delta$ be the set of \defn{almost positive roots}.  For $s \in \sref$, define a bijection
\begin{align}
  \tau_s : \Phipm &\bij \Phipm \nonumber \\
  \beta &\longmapsto
  \begin{cases}
    \beta     & \text{if } \beta \in -(\Delta\setminus \alpha_s) \\
    s(\beta)  & \text{otherwise}
  \end{cases}\ .  \label{eq:taus}
\end{align}

\christianside{This is a really beautiful story, I recommend you look at these references!}
In their study of finite type cluster algebras, S.~Fomin and A.~Zelevinsky used~$\tau_s$ to define a binary relation on $\Phipm$~\cite{FZ2002,FZ2003}.
R.~Marsh, M.~Reineke, and A.~Zelevinsky~\cite{marsh2003generalized} and, independently, N.~Reading~\cite{Rea20072} interpreted this as the bipartite case of a more general family of relations, depending on a Coxeter element~$c$ (or, equivalently, on an orientation of the Coxeter diagram).

\begin{definition}
  \label{def:reading_compatibility}
  The \defn{$c$-compatibility relations} are the unique family of relations $\parallelcclassic$ on $\Phipm$ characterized by the following two properties:
  \begin{enumerate}[(i)]
    \item\label{it:compatibility1} for $\alpha \in \Delta$,
      \[
        -\alpha \parallelcclassic \beta \Leftrightarrow \beta \in \Phi_{\langle s_\alpha \rangle},
      \]
    \item\label{it:compatibility2} for~$s$ final in~$c$,
      \[
        \beta_1 \parallelcclassic \beta_2 \Leftrightarrow \tau_s(\beta_1) \parallelcclassic[\coxrnf] \tau_s(\beta_2).
      \]
  \end{enumerate}
  The \defn{$c$-cluster complex} $\Assoc(W,c)$ is the simplicial complex given by all collections of pairwise $c$-compatible almost positive roots.
\end{definition}

The two properties overdetermine this binary relation.
For a proof that~$\parallelcclassic$ is well-defined, we refer to~\cite[Section~7]{Rea20072}.  By symmetry,~\eqref{it:compatibility2} can be equivalently stated for~$s$ initial in~$c$.

A \defn{$c$-cluster} is a facet of the $c$-cluster complex $\Assoc(W,c)$---that is, a maximal subset of almost positive roots which are pairwise $c$-compatible.
In crystallographic type, the $c$-cluster complexes are isomorphic to the cluster complex defined in~\cite{FZ2002}.
We $m$-eralize \Cref{def:reading_compatibility} in \Cref{def:parallelc}, and show in \Cref{cor:m-c-assoc-m-assoc} that this $m$-eralization of compatibility relations reduces for generalized cluster complexes (see~\cite{FR2005}) and bipartite Coxeter elements to S.~Fomin and N.~Reading's compatibility relation recalled in \Cref{def:fr_compatibility}.

\medskip

K.~Igusa and R.~Schiffler found an explicit rule for the compatibility of two roots under $\parallelcclassic$ in~\cite[Theorem 2.5]{IS2010}, using the connection between representation theory and clusters introduced in~\cite{BMRRT2006}; see also~\cite[Section 8]{BW2008} when~$c$ is bipartite.
This combinatorics was studied and extended in a purely combinatorial setting beyond crystallographic type by C.~Ceballos, J.-P.~Labb\'{e}, and C.~Stump in~\cite{CLS2011}, obtaining isomorphisms
  \[
    \Assoc(W,c) \cong \subwordsS(\cw{c},\wo) = \subwordsR(\invs_\refl(\cw{c}),c^{-1}).
  \]
The root configuration gives a bijection between the facets of these subword complexes and noncrossing partitions~\cite[Theorem 6.23]{PS20112}.
These bijections are $m$-eralized in \Cref{sec:m-assoc-ncp} and in \Cref{sec:subwords_and_compatiblity}, respectively.

\section{\mhead-eralized compatibility relations}
\label{sec:gen_cs_clusters}

We construct in this section an $m$-eralized compatibility relation simmultaneously for all Coxeter elements.
This ties together the compatibility relation from \Cref{def:reading_compatibility} for all Coxeter elements and the compatibility relation of S.~Fomin and N.~Reading for bipartite Coxeter elements.
After having established the existence of the proposed relation, we show in \Cref{sec:fomin_reading} that our construction recovers the Fomin-Reading relation in the bipartite case.

\subsection{Colored almost positive roots}

Let
\[
  \mPhipm \eqdef \bigset{ \alpha^{(k)} }{ \alpha \in \PhiP, 0 \leq k < m } \cup \bigset{ \alpha^{(m)} }{ \alpha \in \Delta }
\]
be the set of \defn{$m$-colored almost positive roots}.
The color~$m$ plays the role of the negative simple roots.
In particular, we use the identification
\[
  \mPhipm[1] \longleftrightarrow \Phipm, \quad \textrm{where }\beta^{(0)} \mapsto \beta, \textrm{ and }\alpha^{(1)} \mapsto -\alpha.
\]
Given this notation of colored roots, a similar inductive description yields an $m$-eralization of \Cref{def:reading_compatibility} for general Coxeter elements.

For $s \in \sref$, define the bijection
\begin{align}
  \taum : \mPhipm &\bij \mPhipm \nonumber\\
  \beta^{(k)} &\longmapsto
  \begin{cases}
    \beta^{(k-1)} &\text{ if } \beta = \alpha_s \text{ and } k>0 \\
  \beta^{(m)} & \text{ if } \beta = \alpha_s \text{ and } k=0 \\
  \beta^{(k)} & \text{ if } \beta \neq \alpha_s \text{ and } k=m \\
    [s(\beta)]^{(k)} &\text{ otherwise}
  \end{cases}\ .\label{eq:taum}
\end{align} 

\begin{remark}
  It follows from the definition that the order of~$\taum$ is given by $\lcm\big\{ m+1, 2\big\}$.
  Thus,~$\taum$ is an involution if and only if $m=1$, and~$\taum$ reduces to $\tau_s$ in this case.
\end{remark}

For later usage, we record the following elementary property of~$\taum$.

\begin{lemma}
\label{lem:taum_restr}
  Let $s,s' \in \sref$ with $s \neq s'$.
  Then $\taum[s]$ restricts to a bijection on $\left(\Phi_{\langle s' \rangle}\right)_{\geq -1}^{(m)}$.
\end{lemma}
\begin{proof}
  All four definitions in~\eqref{eq:taum} map $\left(\Phi_{\langle s' \rangle}\right)_{\geq -1}^{(m)}$ to itself.
  As~$\taum[s]$ is a bijection, the property follows.
\end{proof}

\subsection{The compatibility relation}

\begin{definition}\label{def:parallelc}
  The \defn{$m$-eralized $c$-compatibility relation}~$\parallel_c$ is the unique family of relations~$\parallel_c$ on $\mPhipm$ by the following two properties.
  For~$s$ final in~$c$, we have
  \begin{enumerate}[(i)]
    \item $\alpha_s^{(m)} \parallel_c \beta^{(k)} \Leftrightarrow \beta \in \Phi_{\langle s \rangle}$, and \label{it:mcompatibility1}
    \item $\beta_1^{(k)} \parallel_c \beta_2^{(\ell)} \Leftrightarrow \taum(\beta_1^{(k)}) \parallel_{\coxrnf} \taum(\beta_2^{(\ell)})$. \label{it:mcompatibility2}
  \end{enumerate}
\end{definition}
\christianside{So that $s$ is now final---not initial.  There was a reason for this choice, I promise.}\nathanside{We just don't remember it.}

In contrast to the case $m=1$, \Cref{def:parallelc}\eqref{it:mcompatibility2} is not equivalent to the corresponding statement for $s$ initial in $c$.
We now show that property~\eqref{it:mcompatibility1}---which assumes $s$ to be final in~$c$---implies the analogue of \Cref{def:reading_compatibility}\eqref{it:compatibility1}.

\begin{proposition}
\label{prop:mcompatibility1new}
  For $s \in \sref$, we have
  \[
    \alpha_s^{(m)} \parallel_c \beta^{(k)} \Leftrightarrow \beta \in \Phi_{\langle s \rangle}.
  \]
  For $m=1$, the $m$-eralized $c$-compatibility relation~$\parallel_c$ therefore recovers \Cref{def:reading_compatibility}.
\end{proposition}

\begin{proof}
  Assume that~$s$ is not final in~$c$ and write $c = s_1\cdots s_n$ with $s = s_p$ for $1 \leq p < n$.
  As $s_i \neq s$ for $i > p$, we obtain
  \begin{align*}
    \alpha_s^{(m)} \parallel_c \beta^{(k)}
      &\Leftrightarrow \alpha_s^{(m)} \parallel_{c'} \taum[s_{p+1}] \circ \cdots \circ \taum[s_n] \big(\beta^{(k)}\big) \\
      &\Leftrightarrow \taum[s_{p+1}] \circ \cdots \circ \taum[s_n] \big(\beta^{(k)}\big) = \gamma^{(\ell)} \text{ with } \gamma \in \Phi_{\langle s \rangle} \text{ and some color } \ell \\
      &\Leftrightarrow \beta \in \Phi_{\langle s \rangle},
  \end{align*}
  for $c' = s_{p+1} \cdots s_{n}c\sninv_n \cdots \sninv_{p+1}$.
  The first equivalence follows from the defining property~\eqref{it:mcompatibility2} and the fact that $\taum[s_{p+1}]\circ\cdots\circ\taum[s_n]\big(\alpha_s^{(m)}\big) = \alpha_s^{(m)}$.  The second equivalence follows from the defining property~\eqref{it:mcompatibility1}, and the third equivalence follows from \Cref{lem:taum_restr}.
\end{proof}

Analogously to~\cite[Lemma~7.1]{Rea20072}, we show that a binary relation on $\mPhipm$ satisfying the two defining properties of~$\parallel_c$ is unique, if it exists.
Existence is deferred to \Cref{thm:m-c-compatibility}.

\medskip

As~$\taum$ coincides with  $\tau_s$ on non-simple roots of a given color, the following can be seen as the $m$-eralized version of~\cite[Lemma~7.1]{Rea20072}.

\begin{lemma}
\label{lem:relation1}
  Let~$c$ be a Coxeter element, and $\beta^{(k)} \in \mPhipm$.
  Then there exists a final sequence $s_1,\dots,s_p,s$ for~$c$ such that
  \[
    \taum[s_p] \circ \cdots \circ \taum[s_1](\beta^{(k)}) = \alpha_{s}^{(k)}.
  \]
\end{lemma}

\begin{proof}
  We first consider the case~$k=m$.
  In this case, $\beta = \alpha_s$ for some $s \in \sref$.
  Write $c = s_n\dots s_1$ and set $p < n$ such that $s_{p+1} = s$.
  As $s_i \neq s$ for $i \leq p$, we obtain the desired property by~\eqref{eq:taum}.

  We next consider the case~$k<m$.
  Let $\c = \s_n\dots\s_1$ and consider
  \[
    \cwo[\rev(\psi({\c}))] = \s_1\cdots\s_N.
  \]
  \Cref{lem:reflection_order}\eqref{it:reflection_order7} implies that $s_1,\ldots,s_N$ is a final sequence for~$c$.
  Set~$0 \leq p < N$ such that $s_1\cdots s_{p}(\alpha_{s_{p+1}}) = \beta$ and compute
  \[
    \taum[s_p] \circ \cdots \circ \taum[s_1](\beta^{(k)}) = \big[s_p\cdots s_1(\beta)\big]^{(k)} = [\alpha_{s_{p+1}}]^{(k)}
  \]
  by~\eqref{eq:taum}.
\end{proof}

\begin{lemma}
\label{lem:relation2}
  Let~$c$ be a Coxeter element and let $\beta \in \PhiP$.
  There there exist a final sequence $s_1,\dots,s_p,s$ for~$c$ such that
  \[
    \taum[s_p] \circ \dots \circ \taum[s_1](\beta^{(k)}) = \alpha_s^{(m)}.
  \]
\end{lemma}
\begin{proof}
  Applying \Cref{lem:relation1}, we obtain a final sequence $s_1,\dots,s_{p},s$ for~$c$ such that
  \[
    \taum[s_{p}] \circ \cdots \circ \taum[s_1](\alpha^{(k)}) = \alpha_{s}^{(k)}.
  \]
  If $k=m$, we conclude the result.
  Otherwise, we apply $\taum[s]$ to obtain $\alpha_{s}^{(k-1)}$, and proceed by again applying \Cref{lem:relation1}.
\end{proof}

\begin{proposition}
\label{prop:unique_compatible}
  The $m$-eralized $c$-compatibility relation~$\parallel_c$ is uniquely determined by its two defining properties.
\end{proposition}
\christianside{Remember this, forget the lemmas.}
\begin{proof}
  This is an immediate consequence of \Cref{lem:relation2}.
\end{proof}

\section{\mhead-eralized cluster complexes}
\label{sec:m-cluster-subword}

We construct $m$-eralized cluster complexes as subword complexes and in terms of colored almost positive roots.

\subsection{Cluster complexes as subword complexes}
E.~Tzanaki encoded $m$-eralized clusters as certain colored factorizations for bipartite $c$~\cite[Theorem~1.1]{Tza2008}, $m$-eralizing the result for $m=1$ in~\cite{IS2010,BW2008}.
We extend this approach to general Coxeter elements.

\begin{definition}
\label{def:m-c-cluster}
  Let~$c$ be a Coxeter element and let~$\c$ be a reduced word for~$c$.
  The \defn{$m$-eralized $c$-cluster complex} is the $c$-initial subword complex defined by either
  \begin{align*}
    \DeltaAssocm(W,c) \eqdef\ & \subwordsSB(\cwm{c}, \bwom),\\
                           =\ & \subwordsS(\cwm{c}, \wom, mN),  \text{ or }\\
    \NablaAssocm(W,c) \eqdef\ & \subwordsR(\invs_\refl(\cwm{c}), c^{-1}).
  \end{align*}
The \defn{$m$-eralized $c$-Cambrian poset} $\Cambassocm(W,c)$ is the increasing flip poset of $\DeltaAssocm(W,c)$.
\end{definition}

The equality $\DeltaAssocm(W,c) = \NablaAssocm(W,c)$ follows from the general equality proven in \Cref{prop:dualsubwordcomplexes}, where we again emphasize the different flavours of both.
We may represent a face~$I$ of $\NablaAssocm(W,c)$ as a word of colored reflections $\r_1^{(i_1)}\cdots \r_k^{(i_k)}$, where $k \leq n$ and
\begin{itemize}
  \item a reflection $r_j$ is colored by the copy of $\invs_\refl(\cwo)$ to which it belongs, or by~$m$ if it belongs to the final copy of $\invs_\refl(\c)$;
  \item $r_a<_{\c} r_b$ when $a<b$ and $i_a=i_b$; and
  \item $r_k \cdots r_1 \leq_\refl c$ with $\lengthR(r_k\cdots r_1) = k$.
\end{itemize}
By construction, every face is of the form above.  We will prove in \Cref{prop:dualfaces} that these three properties characterize the faces of $\NablaAssocm(W,\c)$.


\subsection{Cluster complexes and colored almost positive roots}
We now begin to relate the above construction to $\parallel_c$.  Let $\c=\s_1\cdots \s_n$ for a Coxeter element~$c$.
We know from \Cref{lem:reflection_order}\eqref{it:reflection_order2} and~\eqref{it:reflection_order3}
that $\c\cwo \equiv \cwo\psi(\c)$ and thus that
\[
  \c \cwom \equiv \cwom \psi^m(\c).
\]
We therefore have
\begin{equation}
  \invs_\refl(\cwm{c}) \equiv  \underbrace{\invs_\refl(\cwo) \invs_\refl(\cwo) \cdots \invs_\refl(\cwo)}_{\text{colors }0,\ldots,m-1} \underbrace{\invs_\refl(\c)}_{\text{color }m}, \label{eq:cwm}
\end{equation}
where the reflections in the $i$\th\ copy of $\invs_\refl(\cwo)$ have color~$(i-1)$, and the~$n$ roots of color~$m$ all come from the final copy of $\psi^m(\c)$.
This defines a bijection
\begin{align}
\toap_c: \invs_\refl(\cwm{c}) & \bij \mPhipm \nonumber \\
  r^{(k)} &\longmapsto
    \begin{cases}
      \alpha_r^{(k)} &\text{if } k<m \\
      \big[s_{i-1}\cdots s_{1}(\alpha_r)\big]^{(k)}  &\text{if } k=m
    \end{cases}\ , \label{rem:almost_pos_are_these}
\end{align}
where $i$ is chosen so that $\alpha_r = s_1\cdots s_i(\alpha_{s_{i+1}})$ if $\c = \s_1\cdots \s_n$.


\begin{definition}
\label{def:assocm}
  Let $\Assocm(W,c)$ be the image of the faces of $\NablaAssocm(W,c)$ under the map $\toap_c$ defined in~\eqref{rem:almost_pos_are_these}.
  That is,
  \begin{align*}
    \toap_c: \NablaAssocm(W,c) &\bij \Assocm(W,c)\\
    \r_1^{(i_1)}\cdots \r_k^{(i_k)} &\longmapsto \{\toap_c(r_1^{(i_1)}),\ldots,\toap_c(r_k^{(i_k)})\}.
  \end{align*}
\end{definition}



We show in \Cref{thm:m-c-compatibility} that two $m$-colored almost positive roots are compatible under~$\parallel_c$ if and only if they appear together in a face of $\Assocm(W,c)$.

\begin{definition}
\label{def:m-assoc-support}
  The \defn{support} of a facet~$I$ of $\DeltaAssocm(W,c)$ is
  \[
    \supp(I)\eqdef\set{ s \in \sref }{ \alpha_s^{(0)} \notin \Roots{I}}.
  \]
  The support on $\NablaAssocm(W,c)$ and on $\Assocm(W,c)$ are defined analogously.
\end{definition}

\Cref{fig:assocA22} shows all $12$ facets of
\[
  \DeltaAssocm[2](\WA[3],st) \cong \NablaAssoc^{(2)}(\WA[3],st) \cong \Assocm[2](\WA[3],st),
\]
with their supports.
\begin{figure}[t]
  \begin{center}
    \begin{tabular}{c|c|c|c|c}
      $\DeltaAssocm[2](\WA[3],st)$ & $\NablaAssocm[2](\WA[3],st)$  & $\Assocm[2](\WA[3],st)$  & $\Roots{I}$ & $\supp(I)$ \\
      \hline
      $\s\t.\S\T\S.\T\S\T$ &
      $\s\u\T.\S\U\T.\S\U$ &
      $\alpha^{(0)},\gamma^{(0)}$ &
      $\alpha^{(0)},\beta^{(0)}$ &
      $-$
      \\
      $\S\T.\S\T\S.\T\s\t$ &
      $\S\U\T.\S\U\T.\s\u$ &
      $\alpha^{(2)},\beta^{(2)}$ &
      $\alpha^{(2)},\beta^{(2)}$ &
      $s,t$
      \\
      $\S\T.\S\T\s.\t\S\T$ &
      $\S\U\T.\S\u\t.\S\U$ &
      $\gamma^{(1)},\beta^{(1)}$ &
      $\gamma^{(1)},\alpha^{(2)}$ &
      $s,t$
      \\
      $\S\T.\s\t\S.\T\S\T$ &
      $\S\U\t.\s\U\T.\S\U$ &
      $\beta^{(0)},\alpha^{(1)}$ &
      $\beta^{(0)},\gamma^{(1)}$ &
      $s,t$
      \\
      \hline
      $\S\t.\s\T\S.\T\S\T$ &
      $\S\u\t.\S\U\T.\S\U$ &
      $\gamma^{(0)},\beta^{(0)}$ &
      $\gamma^{(0)},\alpha^{(1)}$ &
      $s$
      \\
      $\s\T.\S\T\S.\T\S\t$ &
      $\s\U\T.\S\U\T.\S\u$ &
      $\alpha^{(0)},\beta^{(2)}$ &
      $\alpha^{(0)},\beta^{(2)}$ &
      $t$
      \\
      $\S\T.\S\T\S.\t\s\T$ &
      $\S\U\T.\S\U\t.\s\U$ &
      $\beta^{(1)},\alpha^{(2)}$ &
      $\beta^{(1)},\gamma^{(2)}$ &
      $s,t$
      \\
      $\S\T.\S\t\s.\T\S\T$ &
      $\S\U\T.\s\u\T.\S\U$ &
      $\alpha^{(1)},\gamma^{(1)}$ &
      $\alpha^{(1)},\beta^{(1)}$ &
      $s,t$
      \\
      \hline
      $\s\T.\S\T\s.\T\S\T$ &
      $\s\U\T.\S\u\T.\S\U$ &
      $\alpha^{(0)},\gamma^{(1)}$ &
      $\alpha^{(0)},\beta^{(1)}$ &
      $t$
      \\
      $\S\T.\s\T\S.\T\s\T$ &
      $\S\U\t.\S\U\T.\s\U$ &
      $\beta^{(0)},\alpha^{(2)}$ &
      $\beta^{(0)},\gamma^{(2)}$ &
      $s,t$
      \\
      \hline
      $\S\t.\S\T\S.\t\S\T$ &
      $\S\u\T.\S\U\t.\S\U$ &
      $\gamma^{(0)},\beta^{(1)}$ &
      $\gamma^{(0)},\alpha^{(2)}$ &
      $s$
      \\
      $\S\T.\S\t\S.\T\S\t$ &
      $\S\U\T.\s\U\T.\S\u$ &
      $\alpha^{(1)},\beta^{(2)}$ &
      $\alpha^{(1)},\beta^{(2)}$ &
      $s,t$
    \end{tabular}
  \end{center}
  \caption{The three variants of the $m$-eralized $st$-clusters for $\WA[3]$ with $m=2$, together with their root configurations and supports.  They are arranged according to their orbits under Cambrian rotation, defined in~\Cref{sec:m-assoc-cambrian-recurrence}.}
  \label{fig:assocA22}
\end{figure}

\section{The Cambrian rotation and recurrence}
\label{sec:m-assoc-cambrian-recurrence}

\subsection{The shift operator}
Let~$\c$ be a reduced word for~$c$ with first letter~$\s$.  It is immediate from \Cref{lem:reflection_order}\eqref{it:reflection_order5} and~\eqref{it:reflection_order7} that for $\cwm{c}=\s_1 \s_2 \dots \s_{n+mN}$,
\[\s_2\dots \s_{n+mN} \psi^m(\s_1) \equiv \cwm{\coxr}.\] 
This identification defines a canonical isomorphism
\[
  \Shift_s : \DeltaAssocm(W,c)\bij\DeltaAssocm(W,\coxrn).
\]
This isomorphism is explicitly described on $\NablaAssocm(W,c) \bij \NablaAssocm(W,\coxrn)$ by
\begin{align}
  \Shift_s : \invs_\refl(\cwm{c}) &\bij \invs_\refl(\cwm{\coxr}) \nonumber \\
  r^{(k)} &\longmapsto
    \begin{cases}
      s^{(k-1)}  & \text{if } r=s \text{ and } k>0 \\
      \left[\ {}^{\coxsn}s\right]^{(m)}  & \text{if } r=s \text{ and } k=0 \\
      [r^s]^{(k)} & \text{if } r \neq s \\
    \end{cases} \ .\label{eq:shifts}
\end{align}
\christianside{Everything here behaves as expected, no pitfalls anywhere. Notationally a little heavy, though.}
\nathanside{In an initial draft, we invented the notation $w^s=sws^{-1}$.  A referee set us straight, but there were a lot of changes to be made.}

\begin{example}
  We visualize the definition of~$\Shift_s$ for the initial $s = s_1$ in $c = s_1s_2s_3$ with $\coxrn = s_2s_3s_1$.
  In this case, we obtain
  \begin{center}
    \begin{tikzpicture}[xscale=1.2,yscale=.7]
      \tikzstyle{rect}=[shape=rectangle,minimum width=3cm, text width=3cm]
      \node (U1) at (1,2) {$\s_1$};
      \node (U2) at (2,2) {$\s_2$};
      \node (U3) at (3,2) {$\s_3$};
      \node (U4) at (4,2) {$\s_1$};
      \node (U5) at (5,2) {$\s_2$};
      \node (U6) at (6,2) {$\s_3$};
      \node (U7) at (7,2) {$\s_1$};
      \node (U8) at (8,2) {$\s_2$};
      \node (U9) at (9,2) {$\s_1$};

      \node (UR1) at (1,1) {$(12)^{(0)}$};
      \node (UR2) at (2,1) {$(13)^{(0)}$};
      \node (UR3) at (3,1) {$(14)^{(0)}$};
      \node (UR4) at (4,1) {$(23)^{(0)}$};
      \node (UR5) at (5,1) {$(24)^{(0)}$};
      \node (UR6) at (6,1) {$(34)^{(0)}$};
      \node (UR7) at (7,1) {$(12)^{(1)}$};
      \node (UR8) at (8,1) {$(13)^{(1)}$};
      \node (UR9) at (9,1) {$(14)^{(1)}$};

      \node (DR2) at (2,-1) {$(23)^{(0)}$};
      \node (DR3) at (3,-1) {$(24)^{(0)}$};
      \node (DR4) at (4,-1) {$(13)^{(0)}$};
      \node (DR5) at (5,-1) {$(14)^{(0)}$};
      \node (DR6) at (6,-1) {$(34)^{(0)}$};
      \node (DR7) at (7,-1) {$(12)^{(0)}$};
      \node (DR8) at (8,-1) {$(23)^{(1)}$};
      \node (DR9) at (9,-1) {$(24)^{(1)}$};
      \node (DR1) at (10,-1) {$(13)^{(1)}$};

      \node (D2) at (2,-2) {$\s_2$};
      \node (D3) at (3,-2) {$\s_3$};
      \node (D4) at (4,-2) {$\s_1$};
      \node (D5) at (5,-2) {$\s_2$};
      \node (D6) at (6,-2) {$\s_3$};
      \node (D7) at (7,-2) {$\s_1$};
      \node (D8) at (8,-2) {$\s_2$};
      \node (D9) at (9,-2) {$\s_1$};
      \node (D1) at (10,-2) {$\s_3$};

      \draw (UR1.south) edge[out=270,in=90,looseness=.5,->] (DR1.north);
      \draw (UR2.south) edge[out=270,in=90,->] (DR2.north);
      \draw (UR3.south) edge[out=270,in=90,->] (DR3.north);
      \draw (UR4.south) edge[out=270,in=90,->] (DR4.north);
      \draw (UR5.south) edge[out=270,in=90,->] (DR5.north);
      \draw (UR6.south) edge[out=270,in=90,->] (DR6.north);
      \draw (UR7.south) edge[out=270,in=90,->] (DR7.north);
      \draw (UR8.south) edge[out=270,in=90,->] (DR8.north);
      \draw (UR9.south) edge[out=270,in=90,->] (DR9.north);
    \end{tikzpicture}
  \end{center}
  where the first two rows show $\cwo$ and $\invs_\refl(\cwo)$, the third row shows the map $\Shift_{(12)}$, and the last two rows show $\invs_\refl(\cwo[\coxr])$ and $\cwo[\coxr]$.
  (Observe that in the later, the last two commuting reflections $s_1$ and $s_3$ are interchanged.)
\end{example}

\subsection{The Cambrian rotation}

For $s$ initial in~$c$, we extend the definition of~$\Shift_s$ to words in $\invs_\refl(\cwm{c})$---and thus to faces---by
\begin{align}
  \Shift_s : \NablaAssocm(W,c) &\bij \NablaAssocm(W,\coxrn) \nonumber\\
  \r_1^{(i_1)}\r_2^{(i_2)} \cdots \r_k^{(i_k)} &\longmapsto
    \begin{cases}
      \Shift_s(\r_2^{(i_2)} \cdots \r_k^{(i_k)}\s^{(0)}) & \text{if } \r_1^{(i_1)}= \s^{(0)}\\
      \Shift_s(\r_1^{(i_1)}\r_2^{(i_2)} \cdots \r_k^{(i_k)}) & \text{otherwise} \end{cases}\ .  \label{eq:ass_shift}
\end{align}

\begin{example}
\label{ex:assocshiftorbit}
  Parallel to \Cref{ex:ncshiftorbit}, alternately applying $\Shift_s$ and $\Shift_t$ to the facet $\{1,2\} \in \DeltaAssocm[2](\WA[3],st)$ gives the orbit
  \[
    \begin{array}{ccccc}
      \s\t\S\T\S\T\S\T
      &\xmapsto{\Shift_s}&
      \t\S\T\S\T\S\T\s
      &\xmapsto{\Shift_t}&
      \S\T\S\T\S\T\s\t \\
      &\xmapsto{\Shift_s}&
      \T\S\T\S\T\s\t\S
      &\xmapsto{\Shift_t}&
      \S\T\S\T\s\t\S\T \\
      &\xmapsto{\Shift_s}&
      \T\S\T\s\t\S\T\S
      &\xmapsto{\Shift_t}&
      \S\T\s\t\S\T\S\T \\
      &\xmapsto{\Shift_s}&
      \T\s\t\S\T\S\T\S
      &\xmapsto{\Shift_t}&
      \s\t\S\T\S\T\S\T
    \end{array}.
  \]
  The same computation in $\NablaAssoc^{(2)}(\WA[3],st)$ gives
  \[
    \begin{array}{ccccc}
      \s^{(0)} \u^{(0)}
      &\xmapsto{\Shift_s}&
      \t^{(0)} \u^{(2)}
      &\xmapsto{\Shift_t}&
      \s^{(2)} \u^{(2)} \\
      &\xmapsto{\Shift_s}&
      \s^{(1)} \t^{(2)}
      &\xmapsto{\Shift_t}&
      \u^{(1)} \t^{(1)}\\
      &\xmapsto{\Shift_s}&
      \t^{(1)} \u^{(1)}
      &\xmapsto{\Shift_t}&
      \t^{(0)} \s^{(1)} \\
      &\xmapsto{\Shift_s}&
      \u^{(0)} \s^{(0)}
      &\xmapsto{\Shift_t}&
      \s^{(0)} \u^{(0)}
    \end{array}.
  \]
\end{example}

\begin{definition}
\label{def:cs_c_cambrian_rotation}
  The \defn{$m$-eralized $c$-Cambrian rotation} is
  \[
    \Camb_c \eqdef \Shift_{s_n} \circ \cdots \circ \Shift_{s_1} :  \Assocm(W,c)\bij \Assocm(W,c).
  \]
\end{definition}

This composition evidently does not depend on the chosen reduced word.  The elements in \Cref{fig:assocA22} are arranged according to their orbits under Cambrian rotation.

\begin{proposition}
\label{prop:cambrian_recurrence_order}
  \[
    \order(\Camb_c) =
    \begin{cases}
      mh+2     &\text{if } \psi \not\equiv \id \text{ and } m \text{ odd}\\
      (mh+2)/2 &\text{otherwise}
    \end{cases}\ .
  \]
\end{proposition}
\begin{proof}
  The statement follows from the facts that~$\wo$ is an involution on~$\sref$, $2N = nh$, and the length of the word $\cwm{c}$ is $n+mN$.
\end{proof}

\subsection{The Cambrian recurrence}
We modify the first case in the definition of the shift operator to obtain an inductive characterization of $\Assocm(W,c)$, the \defn{$m$-eralized $c$-Cambrian recurrence}.

\begin{proposition}
\label{prop:assoc_cambrian_recurrence}
  Let~$s$ be initial in~$c$ and let $I=\r_1^{(i_1)} \r_2^{(i_2)} \cdots \r_n^{(i_n)}$.
  If $r^{(i_1)} = s^{(0)}$, we set $I_{\langle s \rangle} \eqdef [\r_2^s]^{(i_2)} \cdots [\r_n^s]^{(i_n)}$.
  Then
  \[
    I \in \NablaAssocm(W,c)
    \Leftrightarrow
    \begin{cases}
      \Shift_s(I_{\langle s \rangle}) \in \NablaAssocm(W_{\langle s \rangle},\coxsn) & \text{if } r_1^{(i_1)} = s^{(0)} \\
      \Shift_s(I) \in \NablaAssocm(W,\coxrn) & \text{otherwise}
    \end{cases}\ .
  \]
\end{proposition}
\begin{proof}
  If $\r_1^{(i_1)}\ne \s^{(0)}$, the statement is clear, so assume otherwise.
  We compute that $r_2^s \cdots r_n^s = (r_1 \cdots r_n) s = c^{-1}s = (sc)^{-1}$.
  Each $r_i^s$ is a reflection inside the parabolic subgroup $W_{\langle s \rangle}$, since $\s \r_n^s \cdots \r_2^s$ is a reduced $\refl$-word for~$c$.
  It is a facet by \Cref{lem:reflection_order}\eqref{it:reflection_order7} and \Cref{lem:reflection_order4}.
\end{proof}

\begin{example}
\label{ex:assoccambrian}
  Parallel to \Cref{ex:ncdeltacambrian}, the Cambrian recurrence for the facet $s^{(1)} u^{(2)} \in \NablaAssocm[2](\WA[3],st)$ is computed as
  \[
    \underbrace{\s^{(1)} \u^{(2)}}_{st} \mapsto \underbrace{\s^{(0)} \t^{(2)}}_{ts} \mapsto \underbrace{\u^{(0)} \t^{(1)}}_{st} \mapsto \underbrace{\t^{(0)} \u^{(1)}}_{ts} \mapsto \underbrace{\s^{(1)}}_{s} \mapsto \underbrace{\s^{(0)}}_{s} \mapsto \underbrace{-}_{\one}.
  \]
  The same computation for $\S\T\S\t\S\T\S\t \in \DeltaAssocm[2](\WA[3],st)$ is
  \[
    \underbrace{\S\T\S\t\S\T\S\t}_{st}
    \mapsto
    \underbrace{\T\S\t\S\T\S\t\S}_{ts}
    \mapsto
    \underbrace{\S\t\S\T\S\t\S\T}_{st}
      \mapsto
    \underbrace{\t\S\T\S\t\S\T\S}_{ts}
    \mapsto
    \underbrace{\S\s\S}_{s}
      \mapsto
    \underbrace{\s\S\S}_{s}
    \mapsto
    \underbrace{-}_{\one}.
  \]
\end{example}

\Cref{fig:cambassocA22} illustrates $\Cambassocm[2](\WA[3],st)$.
\begin{figure}[t]
  \begin{center}
    \begin{tikzpicture}[scale=1.3]
      \tikzstyle{rect}=[rectangle,draw,opacity=.5,fill opacity=1]
      \tikzstyle{sort}=[]
      \tikzstyle{flip1}=[inner sep = 1pt,circle,draw,opacity=.5,fill opacity=1,fill=black!10]
      \tikzstyle{flip2}=[inner sep = 1pt,circle,draw,opacity=.5,fill opacity=1,fill=black!40]
      \node[rect,sort] (e)   at (0,0)
        {$\s\t\S\T\S\T\S\T$};
      \node[rect,sort] (s)   at (-1,1)
        {$\S\t\s\T\S\T\S\T$};
      \node[rect,sort] (t)   at ( 1,1)
        {$\s\T\S\T\s\T\S\T$};
      \node[rect,sort] (st)  at (-1,2)
        {$\S\T\s\t\S\T\S\T$};
      \node[rect,sort] (sts) at ( 0,3)
        {$\S\T\S\t\s\T\S\T$};
      \node[rect,sort] (stss)   at ( 1,4)
        {$\S\T\S\t\S\T\S\t$};
      \node[rect,sort] (stst)   at (-1,4)
        {$\S\T\S\T\s\t\S\T$};
      \node[rect,sort] (ststs)  at (-1,5)
        {$\S\T\S\T\S\t\s\T$};
      \node[rect,sort] (stssts) at ( 0,6)
        {$\S\T\S\T\S\T\s\t$};
      \node[rect,sort] (ss)   at (-3,2)
        {$\S\t\S\T\S\t\S\T$};
      \node[rect,sort] (tt)   at ( 2,2)
        {$\s\T\S\T\S\T\S\t$};
      \node[rect,sort] (stt)  at (-3,3)
        {$\S\T\s\T\S\T\s\T$};

      \draw (e) to node[flip1] {\scriptsize $\alpha$} (s) to node[flip1] {\scriptsize $\gamma$} (st) to node[flip1] {\scriptsize $\beta$} (sts) to node[flip2] {\scriptsize $\beta$} (stss) to node[flip2] {\scriptsize $\alpha$} (stssts);
      \draw (e) to node[flip1] {\scriptsize $\beta$} (t) to node[flip1] {\scriptsize $\alpha$} (sts) to node[flip2] {\scriptsize $\alpha$} (stst) to node[flip2] {\scriptsize $\gamma$} (ststs) to node[flip2] {\scriptsize $\beta$} (stssts);
      \draw (s) to node[flip2] {\scriptsize $\alpha$} (ss) to[bend right=20] node[flip1] {\scriptsize $\gamma$} (stst);
      \draw (t) to node[flip2] {\scriptsize $\beta$} (tt) to node[flip1] {\scriptsize $\alpha$} (stss);
      \draw (st) to node[flip2,pos=0.35] {\scriptsize $\gamma$} (stt) to[bend left=20] node[flip1] {\scriptsize $\beta$} (ststs);
    \end{tikzpicture}
  \end{center}
  \caption{
    The flip poset $\Cambassocm[2](\WA[3],st)$ on the facets of $\Assoc^{(2)}(\WA[3],st)$.
      The colored roots labelling the edges correspond to the direction of the flips, with the color indicated by the shading.}
  \label{fig:cambassocA22}
\end{figure}

\begin{proposition}
\label{prop:mclusterisomorphism}
  For any two Coxeter elements $c,c'$, the two $m$-eralized cluster complexes $\DeltaAssocm(W,c)$ and $\DeltaAssocm(W,c')$ are isomorphic.
\end{proposition}
\begin{proof}
  Let $s_1,\ldots,s_p$ be an initial sequence for~$c$.
  Then,
  \[
    \Shift_{s_p} \circ \cdots \circ \Shift_{s_1} : \DeltaAssocm(W,c)\bij\DeltaAssocm(W,\sninv_p\cdots\sninv_1cs_1\cdots s_p)
  \]
  is the desired isomorphism.
\end{proof}

\begin{remark}
\label{rem:graphautomorphism}
  For any two Coxeter elements~$c$ and $c'$, the underlying unoriented flip graphs of the complexes $\Assocm(W,c)$ and $\Assocm(W,c')$ are isomorphic.
  The isomorphism is induced by the shift operation, and is given by the isomorphism in \Cref{prop:mclusterisomorphism}.
  In particular, the map $\Camb_c$ is a graph automorphism.
  For $m=1$, the increasing flip graph coincides with the Hasse diagram of the Cambrian poset, while for $m\geq 2$ the increasing flip graph is no longer transitively reduced.
  Therefore, the shift operation does not induce a isomorphism between the unoriented Hasse diagrams of $\Cambassocm(W,c)$ and $\Cambassocm(W,c')$.
\end{remark}

\subsection{Flagness of \mhead-eralized cluster complexes}

We conclude this section by using the Cambrian recurrence to intrinsically describe the faces of the complex $\NablaAssocm(W,\c)$.
We then deduce that the $m$-eralized $c$-cluster complex is flag, and so can be reconstructed as the clique complex of its edges.\footnote{This statement was inadvertently omitted in~\cite{CLS2011}.}
\christianside{Never forget to forget the flagness.}
\nathanside{Is flagness is a word?  Please run a spellchecker before submission.}

\begin{proposition}
\label{prop:dualfaces}
  Let $\r_1^{(i_1)}\cdots \r_k^{(i_k)}$ be a subword of $\invs_\refl(\cwm{c})$.
  Then $\r_1^{(i_1)}\cdots \r_k^{(i_k)}$ is a face of $\NablaAssocm(W,c)$ if and only if $r_k \cdots r_1 \leqref c$ with $\lengthR(r_k\cdots r_1) = k$.
\end{proposition}

\begin{proof}
  We have already seen in \Cref{sec:m-cluster-subword} that these are necessary conditions for a subword to be a face, so we only need to show that these are also sufficient.
  For the empty word, the statement is clear, so we can assume $k \geq 1$ and that $r_k \cdots r_1 \leqref c$ with $\lengthR(r_k\cdots r_1) = k$.
  We have to show that this subword of $\invs_\refl(\cwm{c})$ can be extended to a subword that is a reduced $\refl$-word for~$c^{-1}$.

  But this is a direct consequence of the Cambrian recurrence.
  Either the first letter $r_1^{(i_1)}$ is not equal to $s^{(0)}$ in which case we conclude the statement by induction on the second equivalence in \Cref{prop:assoc_cambrian_recurrence}.
  Otherwise, the first letter $\r_1^{(i_1)}$ is $\s^{(0)}$.  In this case, we conclude the statement by induction from the first equivalence in \Cref{prop:assoc_cambrian_recurrence} as follows.
  Removing this first letter $\s^{(0)}$ from the facet and conjugating all other letters by~$s$ gives the facet $I_{\langle s \rangle} = [\r_2^s]^{(i_2)}\cdots [\r_k^s]^{(i_k)} \in \NablaAssocm(W,\coxsn)$.  By induction, $I_{\langle s \rangle}$ can be extended to a reduced subword of $\invs_\refl(\cwm{\coxs})$ whose product is $c^{-1}s$.  Lifting this subword back to $\NablaAssocm(W,c)$ has the effect of conjugating by $s$, giving a reduced subword of $\invs_\refl(\cwm{\c})$ whose product is $sc^{-1}$.  Adding back the first letter $\s^{(0)}$ gives a reduced subword for $c^{-1}$.
\end{proof}

\begin{example}
  The cluster complex $\NablaAssocm[1](\WA[4],s_1s_2s_3)$ has search word
  \[
    \invs_\refl(\c\cwo) = (12)^{(0)}(13)^{(0)}(14)^{(0)}(23)^{(0)}(24)^{(0)}(12)^{(1)}(34)^{(0)}(13)^{(1)}(14)^{(1)}.
  \]
  There are $21$ subwords of this search word of the form $\r_1^{i_1}\r_2^{i_2}$ with $r_2r_1 \leqref c$ and $r_1\neq r_2$.
  Two such examples are $(23)^{(0)}(14)^{(1)}$ and $(24)^{(0)}(12)^{(1)}$. 
\end{example}

\begin{corollary}
\label{cor:flag}
  $\Assocm(W,c)$ is flag---that is, all its minimal nonfaces have cardinality~$2$.
\end{corollary}\nathanside{Note the dependence on~\Cref{prop:ncpairs}.  Subtle.}

\begin{proof}
  We first check that every letter is contained in a facet of $\DeltaAssocm(W,c)$.
  Write $\cwm{c} = \c \restri{\c}{I_1} \cdots \restri{\c}{I_k}$ where $\restri{\c}{I_1} \cdots \restri{\c}{I_k}$ is the $c$-sorting word of~$\wom$ decomposed into its subwords of the individual copies of~$\c$.
  By definition, the letters in the first copy of~$\c$ form a facet.   Furthermore,~\Cref{lem:wosortingword} implies that $I_1 \supseteq I_2 \supseteq \cdots \supseteq I_k$---so the word obtained from $\cwm{c}$ omitting any $\restri{\c}{I_j}$ still contains a copy of $\wom$.  Therefore, every letter is contained in facet.

  Now let $\r_1^{(i_1)} \cdots \r_k^{(i_k)}$ be a subword of $\invs_\refl(\cwm{c})$ such that $\r_a^{(i_a)} \r_b^{(i_b)}$ for $1 \leq a < b \leq k$ is a face of $\NablaAssocm(W,c)$.
  Therefore $r_b r_a \leqref c$ with $r_b \neq r_a$ by \Cref{prop:dualfaces}.
  By \Cref{prop:ncpairs}, $r_k \cdots r_1 \leq c$ with $\lengthR(r_k \cdots r_1) = k$.  Therefore, $\r_1^{(i_1)} \cdots \r_k^{(i_k)}$ is a face of $\NablaAssocm(W,c)$ by \Cref{prop:dualfaces}.
\end{proof}

\section{Subword complexes and compatibility}
\label{sec:subwords_and_compatiblity}

\subsection{Compatibility via the \mhead-eralized cluster complex}
We show that the construction of $\Assocm(W,c)$ given in \Cref{def:m-c-cluster} implies the existence of the~$\parallel_c$ relation proposed in \Cref{def:parallelc}.  Write~$\parallel'_c$ for the relation on $\Phi^{(m)}_{\geq -1}$ given by
\begin{equation}
  \label{eq:reltwo}
  r^{(k)} \parallel'_c t^{(\ell)} \text{ if } \{\r^{(k)},\t^{(\ell)}\} \text{ is a face of } \Assocm(W,c).
\end{equation}

\begin{lemma}
  \label{lem:shift_eq_taum}
  The bijection~$\toap: \invs_\refl(\cwm{c}) \bij \mPhipm$ defined in~\eqref{rem:almost_pos_are_these} sends $\Shift_s$ to~$\taum$.
  That is, for $r^{(k)} \in \invs_\refl(\c \cwom)$ and~$s$ initial in~$c$, we have
  \[
    \toap_{\coxrn}\big(\Shift_s(r^{(k)})\big) = \taum\big(\toap_c(r^{(k)})\big).
  \]
\end{lemma}
\begin{proof}
  This follows immediately from the definitions~\eqref{eq:taum} and~\eqref{eq:shifts}.
  Note that the third and fourth cases in~\eqref{eq:taum} are merged together as the third case in~\eqref{eq:shifts} using~\eqref{rem:almost_pos_are_these}.
\end{proof}

\begin{theorem}
\label{thm:m-c-compatibility}
  The $m$-eralized $c$-compatibility relation~$\parallel_c$ exists and is symmetric.  Moreover, \[r^{(k)} \parallel'_c t^{(\ell)} \Leftrightarrow \toap_c(r^{(k)}) \parallel_c \toap_c(t^{(\ell)}).\]
\end{theorem}

\begin{proof}
  Applying $\phi^{-1}_c$ and using~\Cref{lem:shift_eq_taum}, we prove that $\parallel'_c$ satisfies the defining properties \Cref{def:parallelc} of $\parallel_c$ with $\taum$ replaced by $\Shift_s$.
  That is, for~$s$ final in~$c$
  \begin{enumerate}[(i)]
\item $[{}^{sc}s]^{(m)} \parallel'_c t^{(k)} \Leftrightarrow \phi_c(t^{(k)}) \in \Phi_{\langle s \rangle}$, and
\item $\quad t^{(k)} \parallel'_c r^{(\ell)} \Leftrightarrow \Shift_s(t^{(k)}) \parallel_{\coxrnf} \Shift_s(r^{(\ell)})$,
\end{enumerate}
with $t^{(k)},r^{(\ell)} \in \invs_\refl(\cwm{c})$.  When $k=m$, the first property holds because the colored reflections of color $m$ form a facet of $\Assocm(W,c)$ and the roots of color $m$ in $\Phipm$ are simple.  For $k<m$ we compute
  \begin{align*}
    [{}^{sc}s]^{(m)} \parallel'_c t^{(k)}
      &\XLeftrightarrow{30pt}{\eqref{eq:ass_shift}} \Shift_s^{-1}\big([{}^{sc}s]^{(m)}\big) \parallel'_{\coxrnf} \Shift_s^{-1}\big(t^{(k)}\big) \\
      &\XLeftrightarrow{30pt}{\eqref{eq:shifts}} s^{(0)} \parallel'_{\coxrnf} \Shift_s^{-1}\big(t^{(k)}\big) \\
      &\XLeftrightarrow{30pt}{\ref{prop:assoc_cambrian_recurrence}} \Shift_s\Shift_s^{-1}\big(t^{(k)}\big) = t^{(k)} \in \NablaAssocm(W_{\langle s \rangle},\coxsnf) \\
      &\XLeftrightarrow{30pt}{\ref{cor:flag}} t^{(k)} \in \invs_\refl(\cwm{\coxsf}) \\
      &\XLeftrightarrow{30pt}{\eqref{rem:almost_pos_are_these}} t \in W_{\langle s \rangle} \Leftrightarrow \phi_c(t^{(k)})=\beta_t^{(k)} \in \Phipm,
  \end{align*}
  implying the first defining property for~$\parallel'_c$.
  The second defining property directly follows from~\eqref{eq:ass_shift} using~\eqref{rem:almost_pos_are_these}.

  As shown in \Cref{prop:unique_compatible}, these properties uniquely determine the relation.   It follows from \Cref{lem:shift_eq_taum} that
  \[
    t^{(k)} \parallel'_c r^{(\ell)} \Leftrightarrow \toap_c(t^{(k)}) \parallel_c \toap_c(r^{(\ell)}).\qedhere
  \]
\end{proof}

\christianside{If we had initial, the arguments here would be even more technical. But we are finally done with it.}
\begin{corollary+}
  The $m$-eralized $c$-cluster complex $\Assocm(W,c)$ is the simplicial complex given by all collections of pairwise~$\parallel_c$ compatible $m$-colored almost positive roots.
\end{corollary+}

\subsection{Recovering generalized cluster complexes}
\label{sec:fomin_reading}

In~\cite{FR2005}, S.~Fomin and N.~Reading gave an $m$-eralization of \Cref{def:reading_compatibility} for bipartite Coxeter elements.
Let $\sref = \sref_L \sqcup \sref_R$ be the bipartition of the simple reflections from \Cref{sec:coxeter_elements}.
The \defn{Fomin-Reading map}\footnote{We have chosen to place the ``negative'' simple roots in the separate color~$m$, while S.~Fomin and N.~Reading place the negative simple roots along with the full copy of positive roots in color~$0$.  We have modified their definitions accordingly.} $\FRm : \mPhipm\bij \mPhipm$ is defined by
\[
  \FRm(\beta^{(k)}) =
  \begin{cases}
    \beta^{(k+1)}                & \text{if } 0 \leq k < m-1 \\
    \FR(\beta)^{(0)}  & \text{if } k = m-1 \\ 
	\FR(-\beta)^{(0)}  & \text{if } k = m 
  \end{cases}\ ,
\]
where $\FR \eqdef \prod_{s \in \sref_L} \tau_s \prod_{s \in \sref_R} \tau_s$ and 
$\gamma^{(0)}$ is interpreted as $[-\gamma]^{(m)}$ for $\gamma \in -\Delta$.

\begin{definition}
  \label{def:fr_compatibility}
The \defn{$\FR$ $m$-eralized compatibility relation} is the unique relation $\parallel^{\FR}$ on $\mPhipm$ characterized by the following two properties~\cite[Theorem~3.4]{FR2005}:
\begin{enumerate}[(i)]
  \item\label{it:frinitial} for $\alpha \in \Delta$ and $\beta^{(k)} \in \mPhipm$,
    $
      \alpha^{(m)} \parallel^{\FR} \beta^{(k)} \Leftrightarrow \beta \in \Phi_{\langle \alpha \rangle},
    $
  \item\label{it:frinduction} for $\beta^{(k)}, \gamma^{(\ell)} \in \mPhipm$,
    $
      \beta^{(k)} \parallel^{\FR} \gamma^{(\ell)} \Leftrightarrow \FRm(\beta^{(k)}) \parallel^{\FR} \FRm(\gamma^{(\ell)}).
    $
\end{enumerate}
\end{definition}

We show that the compatibility relation~$\parallel_c$ recovers $\parallel^{\FR}$ for bipartite $c = c_Lc_R$.

\begin{proposition}
\label{prop:FRmap}
  The Fomin-Reading map $\FRm$ satisfies
  \[
    \big(\FRm\big)^{-1} = \psi \circ \taum[s_N] \circ \cdots \circ \taum[s_1],
  \]
  where $\cwo[c_Rc_L] = \s_1\cdots\s_N$ is the $c_Rc_L$-sorting word for~$\wo$, and where $\psi:\PhiP\rightarrow \PhiP$ is the involution on positive roots given by $\psi(\beta)\eqdef -\wo(\beta)$ and $\psi(\beta^{(k)})\eqdef \psi(\beta)^{(k)}$.
\end{proposition}
\begin{proof}
  It is a straightforward check that this composition satisfies the two cases in the definition of the Fomin-Reading map~$\FRm$.
\end{proof}

\begin{example}
  Let $c = c_Lc_R = st = (12)(23) \in \WA[3]$ and let $m = 2$.
  In this case, $\cwo[c_Rc_L] = \t\s\t$, so we consider the identity
  \[
    \big({\FR}^{(2)}\big)^{-1} = \psi \circ \tau^{(2)}_{t} \circ \tau^{(2)}_s \circ \tau^{(2)}_{t}.
  \]
 We compute the images of this composition on $\mPhipm[2]$:
  \begin{center}
    \begin{tikzpicture}
      \node (A) at (0, 0) {$\alpha^{(0)}\ \gamma^{(0)}\ \beta^{(0)}\hspace*{10pt}
      \alpha^{(1)}\ \gamma^{(1)}\ \beta^{(1)}\hspace*{10pt}
      \alpha^{(2)}\ \beta^{(2)}\ $};

      \node (B) at (0,-1) {$\gamma^{(0)}\ \alpha^{(0)}\ \beta^{(2)}\hspace*{10pt}
      \gamma^{(1)}\ \alpha^{(1)}\ \beta^{(0)}\hspace*{10pt}
      \alpha^{(2)}\ \beta^{(1)}\ $};

      \node (C) at (0,-2) {$\beta^{(0)}\ \alpha^{(2)}\ \beta^{(2)}\hspace*{10pt}
      \beta^{(1)}\ \alpha^{(0)}\ \gamma^{(0)}\hspace*{10pt}
      \alpha^{(1)}\ \gamma^{(1)}\ $};

      \node (D) at (0,-3) {$\beta^{(2)}\ \alpha^{(2)}\ \beta^{(1)}\hspace*{10pt}
      \beta^{(0)}\ \gamma^{(0)}\ \alpha^{(0)}\hspace*{10pt}
      \gamma^{(1)}\ \alpha^{(1)}\ $};

      \node (E) at (0,-4) {$\alpha^{(2)}\ \beta^{(2)}\ \alpha^{(1)}\hspace*{10pt}
      \alpha^{(0)}\ \gamma^{(0)}\ \beta^{(0)}\hspace*{10pt}
      \gamma^{(1)}\ \beta^{(1)}\ $};

      \node (F) at (0,-5) {$\alpha^{(0)}\ \gamma^{(0)}\ \beta^{(0)}\hspace*{10pt}
      \alpha^{(1)}\ \gamma^{(1)}\ \beta^{(1)}\hspace*{10pt}
      \alpha^{(2)}\ \beta^{(2)}\ $};

      \draw[|->, looseness=1] (A.south west) to[out=245,in=115] (B.north west);
      \node at (-3.7,-0.5) {$\tau^{(2)}_t$};
      \draw[|->, looseness=1] (B.south west) to[out=245,in=115] (C.north west);
      \node at (-3.7,-1.5) {$\tau^{(2)}_s$};
      \draw[|->, looseness=1] (C.south west) to[out=245,in=115] (D.north west);
      \node at (-3.7,-2.5) {$\tau^{(2)}_t$};
      \draw[|->, looseness=1] (D.south west) to[out=245,in=115] (E.north west);
      \node at (-3.5,-3.5) {$\psi$};
      \draw[|->, looseness=1] (E.south west) to[out=245,in=115] (F.north west);
      \node at (-3.9,-4.5) {${\FR}^{(2)}$};
    \end{tikzpicture}.
  \end{center}
\end{example}

\begin{corollary}
\label{cor:m-c-assoc-m-assoc}
  For $c = c_Lc_R$ a bipartite Coxeter element, $\beta^{(i)} \parallel_c \gamma^{(j)}$ if and only if $\beta^{(i)} \parallel^{\FR} \gamma^{(j)}$.
\end{corollary}
\begin{proof}
  The defining condition \Cref{def:fr_compatibility}\eqref{it:frinitial} for $\parallel^{\FR}$ holds for $\parallel_c$.
  We must show that $\parallel_c$ also satisfies the defining condition \Cref{def:fr_compatibility}\eqref{it:frinduction}.
  By~\cite[Lemma~6.2]{FR2005}, the $m$-eralized compatibility relation is invariant under $\psi$:
  \[
    \beta^{(k)} \parallel^{\FR} \gamma^{(\ell)} \Leftrightarrow \psi(\beta^{(k)}) \parallel^{\FR} \psi(\gamma^{(\ell)}).
  \]
  \Cref{prop:FRmap} implies that \Cref{def:fr_compatibility}\eqref{it:frinduction} can be expressed as
  \begin{equation*}
    \beta^{(k)} \parallel^{\FR} \gamma^{(\ell)} \Leftrightarrow \big(\taum[s_N] \circ \cdots \circ \taum[s_1]\big)^{-1}(\beta^{(k)}) \parallel^{\FR} \big(\taum[s_N] \circ \cdots \circ \taum[s_1]\big)^{-1}(\gamma^{(\ell)}). \label{eq:FRequation}
  \end{equation*}
  By applying \Cref{def:parallelc}\eqref{it:mcompatibility2}~$N$ times, we conclude for $c=c_Lc_R$ that the same recurrence relation holds for $\parallel_c$.  It follows that
  \[
    \beta^{(k)} \parallel^{\FR} \gamma^{(\ell)} \Leftrightarrow \beta^{(k)} \parallel_c \gamma^{(\ell)}.\qedhere
  \]
\end{proof}


\section{Topological properties}
\label{sec:clustertopology}

In this section, we apply the general results from \Cref{sec:topologysubword} to $m$-eralized cluster complexes.
In~\cite{AT2008}, C.~Athanasiadis and E.~Tzanaki showed that the generalized cluster complex $\Assocm(W,c_Lc_R)$ of Fomin-Reading is vertex-decomposable.
For $m=1$, \cite{CLS2011} gives an elegant proof of vertex-decomposability by realizing the cluster complex as a subword complex and invoking~\cite[Theorem 2.5]{KM2004}, which states that \emph{all} subword complexes are vertex-decomposable.
Based on \Cref{thm:vertex-decomposability}, we also obtain the vertex-decomposability for $m$-eralized $c$-cluster complexes.
\christianside{Aren't we lucky to have Coxeter initial complexes?}
\nathanside{Thank you, dual braid monoid!}

\begin{theorem}
\label{cor:m-c-shelling-order}
 $\Assocm(W,c)$ is vertex-decomposable, hence shellable.
 The lexicographic order of the facets (as sorted lists of positions) of~$\Assocm(W,c)$ is a shelling order.
\end{theorem}

\begin{proof}
  This is a direct consequence of \Cref{thm:vertex-decomposability} given that $\cwm{c}$ is initial in $\c^\infty$.
\end{proof}

As a consequence we also obtain the following uniform theorem first proven by S.~Fomin and N.~Reading in~\cite[Proposition~11.1]{FR2005}.

\begin{theorem}
\label{thm:assoc_homotopy}
  The $m$-eralized $c$-cluster complex $\Assocm(W,c)$ has the homotopy type of a wedge of $\Catm[m-1](W)$ spheres of dimension~$n{-}1$.
\end{theorem}

\begin{proof}
  This follows from \Cref{cor:wedgeofspheres} by observing that the subcomplex of $\Assocm(W,c)$ not containing positions corresponding to a letter in the initial copy of $\cwm{c}$ is equal to the cluster complex $\Assocm[m-1](W,\psi(c))$.
  By \Cref{prop:mclusterisomorphism} and \Cref{cor:m-c-assoc-m-assoc}, $\Assocm[m-1](W,\psi(c))$ is isomorphic to $\Assocm[m-1](W,c)$.
  Since the latter is known to be counted by $\Catm[m-1](W)$, we conclude the theorem.
\end{proof}

\begin{corollary}
\label{cor:m-c-h-vector}
  The $h$-polynomial $h_0 + h_1q + \cdots + h_dq^d$ of the $m$-eralized $c$-cluster complex $\Assocm(W,c)$ is given by the upper and lower covers generating function of $\Cambassocm(W,c)$,
  \begin{align*}
    h_i & = \bigset{ F \in \Cambassocm(W,c) }{ F \text{ has exactly } i \text{ upper covers } }.
  \end{align*}
\end{corollary}
The coefficients in this corollary are called \defn{$W$-Fu\ss-Narayana numbers.}

\begin{proof}
  Since the lexicographic order is a linear extension of $\Cambassocm(W,c)$, the statement follows from \Cref{cor:m-c-shelling-order} together with the fact that the reverse of a shelling order is again a shelling order.
\end{proof}

\section{Clusters and noncrossing partitions}
\label{sec:m-assoc-ncp}

The connection between $\DeltaAssocm(W,c)$ and $\DeltaNCm(W,c)$ is based on the $m$-eralization of the $m=1$ results in~\cite[Proposition~6.20]{PS20112}.  This recovers, in crystallographic types, the representation-theoretic bijections given by A.~Buan, I.~Reiten, and H.~Thomas~\cite{BRT2011,buan2009m}.

\medskip

The root configurations of the facets of $\DeltaAssocm(W,\c)$ satisfy an \defn{$m$-eralized $c$-Cambrian recurrence}.

\begin{proposition}
\label{prop:rootconfiguration}
  Let~$s$ be initial in~$c$ and let~$I$ be a facet of $\DeltaAssocm(W,c)$.
  Then the root configuration satisfies
  \[
    \Roots{I} =
    \begin{cases}
      \big\{\alpha_s^{(0)}\big\} \cup \Roots{I_{\langle s \rangle}} &\text{if } s^{(0)} \in \Roots{I} \\
      s\big(\Roots{\Shift_s(I)}\big) &\text{otherwise }
    \end{cases}\ ,
  \]
  with $I_{\langle s \rangle} \in \DeltaAssocm(\Wres,\coxsn)$ and $\Shift_s(I)\in \Assocm(W,\coxrn)$ as defined in \Cref{prop:assoc_cambrian_recurrence}.
\end{proposition}

\begin{proof}
  Both cases follow from \Cref{prop:assoc_cambrian_recurrence} and~\eqref{eq:root_configuration}.
\end{proof}


As discussed, we use the term \defn{natural} to mean that a bijection respects the Cambrian recurrence.

\begin{theorem}
\label{thm:rootconf_skiptset}
  The root configuration induces a natural bijection
  \begin{align*}
    \DeltaAssocm(W,\c) & \cambbij\DeltaNCm(W,c) \\
    I & \longmapsto \t_1^{(i_1)} \cdots \t_n^{(i_n)},
  \end{align*}
for $\Roots{I} = \big\{ \beta_{1}^{(i_1)}, \ldots, \beta_{n}^{(i_n)}\big\}$ and $t_k = s_{\beta_k}$ for $1 \leq k \leq n$.
\end{theorem}\nathanside{And\dots scene.}

\begin{proof}
  Immediate from the Cambrian recurrence for noncrossing partitions in \Cref{prop:nc_cambrian_recurrence} and the recurrence for root configurations in \Cref{prop:rootconfiguration}.
\end{proof}

\begin{theorem}
\label{thm:cambnccambassoc}
  The natural bijection in \Cref{thm:rootconf_skiptset} induces a poset isomorphism $\Cambassocm(W,c) \cong \Cambncm(W,c)$.
\end{theorem}

\begin{proof}
By \Cref{lem:rootupdate} and \Cref{eq:increasingflip}, flips in $\Assocm(W,c)$ are sent to flips in $\DeltaNCm(W,c)$ under the bijection of \Cref{prop:rootconfiguration}.
\end{proof}

\begin{corollary}
\label{cor:m-c-h-vector2}
  The $h$-polynomial of $\Assocm(W,c)$ is given by $\sum q^{\lengthR(w_m)}$, where the sum ranges over all $(w_1,\ldots,w_m) \in \NCm(W,c)$.
\end{corollary}

\begin{proof}
  This follows from \Cref{cor:m-c-h-vector}, from \Cref{thm:cambnccambassoc}, and from \Cref{prop:cambnc}\eqref{eq:cambncitem3}.
\end{proof}

\chapter{Sortable elements}
\label{sec:sortable_elements}

In this chapter, we study $m$-eralized sortable elements as certain elements in the $m$-eralized weak order.
After giving the general definition (\Cref{sec:m_sort}), we review the known theory of sortable elements in Coxeter groups (\Cref{sortable_elements_classical}).
Detailed background can be found in~\cite{Rea2006,Rea2007,Rea20072}.
We discuss the Cambrian recurrence on $m$-eralized sortable elements (\Cref{sec:m-sort-cambrian-recurrence}).
We describe sortable elements in terms of their Garside factors (\Cref{sec:factorsort}), define the Cambrian poset on sortable elements, and show that this poset is a lattice (\Cref{sec:sort_cambrian_lattices}).  We construct natural bijections between $m$-eralized sortable elements, $m$-eralized noncrossing partitions, and $m$-eralized clusters (\Cref{sec:m-sort-ncp}).
We connect $m$-eralized sortable elements with chains in the shard intersection order (\Cref{sec:m-sort-shard}).   We conclude with a comparison of our $m$-eralized Cambrian lattices of type $A_n$ and $m$-Tamari lattices (\Cref{sec:tamari}).\nathanside{This section might get a bit technical, so please buckle up.}

\section{\mhead-eralized sortable elements}
\label{sec:m_sort}

N.~Reading introduced $c$-sortable elements in~\cite{Rea2006} as a subset of elements of the Coxeter group $W$.  His notion extends verbatim to elements of the positive Artin monoid $\Artinmon$.

Let~$c$ be a Coxeter element with reduced $\sref$-word $\c$.  For $w$ an element of $W$ or $\Artinmon$, recall from~\Cref{def:c-sorting} that the $\c$-sorting word is the lexicographically first subword of $\c^\infty$ that is a reduced $\sref$-word for~$w$.

\begin{definition}
\label{def:msort_sortable}
  An element $w$ of $W$ or $\Artinmon$ is \defn{$c$-sortable} if its $\c$-sorting word $\sw{w}{\c}=\restri{\c}{I_1} \restri{\c}{I_2} \cdots \restri{\c}{I_k}$ satisfies $I_1 \supseteq I_2 \supseteq \cdots \supseteq I_k$.
\end{definition}
\christianside{This is the definition you probably know, but now in the positive Artin monoid.}

By~\Cref{lem:wosortingword}, $\cwo$ is initial in $\c^\infty$.
Hence, $\wo$ is $c$-sortable.
Since $\bwom[2] = \bc^h \in \Artinmon$, we conclude that that $\bwom$ is $c$-sortable for any positive integer $m$ and any Coxeter element $c$.

\begin{example}
\label{ex:msortA3}
  We illustrate \Cref{def:msort_sortable} with the following non-example.
  In $\Weakm[2](\WA[4])$ with $\c=\s_1\s_2\s_3$, the element $w=s_1s_2s_3s_1s_2 \cdot s_3s_2s_1$ (where the dot denotes the separation of Garside factors), has $\c$-sorting word
  \[
    \sw{w}{c} = \left( \begin{array}{ccc|ccc|ccc|ccc}  1 & 2 & 3  &1 & 2 & 3  & 1 & 2 & 3 &1 & 2 & 3 \\  s_1& s_2 & s_3 &s_1& s_2 & s_3 & - & s_2 & - & s_1 & - & -  \end{array} \right).
  \]
  It is not $c$-sortable since $s_1$ occurs in the fourth but not the third copy of~$\c$.
  Note that the vertical bars here serve only to distinguish the copies of~$\c$.
\end{example}

Although the word $\sw{w}{c}$ depends on a particular choice of a reduced word~$\c$ for the Coxeter element~$c$, the property of being $c$-sortable does not by \Cref{lem:reflection_order}\eqref{it:reflection_order1}.

We write $\Sort(W,c)$ for the set of $c$-sortable elements in the Coxeter group, $\Sort(\Artinmon,c)$ for those in the positive Artin monoid, and $\Sortm(W,c)$ for the restriction  of $\Sort(\Artinmon,c)$ to the interval $\Wm=[\bone,\bwom]$.
We characterize $m$-eralized sortable elements using Garside factorizations in \Cref{def:factorsort}.  Since $\Sortm(W,c) \subseteq \Artinmon$, sortable elements inherit the notion of \defn{support} from \Cref{sec:cox_length_support}.

\medskip
\Cref{fig:weakA3} illustrates the weak order of $\WA[4]$, with the $s_1s_2s_3$-sortable elements shaded.   $\Sortm[2](\WA[3])$ is illustrated in \Cref{fig:weakA22} on page~\pageref{fig:weakA22}, and the first, second, and fifth columns of \Cref{fig:sortA22} list the Garside factorizations, sorting words, and supports of the $12$ elements in $\Sortm[2](\WA[2],st)$. 


\begin{figure}[t]
  \begin{center}
    \resizebox{\textwidth}{!}{
    \begin{tikzpicture}[yscale=1.2]
      \tikzstyle{rect}=[rectangle,draw,opacity=.5,fill opacity=1]
      \tikzstyle{sort}=[fill=black!20]
      \node[rect,sort] (e)   at ( 0,0) {$\one$};

      \node[rect,sort] (s)   at (-3,1) {$s_1$};
      \node[rect,sort] (t)   at ( 0,1) {$s_2$};
      \node[rect,sort] (u)   at ( 3,1) {$s_3$};

      \node[rect,sort] (st)  at (-5,2) {$s_1s_2$};
      \node[rect] (ts)  at (-2,2) {$s_2|s_1$};
      \node[rect,sort] (su)  at ( 0,2) {$s_1s_3$};
      \node[rect,sort] (tu)  at ( 2,2) {$s_2s_3$};
      \node[rect] (ut)  at ( 5,2) {$s_3|s_2$};

      \node[rect,sort] (stu)  at (-6,3) {$s_1s_2s_3$};
      \node[rect,sort] (sts)  at (-4,3) {$s_1s_2|s_1$};
      \node[rect] (sut)  at (-1,3) {$s_1s_3|s_2$};
      \node[rect] (tus)  at ( 1,3) {$s_2s_3|s_1$};
      \node[rect] (uts)  at ( 4,3) {$s_3|s_2|s_1$};
      \node[rect,sort] (tut)  at ( 6,3) {$s_2s_3|s_2$};

      \node[rect,sort] (stus)  at (-5,4) {$s_1s_2s_3|s_1$};
      \node[rect,sort] (stut)  at (-2,4) {$s_1s_2s_3|s_2$};
      \node[rect] (tust)  at ( 0,4) {$s_2s_3|s_1s_2$};
      \node[rect] (suts)  at ( 2,4) {$s_1s_3|s_2|s_1$};
      \node[rect] (tuts)  at ( 5,4) {$s_2s_3|s_2|s_1$};

      \node[rect,sort] (stust)   at (-3,5) {$s_1s_2s_3|s_1s_2$};
      \node[rect] (stuts)   at ( 0,5) {$s_1s_2s_3|s_2|s_1$};
      \node[rect] (tusts)   at ( 3,5) {$s_2s_3|s_1s_2|s_1$};

      \node[rect,sort] (stusts)   at ( 0,6) {$s_1s_2s_3|s_1s_2|s_1$};

      \draw (e) to (s);
      \draw (e) to (t);
      \draw (e) to (u);

      \draw (s) to (st);
      \draw (s) to (su);
      \draw (t) to (ts);
      \draw (t) to (tu);
      \draw (u) to (su);
      \draw (u) to (ut);

      \draw (st) to (stu);
      \draw (st) to (sts);
      \draw (ts) to (sts);
      \draw (ts) to (tus);
      \draw (su) to (sut);
      \draw (tu) to (tus);
      \draw (tu) to (tut);
      \draw (ut) to (uts);
      \draw (ut) to (tut);

      \draw (stu) to (stus);
      \draw (sts) to (stus);
      \draw (stu) to (stut);
      \draw (sut) to (stut);
      \draw (sut) to (suts);
      \draw (tus) to (tust);
      \draw (uts) to (suts);
      \draw (uts) to (tuts);
      \draw (tut) to (tuts);

      \draw (stus) to (stust);
      \draw (tust) to (stust);
      \draw (stut) to (stuts);
      \draw (suts) to (stuts);
      \draw (tust) to (tusts);
      \draw (tuts) to (tusts);

      \draw (stust) to (stusts);
      \draw (stuts) to (stusts);
      \draw (tusts) to (stusts);
    \end{tikzpicture}
    }
  \end{center}
  \caption{The weak order $\Weak(\WA[4])$.  The $s_1s_2s_3$-sortable elements are shaded.}
  \label{fig:weakA3}
\end{figure}

\section{Classical sortable elements}
\label{sortable_elements_classical}

In this section, we review properties of sortable elements of finite Coxeter groups.  

\subsection{Cambrian recurrence}
There is an immediate defining recurrence for $\Sort(W,c)$ from \Cref{def:msort_sortable}, called the \defn{$c$-Cambrian recurrence}.
\begin{proposition+}[{N.~Reading~\cite[Lemma~2.1 and~2.2]{Rea2006}}]
\label{prop:sort_cambrian_recurrence}
  Let~$s$ be initial in~$c$.
  Then
  \[
    w \in \Sort(W,c) \Leftrightarrow
    \begin{cases}
      \phantom{\sninv}w  \in \Sort(W_{\langle s \rangle}, \coxsn)  & \text{if } s \in \AscSet_L(w) \\
      \sninv w \in \Sort(W, \coxrn)  & \text{if } s \in \DesSet_L(w)
    \end{cases}\ .
  \]
  \qedherecases
\end{proposition+}

\subsection{Lattice properties of sortable elements}

The following properties of sortable elements are most easily explained using the following characterization of the sortable property in the language of biclosed sets from~\eqref{eq:biclosed}.

\begin{theorem+}[{\cite[Theorem~4.3]{RS2011}}]
\label{thm:caligned}
  An element $w \in W$ is $c$-sortable for a Coxeter element~$c \in W$ if and only if, for $\alpha,\beta,\gamma \in \PhiP$ such that $\gamma = a\alpha+b\beta$ with $a,b\in\R_+$, we have
  \begin{equation*}
    \gamma \in \inv(w) \Rightarrow {\min}_{\leq_c}\{\alpha,\beta\} \in \inv(w),
  \end{equation*}
  where $\leq_c$ is the root order induced by the $c$-sorting word~$\cwo$.
  In particular, $\wo \in \Sort(w,c)$.
\end{theorem+}
The following lemma is an immediate consequence of \Cref{thm:caligned}.

\begin{lemma+}
  \label{eq:scfa_parabolic}
  For $w \in \Sort(W,c)$ and $J \subseteq \sref$,
  \[
    w_{J} \in \Sort\left(W_{J},\restri{c}{J}\right). \qedhere
  \]
\end{lemma+}

Under the inclusion $W_{\langle s \rangle} \subset W$, a sortable element in $W_{\langle s \rangle}$ is again sortable in $W$.  The following lift from $W_{\langle s \rangle}$ to $W$ is more subtle.

\begin{lemma+}[{N.~Reading~\cite[Lemma~2.8 and~2.9]{Rea2007}}]
\label{lem:readingjoininparabolic}
  For~$s$ initial in~$c$, if $w \in \Sort(W_{\langle s \rangle},\coxsn)$, then $w \vee s$ is both $c$-sortable and $\coxrn$-sortable with
  \[
    \coveredref(w \vee s) = \coveredref(w) \cup \{s\}. \qedhere
  \]
\end{lemma+}\nathanside{The proofs of analogous statements to~\Cref{eq:scfa_parabolic,lem:readingjoininparabolic} in the Fu\ss case will occupy many pages.}

\subsection{The Cambrian lattice}
The \defn{$c$-Cambrian lattice} $\Cambsort(W,c)$ is the restriction of the weak order $\Weak(W)$ to $\Sort(W,c)$.
The inversion set of the meet of two $c$-sortable elements in the weak order on~$W$ is simply the intersection of their inversion sets, which again directly follows from \Cref{thm:caligned}.

\begin{proposition+}
\label{prop:scfa_inv}
 Let $w,u\in \Sort(W,c)$.  Then $u \wedge v \in \Sort(W,c)$ and
  \[
    \inv(u \wedge v) = \inv(u) \cap \inv(v). \qedhere
  \]
\end{proposition+}

Although the join does not enjoy such a simple description in terms of inversion sets, if $w,u\in \Sort(W,c)$, then also $u \vee v \in \Sort(W,c)$.  We conclude the following theorem~\cite[Theorem~7.3]{RS2011}.  

\begin{theorem+}
\label{eq:sublattice}
  $\Cambsort(W,c)$ is a sublattice of $\Weak(W)$.
\end{theorem+}

We $m$-eralize~\Cref{prop:scfa_inv} in \Cref{thm:sort_intersection} and $m$-eralize \Cref{eq:sublattice} in \Cref{thm:sort_is_lattice}.

\subsection{Cambrian rotation}
The following \defn{$c$-Cambrian rotation} is well-defined by~\Cref{prop:sort_cambrian_recurrence} and~\Cref{lem:readingjoininparabolic}:
\begin{align*}
  \Shift_s: \Sort(W,c)  &\longrightarrow \Sort(W,\coxrn) \\
                      w &\longmapsto
                        \begin{cases}
                          w \vee s  & \text{if } s \in \AscSet_L(w) \\
                          \sninv w  & \text{if } s \in \DesSet_L(w)
                        \end{cases}\ .
\end{align*}

\subsection{Sortable elements, noncrossing partitions, and clusters}

By associating a sortable element to its cover reflections, N.~Reading gave a natural bijection between $c$-sortable elements and $c$-noncrossing partitions~\cite[Theorem 6.1]{Rea20072}.  This bijection is given by
\begin{align}
  \Sort(W,c) &\cambbij \NC(W,c) \nonumber \\
  w &\longmapsto r_1 \cdots r_k,\label{eq:sort_to_nc_by_covers}
\end{align}
where $\coveredref(w) = \{r_1,\ldots,r_k\}$ with $r_1 <_{\c} \cdots <_{\c} r_k$.
N.~Reading and D.~Speyer gave an alternative description of this bijection using the notion of a \defn{skip set}~\cite[Section 5]{RS2011}, which we $m$-eralize in \Cref{thm:bij_sort_to_nc}.

N.~Reading proved the existence of the compatibility~$\parallel_c$ by showing that it arises naturally from his theory of sortable elements~\cite{Rea20072}, providing a bijection between sortable elements and clusters.

\section{The Cambrian recurrence}
\label{sec:m-sort-cambrian-recurrence}

There is a simple inductive characterization of $\Sortm(W,c)$ called the \defn{$m$-eralized $c$-Cambrian recurrence}, which follows directly from \Cref{lem:reflection_order}\eqref{it:reflection_order7} and \Cref{lem:reflection_order4}.

\begin{proposition+}
\label{prop:msort_cambrian_recurrence}
  Let~$s$ be initial in~$c$.
  Then
  \[
    w \in \Sortm(W,c) \Leftrightarrow
    \begin{cases}
      \phantom{\sninv}w  \in \Sortm(W_{\langle s \rangle}, \coxsn)  & \text{if } s \in \AscSet_L(w) \\
      \sninv w \in \Sortm(W, \coxrn)  & \text{if } s \in \DesSet_L(w)
    \end{cases}\ .
  \]
  \qedherecases
\end{proposition+}

\begin{example}
\label{ex:sortmcambrian}
  Parallel to \Cref{ex:ncdeltacambrian,ex:assoccambrian}, the Cambrian recurrence for $sts \cdot s \in \Sortm[2](\WA[3],st)$ is
  \[
      \underbrace{sts \cdot s}_{st}
      \mapsto
      \underbrace{ts \cdot s}_{ts}
      \mapsto
      \underbrace{s \cdot s}_{st}
      \mapsto
      \underbrace{s}_{ts}
      \mapsto
      \underbrace{s}_{s}
      \mapsto
      \underbrace{\one}_{s}
      \mapsto
      \underbrace{\one}_{\one},
  \]
where the subscript identifies the (parabolic) Coxeter element.
\end{example}

  \begin{figure}[t]
    \begin{center}
      \begin{tabular}{c|c|c|c|c}
        $\Sortm[2](\WA[3],st)$ & $\Sortma[2](\WA[3],st)$ & $\deltaSortm[2](\WA[3],st)$ & $\Skipset(w)$ & $\supp(w)$ \\
        \hline
         $\S\T|\S\T|\S\T$ &
       $e$ &
        $\one \geqsh \one$ &
        $\alpha^{(0)},\beta^{(0)}$ &
        $-$
        \\
        $\s\t|\s\t|\s\t$ &
        $sts\cdot tst$ &
        $sts \geqsh sts$ &
        $\alpha^{(2)},\beta^{(2)}$ &
        $s,t$
        \\
        $\s\t|\s\t|\S\T$ &
        $sts\cdot t$&
        $sts \geqsh s$ &
        $\gamma^{(1)},\alpha^{(2)}$ &
        $s,t$
        \\
        $\s\t|\S\T|\S\T$ &
        $st$ &
        $st \geqsh e$ &
        $\beta^{(0)},\gamma^{(1)}$ &
        $s,t$
        \\
        \hline
        $\s\T|\S\T|\S\T$ &
       $s$ &
        $s \geqsh e$ &
        $\gamma^{(0)},\alpha^{(1)}$ &
       $s$
        \\
        $\S\t|\S\t|\S\T$ &
        $t\cdot t$ &
        $t \geqsh t$ &
        $\alpha^{(0)},\beta^{(2)}$ &
       $t$
        \\
        $\s\t|\s\t|\s\T$ &
        $sts\cdot ts$&
        $sts \geqsh st$ &
        $\beta^{(1)},\gamma^{(2)}$ &
        $s,t$
        \\
        $\s\t|\s\T|\S\T$ &
        $sts$ &
        $sts \geqsh e$ &
        $\alpha^{(1)},\beta^{(1)}$ &
        $s,t$
        \\
        \hline
        $\S\t|\S\T|\S\T$ &
       $t$ &
        $t \geqsh e$ &
        $\alpha^{(0)},\beta^{(1)}$ &
       $t$
        \\
        $\s\t|\S\t|\S\T$ &
        $st\cdot t$ &
        $st \geqsh st$ &
        $\beta^{(0)},\gamma^{(2)}$ &
        $s,t$
        \\
        \hline
        $\s\T|\s\T|\S\T$ &
        $s\cdot s$ &
        $s \geqsh s$ &
        $\gamma^{(0)},\alpha^{(2)}$ &
       $s$
        \\
        $\s\t|\s\T|\s\T$ &
        $sts\cdot s$ &
        $sts \geqsh t$ &
        $\alpha^{(1)},\beta^{(2)}$ &
        $s,t$
      \end{tabular}
    \end{center}
    \caption{The three variants of the $m$-eralized $st$-sortable elements for $\WA[3]$ with $m=2$, together with their skip sets and their supports.  They are arranged according to their orbits under Cambrian rotation, defined in \Cref{sec:cambrian_rotation}.}
    \label{fig:sortA22}
  \end{figure}

\section{Factorwise sortable elements}
\label{sec:factorsort}

We now give a description of $m$-eralized $c$-sortable elements using their Garside factorizations.
Since each Garside factor may be interpreted as an element of the group~$W$, this relates $m$-eralized $c$-sortable elements to tuples of elements of~$W$.\nathanside{Didn't see that one coming, did you.}

Let~$c$ be a Coxeter element with word $\c$, and let~$w \in W$ with $\DesSet_R(w)=\{s_{i_1},s_{i_2},\ldots,s_{i_k}\}$ ordered so that
\[
  {}^w s_{i_1} <_\c {}^w s_{i_2} <_\c \cdots <_\c {}^w s_{i_k}.
\]
Define the \defn{twisted restriction} of a Coxeter element~$c$ with respect to the element~$w$ to be the Coxeter element $\restr{c}{w}\eqdef s_{i_1} s_{i_2} \cdots s_{i_k}$ of the parabolic subgroup $W_{\DesSet_R(w)}$.

\begin{definition}
\label{def:factorsort}
  An element~$w\in \Wm$ with $\garside(w) = w^{(1)} \cdot\ \cdots\ \cdot w^{(m)}$ is \defn{factorwise $c$-sortable} if $w^{(i)}$ is $c^{(i)}$-sortable for all $1 \leq i \leq m$, where we set $\c^{(1)} = \c$ and $\c^{(i)}\eqdef\restr{\c^{(i-1)}}{w^{(i-1)}}$ for $1 < i \leq m$.  We denote the set of factorwise $c$-sortable elements by $\Sortma(W,c)$.
\end{definition}\nathanside{Let this definition gently wash over you.}

\begin{example}
\label{ex:crestricted}
  Let $\c=\s_1\s_2\s_3 \in \WA[4]$ and let~$w \in \BA[4]$ have the Garside factorization
  \[
    w^{(1)}\cdot w^{(2)} = s_1s_2s_3s_2\cdot s_3s_2=s_1s_3s_2s_3 \cdot s_3s_2.
  \]
  Since $\DesSet(w^{(1)})=\{s_3,s_2\}$ with $s_3^{w^{(1)}} = (13)<_{\c^{(1)}} (34) = s_2^{w^{(1)}}$, we have
  \[
    \c^{(2)}=\restr{\c^{(1)}}{w^{(1)}}=\s_3\s_2.
  \]
  Then~$w$ is factorwise $c$-sortable, because $w^{(1)}$ is $c^{(1)}$-sortable and $w^{(2)}$ is $c^{(2)}$-sortable.
\end{example}

\subsection{Sortable and factorwise sortable elements}
We show that the factorwise sortable elements $\Sortma(W,c)$ coincides with $\Sortm(W,c)$ by proving that factorwise $c$-sortable elements satisfy the Cambrian recurrence.

\christianside{Be prepared: there is a beautiful beach to reach, but the path goes through a deep jungle.}\nathanside{I roll my D20 to cast \textbf{teleportation}.}
\begin{proposition}
\label{prop:aligned_cambrian_recurrence}
  Let~$s$ be initial in~$c$.
  Then
  \[
    w \in \Sortma(W,c) \Leftrightarrow
    \begin{cases}
      \phantom{\sninv}w  \in \Sortma(W_{\langle s \rangle}, \coxsn)  & \text{if } s \in \AscSet_L(w) \\
      \sninv w \in \Sortma(W, \coxrn)  & \text{if } s \in \DesSet_L(w)
    \end{cases}\ .
  \]
\end{proposition}

\begin{proof}
  Let $w \in \Sortma(W,c)$ and suppose
  \[
    \garside(w) = w^{(1)}\cdot w^{(2)} \cdot\ \cdots\ \cdot w^{(m)},
  \]
  where some of these factors may be the identity.

  If $s \in \AscSet_L(w) = \AscSet_L(w^{(1)})$, then $w^{(1)}$ is $c$-sortable by assumption, and \Cref{prop:sort_cambrian_recurrence} implies that $s \notin \supp(w^{(1)})$, so that $s \notin \DesSet_R(w^{(1)})$.
  We conclude that $s \notin \supp(w)$ since all further Garside factors $w^{(2)},\ldots,w^{(m)}$ are inside $W_{\DesSet_R(w^{(1)})}$, implying that~$w$ is factorwise $\coxsn$-sortable as an element of $\Wm_{\langle s \rangle}$ by \Cref{lem:reflection_order4}.
  The converse direction also follows from \Cref{lem:reflection_order4}.

  \medskip

  Otherwise, $s \in \DesSet_L(w)$ so that $w=su$ is reduced.
  We set $\garside(u) = u^{(1)}\cdot u^{(2)}\cdot\ \cdots\ \cdot u^{(m)}$, and thus need to show that
  \[
    w \in \Sortma(W,c) \Leftrightarrow u \in \Sortma(W,\coxrn).
  \]

  If $s u^{(1)} \leq \wo$, then $\garside(w)=su^{(1)} \cdot u^{(2)} \cdot\ \cdots\ \cdot u^{(m)}$ by \Cref{thm:garsidefactorization}, since $\DesSet_R(s u^{(1)}) \supseteq \DesSet_R(u^{(1)})$.
  By \Cref{prop:msort_cambrian_recurrence}, $w^{(1)}$ is $c$-sortable if and only if $\sninv w^{(1)}=u^{(1)}$ is $\coxrn$-sortable, and we conclude this case.

  \medskip

  We consider both implications individually in the final case $s u^{(1)} \not \leq \wo$, so that $s \in \DesSet_L(u^{(1)})$.

  Suppose $u \in \Sortma(W,\coxrn)$, and let $\coxr$ be a word for $\coxrn$ ending in the letter~$s$.
  Since~$s$ is final in the reflection order associated to~$\coxr$ and $u^{(1)}$ is $\coxrn$-sortable, the simple reflection~$s$ is the final reflection in the inversion sequence $\invs(\u^{(1)}(\coxr))$, and so corresponds to the final simple reflection~$r$ in $\sw{u^{(1)}}{\coxr}$.
  More succinctly, we have the reduced expression $u^{(1)}r=su^{(1)}$, and obtain $u^{(1)}=su^{(1)}r^{-1}$.
  In this case, set $w^{(1)}  = u^{(1)} = s u^{(1)} r^{-1}$, which is $c$-sortable by \Cref{prop:msort_cambrian_recurrence}.

  We drop the first Garside factor and repeat the preceding argument on the element $r u^{(2)} \cdots u^{(m)}$, obtaining an element of $\Sortma[m-1](W,c)$ with Garside factorization $w^{(2)}\cdot\ \cdots\ \cdot w^{(m)}$.
  We now claim that $w^{(1)} \cdot w^{(2)}\cdot\ \cdots\ \cdot w^{(m)}$ is the Garside factorization of~$w$.
  We did not change the first Garside factor $w^{(1)} = u^{(1)}$ since $r \in \DesSet_R(u^{(1)})$ and $\DesSet_L(r u^{(2)}) \subseteq \{r\} \cup \DesSet_L(u^{(2)})$ (by \Cref{prop:des_contained}).
  $w^{(2)}$ thus lies in $\restr{W}{u^{(1)}}$.
  We conclude that $w^{(1)}$ was indeed the first Garside factor of~$w$.
  The result follows by induction on the number of Garside factors of~$w$.

  \medskip

  Now suppose $w \in \Sortma(W,c)$.
  Running the argument above in reverse, we obtain the candidate Garside factorization of $u$, $u^{(1)}=w^{(1)}$ and $u^{(2)}\cdot\ \cdots\ \cdot u^{(m)} = r^{-1}w^{(2)}  \cdots w^{(m)}$, where $u^{(1)}r=s u^{(1)}$.
  Since $\DesSet_R(u^{(1)})=\DesSet_R(w^{(1)})$, and since $w^{(2)}$ only uses simple reflections in $\DesSet_R(w^{(1)})$, $u^{(1)}$ is indeed the first Garside factor.
  The result again follows by induction on the number of Garside factors of~$w$.
\end{proof}

\begin{corollary}
\label{cor:factorsort}
    $\Sortm(W,c) = \Sortma(W,c).$
\end{corollary}\nathanside{Like, as sets of elements of $\Artinmon$.}

\begin{proof}
  Since both $\Sortm(W,c)$ and $\Sortma(W,c)$ satisfy the same recurrence and initial conditions, and element $w \in \Wm$ is $c$-sortable if and only if it is factorwise $c$-sortable.
\end{proof}

\begin{example}
 By~\Cref{ex:crestricted}, the element $w=w^{(1)}\cdot w^{(2)} = s_1s_2s_3s_2 \cdot s_3s_2 \in \WA[4]$ is factorwise $c$-sortable for $c = s_1s_2s_3$.
  Its $c$-sorting word is
  \[
    \sw{w}{c} = \left( \begin{array}{ccc|ccc|ccc|ccc}  1 & 2 & 3  &1 & 2 & 3  & 1 & 2 & 3 &1 & 2 & 3 \\  s_1& s_2 & s_3 & - & s_2 & s_3 & - & s_2 & - & - & - & -  \end{array} \right).
  \]
  By \Cref{def:msort_sortable}, this element is also $c$-sortable.
\end{example}

\subsection{Sorting and factorwise sorting words}
Not only do the notions of sortable and factorwise sortable agree, it turns out that the $\c$-sorting word of a $\c$-sortable element~$w$ is \emph{commutation equivalent} to the concatenation of the individual sorting words of the Garside factors of~$w$.

For $w \in \Sortm(W,c)$, define
\[
  \sw{\garside(w)}{c}\eqdef \left[\sq{w}^{(1)}(\c^{(1)})\right] \cdot \left[\sq{w}^{(2)}(\c^{(2)})\right] \cdot\ \cdots\ \cdot \left[\sq{w}^{(m)}(\c^{(m)})\right]
\]
to be the concatenation of the $\c^{(i)}$-sorting words defined from the Garside factors of~$w$ as in \Cref{def:factorsort}.

\begin{proposition}
\label{prop:sortingwordgarsidefactorization}
  For $w \in \Sortm(W,c)$ with sorting word $\sw{w}{c}$, we have
  \[
    \sw{\garside(w)}{c} \equiv \sw{w}{c}.
  \]
\end{proposition}
\begin{proof}
  Let $w \in \Sortm(W,c)$ with~$s$ initial in~$c$ and $s \in \DesSet_L(w)$.
  If we let $w = su$, then the proof of \Cref{prop:aligned_cambrian_recurrence} shows that the Garside factorization of~$u$ is obtained by removing the initial~$s$ from the Garside factorization of~$w$ and performing commutations.
  The same relationship is trivially true for the sorting words of~$w$ and $u$.
  The result now follows from the Cambrian recurrence.
\end{proof}

This proposition is a generalization of \Cref{lem:reflection_order}\eqref{it:reflection_order5} to arbitrary sortable elements---applying \Cref{prop:sortingwordgarsidefactorization} to $\bwom[2]\in\Sortm[2](w,c)$ gives
\[
  \sw{\wo}{c}\cdot \sw{\wo}{\psi({c})} = \sw{\garside(\bwom[2])}{c} \equiv \sw{\wom[2]}{c}= \c^h.
\]

\begin{example}
  In $\WA[4]$ with $\c=\s_1\s_2\s_3$, we compute
  \[
    \begin{array}{rcl}
      \sw{\garside(\wom[2])}{c}=\sw{\wo}{c}\cdot \sw{\wo}{\psi({c})}= &
      \left(\s_1\s_2\s_3|\s_1\s_2|\s_1\right) \cdot \left(\s_3\s_2\s_1|\s_3\s_2|\s_3\right) & = \sw{\wom[2]}{c} \\
      \equiv & \s_1\s_2\s_3|\s_1\s_2\s_3|\s_1\s_2\s_3|\s_1\s_2\s_3 &= \c^4
    \end{array}
  \]
\end{example}

As a corollary to \Cref{prop:sortingwordgarsidefactorization}, the color of a reflection in $\invs_\refl(\sw{w}{c})$ for $w \in \Sortm(W,c)$ matches the Garside factor containing the corresponding letter in $\sw{\garside{w}}{c}$.

\begin{corollary+}
\label{cor:orderedcolors}
  Let $w \in \Sortm(W,c)$ with $\sw{w}{c}=\s_1\s_2\cdots\s_p$ and $\invs(\sw{w}{c}) = \left(\beta_1^{(i_1)}, \ldots, \beta_p^{(i_p)}\right)$.
  Then the letter $\s_a$ belongs to $\sq{w}^{(j+1)}(\sq{c}^{(j+1)})$ under the identification $\sw{w}{c}\equiv \sw{\garside(w)}{c}$ if and only if $i_a = j$.
\end{corollary+}

\section{Lattice properties of \mhead-eralized sortable elements}
\label{sec:technical}

In this section, we $m$-eralize both \Cref{eq:scfa_parabolic} and \Cref{lem:readingjoininparabolic}.  The proofs of these theorems are somewhat technical.

\subsection{Projections of sortable elements to parabolic subgroups}
We first $m$-eralize \Cref{eq:scfa_parabolic} to $m$-eralized sortable elements and their colored inversion sets.

\begin{theorem}
\label{prop:sort_parabolic}
  For $w \in \Sortm(W,c)$, define $w_J\eqdef w \wedge \wom(J)$.  Then
  \[
    w_J \in \Sortm(W_J,\restri{c}{J}) \text{ and } \inv(w_J)=\restri{\inv(w)}{J}.
  \]
\end{theorem}

\christianside{You may also come back to the proof later.}\nathanside{I'm kind of with Christian on this one.}
\begin{proof}
  For each $m \geq k \geq 0$, define $w_k^m(J) \eqdef \wom[k]\wo(\psi^k(J))^{m-k}$ and
  \[
    w(k)\eqdef w \wedge w_k^m(J)
  \]
  with Garside factorization $w(k)=w_k^{(1)} \cdot \cdots \cdot w_k^{(m)}$.
  By construction of Garside factorization, the first $k$ Garside factors of $w(k)$ agree with those of $w$.

  Define $J_k$ to be the set of right descents $s_i$ of $w_k^{(k)}$ such that $w_k^{(1)} \cdots w_k^{(k)} (\alpha_{s_i}) \in \Phi_J$.  By convention, we set $w_m^{(m+1)}=e$ and $J_m=\emptyset$.  We prove the following statements by decreasing induction on $k$:
\begin{itemize}
	\item $w_k^{(k+1)} \cdots w_k^{(m)}$ uses only letters from $J_k$,
	\item the inversions in $\inv(w(k))$ of color weakly greater than $k$ are exactly those inversions in $\inv(w)$ of color weakly greater than $k$ that lie in $\Phi_J$, and
	\item the element $w(k)$ is factorwise $c$-sortable.
\end{itemize}
	
The base case $k=m$ follows from the fact that $w(m)=w$ and $w$ has no inversions of color greater than $m$.  Suppose we have shown the statements for $k$; we will show them for $k-1$.

   Writing the parabolic decomposition of $w^{(k)}$ with respect to $J_{k-1}$ from~\Cref{eq:parabolic_decomposition_W} as $w^{(k)} = (w^{(k)})_{J_{k-1}} (w^{(k)})^{J_{k-1}}$, we have
\begin{align*}
w(k) &=w^{(1)} \cdot \cdots \cdot w^{(k-1)} \cdot w^{(k)} \cdot w_{k}^{(k+1)} \cdot \cdots \cdot w_{k}^{(m)}\\
&=w^{(1)} \cdot \cdots \cdot w^{(k-1)} \cdot \left[(w^{(k)})_{J_{k-1}} (w^{(k)})^{J_{k-1}}\right] w_{k}^{(k+1)} \cdot \cdots \cdot w_{k}^{(m)}.
\end{align*}

As $w_{k-1}^m(J)$ is initial in~$w_k^m(J)$, we have that $w(k-1)=w(k) \wedge w_{k-1}^m(J)$.   Since $w(k-1)$ is an initial segment of $w(k)$, $\restri{\inv(w(k-1))}{J} \subseteq \restri{\inv(w(k))}{J}$.
  We will conclude equality by showing that there are exactly the right number of inversions in $\inv(w(k-1))$.  Since $w(k)$ and $w(k-1)$ share their first $k-1$ Garside factors, we are only concerned with the remaining $m-k+1$ factors.

By induction, every inversion corresponding to a letter in $w_k^{(k+1)} \cdots w_k^{(m)}$ lies in $\Phi_J$.   To show that these are again inversions in $w(k-1)$, we must check that every letter in $w_k^{(k+1)} \cdots w_k^{(m)}$ can move past $(w^{(k)})^{J_{k-1}}$ as a simple reflection in $\sref_{J_{k-1}}$.

By induction, $w_k^{(k+1)} \cdots w_k^{(m)}$ uses only letters from $J_k$ and $w(k)$ is factorwise $c$-sortable.  Fix $s_i$ a letter used in $w_k^{(k+1)} \cdots w_k^{(m)}$.  By definition of factorwise $c$-sortablility, $s_i \in \DesSet_R(w^{(k)})$.  By the inductive hypothesis, when computing $\inv(w(k))$, the uncolored root corresponding to this $s_i$ lies in $\Phi_J$.

Since $s_i \in \DesSet_R(w^{(k)})$, it must be that $(w^{(1)}\cdot \cdots \cdot w^{(k)})(\alpha_{s_i})$ occurs among the roots corresponding to $(w^{(k)})_{J_{k-1}}$ in $\inv(w(k))$.  Furthermore, since this root can be removed from $\inv(w^{(k)})$ while preserving the biclosed property, it can also be removed from $\inv((w^{(k)})_{J_{k-1}})$.

We conclude that for any letter $s_i \in J_k$, there exists $s_i' \in \sref_{J_{k-1}}$ with $s_i' \in \DesSet_R((w^{(k)})_{J_{k-1}})$ and \[\left(w^{(1)} \cdots w^{(k-1)} (w^{(k)})_{J_{k-1}}\right) (\alpha_{s_i'})=\left(w^{(1)} \cdots w^{(k)}\right) (\alpha_{s_i})\in \Phi_J.\]  We may therefore move every letter in $w_k^{(k+1)} \cdots w_k^{(m)}$ past $(w^{(k)})^{J_{k-1}}$ as a simple reflection in $\sref_{J_{k-1}}$.    Finally, moving all letters of $w_k^{(k+1)} \cdots w_k^{(m)}$ past $(w^{(k)})^{J_{k-1}}$ (in order, from left to right) and then dropping the trailing $(w^{(k)})^{J_{k-1}}$ preserves factorwise $c$-sortability.  
\end{proof}

\begin{example}
  Neither \Cref{cor:orderedcolors} nor \Cref{prop:sort_parabolic} hold for general non-sortable elements.
  For example, let $w = sst \in \BA[3]$ with Garside factorization $s \cdot st$.
  Then \Cref{cor:orderedcolors} fails for $w$.
  Since $\invs(s \cdot st) = \left( \alpha^{(0)},\alpha^{(1)},\beta^{(0)} \right)$, we see that $\t$ appears in the second Garside factor, but the corresponding root~$\beta$ has color~$0$.
  \Cref{prop:sort_parabolic} also fails for $w$ with $J=\{t\}$, since $\restri{\inv(w)}{J}=\{\beta^{(0)}\}$ but $\inv(w_J)=\inv(\one)=\emptyset$.
\end{example}

\subsection{Joins of sortable elements with initial simple reflections}

The following theorem generalizes part of~\Cref{lem:readingjoininparabolic}, and will be needed for $m$-eralized Cambrian rotation to be well-defined.  
Although \Cref{prop:joininparabolic} does not describe the change in cover reflections from $w$ to $w \vee s^k$ (because we have not defined cover reflections for elements of $\Artinmon$), the related \Cref{lemma:skipjoininparabolic} indicates how a related set of colored roots changes.

\begin{theorem}
\label{prop:joininparabolic}
  For~$s$ be initial in~$c$, if $w \in \Sortm(W_{\langle s \rangle},\coxsn)$, then $w \vee s^k$ is both $c$-sortable and $\coxrn$-sortable for any $0 \leq k \leq m$.
\end{theorem}

\medskip

\christianside{If you enjoyed the proof on the previous page, take a deep breath and continue.}\nathanside{This one is actually a little harder, but the idea is still simple.}
Set $w(0) = w^{(1)}_0 \cdot\ \cdots\ \cdot w^{(m)}_0$ to be the Garside factorization of the element~$w \in \Sortm(W_{\langle s \rangle},\coxsn)$ in the statement of~\Cref{prop:joininparabolic}.  For $1 \leq k \leq m$ inductively set
\begin{align*}
  w(k)
    =& w^{(1)}_{k-1} \cdot w^{(2)}_{k-1} \cdot\ \cdots\ \cdot w^{(k-1)}_{k-1} \cdot \left(w^{(k)}_{k-1} \vee s_k\right) \cdot (w^{(k+1)}_{k-1})^{v_k} \cdot\ \cdots\ \cdot (w^{(m)}_{k-1})^{v_k} \\
    =& w^{(1)}_k \cdot w^{(2)}_k \cdot\ \cdots\ \cdot w^{(m)}_k
\end{align*}
where $s_k, v_k\in W$ are given by
\[
  s_k = s_{k-1}^{\left(w^{(k-1)}_{k-1}\right)^{-1}} \quad \text{ and } \quad
  v_k = \left(w^{(k)}_{k-1}\right)^{-1} \left(w^{(k)}_{k-1} \vee s_k\right).
\]
We will show that the decomposition $w(k) = w^{(1)}_k \cdot\ \cdots\ \cdot w^{(m)}_k$ is the Garside factorization of $w \vee s^k$, and that this factorization is factorwise $c$- and $\coxrn$-sortable.

\begin{lemma}
\label{lem:inductivesk}
  The element $w(k)$ is $c$-sortable and $\coxrn$-sortable with Garside factorization $w^{(1)}_k \cdot\ \cdots\ \cdot w^{(m)}_k$.
\end{lemma}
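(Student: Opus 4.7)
The proof will proceed by induction on $k$, with the base case $k=0$ reducing to the Cambrian recurrence for $m$-eralized sortable elements.

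For the base case, $w(0) = w \in \Sortm(W_{\langle s \rangle}, sc)$ by hypothesis. Since $s \in \AscSet_L(w)$, applying \Cref{prop:msort_cambrian_recurrence} in reverse yields $w \in \Sortm(W,c)$. For $(s^{-1}cs)$-sortability, I would argue that because $s \notin \supp(w)$, the $c$-sorting and $(s^{-1}cs)$-sorting words of $w$ both arise by the same ``skip every $s$-column'' procedure from the respective Coxeter element powers, so each reduces to the $(s^{-1}c)$-sorting word of $w$ inside $W_{\langle s \rangle}$.

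For the inductive step, assume the claim for $w(k-1)$, so each $w^{(i)}_{k-1}$ is $c^{(i)}$-sortable in the parabolic $W_{\DesSet_R(w^{(i-1)}_{k-1})}$ (with $c^{(i)}$ the iterated restriction). The plan has three pieces. First, I would show that $s_k$ is initial, as a simple reflection of the parabolic at step $k$, in the restricted Coxeter element $c^{(k)}$: unwinding the definition $s_k = s^{(w^{(1)}_1 \cdots w^{(k-1)}_{k-1})^{-1}}$ and applying \Cref{lem:reflection_order}\eqref{eq:reflection_order3}--\eqref{eq:reflection_order4} iteratively shows that $s_k$ is precisely what the initial letter $s$ of $c$ becomes after peeling off and conjugating through the first $k-1$ Garside factors. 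Second, I would apply the $m=1$ join result \Cref{lem:readingjoininparabolic} inside this parabolic: since $s_k \in \AscSet_L(w^{(k)}_{k-1})$ and $w^{(k)}_{k-1}$ is $c^{(k)}$-sortable, $w^{(k)}_k := w^{(k)}_{k-1}\vee s_k$ is simultaneously $c^{(k)}$- and $(s_k^{-1}c^{(k)}s_k)$-sortable, with $\coveredref(w^{(k)}_k) = \coveredref(w^{(k)}_{k-1}) \cup \{s_k\}$. Third, I would handle the tail $i > k$: the parabolic governing subsequent factors is transported by $v_k$, and each $(w^{(i)}_{k-1})^{v_k}$ inherits sortability for the correspondingly conjugated restricted Coxeter element.

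To conclude the inductive step, I would verify the Garside descent condition $\DesSet_R(w^{(i-1)}_k) \supseteq \DesSet_L(w^{(i)}_k)$ at every $i$. For $i < k$, both factors are unchanged so this is immediate from the inductive hypothesis; for $i > k$, the two sides are conjugates by $v_k$ of the corresponding sets for $w(k-1)$, so the containment is preserved. The delicate case is $i = k+1$, where I must combine the inductive containment $\DesSet_R(w^{(k)}_{k-1}) \supseteq \DesSet_L(w^{(k+1)}_{k-1})$ with the fact from step two that passing from $w^{(k)}_{k-1}$ to its join with $s_k$ only enlarges the right-descent set (by exactly $\{s_k\}$), while $\DesSet_L(w^{(k+1)}_k) = v_k\DesSet_L(w^{(k+1)}_{k-1})v_k^{-1}$. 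The main obstacle is precisely this compatibility at the interface $i=k+1$: tracking how the conjugation by the (a priori non-positive) element $v_k$ interacts with the descent sets and with the restriction of the Coxeter element. Once that is established, the sortability of $w(k)$ in $c$ and $s^{-1}cs$ follows factorwise via \Cref{cor:factorsort}, and the displayed product is automatically its Garside factorization by the descent criterion \eqref{eq:garsidefactorization}.
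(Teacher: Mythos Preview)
Your outline follows the paper's approach closely (induction on $k$, invoke \Cref{lem:readingjoininparabolic} at the $k$th factor, transport the tail by conjugation with $v_k$), but there are two genuine gaps at exactly the spot you flag as ``the main obstacle.''

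First, the claim that ``passing from $w^{(k)}_{k-1}$ to its join with $s_k$ only enlarges the right-descent set (by exactly $\{s_k\}$)'' is false as stated. What \Cref{lem:readingjoininparabolic} gives---and what you correctly recorded in step two---is the statement about \emph{covered reflections}: $\coveredref(w^{(k)}_k)=\coveredref(w^{(k)}_{k-1})\cup\{s_k\}$. The sets $\DesSet_R(w^{(k)}_{k-1})$ and $\DesSet_R(w^{(k)}_k)$ need not be comparable, and $s_k$ is not in general a right descent of $w^{(k)}_k$. The correct mechanism at the interface is that conjugation by $v_k=(w^{(k)}_k)^{-1}w^{(k)}_{k-1}$ sends a simple $t\in\DesSet_R(w^{(k)}_{k-1})$ first to the covered reflection $t^{w^{(k)}_{k-1}}\in\coveredref(w^{(k)}_{k-1})\subseteq\coveredref(w^{(k)}_k)$, and then down to the matching simple in $\DesSet_R(w^{(k)}_k)$. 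Thus $v_k$ realizes an isomorphism of Coxeter systems between the two standard parabolics, which is exactly why $(w^{(i)}_{k-1})^{v_k}$ lands in $W_{\DesSet_R(w^{(k)}_k)}$, has the conjugated left descent set, and remains sortable for the conjugated restricted Coxeter element. Your phrasing in terms of right descents would not let you close the Garside condition at $i=k+1$.

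Second, you assert $s_k\in\AscSet_L(w^{(k)}_{k-1})$ without argument; this is not automatic. The paper proves the stronger fact $s_k\notin\supp(w^{(k)}_{k-1})$, and this needs the inductive hypothesis: by induction $s_{k-1}\notin\coveredref(w^{(k-1)}_{k-2})$, so after taking the join and transporting via $v_{k-1}$, the simple slot corresponding to $s_k$ in $\DesSet_R(w^{(k-1)}_{k-1})$ is precisely the new one created by the join and is therefore absent from the support of the transported $(k)$th factor. You should carry both auxiliary statements---$\coveredref(w^{(k)}_k)=\coveredref(w^{(k)}_{k-1})\cup\{s_k\}$ and ``$s_k$ is initial in the restricted Coxeter element at level $k$''---through the induction alongside the main claim; the paper does exactly this. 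Your base case $k=0$ is fine, though the paper starts at $k=1$ (extracted as \Cref{lem:joininparabolic}).
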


As the proof of \Cref{lem:inductivesk} is a somewhat lengthy induction on~$k$, we extract the base case into a separate lemma for readability.

\begin{lemma}
\label{lem:joininparabolic}
  The element $w(1)$ is $c$-sortable and $\coxrn$-sortable with Garside factorization
  \[
    w(1) = (w^{(1)}_0 \vee s) \cdot (w^{(2)}_{0})^{v} \cdot\ \cdots\ \cdot (w^{(m)}_{0})^{v},
  \]
  where $v = \left(w^{(1)}_{0}\right)^{-1} \left(w^{(1)}_{0} \vee s\right)  \in W$.  Furthermore,  $\coveredref\left(w^{(1)}_0 \vee s\right) = \coveredref\left(w^{(1)}_0\right)\cup\{s\}$.
\end{lemma}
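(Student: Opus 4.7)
The plan is to establish the three content claims of the lemma in turn: the sortability and covered-reflection identity for $w^{(1)}_0 \vee s$; the fact that the displayed product is the Garside factorization of $w(1)$; and that $w(1)$ itself is both $c$- and $(s^{-1}cs)$-sortable.

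First I would apply \Cref{lem:readingjoininparabolic} to $w^{(1)}_0$. Since $w \in \Sortm(W_{\langle s\rangle}, sc)$, factorwise sortability (\Cref{cor:factorsort}) ensures $w^{(1)}_0 \in \Sort(W_{\langle s\rangle}, sc)$, and Reading's result immediately yields both the simultaneous $c$- and $(s^{-1}cs)$-sortability of $w^{(1)}_0 \vee s$ in $W$ and the identity $\coveredref(w^{(1)}_0 \vee s) = \coveredref(w^{(1)}_0) \cup \{s\}$, settling the final assertion of the lemma.

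For the Garside factorization, I would use uniqueness of the Garside normal form. Each factor visibly lies in $W = [e,\wo]$: the first is a join inside $W$, and each $(w^{(j)}_0)^v$ is the $W$-conjugate of $w^{(j)}_0 \in W$ by $v \in W$. It therefore suffices to verify the descent inclusions in~\eqref{eq:garsidefactorization}. For $j \geq 2$, the original condition $\DesSet_R(w^{(j)}_0) \supseteq \DesSet_L(w^{(j+1)}_0)$ from the Garside factorization of $w$ transfers to the conjugated factors because simultaneous conjugation by $v$ restricts to a length-preserving isomorphism from $W_{\DesSet_R(w^{(1)}_0)}$ (which contains both $w^{(j)}_0$ and $w^{(j+1)}_0$ by the Garside property of $w$) onto a standard parabolic subgroup of $W_{\DesSet_R(w^{(1)}_0 \vee s)}$. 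For the boundary case $j = 1$, the inclusion $\DesSet_R(w^{(1)}_0 \vee s) \supseteq \DesSet_L((w^{(2)}_0)^v)$ follows from the same isomorphism together with the covered-reflection identity: the image of $\DesSet_L(w^{(2)}_0) \subseteq \DesSet_R(w^{(1)}_0)$ under the isomorphism lands in the portion of $\DesSet_R(w^{(1)}_0 \vee s)$ corresponding to the ``old'' covered reflections $\coveredref(w^{(1)}_0) \subseteq \coveredref(w^{(1)}_0 \vee s)$, hence is automatically contained in $\DesSet_R(w^{(1)}_0 \vee s)$.

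Finally, for the $c$- and $(s^{-1}cs)$-sortability of $w(1)$, I would apply the factorwise characterization (\Cref{cor:factorsort}). The first factor is handled by Reading's lemma. For subsequent factors, the restricted Coxeter elements produced by the Garside recurrence for $w(1)$ are precisely the $v$-conjugates of the restricted Coxeter elements for $w$, so the sortability of each $w^{(j)}_0$ with respect to its original restricted Coxeter element translates under the simultaneous conjugation to sortability of $(w^{(j)}_0)^v$ with respect to the new one. The principal technical obstacle throughout will be verifying the parabolic-isomorphism property used in both the Garside and sortability arguments: that conjugation by $v = (w^{(1)}_0 \vee s)^{-1} w^{(1)}_0$ induces a length-preserving isomorphism on $W_{\DesSet_R(w^{(1)}_0)}$ intertwining the relevant descent structures and restricted Coxeter elements. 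The key input from Reading's lemma that makes this work cleanly is that the only new covered reflection introduced by the join is $s$ itself, so $v$ mediates a ``clean'' transition between $W_{\DesSet_R(w^{(1)}_0)}$ and the corresponding subparabolic of $W_{\DesSet_R(w^{(1)}_0 \vee s)}$.
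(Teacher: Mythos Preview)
Your proposal is correct and follows essentially the same approach as the paper: apply \Cref{lem:readingjoininparabolic} to obtain the sortability and covered-reflection identity for the first factor, then use that conjugation by $v$ carries $\DesSet_R(w^{(1)}_0)$ bijectively onto a subset of $\DesSet_R(w^{(1)}_0 \vee s)$ (via $\coveredref(w^{(1)}_0) \subseteq \coveredref(w^{(1)}_0 \vee s)$) to transport the Garside and factorwise-sortability conditions to the later factors. One small imprecision: when you say the later factors lie in $W_{\DesSet_R(w^{(1)}_0)}$ ``by the Garside property of $w$'', the Garside descent condition \eqref{eq:garsidefactorization} alone only controls left descents, not support; what you actually need (and what the paper invokes) is the factorwise $c$-sortability of $w$, which by \Cref{def:factorsort} places each $w^{(j)}_0$ inside the parabolic $W_{\DesSet_R(w^{(j-1)}_0)} \subseteq W_{\DesSet_R(w^{(1)}_0)}$.
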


\begin{proof}
  Since $w(0)$ is $c$-sortable (and thus factorwise $c$-sortable) by assumption, its first factor $w^{(1)}_0$ is also $c$-sortable.
  This element $w^{(1)}_0$ satisfies the assumption of \Cref{lem:readingjoininparabolic}, and we obtain that $w^{(1)}_0 \vee s$ is both $c$-sortable and $\coxrn$-sortable, and that
  \begin{equation}
    \coveredref(w^{(1)}_0 \vee s) = \coveredref(w^{(1)}_0)\cup\{s\}. \label{eq:coverstocovers}
  \end{equation}
  The factorwise $c$-sortability also implies that the Garside factors $w^{(2)}_{0},\ldots, w^{(m)}_{0}$ all live in the parabolic subgroup $W_{\DesSet_R(w^{(1)}_0)}$.
  Now, conjugating all these Garside factors by~$v$ simply takes those right descents of $w^{(1)}_0$, maps them to the cover reflection $\coveredref(w^{(1)}_0)$ by conjugating with $w^{(1)}_0$, and then turns these cover reflections back to the corresponding right descents of $w^{(1)}_{1} = w^{(1)}_{0} \vee s$ using~\eqref{eq:coverstocovers}.
  Since this rearrangement of the right descents is clearly compatible with the defining property of factorwise $c$-sortability and factorwise $\coxrn$-sortability, the statements follow.
\end{proof}

\begin{proof}[Proof of \Cref{lem:inductivesk}]
  We write $c^{(i)}_j$ for the Coxeter element $c^{(i)}$ for $w(j)$, as defined at the beginning of \Cref{sec:factorsort}.
  We will prove the statement of \Cref{lem:inductivesk} along with
  \begin{itemize}
    \item $\coveredref(w^{(k)}_{k}) = \coveredref(w^{(k)}_{k-1} \vee s_{k}) = \coveredref(w^{(k)}_{k-1}) \cup \{s_{k}\}$; and
    \item  $s_{k}$ is initial in $c^{(k-1)}_{k-1}$,
  \end{itemize}
  by induction on~$k$.
  These are established for $k=1$ by \Cref{lem:joininparabolic}; we interpret $c^{(0)}_0$ as $c$.
  It remains to conclude the statements for~$k$, assuming that they hold for $k-1$.

  The first $k-1$ Garside factors have not changed, and so are still Garside factors, and each of them is sortable in its corresponding parabolic subgroup given by the definition of factorwise sortability.

  \medskip

  We next show that $s_k \in \DesSet_R(w_{k-1}^{(k-1)})$ (which, in particular, shows that it is a simple reflection) and $s_k \notin \supp(w_{k-1}^{(k)})$.
  We can assume by induction that
  \[
    \coveredref(w^{(k-1)}_{k-1}) = \coveredref(w^{(k-1)}_{k-2} \vee s_{k-1}) = \coveredref(w^{(k-1)}_{k-2}) \cup \{s_{k-1}\}.
  \]
  Therefore, $s_k = s_{k-1}^{w^{(k-1)}_{k-1}}$ is the right descent of $w^{(k-1)}_{k-1}$ corresponding to its cover reflection $s_{k-1}$, implying the first property $s_k \in \DesSet_R(w_{k-1}^{(k-1)})$.
  Moreover, $w^{(k)}_{k-1} = \big(w^{(k)}_{k-2}\big)^{v_{k-1}}$ sits inside the right descents of $w^{(k-1)}_{k-1}$ in the same way as $w^{(k)}_{k-2}$ sits in the right descents of $w^{(k-1)}_{k-2}$.
  Since $s_{k-1}$ was not a covered reflection of $w^{(k-1)}_{k-2}$, the right descent $s_k$ of $w^{(k-1)}_{k-1}$ corresponding to this covered reflection cannot be contained in the support of $w_{k-1}^{(k)}$, yielding the second property $s_k \notin \supp(w_{k-1}^{(k)})$.

  The induction hypothesis gives us that $s_{k-1}$ is initial in $c^{(k-2)}_{k-2}$.  Therefore $s_k$ is initial in $c^{(k-1)}_{k-1}$ by the definition of $c^{(k-1)}_{k-1}$ since $s_{k-1} \in \coveredref(w^{(k-1)}_{k-1})$ is the cover reflection corresponding to $s_k \in \DesSet_R(w^{(k-1)}_{k-1})$.

  We can therefore apply \Cref{lem:readingjoininparabolic} to the $c^{(k-1)}_{k-1}$-sortable element $w^{(k)}_{k-1}$ to obtain that
  $w^{(k)}_{k} = w^{(k)}_{k-1} \vee s_k$ is again $c^{(k-1)}_{k-1}$- and $(s_k^{-1}c^{(k-1)}_{k-1}s_k)$-sortable with
\[
\coveredref(w^{(k)}_{k}) = \coveredref(w^{(k)}_{k-1} \vee s_{k}) = \coveredref(w^{(k)}_{k-1}) \cup \{s_{k}\}.
\]

Therefore $w^{(k)}_{k-1}$ lives in the parabolic subgroup generated by $\coveredref(w^{(k-1)}_{k}) = \coveredref(w^{(k-1)}_{k-1})$.

  The final part of the proof is to conjugate the remaining Garside factors $w_{k-1}^{(k+1)}$ through $w_{m}^{(k+1)}$ by~$v_k$.
  This part is completely analogous to the argument given in the proof of \Cref{lem:joininparabolic}.
\end{proof}

\begin{proof}[Proof of \Cref{prop:joininparabolic}]
  We show that $w(k) = w \vee s^k$, and again first consider the case $k=1$.

  Clearly, $s \leq w(1)$ since the Garside factorization begins with $w^{(1)}_0 \vee s$ which is above~$s$ in~$W$ and therefore has a reduced $\sref$-word starting with~$s$.
  Also $w \leq w(1)$ since
  \begin{align*}
    w(1)=w^{(1)}_1 \cdot w^{(2)}_1\cdot\ \cdots\ \cdot w^{(m)}_1
      &= \big(w^{(1)}_0 v_1\big)\cdot\big(v_1^{-1} w^{(2)}_0 v_1\big)\cdot\ \cdots\ \cdot \big(v_1^{-1} w^{(m)}_0 v_1\big) \\
      &= w^{(1)}_0 \cdots w^{(m)}_0 v_1 = w v_1,
  \end{align*}
  where we write $v_1 = (w^{(1)}_{0})^{-1} w^{(1)}_{1} = (w^{(1)}_{0})^{-1} w^{(1)}_{0} \vee s_k$ as before, and write $v_1 = (w^{(1)}_{0} \vee s_k)^{-1} (w^{(1)}_{0}) \in B^+$.
  The first equality is given by the definition of $w(1)$ in terms of $w(0)$.
  Then \Cref{lem:joininparabolic} implies that $w^{(1)}_1\cdot\ \cdots\ \cdot w^{(m)}_1$ is indeed the Garside factorization~$w(1)$.

  It remains to show that~$w(1)$ is minimal among all elements above~$s$ and~$w$.  Although the colored inversion set of an element of $B^+$ is not necessarily unique to that element, the number of inversions still tells us its length.  Any element above~$w$ must contain all inversions of~$w$, and any element above $w^{(1)}_0$ and~$s$ must contain the inversions of $w^{(1)}_0 \vee s$.  The inversion set of $w(1)$ contains all these inversions and no others, and therefore has the minimal desired length; we conclude that $w(1) = w \vee s$.

  \medskip

  For the case of general~$k$, we first check that $w(k-1) \leq w(k)$ and that $s^k \leq w(k)$.
  For $w(k-1)$, we have
  \begin{align*}
    w(k)  =& w^{(1)}_{k-1} \cdot w^{(2)}_{k-1} \cdot\ \cdots\ \cdot w^{(k-1)}_{k-1} \cdot (w^{(k)}_{k-1} \vee s_k) \cdot (w^{(k+1)}_{k-1})^{v_k} \cdot\ \cdots\ \cdot (w^{(m)}_{k-1})^{v_k} \\
          =&
    w^{(1)}_{k-1} w^{(2)}_{k-1} \cdots w^{(k-1)}_{k-1} w^{(k)}_{k-1} \left( (w^{(k)}_{k-1})^{-1} (w^{(k)}_{k-1} \vee s_k) \right) (w^{(k+1)}_{k-1})^{v_k} \cdots (w^{(m)}_{k-1})^{v_k} \\
          =& w^{(1)}_{k-1} \cdots w^{(m)}_{k-1} \left( (w^{(k)}_{k-1})^{-1} (w^{(k)}_{k-1} \vee s_k) \right).
  \end{align*}
  For $s^k$, we have
  \begin{align*}
    w(k)  =& w^{(1)}_{k-1} w^{(2)}_{k-1} \cdots w^{(k-1)}_{k-1} (w^{(k)}_{k-1} \vee s_k) (w^{(k+1)}_{k-1})^{v_k} \cdots (w^{(m)}_{k-1})^{v_k} \\
          =& w^{(1)}_{k-1} w^{(2)}_{k-1} \cdots w^{(k-1)}_{k-1} s_k \cdots = w^{(1)}_{1} w^{(2)}_2 \cdots w^{(k-1)}_{k-1} s_k \cdots \\ =& s (w^{(1)}_{1} w^{(2)}_2 \cdots w^{(k-1)}_{k-1} ) \cdots = s s^{k-1} \cdots = s^k \cdots.
  \end{align*}

We now show that $w(k)$ is the minimal element above $s^k$ and $w(k-1)$.  Let $u=(w_{k-1}^{(1)}w_{k-1}^{(2)}\cdots w_{k-1}^{(k-1)})$.  We claim that $s^k \vee w(k-1)=(su) \vee w(k-1).$  We first show that $s^k \vee u = su$.  The element $s^k \vee u$ is divisible by $s^k$, and since~$u$ is divisible by $s^{k-1}$ but not by $s^k$, the length of $s^k \vee u$ is at least one more than the length of $u$.  As the element $su=us_k$ is divisible by both $s^k$ and $u$, we conclude that $su=s^k \vee u$.  Therefore, since~$u$ is initial in~$w(k-1)$, we conclude that
\[su \vee w(k-1) = s^k \vee u \vee w(k-1) = s^k \vee w(k-1).\]
Now $su=us_k$, so that $us_k$ and $w(k-1)$ share their first $k-1$ Garside factors.  Therefore, \[su \vee w(k-1) = us_k \vee w(k-1) = u(s_k \vee (w_{k-1}^k \cdot\ \cdots\ \cdot w_{k-1}^{(m)})).\]  By the inversion set argument used above for $k=1$, we may now conclude that the final $m-k+1$ Garside factors of $s_k \vee (w_{k-1}^k \cdot\ \cdots\ \cdot w_{k-1}^{(m)})$ are of the specified form, so that $w(k)=s^k \vee w(k-1).$
\end{proof}

\section{Cambrian lattices}
\label{sec:sort_cambrian_lattices}

\begin{definition}
  The \defn{$m$-eralized $c$-Cambrian poset} $\Cambsortm(W,c)$ is the restriction of $\Weakm(W)$ to $\Sortm(W,c)$.
\end{definition}\nathanside{Time for some Cambrian recurrence!}

\Cref{fig:cambsortA22} shows all $12$ $st$-sorting elements in $\Cambsortm[2](\WA[3],st)$.
\begin{figure}[t]
  \begin{center}
    \begin{tikzpicture}[scale=1.3]
      \tikzstyle{rect}=[rectangle,draw,opacity=.5,fill opacity=1]
      \tikzstyle{sort}=[]
      \node[rect,sort] (e)   at (0,0) {$\one$};
      \node[rect,sort] (s)   at (-1,1) {$s$};
      \node[rect,sort] (t)   at ( 1,1) {$t$};
      \node[rect,sort] (st)  at (-1,2) {$st$};
      \node[rect,sort] (sts) at ( 0,3) {$sts$};
      \node[rect,sort] (stss)   at ( 1,4) {$sts\cdot s$};
      \node[rect,sort] (stst)   at (-1,4) {$sts\cdot t$};
      \node[rect,sort] (ststs)  at (-1,5) {$sts\cdot ts$};
      \node[rect,sort] (stssts) at ( 0,6) {$sts\cdot tst$};
      \node[rect,sort] (ss)   at (-3,2) {$s\cdot s$};
      \node[rect,sort] (tt)   at ( 2,2) {$t\cdot t$};
      \node[rect,sort] (stt)  at (-3,3) {$st\cdot t$};

      \draw (e) to (s) to (st) to (sts) to (stss) to (stssts);
      \draw (e) to (t) to (sts) to (stst) to (ststs) to (stssts);
      \draw (s) to (ss) to[bend right=20] (stst);
      \draw (t) to (tt) to (stss);
      \draw (st) to (stt) to[bend left=20] (ststs);
    \end{tikzpicture}
  \end{center}
  \caption{The Cambrian lattice $\Cambsortm[2](\WA[3],st)$.  Each sortable element is represented by its Garside factorization.}
  \label{fig:cambsortA22}
\end{figure}
By~\Cref{eq:rightweakorder}, weak order on~$W$ is characterized as containment of inversion sets.  Although comparison of colored inversion sets does not recover $\Weakm(W)$ for $m \geq 2$, it does capture relations among $c$-sortable elements.


\begin{theorem}
\label{thm:sort_componentwise_inversion}
  For $w,u\in \Sortm(W,c)$,
  \[
    w \leq u \text{ if and only if } \inv(w) \subseteq \inv(u).
  \]
\end{theorem}
\christianside{Inversion sets for sortables behave in the positive Artin monoid in the same way inversion sets for elements in the Coxeter group do---good to know!}

\begin{proof}
 If $w \leq u$ then it is clear that $\inv(w)$ is contained in $\inv(u)$, since~$w$ is initial in~$u$.  We now argue the converse.  Suppose $\inv(w) \subseteq \inv(u)$ and let $s$ be initial in $c$.
  \begin{enumerate}[$\circ$]
    \item Suppose~$s \in \AscSet_L(w)$ and $s \in \AscSet_L(u)$.
    Then we are done by restriction to $W_{\langle s \rangle}$.

    \item The case~$s \in \DesSet_L(w)$ and $s \in \AscSet_L(u)$ is not possible as $\DesSet_L(w) \subseteq \DesSet_L(u)$.

    \item Suppose $s \in \AscSet_L(w)$ and $s \in \DesSet_L(u)$.
    It is clear that $u_{\langle s \rangle} \leq u$.
    Since $w \in \Sortm(W_{\langle s \rangle},\coxsn)$, we have that $\inv(w) \subseteq \inv(u_{\langle s \rangle})$ by \Cref{prop:sort_parabolic}, so that by induction on rank (since both $w,u_{\langle s \rangle} \in \Sortm(W_{\langle s \rangle},\coxsn)$), $w \leq u_{\langle s \rangle}$.
    Since $u_{\langle s \rangle} \leq u$, we conclude that $w \leq u$.

    \item Suppose finally that~$s \in \DesSet_L(w)$ and $s \in \DesSet_L(u)$.
    Then we get the statement for $\sninv u=u'$ and $\sninv w=w'$ by induction on length.
    Multiplying by~$s$ does not change containment of inversion sets (since multiplication by~$s$ just multiplies all inversions by~$s$, and then adds~$\alpha_s^{(0)}$).\qedhere
  \end{enumerate}
\end{proof}

\medskip

In fact, the intersection of colored inversion sets of two $c$-sortable elements is again the inversion set of a $c$-sortable element, $m$-eralizing \Cref{prop:scfa_inv}.
\begin{lemma}
  \label{thm:sort_intersection}
  Let $u,v\in \Sortm(W,c)$.  Then $u \wedge v \in \Sortm(W,c)$ and
  \[\inv(u \wedge v)= \inv(u) \cap \inv(v).\]
\end{lemma}
\begin{proof}
	Let $s$ be initial in $c$.  We consider again the four possible cases.
  \begin{enumerate}[$\circ$]
    \item If~$s \in \AscSet_L(u)$ and $s \in \AscSet_L(v)$, then we are done by restriction to $W_{\langle s \rangle}$ and induction on rank.

    \item If $s \in \DesSet_L(u)$ and $s \in \AscSet_L(v)$, then $v \in \Sortm(W_{\langle s \rangle},\coxsn)$.  Then $u \wedge v = (u \wedge (\wom(J) \wedge v)) = ((u \wedge \wom(J)) \wedge v) = u_{\langle s \rangle} \wedge v.$  By \Cref{prop:sort_parabolic}, $u_{\langle s \rangle}$ is $\coxsn$-sortable, so that $u \wedge v$ is $c$-sortable with inversion set given by the previous case.

    \item The case  $s \in \DesSet_L(v)$ and $s \in \AscSet_L(u)$ follows by symmetry.

    \item Finally, if~$s \in \DesSet_L(u)$  and $s \in \DesSet_L(v)$, then by~\Cref{prop:msort_cambrian_recurrence}, $\sninv u$ and $\sninv  v$ are both $\coxrn$-sortable.
    By induction on length, $(\sninv  u) \wedge (\sninv  v)$ is $\coxrn$-sortable.
    Then $u \wedge v = s( \sninv  u \wedge \sninv  v)$, which is again $c$-sortable by \Cref{prop:msort_cambrian_recurrence}.   Furthermore, we have
      \[
        \inv(\sninv u \wedge \sninv v)= \inv(\sninv u) \cap \inv(\sninv v)
      \]
      by induction on length.
      This gives
      \begin{align*}
        \inv(w \wedge u) &= \{\alpha_s^{(0)}\} \cup s (\inv(\sninv w \wedge \sninv u)) \\
                         &= \{\alpha_s^{(0)}\} \cup s \left(\inv(\sninv w) \cap \inv(\sninv u)\right) \\
                         &= \inv(w)\cap \inv(u). \qedhere
      \end{align*}
  \end{enumerate}
\end{proof}

An element $w' \in W$ is called \defn{$c$-antisortable} if $w'\wo$ is $c^{-1}$-sortable.  N.~Reading showed that every element $w \in \Weak(W)$ lies above a unique largest $c$-sortable element~$\pid(w)$ in weak order, and lies below a unique smallest $c$-antisortable element~$\piu(w)$~\cite{Rea2007}.  He proved that the fibers of the $\pid: \Weak(W) \to \Sort(W,c)$ are given by the intervals $[\pid(w),\piu(w)]_{\Weak(W)}$, so that $\pid$ defines a lattice congruence~\cite[Proposition~3.1]{Rea2007}.
\medskip

%
%
In contrast, the $m$-eralized $c$-sortable elements no longer form a lattice quotient of $\Weakm(W)$ for the reason that the maps $\pid$ and $\piu$ are not well-defined.

For $\WA[3]$ with $m=3$, the elements of $\Weakm[3](\WA[3])$ lying above~$t$---but not above any larger $st$-sortable element---do not form an interval.  This is illustrated in \Cref{fig:notaquotient}.
\begin{figure}[t]
  \begin{center}
    \begin{tikzpicture}[scale=1.2]
      \tikzstyle{rect}=[rectangle,draw,opacity=.5,fill opacity=1]
      \tikzstyle{rrect}=[rounded rectangle,draw,opacity=.5,fill opacity=1]
      \tikzstyle{sort}=[fill=black!20]
      \node[rect,sort] (t)   at (0,0) {$t$};
  \node[rect,sort] (tt)   at (1,1) {$t\cdot t$};
  \node[rrect] (ts)   at (0,1) {$ts$};
  \node[rrect] (tss)   at (1,2) {$ts\cdot s$};
  \node[rrect] (tsst)   at (1,3) {$ts\cdot st$};
  \node[rrect] (tsstt)   at (2,4) {$ts\cdot st\cdot t$};
  \node[rrect] (tsstts)   at (3,5) {$ts\cdot st\cdot ts$};

  \node[rrect] (tsss)   at (2.5,3) {$ts\cdot s \cdot s$};
  \node[rrect] (tssst)   at (3.5,4) {$ts\cdot s \cdot st$};

  \node[rect,sort] (sts)   at (-1,2) {$\wo$};
  \node[rect,sort] (stss)   at (-1,3) {$\wo\cdot s$};
  \node[rect] (stsst)   at (0,4) {$\wo\cdot st$};
  \node[rect] (stsstt)   at (1,5) {$\wo\cdot st\cdot t$};
  \node[rect] (stsstts)   at (1,7) {$\wo\cdot st\cdot ts$};

      \draw (t) to (ts) to (tss) to (tsst) to (tsstt) to (tsstts);
  \draw (tss) to (tsss) to (tssst);

  \draw[dotted] (t) to (tt);
  \draw[dotted] (ts) to (sts) to (stss) to (stsst) to (stsstt) to (stsstts);
  \draw[dotted] (tsst) to (stsst);
  \draw[dotted] (tsstts) to (stsstts);
  \draw[dotted] (tssst) to (stsstt);
  \draw[dotted] (t) to (tt);

    \end{tikzpicture}
  \end{center}
  \caption{A part of $\Weakm[3](\WA[3])$.  The $st$-sortable elements are shaded.  The rounded boxes indicate those elements lying above $t$, but not lying above any larger $st$-sortable element.  There are two such maximal elements.}
  \label{fig:notaquotient}
\end{figure}

Although $\Sortm(W,c)$ is no longer a lattice quotient of $\Weakm(W)$, the restriction of $\Weakm(W)$ to $\Sortm(W,c)$ is still a lattice.    The proof is analogous to the proof of~\cite[Theorem~1.2]{Rea2007}, except that we do not have a projection map $\pi_\downarrow^c$, and so cannot rely on its properties to compute the join.

\begin{theorem}
\label{thm:sort_is_lattice}
  $\Cambsortm(W,c)$ is a sublattice of $\Weakm(W)$.
\end{theorem}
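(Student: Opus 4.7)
The plan is to prove by induction on the rank $n=|\sref|$, with a secondary induction on $\length(u\vee v)$ (computed in $\Weakm(W)$), that for every Coxeter element $c$ and every pair $u,v\in\Sortm(W,c)$, both $u\wedge v$ and $u\vee v$ remain $c$-sortable. The base cases (rank zero, or either element equal to the identity) are immediate. For the inductive step, I would fix some $s$ initial in $c$ and split into three cases according to the left descents of $u$ and $v$, using the Cambrian recurrence (\Cref{prop:msort_cambrian_recurrence}) to move between $(W,c)$, $(W_{\langle s\rangle},s^{-1}c)$, and $(W,s^{-1}cs)$.

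The symmetric cases are routine. If $s\in\DesSet_L(u)\cap\DesSet_L(v)$, left cancellation in $\Artinmon$ gives $u\wedge v=s(s^{-1}u\wedge s^{-1}v)$ and $u\vee v=s(s^{-1}u\vee s^{-1}v)$; the interior operations lie in $\Sortm(W,s^{-1}cs)$ and have strictly smaller join-length, so the secondary induction applies. If $s\in\AscSet_L(u)\cap\AscSet_L(v)$, both elements lie in $\Sortm(W_{\langle s\rangle},s^{-1}c)$; since $w\mapsto w_{\langle s\rangle}$ is a lattice homomorphism on $\Artinmon$ (by \Cref{prop:wjlatticehomo} and the subsequent remark), the meet and join in $\Weakm(W)$ stay inside the positive parabolic submonoid and coincide with those computed in $\Weakm(W_{\langle s\rangle})$, so the primary (rank) induction closes the case. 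In the asymmetric case $s\in\DesSet_L(u)$, $s\in\AscSet_L(v)$, the meet is handled by observing that $u\wedge v$ is a left factor of $v\in\Artinmon_{\langle s\rangle}$ and hence lies in the parabolic, whence $u\wedge v=u_{\langle s\rangle}\wedge v$; both factors lie in $\Sortm(W_{\langle s\rangle},s^{-1}c)$ by \Cref{prop:sort_parabolic} and the Cambrian recurrence, so the previous case applies.

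The delicate piece is the join in this same asymmetric case. The trick is to first replace $v$ by $v\vee s$: by \Cref{prop:joininparabolic} applied with $k=1$, this element is again $c$-sortable, and by construction it carries $s$ as a left descent. Because $s\le u$ we have $u\vee v=u\vee(v\vee s)$, so the pair $(u,v\vee s)$ now falls into the fully symmetric Case A already treated; peeling off the shared $s$ yields $s^{-1}u\vee s^{-1}(v\vee s)$, of strictly smaller join-length, which the secondary induction resolves. The main obstacle is precisely this asymmetric join, and \Cref{prop:joininparabolic}---the substantial technical work of \Cref{sec:factorsort}---is exactly what makes the reduction go through.
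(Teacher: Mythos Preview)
Your proposal is correct and follows essentially the same strategy as the paper's proof: the same case split on the left descents of $u$ and $v$ relative to an initial $s$, the same reduction of the asymmetric meet via the parabolic projection $w\mapsto w_{\langle s\rangle}$ (\Cref{prop:wjlatticehomo} and \Cref{prop:sort_parabolic}), and the same key use of \Cref{prop:joininparabolic} to force the ascent element into the descent case before handling the asymmetric join. Your choice of $\ell(u\vee v)$ as the secondary induction parameter is arguably cleaner than the paper's somewhat implicit ``induction on length,'' since it makes immediately transparent that replacing $v$ by $v\vee s$ leaves the quantity unchanged while the subsequent peel-off of $s$ strictly decreases it.
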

\christianside{This is maybe the cleanest way of seeing this.}

\begin{proof} 
 \Cref{thm:sort_intersection} shows that $u \wedge v \in \Wm$ is $c$-sortable for $u,v \in \Sortm(W,c)$.  A similar argument proves that $u \vee v \in \Wm$ is $c$-sortable for $u,v \in \Sortm(W,c)$.
  Again, let~$s$ be initial in~$c$.

  \begin{enumerate}[$\circ$]
    \item If $s \in \DesSet_L(u)$ and $s \in \DesSet_L(v)$, then $s \in \DesSet_L(u \vee v)$.
    By induction on length, $(\sninv u) \vee (\sninv v)$ is $\coxsn$-sortable.
    Then $u \vee v = s( \sninv u \vee \sninv v)$ is $c$-sortable.

    \item If $s \in \DesSet_L(v)$ and $s \in \AscSet_L(u)$, then $u \in  \Sortm(W_{\langle s \rangle},\coxsn)$ and $s \vee u$ is $c$-sortable by \Cref{prop:joininparabolic}.
    We compute that $u \vee v = s \vee (u \vee v) = (s \vee u) \vee v$, so that $u \vee v$ is $c$-sortable by the previous case.

    \item The case $s \in \DesSet_L(u)$ and $s \in \AscSet_L(v)$ follows by symmetry.

    \item Finally, suppose $s \in \AscSet_L(u)$ and $s \in \AscSet_L(v)$.  Then $u,v \in \Sortm(W_{\langle s \rangle})$ and we conclude the result by induction on rank.\qedhere
  \end{enumerate}
\end{proof}

\section{The Cambrian rotation}
\label{sec:cambrian_rotation}

For~$s$ initial in~$c$, define the bijection
\begin{align}
  \Shift_s: \Sortm(W,c) &\bij \Sortm(W,\coxrn) \nonumber \\
    w &\longmapsto \begin{cases}
    w \vee s^m  & \text{if } s \in \AscSet_L(w) \\
    \sninv w  & \text{if } s \in \DesSet_L(w)
  \end{cases}\ ,\label{eq:shift_on_sorts}
\end{align}
where $w \vee s^m$ denotes the join in $\Weakm(W)$.
The first case is well-defined by~\Cref{prop:joininparabolic} from which it also follows that $w \vee s^m \neq u \vee s^m$ for $w \neq u$.

\begin{example}
\label{ex:sortshiftorbit}
  Parallel to \Cref{ex:ncshiftorbit,ex:assocshiftorbit}, alternately applying $\Shift_s$ and $\Shift_t$ to $\one \in \Sortm[2](\WA[3],st)$ gives the orbit
  \[
    \begin{array}{ccccc}
      \one
      &\xmapsto{\Shift_s}&
      s \cdot s
      &\xmapsto{\Shift_t}&
      sts \cdot sts \\
      &\xmapsto{\Shift_s}&
      tst \cdot st
      &\xmapsto{\Shift_t}&
      sts\cdot t \\
      &\xmapsto{\Shift_s}&
      tst
      &\xmapsto{\Shift_t}&
      st \\
      &\xmapsto{\Shift_s}&
      t
      &\xmapsto{\Shift_t}&
      e
    \end{array}.
  \]
\end{example}

\begin{definition}
\label{def:sort_cambrian_rotation}
  The \defn{$m$-eralized $c$-Cambrian rotation} $\Camb_c: \Sortm(W,c) \to \Sortm(W,c)$ is given by
  \[
    \Camb_c = \Shift_{s_n} \circ \cdots \circ \Shift_{s_1}
  \]
  for any reduced $\sref$-word $\s_1 \s_2 \cdots \s_n$ for~$c$.
\end{definition}

This composition evidently does not depend on the chosen reduced word.  The elements in \Cref{fig:sortA22} are arranged according to their orbits under Cambrian rotation.

\section{Sortable elements, noncrossing partitions, and clusters}
\label{sec:m-sort-ncp}

In this section, we use skip sets to relate sortable elements to noncrossing partitions (\Cref{thm:bij_sort_to_nc}) and to clusters (\Cref{thm:m-assoc_to_m_ncp}), where we recall that the term \emph{natural} means that a bijection respects the Cambrian recurrence.
These bijections were introduced by N.~Reading and D.~Speyer for $m=1$ in~\cite{RS2011}.
\christianside{Are you ready to see the parts of the theory coming together naturally?}\nathanside{Thanks, Reading and Speyer!}

\subsection{Sortable elements and noncrossing partitions}

We define a natural bijection between $m$-eralized $c$-sortable elements and $m$-eralized $c$-noncrossing partitions, $m$-eralizing the constructions in~\cite{Rea20072,RS2011}.

\medskip

Let the $\c$-sorting word of $w \in \Sortm(W,c)$ be $\sw{w}{c} = \s_1\cdots\s_p$, and let $s \in \sref$.
We say that~$w$ \defn{skips}~$s \in \sref$ in position~$k+1$ if the leftmost instance of~$s$ in $\c^\infty$ not used in $\sw{w}{c}$ occurs between~$\s_k$ and~$\s_{k+1}$.
The \defn{skip set} of colored positive roots is defined as
\[
  \Skipset(w) = \bigset{ \beta_s^{(\ell_s)} }{ s \in \sref },
\]
where $\beta_s^{(\ell_s)}=s_1\cdots s_k(\alpha_{s}^{(0)})$ when~$w$ skips~$s$ in position~$k+1$, and we say that the skip for~$s$ has color~$\ell_s$.  The skip set is ordered by the indices of the skipped positions.

\begin{example}
\label{ex:skipset}
  The skip set of $s_1s_2s_3|s_2s_3|s_2 \in \Sortm[2](\WA[4],s_1s_2s_3)$ is
  \[
    \begin{array}{ccc|ccc|ccc|ccc}
      s_1 & s_2 & s_3 & {\lightgrey s_1} & s_2 & s_3 & - & s_2 & {\lightgrey s_3} & - & {\lightgrey s_2} & - \\
      & & & (23)^{(0)} & & & & & (34)^{(1)} & & (14)^{(2)} &
    \end{array}.
  \]
  The skip for $s_1$ has color~$0$, the skip for $s_3$ has color~$1$, and the skip for $s_2$ has color~$2$.  \Cref{fig:sortA22} on page~\pageref{fig:sortA22} shows the skip sets of all elements in $\Sortm[2](\WA[3],st)$.
\end{example}

\begin{remark}
\label{rem:skipset}
  Let $w \in \Sortm(W,c)$.  We recover its $c$-sorting word $\sw{w}{c}$ from its skip set $\Skipset(w)$ by reading the word $\c^\infty=\s_1\s_2\ldots$ from left to right, deleting letters as follows: if there is no next letter to read in the copy of $\c^\infty$ from which we have deleted some letters, the remaining letters spell $\sw{w}{c}$.   Otherwise, let~$\s$ be the letter in the current position, and let~$u$ be the product of the undeleted letters strictly to its left.  If $u(\alpha_s^{(0)}) \in \Skipset(u)$, delete the current letter~$\s$ and all occurrences of~$\s$ to the right.
\end{remark}

The skip sets satisfy an \defn{$m$-eralized $c$-Cambrian recurrence}, proven for $m=1$ by N.~Reading and D.~Speyer in~\cite{RS2011}.
\begin{proposition}
\label{prop:skipset}
  Let~$s$ be initial in~$c$ and let $w \in \Sortm(W,c)$.
  Then
  \[
    \Skipset(w) =
    \begin{cases}
      \big\{\alpha_s^{(0)}\big\} \cup \Skipset[\coxsn](w) &\text{if } s \in \AscSet_L(w) \\
      s\big(\Skipset[\coxrn](\sninv w)\big) &\text{if } s \in \DesSet_L(w)
    \end{cases}\ .
  \]
\end{proposition}

\begin{proof}
  If $s \in \AscSet_L(w)$ then~$w \in \Sortm(\Wres,\coxsn)$ with skip set $\Skipset[\coxsn](w)$.
  Treating~$w$ as an element of $\Sortm(W,c)$ does not change the positions of the skips $t \neq s$.  But~$w$ now skips~$s = s_1$ in position~$1$, which adds $\big\{\alpha_s^{(0)}\big\}$ to its skip set.
  If $s \in \DesSet_L(w)$, no simple reflection is skipped in position~$1$ and so each skip
 $s_1\cdots s_k(\alpha_{t}) \in \Skipset(w)$ can be identified with $s(s_2\cdots s_k(\alpha_{t}^{(0)})) \in s\Skipset[\coxrn](\sninv w)$.
\end{proof}

\begin{example}
  Parallel to \Cref{ex:sortmcambrian}, the sequence of skip sets for the Cambrian recurrence on $sts \cdot s \in \Sortm[2](\WA[3],st)$ is
  \[
      \underbrace{\alpha^{(1)},\beta^{(2)}}_{st}
      \hspace*{-2pt}\mapsto\hspace*{-2pt}
      \underbrace{\alpha^{(0)},\gamma^{(2)}}_{ts}
      \hspace*{-2pt}\mapsto\hspace*{-2pt}
      \underbrace{\gamma^{(0)},\alpha^{(2)}}_{st}
      \hspace*{-2pt}\mapsto\hspace*{-2pt}
      \underbrace{\beta^{(0)},\alpha^{(1)}}_{ts}
      \hspace*{-2pt}\mapsto\hspace*{-2pt}
      \underbrace{\alpha^{(1)}}_{s}
      \hspace*{-2pt}\mapsto\hspace*{-2pt}
      \underbrace{\alpha^{(0)}}_{s}
      \hspace*{-2pt}\mapsto\hspace*{-2pt}
      \underbrace{-}_{\one}.
  \]
\end{example}

We deduce the following theorem.

\begin{theorem}
\label{thm:bij_sort_to_nc}
  The map $\Skipset$ induces a natural bijection
  \begin{align*}
    \Sortm(W,c)&\cambbij\DeltaNCm(W,c)\\
    w &\longmapsto \t_1^{(i_1)} \cdots \t_n^{(i_n)},
  \end{align*}
  for $\Skipset(w) = \big\{ \beta_{1}^{(i_1)}, \ldots, \beta_{n}^{(i_n)}\big\}$ and $t_k = s_{\beta_k}$.
\end{theorem}

\begin{proof}
  Immediate from \Cref{prop:skipset} and \Cref{prop:nc_cambrian_recurrence}: the inductive structure on $\Sortm(W,c)$ is sent to the inductive structure on $\DeltaNCm(W,c)$, and both sides have the same initial conditions.
\end{proof}

Comparing rows in \Cref{fig:ncA22,fig:sortA22} illustrates for $\WA[3]$ the natural bijection $\Sortm[2](\WA[3],st) \bij \DeltaNCm[2](\WA[3],st)$.
We use the bijection of \Cref{thm:bij_sort_to_nc} to prove that $\Cambsortm(W,c)$ and $\Cambncm(W,c)$ are isomorphic.

\begin{theorem}
\label{thm:cambnccambsort}
  $\Cambsortm(W,c) \cong \Cambncm(W,c)$.
\end{theorem}

To prove this theorem, we require control over the skip set of covers of the elements in $\Sortm(W_{\langle s \rangle},\coxsn)$.

\begin{lemma}
\label{lemma:skipjoininparabolic}
  Let $w \in \Sortm(W_{\langle s \rangle},\coxsn)$ with
  \[
    \Skipset(w) = \{ \alpha_s^{(0)}, \beta_2^{(\ell_2)}, \ldots,\beta_i^{(\ell_i)},\beta_{i+1}^{(\ell_{i+1})},\ldots, \beta_n^{(\ell_n)}\}
  \]
  for $0\leq \ell_2\leq \cdots\leq \ell_i <k \leq \ell_{i+1} \leq \cdots \leq \ell_n$, then
  \[
    \Skipset(w \vee s^k) = \{[s(\beta_2)]^{(\ell_2)}, \ldots,[s(\beta_i)]^{(\ell_i)},\alpha_s^{(k)},\beta_{i+1}^{(\ell_{i+1})},\ldots, \beta_n^{(\ell_n)}\}.
  \]
\end{lemma}
\begin{proof}
  The proof relies on the description of how the Garside factorization of $w \vee s^{k}$ is obtained from the Garside factorization of~$w \vee s^{k-1}$ in \Cref{lem:inductivesk}.
  In particular, the colored inversion sequence of the $c$-sorting word only changes within the $k$\th\ Garside factor.
  \cite[Proposition~5.4]{RS2011} shows that the roots in this Garside factor change as described in the statement of the lemma for $k=1$ and $m=1$, so that the skip set $\Skipset(w \vee s^k)$ is obtained from the skip set $\Skipset(w \vee s^{k-1})$ by
  \begin{itemize}
    \item replacing $\alpha_s^{(k-1)}$ by $\alpha_s^{(k)}$;

    \item replacing all other $(k-1)$-colored roots $\beta^{(k-1)}$ by $[s(\beta)]^{(k-1)}$; and

    \item leaving all other colored roots unchanged.
  \end{itemize}
  The lemma follows by applying this procedure~$k$ times.
\end{proof}

\begin{proof}[Proof of \Cref{thm:cambnccambsort}]
  We show that a cover relation \[u \lessdot v  \text{ in } \Cambsort(W,c)\] corresponds under the map $\Skipset$ to a cover relation \[I = \r_1^{(i_1)}\cdots\r_n^{(i_n)} \lessdot \t_1^{(j_1)}\cdots\t_n^{(j_n)}=\Cambupflip[r](I) \text{ in } \Cambnc(W,c).\]
  \begin{enumerate}[$\circ$]
    \item If $s \in \AscSet_L(u)$ and $s \in \AscSet_L(v)$ (equivalently, $\r_1^{(i_1)}=\t_1^{(j_1)}=\s^{(0)}$), the statement follows by the Cambrian recurrences in \Cref{prop:nc_cambrian_recurrence,prop:sort_cambrian_recurrence} and \Cref{thm:bij_sort_to_nc}.

    \item The case $s \in \DesSet_L(u)$ and $s \in \AscSet_L(v)$ (equivalently, $\r_1^{(i_1)} \neq \s^{(0)}$ and $\t_1^{(j_1)}=s^{(0)}$) is impossible if we are starting from a cover $u\lessdot v$ in $\Cambsort(W,c)$ since $u \leq v$ in weak order, and it is impossible if we are starting from a cover in $\Cambnc(W,c)$ because the increasing flip of $\r_j^{(i_j)}$ changes only reflections that appear after it in $\c^\infty$, and it changes them into reflections which still appear after it in $\c^\infty$.

    \item If $s \in \DesSet_L(u)$ and $s \in \DesSet_L(v)$ (equivalently, $\r_1^{(i_1)} \neq \s^{(0)}$ and $\t_1^{(j_1)} \neq \s^{(0)}$), the statement follows again by the Cambrian recurrences.

    \item Then suppose $s \in \AscSet_L(u), s \in \DesSet_L(v)$ (equivalently, $\r_1^{(i_1)} = \s^{(0)}$ and $\t_1^{(j_1)} \neq \s^{(0)}$).    Since $u \leq v$ in weak order and $s \in \supp(v)$, we have that $s \leq v$ in weak order, so that $u \vee s \leq v$.
    Since $s \leq vs \notin \supp(u)$, \Cref{lemma:skipjoininparabolic} for $k=1$ implies that $u \vee s$ is $c$-sortable.
    Therefore, $v = u \vee s$ and the skip set of~$v$ may be obtained from the skip set of~$u$ by \Cref{lemma:skipjoininparabolic}.  On the other hand,~\eqref{eq:increasingflip} shows that $\Cambupflip[r](I)$ behaves in exactly the same way.\qedhere
  \end{enumerate}
\end{proof}

\subsection{Sortable elements and clusters}

Using \Cref{thm:rootconf_skiptset}, we obtain a bijection between $m$-eralized $c$-sortable elements and the $m$-eralized $c$-cluster complex by identifying the skip set of a sortable element and root configuration of a facet.

\begin{theorem}
\label{thm:m-assoc_to_m_ncp}
  There is a natural bijection
  \begin{align*}
    \Sortm(W,c) &\cambbij\DeltaAssocm(W,c)\\
    w & \longmapsto I,
  \end{align*}
  given by $\Skipset(w) = \Roots{I}$.
\end{theorem}

\begin{proof}
  This is a direct consequence of \Cref{prop:skipset,prop:rootconfiguration}.
\end{proof}

We now $m$-eralize~\cite[Theorem~8.1]{Rea20072} to give a satisfyingly direct bijection between $\Sortm(W,c)$ and $\Assocm(W,c)$.
Write the $c$-sorting word of $w \in \Sortm(W,c)$ as $\sw{w}{c} = \s_1\cdots\s_p$.

For $s \in \supp(w)$, let $\beta_s^{(\ell_s)}$ be the colored positive root $\s_1\cdots\s_{k-1}(\alpha_{s_k}^{(0)})$, where $s_k$ is the \emph{last} occurrence of the letter~$\s$ in $\sw{w}{c}$.  Define
\[
  \Lastset(w) \eqdef \bigset{ \beta_s^{(\ell_s)} }{ s \in \supp(w) } \cup \bigset{ \alpha_s^{(m)} }{ s \notin \supp(w) }.
\]

\begin{proposition}
\label{prop:lastsetrecurrence}
  Let~$s$ be initial in~$c$ and let $w \in \Sortm(W,c)$.
  Then
  \[
    \Lastset(w) =
    \begin{cases}
      \Lastset[\coxsn](w) \cup \big\{\alpha_s^{(m)}\big\} &\text{if } s \in \AscSet_L(w) \\
      \left(\taum \right)^{-1}\Lastset[\coxrn](\sninv w) &\text{if } s \in \DesSet_L(w)
    \end{cases}\ .
  \]
\end{proposition}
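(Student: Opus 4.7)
The plan is to prove both cases by direct computation, unwinding the definition of $\Lastset$ and exploiting the fact that the $c$-sorting word behaves transparently under the two halves of the sortable Cambrian recurrence of \Cref{prop:msort_cambrian_recurrence}. The key algebraic fact I will use repeatedly is that, for a colored positive root $\beta^{(k)}$, left-multiplication by $s$ in the $W$-action on colored positive roots produces $[s(\beta)]^{(k)}$ when $\beta \neq \alpha_s$ and $\alpha_s^{(k+1)}$ when $\beta = \alpha_s$. Inspection of the three cases of $\taum$ shows that these are exactly the values of $\num$ on the colored positive roots actually produced from prefixes in sorting words; the two remaining values needed for the proof are the wrap-around $\num(\alpha_s^{(m)}) = \alpha_s^{(0)}$ and the fixed points $\num(\alpha_{s'}^{(m)}) = \alpha_{s'}^{(m)}$ for $s' \neq s$.

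For the ascent case $s \in \AscSet_L(w)$, I will use $s \notin \supp(w)$ to conclude that no occurrence of $s$ in $c^\infty$ is selected by the $c$-sorting procedure, so $\sw{w}{c}$ and $\sw{w}{s^{-1}c}$ consist of the same sequence of letters. For every $s' \in \supp(w)$ the colored positive root $\beta_{s'}^{(\ell_{s'})}$ is therefore computed from identical prefixes in both settings, giving $\Lastset[s^{-1}c](w) \subseteq \Lastset(w)$; the unique extra element $\alpha_s^{(m)} \in \Lastset(w)$ is forced by $s \notin \supp(w)$.

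For the descent case $s \in \DesSet_L(w)$, I will write $w' = s^{-1}w$ and $c' = s^{-1}cs$, so that $\sw{w}{c} = s \cdot \sw{w'}{c'}$, and compare, for each $s' \in \sref$, the element of $\Lastset(w)$ indexed by $s'$ with $\num$ applied to the element of $\Lastset[c'](w')$ indexed by $s'$. When $s' \in \supp(w')$, the last occurrence of $s'$ in $\sw{w}{c}$ is shifted by one position from its position in $\sw{w'}{c'}$, and the relevant prefix in $\sw{w}{c}$ is obtained from the corresponding prefix in $\sw{w'}{c'}$ by prepending $s$; the algebraic fact above then matches the resulting colored positive root with $\num$ of its partner in $\Lastset[c'](w')$. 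When $s' \notin \supp(w)$ (which forces $s' \neq s$), both sides contain $\alpha_{s'}^{(m)}$, and $\num$ fixes it. The most delicate remaining subcase is $s' = s$ with $s \notin \supp(w')$: the initial $s$ is then the only copy of $s$ in $\sw{w}{c}$, so $\beta_s^{(\ell_s)} = \alpha_s^{(0)}$ in $\Lastset(w)$, while $\Lastset[c'](w')$ contributes $\alpha_s^{(m)}$, and the equality $\num(\alpha_s^{(m)}) = \alpha_s^{(0)}$ holds precisely because of the modular $(m+1)$ color convention in $\num$. This color wrap-around is the main subtlety to verify, and it is exactly what forces $\num$ rather than plain left-multiplication by $s$ to appear in the statement of the recurrence.
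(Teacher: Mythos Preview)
Your proof is correct and follows essentially the same approach as the paper's. The paper's argument is considerably terser: it simply observes that $\num$ acts as the simple reflection $s$ on the colored positive roots coming from sorting-word prefixes (the ``$s' \in \supp$'' part) while fixing the $m$-colored simple roots indexed by letters outside the support. Your case-by-case analysis makes this precise, and in particular your explicit treatment of the wrap-around subcase $s' = s$ with $s \notin \supp(s^{-1}w)$ spells out the one place where $\num$ genuinely differs from the ordinary $s$-action and explains why $\num$ rather than $s$ must appear in the statement; the paper leaves this implicit.
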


\begin{proof}
  The first case $s \in \AscSet_L(w)$ is clear, since $s \notin \supp(w)$.
  If $s \in \DesSet_L(w)$, note that $\left(\taum \right)^{-1}$ acts as the simple reflection~$s$ on $\bigset{ \beta_s^{(\ell_s)} }{ s \in \supp(w) }$, while fixing $\bigset{ \alpha_s^{(m)} }{ s \notin \supp(w) }$.
  This is exactly how $\Lastset(\sninv w)$ is obtained from $\Lastset(w)$.
\end{proof}

\begin{theorem}
\label{thm:sortcluster}
  The map $\Lastset$ is a natural bijection
  \begin{align*}
    \Sortm(W,c) &\cambbij \Assocm(W,c) \\
    w &\longmapsto \Lastset(w).
  \end{align*}
\end{theorem}

\begin{proof}
This follows from the Cambrian recurrences in \Cref{prop:sort_cambrian_recurrence} and in \Cref{prop:lastsetrecurrence}.
\end{proof}

\section{Sortable elements and the shard intersection order}
\label{sec:m-sort-shard}

In this section, we give an alternative description of the bijection in \Cref{thm:bij_sort_to_nc}.  This reconciles the $m$-eralization of $c$-sortable elements with D.~Armstrong's $m$-eralization of noncrossing partitions using the shard intersection order.

\medskip

N.~Reading gave a beautiful proof of the lattice property of the noncrossing partition lattice $\NCL(W,c)$, showing that the restriction of $\Shard(W)$ to the sortable elements $\Sort(W,c)$ is isomorphic to $\NCL(W,c)$ (\cite[Theorem~8.5]{Rea2011}).

\begin{theorem+}
\label{thm:shard_nc_lattice_isomorphism}
  The restriction of $\Shard(W)$ to $\Sort(W,c)$ is isomorphic to $\NCL(W,c)$ under the natural bijection of \Cref{thm:bij_sort_to_nc}.
\end{theorem+}

\begin{example}
  \Cref{fig:ncA2} illustrates $\NCL(\WA[3],(123))$, as well as $\Shard(\WA[3])$ restricted to $\Sort(\WA[3],(123))$.   These should be compared with the corresponding shard intersection order, drawn in \Cref{fig:shardorderA2}.  Sortable elements are specified by their inversion sets---inversions are black circles, non-inversions are white circles, covered reflections are circled in gray, and other inversions in the parabolic subgroup generated by the covered reflections are circled in white.    Circles are indexed by reflections in the order
\raisebox{-0.5\height}{\begin{tikzpicture}[scale=.5]
	\rootposetfill{ 0,0}{2}{{12,23},{13}}{.8}{(e)}{};
\end{tikzpicture}}.

\christianside{A brief glimpse from the shining nonnesting world.}
\Cref{fig:ncA3shard} depicts the restriction of $\Shard(\WA[4])$ to $\Sort(\WA[4],(1234))$, and should be compared with the noncrossing partition lattice in \Cref{fig:ncA3}.  Circles are indexed by reflections in the order
\raisebox{-0.5\height}{\begin{tikzpicture}[scale=.5]
\rootposetfill{ 0,0}{3}{{12,23,34},{13,24},{14}}{.8}{(e)}{};
\end{tikzpicture}}.
\label{ex:root_posets}
\end{example}

\begin{figure}[t]
  \begin{center}
    \begin{tikzpicture}[scale=1]
      \tikzstyle{rect}=[rectangle,draw,opacity=.5,fill opacity=1,inner sep=3pt,outer sep=0pt,rounded corners=0.1cm];
      \node[rect] (e)   at ( 0,0) {$(12)(23) \cdot \one$};
      \node[rect] (1)   at (-2,2) {$(13) \cdot (12)$};
      \node[rect] (2)   at ( 0,2) {$(12) \cdot (23)$};
      \node[rect] (121) at ( 2,2) {$(23) \cdot (13)$};
      \node[rect] (12)  at ( 0,4) {$\one \cdot (12)(23)$};
      \draw (e) -- (1) -- (12);
      \draw (e) -- (2) -- (12);
      \draw (e) -- (121) -- (12);
    \end{tikzpicture}
    \qquad
    \begin{tikzpicture}[scale=1]
      \tikzstyle{rect}=[rectangle,draw,opacity=.5,fill opacity=1]
      \rootposet{ 0,0}{2}{{0,0},{0}}{0.2}{(e)}{};
      \rootposet{-2,2}{2}{{2,0},{0}}{0.2}{(1)}{};
      \rootposet{ 0,2}{2}{{1,0},{2}}{0.2}{(2)}{};
      \rootposet{ 2,2}{2}{{0,2},{0}}{0.2}{(121)}{};
      \rootposet{ 0,4}{2}{{2,2},{3}}{0.2}{(12)}{};
      \draw (e) -- (1) -- (12);
      \draw (e) -- (2) -- (12);
      \draw (e) -- (121) -- (12);
    \end{tikzpicture}
  \end{center}
  \caption{\label{fig:ncA2} The isomorphic lattices $\NCL(\WA[3],st)$ and $\Shard(\WA[3])$ restricted to $\Sort(\WA[3],st)$.  Sortable elements are specified by their inversion sets as described in~\Cref{ex:root_posets}.}
\end{figure}

\begin{figure}[t]
  \begin{center}
    \resizebox{\textwidth}{!}{
    \begin{tikzpicture}
      \tikzstyle{rect}=[rectangle,draw,opacity=.5,fill opacity=1]

      \rootposet{ 0,0}{3}{{0,0,0},{0,0},{0}}{0.2}{(e)}{};

      \rootposet{-5,3}{3}{{2,0,0},{0,0},{0}}{0.2}{(s)}{};
      \rootposet{ 1,3}{3}{{1,0,0},{2,0},{0}}{0.2}{(st)}{};
      \rootposet{-1,3}{3}{{0,0,2},{0,0},{0}}{0.2}{(u)}{};
      \rootposet{-3,3}{3}{{0,2,0},{0,0},{0}}{0.2}{(t)}{};
      \rootposet{ 3,3}{3}{{0,1,0},{0,2},{0}}{0.2}{(tu)}{};
      \rootposet{ 5,3}{3}{{1,0,0},{1,0},{2}}{0.2}{(stu)}{};

      \rootposet{-5,6}{3}{{2,0,2},{0,0},{0}}{0.2}{(su)}{};
      \rootposet{-3,6}{3}{{2,2,0},{3,0},{0}}{0.2}{(sts)}{};
      \rootposet{ 1,6}{3}{{1,0,2},{2,0},{3}}{0.2}{(stut)}{};
      \rootposet{-1,6}{3}{{0,2,2},{0,3},{0}}{0.2}{(tut)}{};
      \rootposet{ 3,6}{3}{{2,1,0},{1,2},{3}}{0.2}{(stust)}{};
      \rootposet{ 5,6}{3}{{1,2,0},{1,0},{2}}{0.2}{(stus)}{};

      \rootposet{ 0,9}{3}{{2,2,2},{3,3},{3}}{0.2}{(stusts)}{};

      \draw [black] (stusts) to (stust);
      \draw [black] (stust) to (stu);
      \draw [black] (stus) to (t);
      \draw [black] (stu) to (e);
      \draw [black] (stut) to (u);
      \draw [black] (stusts) to (su);
      \draw [black] (tu) to (e);
      \draw [black] (stut) to (stu);
      \draw [black] (tut) to (tu);
      \draw [black] (stusts) to (sts);
      \draw [black] (stust) to (tu);
      \draw [black] (su) to (s);
      \draw [black] (sts) to (s);
      \draw [black] (st) to (e);
      \draw [black] (tut) to (t);
      \draw [black] (u) to (e);
      \draw [black] (stusts) to (tut);
      \draw [black] (s) to (e);
      \draw [black] (sts) to (t);
      \draw [black] (sts) to (st);
      \draw [black] (stus) to (stu);
      \draw [black] (tut) to (u);
      \draw [black] (stusts) to (stus);
      \draw [black] (su) to (u);
      \draw [black] (stust) to (s);
      \draw [black] (stut) to (st);
      \draw [black] (stusts) to (stut);
      \draw [black] (t) to (e);
    \end{tikzpicture}
    }
  \end{center}
  \caption{\label{fig:ncA3shard}$\Shard(\WA[4])$ restricted to $\Sort(\WA[4],s_1s_2s_3)$.  Sortable elements are specified by their inversion sets as described in~\Cref{ex:root_posets}.}
\end{figure}

Combining \Cref{thm:shard_nc_lattice_isomorphism} with \Cref{def:nc_multichains} suggests a definition of~$m$-eralized sortable elements as~$m$-multi\-chains of sortable elements in $\Shard(W)$.

\nathanside{But that's not organic.}
\begin{definition}
\label{def:sort_multichains}
  The \defn{shard $c$-sortable elements} are the~$m$-multichains
  \[
    \deltaSortm(W,c) \eqdef \bigset{ (w_1 \geqsh w_2 \geqsh \cdots \geqsh w_m) }{ w_i \in \Sort(W,c) }.
  \]
\end{definition}

By construction, the shard $c$-sortable elements are in bijection with the $m$-eralized $c$-noncrossing partitions.

\begin{theorem+}
\label{thm:bij_sort_multichains_to_nc}
  There is a natural bijection
  \[
    \deltaSortm(W,c) \cambbij \NCm(W,c). \qedhere
  \]
\end{theorem+}

\medskip

The main theorem of this section draws an analogy between the $m$-eralized Cambrian lattices and D.~Amrstrong's $m$-eralization of the noncrossing partition lattice from~\Cref{sec:ncm_chains_deltas_subwords}---just as multichains of noncrossing partitions should be ordered by componentwise absolute order, multichains of sortable elements should be ordered by componentwise weak order.

\begin{theorem}
\label{thm:bij_multichain_sort_to_sort}
  There is a natural bijection
  \[
    \deltaSortm(W,c)\cambbij\Sortma(W,c)
  \]
  that sends $\deltaSortm(W,c)$ under componentwise weak order to $\Cambsortm(W,c)$.
\end{theorem}

\begin{remark}
  A naive guess for a bijection from $\deltaSortm(W,c)$ to $\Sortm(W,c)$ would be to simply multiply the individual factors of $\deltaSortm(W,c)$ (as an element of~$\Artinmon$).
  This naive guess is wrong.
  For example, for $c=s_1s_2s_3 \in \WA[4]$, this map would send the chain of sortable elements in shard order
  \[
    \left(s_1 s_2 s_3 s_1 \geqsh s_1s_2s_3\right) \text{ to the element } s_1s_2s_3|s_1 \cdot \cdot |s_1 s_2 s_3 \in \BA[4].
  \]
  This element is evidently not $c$-sortable.
\end{remark}

\begin{proof}[Proof of \Cref{thm:bij_multichain_sort_to_sort}]
  Given a multichain $(w_1\geqsh w_2 \geqsh \cdots \geqsh w_m) \in \deltaSortm(W,c)$, we produce the Garside factorization of an element in $\Sortm(W,c)$ as follows.

  \Cref{prop:shard_interval_isomorphism} gives a bijection $[\one,w]_{\Shard(W)}\cong \Shard(W_{\DesSet_R(w)})$.
  If $u \leqsh w$ with $w,u \in \Sort(W,c)$,~$u$ is sent to a $\restr{c}{w}$-sortable element in $W_{\DesSet_R(w)}$ under this bijection using \Cref{thm:shard_nc_lattice_isomorphism}.  

  We iterate this procedure to obtain elements $w^{(1)},w^{(2)},\ldots,w^{(m)}$ such that $w^{(1)}\eqdef w_1 \in \Sort(W,c)$, and for all $1 < i \leq m$, $w^{(i)} \in \Sort(W_{\DesSet_R(w_{(i-1)})},c^{(i)})$, where $c^{(i)}\eqdef\restr{c^{(i-1)}}{w^{(i-1)}}$.
  Then $w^{(1)} \cdot w^{(2)} \cdot\ \cdots\ \cdot w^{(m)}$ satisfies the condition in \Cref{thm:garsidefactorization} to be a Garside factorization, it satisfies the factorwise conditions of \Cref{def:factorsort}, and so it is an element of $\Sortm(W,c)=\Sortma(W,c)$ by \Cref{cor:factorsort}.

  The inverse of this bijection is given by explicitly describing the inverse of \Cref{prop:shard_interval_isomorphism} on $c$-sortable elements: given $u \in W_{\DesSet_R(w)}$, we conjugate it by~$w$ to an element of $W_{\coveredref(w)}$.  Since $c$-sortable elements are uniquely defined by their cover reflections by~\eqref{eq:sort_to_nc_by_covers}, there is a unique way to complete the inversion set in $W_{\coveredref(w)}$ to the inversion set of a $c$-sortable element in~$W$.

  \Cref{thm:sort_componentwise_inversion} implies that the componentwise weak order on $\deltaSortm(W,c)$ recovers $\Cambsortm(W,c)$.
\end{proof}

\begin{theorem}
\label{thm:bij_nc_to_sort}
   Starting at any corner of the square below and traveling around the square via the given bijections results in the identity map.
   \[\xymatrix{ 
\NCm(W,c) \ar@{->}[d]|{\text{\Cref{thm:bij_sort_multichains_to_nc}}} \ar@{<-}[rrr]|{\text{\Cref{prop:nc_fuss_subwords}}}    & & &  \DeltaNCm(W,c) \ar@{<-}[d]|{\text{\Cref{thm:bij_sort_to_nc}}} \\
 \deltaSortm(W,c) \ar@{->}[rrr]|{\text{\Cref{thm:bij_multichain_sort_to_sort}}}    & & &  \Sortma(W,c)
  }\]
\end{theorem}

\begin{proof}
  It suffices to check that the composition of the bijections described in \Cref{prop:nc_delta}, \Cref{thm:bij_sort_multichains_to_nc}, and \Cref{thm:bij_multichain_sort_to_sort} is the inverse of the bijection $\Skipset$ given in \Cref{thm:bij_sort_to_nc}.

  Choose $(u_1 \geq_\refl u_2 \geq_\refl \cdots \geq_\refl u_m) \in \NCm(W,c)$, write the corresponding chain of sortable elements as $(w_1 \geqsh w_2 \geqsh \cdots \geqsh w_m) \in \deltaSortm(W,c)$, and denote the corresponding factorwise $c$-sortable element by $w = w^{(1)}w^{(2)}\cdots w^{(m)}\in \Sortma(W,c)$.  Let $\Skipset \left(w \right) = (\delta_0,\delta_1,\delta_2,\ldots,\delta_m) \in \deltaNCm(W,c)$.

  We must show that $(\delta_0,\delta_1,\delta_2,\ldots,\delta_m)  = (c u_1^{-1}, u_1 u_2 ^{-1},u_2 u_3^{-1}, \cdots, u_m)$.  We argue by induction on $m$, the base case following from the $m=1$ theory.

  First, any skips of color~$0$ that could be attributed to the $w^{(2)}\cdots w^{(m)}$ piece of $\Skipset(w)$ are already skips of color~$0$ from the $w^{(1)}$ piece of the product, since the support of $w^{(2)}\cdots w^{(m)}$ is contained in the support of $w^{(1)}$.
  By \Cref{prop:sortingwordgarsidefactorization}, these skips of color~$0$ are not affected by the addition of the piece $w^{(2)}\cdots w^{(m)}$, and therefore account for the first piece of the delta sequence, $c u_1^{-1}$ by the $m=1$ bijection between sortable elements and delta sequences.

  Second, since the covering reflections of $w^{(1)}$ correspond to a parabolic Coxeter element, conjugating by $w^{(1)}$ to map $W_{\DesSet_R(w^{(1)})}$ to $W_{\coveredref(w^{(1)})}$, we have by induction that the skip set of $w^{(2)}\cdots w^{(m)}$ is indeed $(u_1 u_2 ^{-1},u_2 u_3^{-1}, \cdots, u_m)$.

  These two pieces together conclude the proof.
\end{proof}

\begin{example}
  We compute an extended example of~\Cref{thm:bij_nc_to_sort}, starting and ending at  $\NCm(\WA[4],c)$.
  Fix $m=2$ and $c=s_1s_2s_3 \in \WA[4]$.  The element $(23)^{(0)}(34)^{(1)}(14)^{(2)} \in \DeltaNCm(\WA[4],c)$ corresponds under the map in \Cref{prop:nc_delta} to the chain
  \[
    (134) \geq_\refl (14) \in \NCm(\WA[4],c).
  \]
By \Cref{thm:bij_sort_multichains_to_nc}, this chain corresponds to the shard sortable element
  \[
    \left(w_1 \geqsh w_2\right) = \left(s_1s_2s_3s_2\geqsh s_1s_2s_3\right) \in \deltaSortm(\WA[4],c).
  \]
  We use \Cref{thm:bij_multichain_sort_to_sort} to find the Garside factorization of an element of $\Sortm(W,c)$ from this shard sortable element.  We compute
  \[
    \inv_\refl(w_1)=\big\{(12),(13),(14),(34)\big\} \supseteq \big\{(12),(13),(14)\big\}=\inv_\refl(w_2).
  \]
  The covered reflections $\coveredref(w_1)=\{(13),(34)\}$ generate the (nonstandard) parabolic subgroup $W_{ \coveredref(w_1)}$ with reflections $\{(13),(14),(34)\}$.  The associated descents $\DesSet_R(w_1)=\{s_3,s_2\}$ generate the standard parabolic subgroup $W_{\DesSet_R(w_1)}$.

  Under the first isomorphism $[e,w_1]_{\Shard(W)}\cong \Shard(W_{\coveredref(w_1)})$ of \Cref{prop:shard_interval_isomorphism},  the element $(w_2)_{\coveredref(w_1)} \in W_{\coveredref(w_1)}$ has inversions $\{(13),(14)\}$.
  Passing to the standard parabolic $\Shard(W_{\DesSet_R(w_1)})$ by the second isomorphism of \Cref{prop:shard_interval_isomorphism}, the corresponding element of $W_{\DesSet_R(w_1)}$ has inversions $\{(34),(24)\}$ and reduced $\sref$-word $s_3s_2$.  We conclude that
  \[
    \left(s_1s_2s_3s_2\geqsh s_1s_2s_3 \right) \mapsto s_1s_2s_3s_2\cdot s_3s_2 \in \Sortma[2](\WA[4]).
  \]
 This element is factorwise $c$-sortable by \Cref{ex:crestricted} and its skip set was computed in \Cref{ex:skipset}, recovering the initial colored factorization $(23)^{(0)}(34)^{(1)}(14)^{(2)} \in \DeltaNCm(\WA[4],c)$.  
\end{example}

\section{The linear \mhead-eralized Cambrian and the \mhead-Tamari lattice}
\label{sec:tamari}

For this section, we fix the linear Coxeter element
\[
  c = (1,2,\ldots,n) = s_1\cdots s_{n-1} \in \WA[n]
\]
and the corresponding $m$-eralized $c$-Cambrian lattice $\Cambsortm(\WA[n],c)$.
We conclude this chapter with a brief comparison of $\Cambsortm(\WA[n],c)$ with the $m$-Tamari lattice~\cite{BPR2012}.

An \defn{$m$-Dyck path} is a walk by east and north steps from $(0,0)$ to $(mn,n)$, never going below the line betwen them.
The \defn{$m$-Tamari lattice} on Dyck paths is defined by cover relations:
whenever there is an east step followed immediately by a north step, a line is drawn with slope $1/m$ from their common endpoint upwards until it first return to the Dyck path.
The cover relation consists of swapping the east step with the subpath cut out by the line segment.

The $1$-Tamari lattice is isomorphic to the 1-Cambrian lattice with linear Coxeter element, but this is no longer the case for $m>1$ and $n>2$.
\medskip

The $m$-Tamari lattice is illustrated for $n=3$ and $m=2$ in \Cref{fig:tamari2}, while the $m$-eralized Cambrian lattice was shown in \Cref{fig:cambsortA22} on page~\pageref{fig:cambsortA22}.
\begin{figure}[t]
  \begin{center}
    \begin{tikzpicture}[scale=1.3]
      \tikzstyle{rect}=[rectangle,draw,opacity=.5,fill opacity=1,minimum width=50pt,minimum height=28pt]
      \tikzstyle{sort}=[]
        \node[rect] (e)   at (0,0) {};
        \dyckpath{(e)}{0,1,1,0,1,1,0,1,1};
        \node[rect] (s)   at (-1.5,1) {};
        \dyckpath{(s)}{0,1,0,1,1,1,0,1,1};
        \node[rect] (t)   at ( 1,1) {};
        \dyckpath{(t)}{0,1,1,0,1,0,1,1,1};
        \node[rect] (st)  at (-1.5,2) {};
        \dyckpath{(st)}{0,1,0,1,1,0,1,1,1};
        \node[rect] (sts) at ( 0,3) {};
        \dyckpath{(sts)}{0,1,0,1,0,1,1,1,1};
        \node[rect] (stss)   at ( 1,4) {};
        \dyckpath{(stss)}{0,1,0,0,1,1,1,1,1};
        \node[rect] (stst)   at (-2,4) {};
        \dyckpath{(stst)}{0,0,1,1,0,1,1,1,1};
        \node[rect] (ststs)  at (-1,5) {};
        \dyckpath{(ststs)}{0,0,1,0,1,1,1,1,1};
        \node[rect] (stssts) at ( 0,6) {};
        \dyckpath{(stssts)}{0,0,0,1,1,1,1,1,1};
        \node[rect] (ss)   at (-3,2) {};
        \dyckpath{(ss)}{0,0,1,1,1,1,0,1,1};
        \node[rect] (tt)   at ( 2,2) {};
        \dyckpath{(tt)}{0,1,1,0,0,1,1,1,1};
        \node[rect] (stt)  at (-3,3) {};
        \dyckpath{(stt)}{0,0,1,1,1,0,1,1,1};

        \draw (e) to (s) to (st) to (sts) to (stss) to (stssts);
        \draw (e) to (t) to (sts) to (ststs) to (stssts);
        \draw (s) to (ss) to (stt);
        \draw (t) to (tt) to (stss);
        \draw (st) to (stt) to (stst) to (ststs);
    \end{tikzpicture}
  \end{center}
  \caption{The $2$-Tamari lattice on the $2$-Dyck paths.}
  \label{fig:tamari2}
\end{figure}
While these two lattices are generally not isomorphic, there is nevertheless the following surprising connection between the two lattices.

\begin{theorem}
  We have
  \[
  \sum_I q^{\#\text{upper covers of }I} = \sum_D q^{\#\text{upper covers of }D},
  \]
  where the first sum ranges over all sortable elements using the cover relation in $\Cambsortm\big(\WA[n],(12\cdots n)\big)$, and the second sum ranges over all $m$-Dyck paths using the cover relation in the $m$-Tamari lattice of Dyck paths from $(0,0)$ to $(mn,n)$.
\end{theorem}

\begin{proof}
  The number of upper covers of an $m$-Dyck path in $m$-Tamari lattice is given by its number of valleys.
  The generating function of all $m$-Dyck paths by their number of valleys is well-known to be the $m$-Narayana polynomial.
  The $m$-Narayana polynomial is also equal to the $h$-polynomial of $\Assocm\big(\WA[n],(12\cdots n)\big)$, see~\cite{FR2005}.
  And we have seen in \Cref{cor:m-c-h-vector}, that this equals the generating function of the number of upper covers in $\Cambassocm\big(\WA[n],(12\cdots n)\big)$.
  (Alternatively, one can deduce this second Narayana count also from \Cref{cor:m-c-h-vector2} and~\cite[Section~3.5]{Arm2006}.)
\end{proof}

We conjecture that there is an even deeper relationships between the two lattices.
M.~Bousquet-M\'{e}lou, E.~Fusy, and L.-F.~Pr\'{e}ville-Ratelle~\cite{BMFPR2012} proved that the number of intervals in the $m$-Tamari lattice is
\[
  \frac{m+1}{n(mn+1)} \binom{(m+1)^2 n +m}{n-1}.
\]
In a subsequent paper, M.~Bousquet-M\'{e}lou, G.~Chapuy, and L.-F.~Pr\'{e}ville-Ratelle proved that the number of labelled intervals in the $m$-Tamari lattice---where the top element of the interval is labelled by a parking function compatible with the corresponding Dyck path---equals $(m+1)^n (mn+1)^{n-2}$~\cite{BMCPR2013}.
Labelling $(w_1 \geqref w_2 \geqref \cdots \geqref w_m) \in \NCm(W,c)$ by cosets of the parabolic subgroup $W_{\fixd(w_m)}$~\cite{Rho2014}, 
%
%
we conjecture the following relationship between the number of intervals and labelled intervals.

\begin{conjecture}\label{conj:intervals_m_tamari}
  Let $c=(1,2,\ldots,n) \in \WA[n]$ be the linear Coxeter element.
  Then
  \begin{itemize}
    \item the number of intervals in the $m$-Tamari lattice equals the number of intervals in the $m$-eralized Cambrian lattice, and
    \item the number of intervals in the lattices also coincide if each interval is weighted by the number of parking functions or, respectively, by the number of cosets of the top element.
  \end{itemize}
\end{conjecture}\nathanside{Noncrossing $q,t$-Catalan combinatorics?}

We conjecture the following weighted version of the interval enumeration. 

\begin{conjecture}\label{conj:intervals_m_tamari2}
For any integer partition $\lambda \vdash n$, there are the same number of intervals $\alpha \leq \beta$
  \begin{itemize}
    \item in the $m$-eralized Cambrian lattice for which $\lambda$ records the sorted block sizes of the last component~$\delta_m$ of the $m$-delta sequence $\beta = (\delta_0,\delta_1,\ldots,\delta_{m})$, as
    \item in the $m$-Tamari lattice for which $\lambda$ records the sorted sizes of the vertical runs of the $m$-Dyck path $\beta$.
  \end{itemize}
\end{conjecture}
\christianside{If this is true, there must be some structural connection between the two!}
\nathanside{Hi Fr\'ed\'eric!}

Both conjectures have been verified for $\WA$ for $n=3,4$ and $m=1,2,3,4$.

\chapter{Positive \mhead-eralized structures}
\label{sec:rat_positive}

In this chapter, we study \emph{positive $m$-eralized analogues} of the structures we have previously considered.
We define and discuss their embeddings (\Cref{sec:positiveembedding}), study their enumeration (\Cref{sec:positivecount}) and their symmetries under Kreweras complements and Cambrian rotations (\Cref{sec:positivesymmetry}).\nathanside{Home stretch---two easy chapters, then representation theory!}

\section{The positive \mhead-eralized structures and their embeddings}
\label{sec:positiveembedding}

Recall that the support $\supp(w)$ of $w \in \Artinmon$ is the set $\{ s_1,\ldots,s_p\}$ of simple reflections contained in any reduced $\sref$-word $\s_1\cdots\s_p$ for~$w$.
The natural bijections
\[
  \Sortm(W,c) \cambbij \Assocm(W,c) \cambbij \NCm(W,c) 
\]
preserve the various notion of support given in \Cref{def:nc_multichains,,def:m-assoc-support}.

\medskip

The uniform underlying idea is to restrict attention from all Fu\ss-Catalan objects to those with full support.
This is a well-worn path for $m=1$, and has also been developed for $m$-eralized $c$-cluster complexes with bipartite~$c$ in~\cite[Section~12]{FR2005}.

We define the following positive analogues together with natural embeddings:
\begin{align*}
  \Sortmpos(W,c)  &\hookrightarrow \Sortm(W,c) \\
  \NCmpos(W,c)    &\hookrightarrow \NCm(W,c) \\
  \Assocmpos(W,c) &\hookrightarrow \Assocm(W,c)\ .
\end{align*}

An element of $\NCm(W,c) \cambbij \Assocm(W,c) \cambbij \Sortm(W,c)$ is said to have \defn{full support} if its support is all of $\sref$.  Restricting to those elements of full support gives a quick, uniform definition for positive versions of each construction.   To better describe the symmetries of these positive structures, we prefer to give slightly different interpretations.

\medskip

An element $w \in \Sortm(W,c)$ has full support if and only if the sorting word $\sw{w}{c}$ starts with an initial copy of~$\c$.  We define the \defn{positive $m$-eralized $c$-sortable elements} to be
\[
  \bigset{ w \in [\bone,\overline{\bc}\bwom]_{\Weak(\Artinmon)} }{ w \text{ is $c$-sortable} }.
\]
We therefore recover the sortable elements of full support using the embedding
\begin{align*}
  \Sortmpos(W,c) &\hookrightarrow \Sortm(W,c) \\
	\bw &\mapsto \bc\bw.
\end{align*}
\christianside{Take care, this is \emph{not} the embedding as a subset.}
Since multiplying an element less than $\overline{\bc}\bwom$ by $\bc$ evidently does not increase its Garside degree, this embedding has the correct image by \Cref{prop:weak_degree_characterization}.
Some caution is warranted---it is already the case that $\Sortmpos(W,c) \subseteq \Sortm(W,c)$, but this is not the intended embedding.  \Cref{fig:sortposA22} illustrates the example in~$\WA[3]$ with $m=2$.

\begin{figure}[t]
  \begin{center}
    \begin{tabular}{ccc}
      $\Sortmpos[2](\WA[3],st)$ & $\hookrightarrow$ & $\Sortm[2](\WA[3],st)$ \\
      \hline
      $\cdot\cdot|\s\t|\s\t$ & $\mapsto$ &
      $\s\t|\s\t|\s\t$
      \\
      $\cdot\cdot|\s\T|\S\T$ & $\mapsto$ &
      $\s\t|\s\T|\S\T$
      \\
      $\cdot\cdot|\S\t|\S\T$ & $\mapsto$ &
      $\s\t|\S\t|\S\T$
      \\
      $\cdot\cdot|\s\t|\S\T$ & $\mapsto$ &
      $\s\t|\s\t|\S\T$
      \\
      $\cdot\cdot|\s\T|\s\T$ & $\mapsto$ &
      $\s\t|\s\T|\s\T$
      \\
      $\cdot\cdot|\s\t|\s\T$ & $\mapsto$ &
      $\s\t|\s\t|\s\T$
      \\
      $\cdot\cdot|\S\T|\S\T$ & $\mapsto$ &
      $\s\t|\S\T|\S\T$
    \end{tabular}
  \end{center}
  \caption{The positive $m$-eralized $st$-sortable elements $\Sortmpos[2](\WA[3],st)$, with their embedding into $\Sortm[2](\WA[3],st)$.}
  \label{fig:sortposA22}
\end{figure}

\medskip

A facet~$I$ of the dual subword complex $\DeltaNCm(W,c)$ has full support if and only if it does not contain any letter from the first copy of~$c$ of the initial copy of~$\wo$ of the search word $\cwom[c][m+1]$.  That is, $I$ has full support if and only if
  \[
    I \cap \set{ \r^{(0)} }{ \r \in \invs_\refl(c)} = \emptyset.
  \]
We define the \defn{positive $m$-eralized $c$-noncrossing partitions} to be
\[
  \NCmpos(W,c) \eqdef \subwordsR(\overline{\c}\cwom[c][m+1], c).
\]

On the level of the colored reflection sequences, the inclusion $\overline{c}\cwom[c][m+1] \hookrightarrow \cwom[c][m+1]$ conjugates every reflection by~$c$.  Therefore, reinserting the initial missing copy of $\c$ defines an embedding
\begin{align*}
  \NCmpos(W,c) &\hookrightarrow \NCm(W,c)\\
 I &\mapsto \set{i+n}{i \in I}.
\end{align*}
  Since the product of the reflections in a facet of $\NCmpos(W,c)$ is an $\refl$-word for~$c$, this conjugation gives another $\refl$-word for~$c$.  The image is therefore exactly those facets with full support.  \Cref{fig:sortposA22} illustrates the example in~$\WA[3]$ with $m=2$.

\begin{figure}[t]
  \begin{center}
    \begin{tabular}{ccc}
      $\NCmpos[2](\WA[3],st)$ & $\hookrightarrow$ & $\NCm[2](\WA[3],st)$ \\
      \hline
      $\cdot\cdot\T.\S\U\T.\s\U\t$ & $\mapsto$ &
      $\S\U\T.\S\U\T.\s\U\t$
      \\
      $\cdot\cdot\T.\s\U\t.\S\U\T$ & $\mapsto$ &
      $\S\U\T.\s\U\t.\S\U\T$
      \\
      $\cdot\cdot\t.\S\U\T.\S\u\T$ & $\mapsto$ &
      $\S\U\t.\S\U\T.\S\u\T$
      \\
      $\cdot\cdot\T.\S\u\T.\s\U\T$ & $\mapsto$ &
      $\S\U\T.\S\u\T.\s\U\T$
      \\
      $\cdot\cdot\T.\s\U\T.\S\U\t$ & $\mapsto$ &
      $\S\U\T.\s\U\T.\S\U\t$
      \\
      $\cdot\cdot\T.\S\U\t.\S\u\T$ & $\mapsto$ &
      $\S\U\T.\S\U\t.\S\u\T$
      \\
      $\cdot\cdot\t.\S\u\T.\S\U\T$ & $\mapsto$ &
      $\S\U\t.\S\u\T.\S\U\T$
    \end{tabular}
  \end{center}
  \caption{The positive $m$-eralized $st$-noncrossing partitions $\NCmpos[2](\WA[3],st)$.}
  \label{fig:ncposA22}
\end{figure}


As in the case of $\DeltaNCm(W,c)$, a facet~$I$ of the subword complex $\DeltaAssocm(W,c)$ has full support if and only if it does not contain any letter from the first copy of~$c$ of the initial copy of~$\wo$ in the search word $\c\cwom$.  
We define the \defn{positive $m$-eralized $c$-cluster complex} to be
\begin{align*}
  \Assocmpos(W,c) \eqdef \subwordsS(\cwom, \overline{c}\wom, mN-n) = \subwordsSB(\cwom, \overline{\bc}\bwom).
\end{align*}
The inclusion $\cwom \hookrightarrow \c\cwom$ gives the embedding
\begin{equation}
\label{eq:assopos}
  \begin{aligned}
    \Assocmpos(W,c) &\hookrightarrow \DeltaAssocm(W,c) \\
      I &\mapsto \set{i+n}{i \in I}.
  \end{aligned}
\end{equation}
By \Cref{def:m-assoc-support}, the image of this embedding recovers those elements of full support.  \Cref{fig:assocposA22} illustrates the example in~$\WA[3]$ with $m=2$.
\begin{figure}[t]
  \begin{center}
    \begin{tabular}{ccc}
      $\Assocmpos[2](\WA[3],st)$ & $\hookrightarrow$ & $\DeltaAssocm[2](\WA[3],st)$ \\
      \hline
      $\cdot\cdot.\S\T\S.\T\s\t$ & $\mapsto$ &
      $\S\T.\S\T\S.\T\s\t$
      \\
      $\cdot\cdot.\S\t\s.\T\S\T$ & $\mapsto$ &
      $\S\T.\S\t\s.\T\S\T$
      \\
      $\cdot\cdot.\s\T\S.\T\s\T$ & $\mapsto$ &
      $\S\T.\s\T\S.\T\s\T$
      \\
      $\cdot\cdot.\S\T\s.\t\S\T$ & $\mapsto$ &
      $\S\T.\S\T\s.\t\S\T$
      \\
      $\cdot\cdot.\S\t\S.\T\S\t$ & $\mapsto$ &
      $\S\T.\S\t\S.\T\S\t$
      \\
      $\cdot\cdot.\S\T\S.\t\s\T$ & $\mapsto$ &
      $\S\T.\S\T\S.\t\s\T$
      \\
      $\cdot\cdot.\s\t\S.\T\S\T$ & $\mapsto$ &
      $\S\T.\s\t\S.\T\S\T$
      \\
    \end{tabular}
  \end{center}
  \caption{The positive $m$-eralized $st$-clusters $\Assocmpos[2](\WA[3],st)$.}
  \label{fig:assocposA22}
\end{figure}

\section{Enumeration of positive \mhead-eralized structures}
\label{sec:positivecount}

Before discussing some structural properties of the positive constructions, we recall that they are uniformly enumerated by the \defn{positive Fu\ss-Catalan} (or \defn{Fu\ss-Dogolon}\footnote{This nomenclature is due to D.~Bessis or comes from the fact that these numbers \emph{lay doggo} without being discovered for far longer than the classical Catalan numbers.}) \defn{numbers of type~$W$}, defined by\nathanside{I'm almost positive this nomenclature has my full support.}
\begin{equation}
  \Catplusm(W) \eqdef \prod_{i=1}^n \frac{mh+d_i-2}{d_i}.
  \label{eq:pos_cat_num}
\end{equation}
We refer to~\cite[Corollary~12.4]{FR2005} for a proof of this is the counting formula for $\Assocmpos(W,c)$.
Formally extending \eqref{eq:fuss_cat_num} to accommodate negative numbers, we obtain $\Catplusm(W) = (-1)^n \Catm[-m-1](W)$.
This purely enumerative identity may be seen as an instance of combinatorial \emph{reciprocity}~\cite[Corollary~1.3]{Ath2005}.

\begin{theorem}
  We have
  \[
    \big| \Sortmpos(W,c) \big| = \big| \NCmpos(W,c) \big| = \big| \Assocmpos(W,c) \big| = \Catplusm(W).
  \]
\end{theorem}

\begin{proof}
  By the embedding $\Assocmpos(W,c) \hookrightarrow \DeltaAssocm(W,c) \bij \Assocm(W,c)$, \Cref{cor:m-c-assoc-m-assoc} gives a bijection between $\Assocmpos(W,c)$ and facets in the Fomin-Reading generalized cluster complex that do not use $m$-colored simple roots.  The enumeration for $\Assocmpos(W,c)$ now follows from \cite[Proposition~12.4]{FR2005}.
  As the bijections
  \[
    \NCm(W,c) \lrbij \Assocm(W,c) \lrbij \Sortm(W,c)
  \]
  respect support, we conclude the result for the remaining objects.
\end{proof}

We obtain the following counting formula analogous to \Cref{thm:m-counting}.

\begin{theorem}
\label{thm:m-pos-counting}
  We have
  \begin{equation*}
    (1-q)^{n+1}\sum_{m=0}^\infty  \big|\NCmpos(W,c)\big| q^m = \sum_{\r_1\cdots \r_n \in \reds(c)} q^{\mathrm{asc}(\r_1\cdots \r_n)}
  \end{equation*}
  where $\asc(\r_1,\ldots,\r_n)$ is the number of ascents $r_i <_c r_{i+1}$ in the reflection order induced by the Coxeter element~$c$.
\end{theorem}
\christianside{Look at the two formulas, aren't they cute.}

\begin{proof}
  This is a direct consequence of~\eqref{eq:assopos}, after observing that this formula is equivalent to
  \[
    \big|\NCmpos(W,c)\big| = \sum_{\r_1\cdots\r_n \in \reds(c)} \binom{m+\des(\r_1\cdots\r_n)}{n}
  \]
  where $\des(\r_1,\ldots,\r_n)$ is the number of ascents $r_i >_c r_{i+1}$.
  The only difference to the proof of \Cref{thm:m-counting} is that we now how only $m$ copies of $\invs_\refl(\cwo)$ in the search word, and we search for~$c^{-1}$ rather than for~$c$.
\end{proof}

\section{Symmetries of the positive \mhead-eralized structures}
\label{sec:positivesymmetry}

Cambrian rotation realizes a remarkable symmetry on the $m$-eralized $c$-cluster complex $\Assocm(W,c)$ (\Cref{def:cs_c_cambrian_rotation}).
This symmetry is lost when studying the positive $m$-eralized $c$-cluster complex $\Assocmpos(W,c)$---in fact, $\Assocmpos(W,c)$ and $\Assocmpos(W,\coxrn)$ are not necessarily isomorphic simplicial complexes.

The situation is reversed for the complexes $\NCm(W,c)$ and $\NCmpos(W,c)$.  Although the dual subword complexes $\NCm(W,c)$ and $\NCm(W,\coxrn)$ are not necessarily isomorphic, it turns out that $\NCmpos(W,c)$ and $\NCmpos(W,\coxrn)$ \emph{are} isomorphic
Analogously to the situation for $\Assocm(W,c)$, composing this isomorphism~$n$ times in the order induced by~$\c$ realizes a symmetry on the complex.  This symmetry is illustrated in \Cref{fig:swflipgraphs}.

\begin{figure}[t]
  \begin{center}
      \begin{tikzpicture}
      \node[inner sep=0pt] (11) at (-3,0)
        {\includegraphics[width=.48\textwidth]{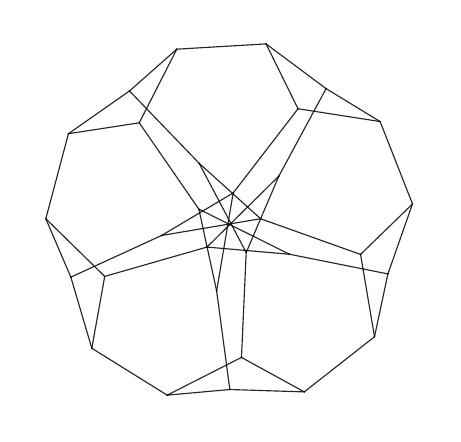}};
      \node[inner sep=0pt] (21) at (-3,-2.7)
        {$\NCmpos[2](\WA[4],c)$ with $30$ facets};
    \end{tikzpicture}
    \quad
    \begin{tikzpicture}
      \node[inner sep=0pt] (12) at ( 3,0)
        {\includegraphics[width=.48\textwidth]{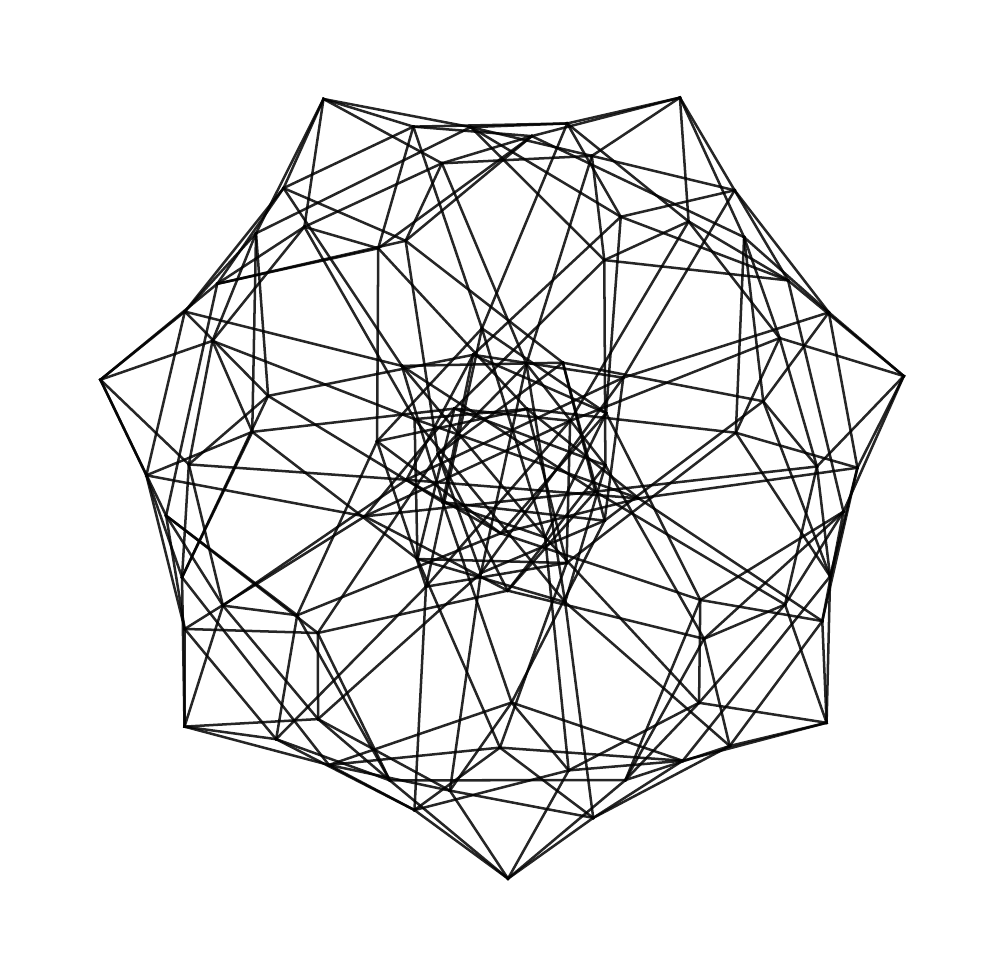}};
      \node[inner sep=0pt] (22) at ( 3,-2.7)
        {$\NCmpos[3](\WA[4],c)$ with $91$ facets};
    \end{tikzpicture}
    \begin{tikzpicture}
      \node[inner sep=0pt] (31) at(-3,-6)
        {\includegraphics[width=.48\textwidth]{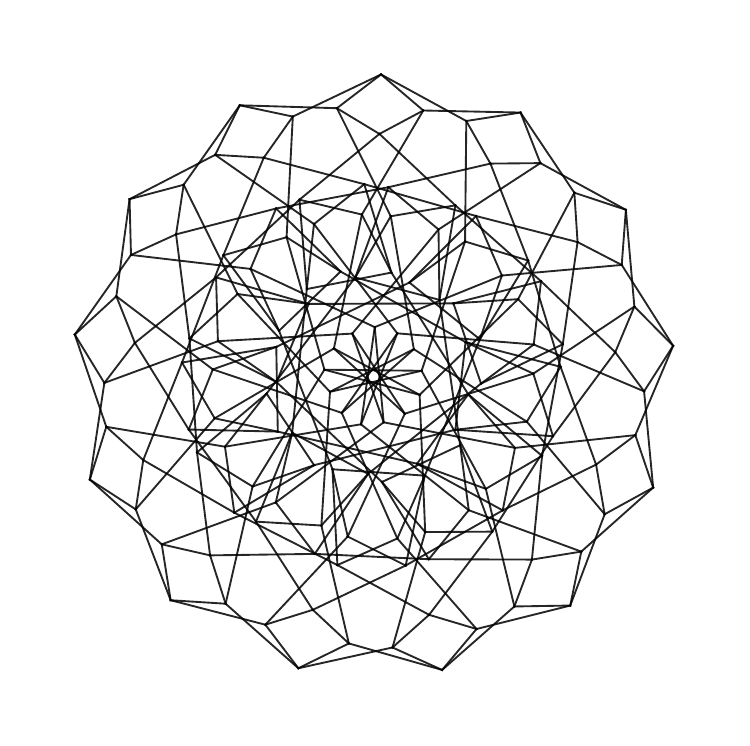}};
      \node[inner sep=0pt] (41) at (-3,-8.7)
        {$\NCmpos[2](\WA[4],c)$ with $143$ facets};
    \end{tikzpicture}
    \quad
    \begin{tikzpicture}
      \node[inner sep=0pt] (32) at ( 3,-6)
        {\includegraphics[width=.48\textwidth]{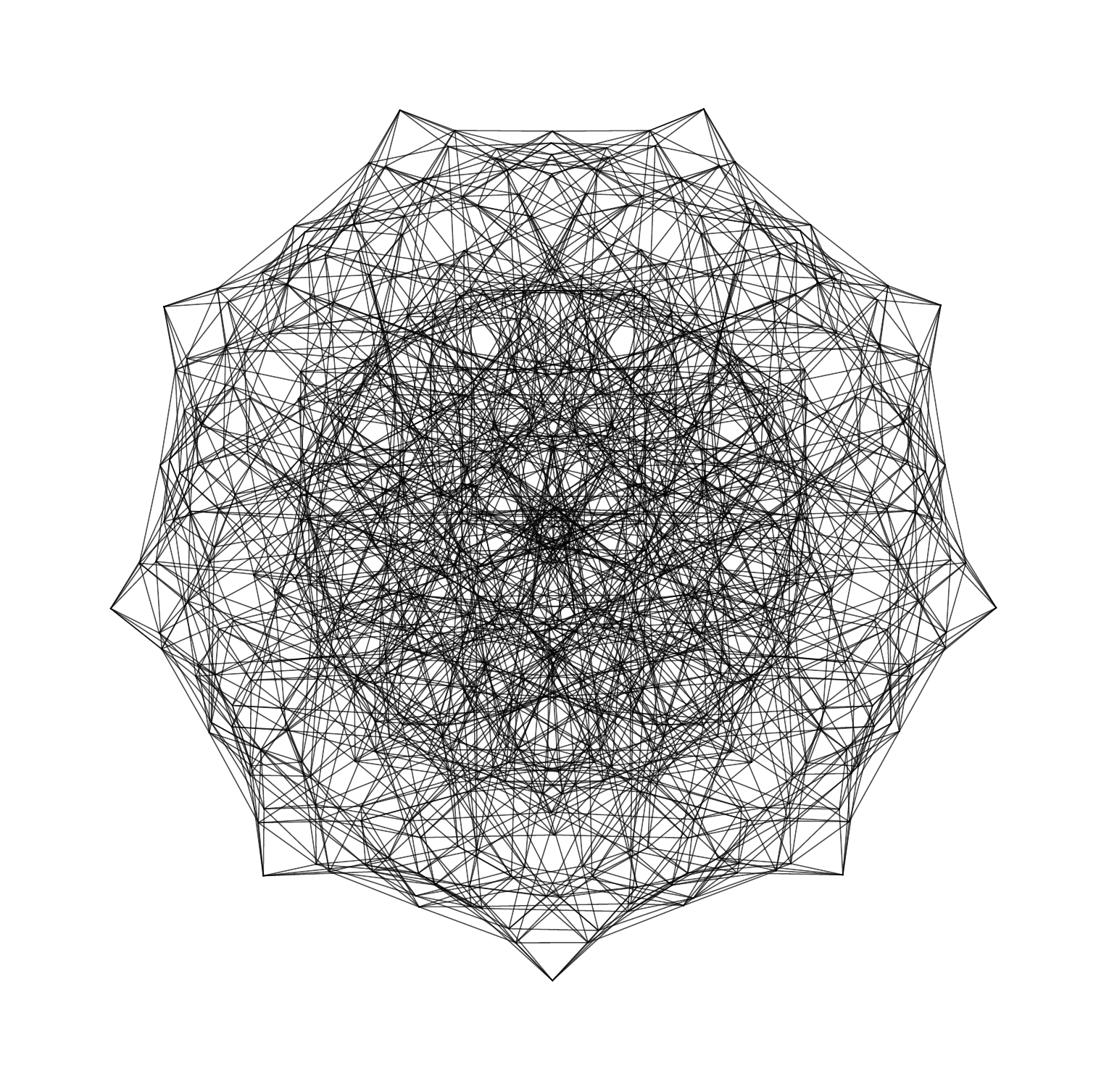}};
      \node[inner sep=0pt] (42) at ( 3,-8.7)
        {$\NCmpos[3](\WA[4],c)$ with $612$ facets};
    \end{tikzpicture}
  \end{center}
\caption{Some examples of the symmetries of $\NCmpos(W,c)$ under the positive Kreweras complement $\Krewpos$.}
\label{fig:swflipgraphs}
\end{figure}

\medskip

We explain this symmetry by defining a shift operation on $\NCmpos(W,c)$, analogously to~\eqref{eq:ass_shift} for $\Assoc(W,c)$.  For $s$ initial in $c$, \Cref{lem:reflection_order} implies that the search words $\inv_\refl(\Q)$ for $\NCmpos(W,c)$ and $\inv_\refl(\Q')$ for $\NCmpos(W,\coxrn)$ are related by
\begin{equation}
\label{eq:bigrotation}
  \sninv\Q\psi^{m+1}(\s) \equiv \Q'.
\end{equation}

This relationship between the search words $\Q$ and $\Q'$ induces a bijection on the facets of $\NCmpos(W,c)$ and $\NCmpos(W,\coxrn)$.  For $s$ initial in $c$, define 
\begin{align*}
  \Shift_s: \NCmpos(W,c) &\bij \NCmpos(W,\coxrn)\\
I &\longmapsto \Shift_s(I)
\end{align*}
using the identification provided by~\eqref{eq:bigrotation}.

\medskip

We check that this is a bijection.  Recall that every facet $I$ of $\NCmpos(W,c)$ spells out a reduced $\refl$-word for~$c$.  Write the reflections corresponding to the positions of $I$ as the reduced $\refl$-word $\t_1 \t_2 \t_3 \cdots \t_n$ for $c$.  If the first letter of $\inv_\refl(\Q)$ is not in $I$, then $\Shift_s(I)$ spells out the reduced $\refl$-word $\t_1^s \t_2^s \t_3^s \cdots \t_n^s$ for $\coxrn$, and so specifies a facet in $\NCmpos(W,\coxrn)$.   Otherwise, the first letter of $\Q$---corresponding to the reflection $s$---is in $I$.  The reflection corresponding to the last letter in $\sninv\Q\psi^{m+1}(\s)$ is $s^{cs}$, so that the map
$\Shift_s(I)$ spells out the $\refl$-word $\t_2^s \t_3^s \cdots \t_n^s \s^{cs}$.  We compute
$
  t_2^s t_3^s \cdots t_n^s s^{cs} = (c s) s^{c s} = \coxrn,
$
as desired.  It is now clear that $\Shift_s$ is an isomorphism.

\medskip

\christianside{Symmetries everywhere, beautiful.}
We summarize the preceding discussion with the following theorem.

\begin{theorem+}
\label{thm:nceqncdog}
  For any two Coxeter elements $c,c'$, the simplicial complexes $\NCmpos(W,c)$ and $\NCmpos(W,c')$ are isomorphic.
\end{theorem+}

Composing shifts now defines a map from $\NCmpos(W,c)$ to itself.

\begin{definition}
  Let $\cwo = \s_1 \s_2 \cdots \s_N$ be the~$\c$-sorting word for~$\wo$.
  Define the \defn{positive Kreweras complement} $\Krewpos: \NCmpos(W,c) \to \NCmpos(W,c)$ by \[
    \Krewpos \eqdef \Shift_{s_N}\circ\cdots\circ\Shift_{s_2}\circ\Shift_{s_1}.
  \]
\end{definition}
\christianside{Am I supposed to understand this naming scheme?}

\begin{remark}
  Combinatorial interpretations of $m$-eralized noncrossing partitions in classical types in terms of set partitions have been studied in~\cite{Arm2006}.
  Although evocative of Cambrian rotation, it is more appropriate to call this composition the \emph{positive Kreweras complement}---it may be shown to act as a rotation of combinatorial models for positive Fu{\ss}-Catalan noncrossing partitions in classical types~\cite{KS2018}.
\end{remark}

We now compute the order of $\Krewpos$.  Since conjugation by~$\wo$ is an involution, $h = 2|\refl|/|\sref|$, and there are exactly $(m+1)|\refl|-|\sref|$ letters used to construct $\NCmpos(W,c)$,
\[
  \left(\Krewpos\right)^{(m+1)h-2} \equiv \id.
\]

\begin{theorem}
  The order of the positive Kreweras complement is given by
  \[
    \order(\Krewpos) =
    \begin{cases}
      (m+1)h/2 - 1, & \text{if } \psi \equiv \id \text{ or } \big( n = 2 \text{ and } m=1 \big), \\
      (m+1)h - 2, & \text{otherwise.}
    \end{cases}
  \]
\end{theorem}\christianside{This is the same order as the positive Panyushev map on order ideals in root posets!}\nathanside{Did you mean rowmotion?}

\begin{proof}
  By the discussion above, $\left(\Krewpos\right)^{(m+1)h-2} \equiv \id$.  When $\psi \equiv \id$, the symmetry of order~$2$ gives $\left(\Krewpos\right)^{(m+1)h/2-1} \equiv \id$.  We conclude by showing that for any two letters in~$\Q$, there is a positive $m$-divisible noncrossing partition containing exactly one.
  Using the positive Kreweras complement, it is simple to see that this is the case if we are in rank bigger than~$2$.
  Thus, the only special cases are the dihedral groups for which $\psi\not\equiv\id$.
  For those, the situation is fine if $m>1$, while we also have a symmetry of order~$2$ otherwise.
  This completes the proof.
\end{proof}

\begin{theorem}
\label{thm:fussdogregular}
  The flip graph of $\NCmpos(W,c)$ is regular with degree $n(m-1)$.
\end{theorem}\nathanside{Does this embed in the complexified hyperplane complement?}

\begin{proof}
  Choose a facet $I$ of the dual subword complex $\NCmpos(W,c)$ and a letter in the facet to flip.    If the rank of~$W$ is one, this letter can flip to any other letter and so has the desired degree.
  Otherwise the rank is greater than one and we may find another letter in the facet.
  By the symmetry of the $\Shift_s$ maps, we can move this second letter to be the leftmost reflection~$\s$ in the leftmost copy of $\cwo$ in the search word.
  It follows from~\Cref{lem:reflection_order4} that the facet $I$ without this new first letter is naturally a facet of $\NCmpos(W_{\langle s \rangle},\coxsn)$.  We conclude the result by induction on the rank of~$W$.
\end{proof}

\chapter{Conjectures on rational structures}
\label{sec:rat}

In this chapter, we give an elementary definition of the rational Catalan numbers (\Cref{sec:rat_cat_num}).  We conclude with a possible approach and its limitations for constructing noncrossing combinatorial objects counted by the rational Catalan numbers (\Cref{sec:possible_approach}).

\section{Rational Catalan numbers}
\label{sec:rat_cat_num}

From a purely enumerative perspective, it is believed that noncrossing Catalan objects ought to have combinatorial generalizations beyond their $m$-eralization and positive $m$-eralization.  Such generalizations have been studied in type~$A$, and we refer to~\cite{ARW2013,ALW2014,GM2015} for details.

\begin{definition}
\label{def:ratcat}  Let~$W$ be a finite Coxeter group and let~$p$ be a positive integral parameter coprime to the Coxeter number~$h$.
The \defn{rational Catalan number of type~$W$} is given by
  \begin{equation*}
    \Catrat(W)\eqdef \prod_{i=1}^n \frac{ p + ( p e_i \mod h ) }{ d_i },
  \end{equation*}
  where $e_1 = d_1 - 1 \leq \ldots \leq e_n = d_n - 1$ are the \defn{exponents} of~$W$.
\end{definition}

For Coxeter groups and $p = mh{\pm}1$, this formula recovers the Fu\ss-Catalan and positive Fu\ss-Catalan numbers from~\eqref{eq:fuss_cat_num} and~\eqref{eq:pos_cat_num}.  \Cref{fig:rationalABtables} lists some rational Catalan numbers in the classical types.

\begin{figure}[t]
  \begin{tabular}{c|ccccccccccccc}
    $p$   & $1$ & 2 & 3 & 4  & 5  & 6  & 7   & 8  & 9   & 10 & 11  & 12 & 13 \\
    \hline
    \hline
    $A_1$ & 1& $\cdot$& 2& $\cdot$& 3& $\cdot$& 4& $\cdot$& 5& $\cdot$& 6& $\cdot$& 7\\
    $A_2$ &  1& 2& $\cdot$& 5& 7& $\cdot$& 12& 15& $\cdot$& 22& 26& $\cdot$& 35\\
    $A_3$ &  1& $\cdot$& 5& $\cdot$& 14& $\cdot$& 30& $\cdot$& 55& $\cdot$& 91& $\cdot$& 140\\
    $A_4$ &  1& 3& 7& 14& $\cdot$& 42& 66& 99& 143& $\cdot$& 273& 364& 476\\
    $A_5$ &  1& $\cdot$& $\cdot$& $\cdot$& 42& $\cdot$& 132& $\cdot$& $\cdot$& $\cdot$& 728& $\cdot$& 1428\\
    $A_6$ &  1& 4& 12& 30& 66& 132& $\cdot$& 429& 715& 1144& 1768& 2652& 3876 \\
    \hline
    \hline
    $B_2$ &  1& $\cdot$& 3& $\cdot$& 6& $\cdot$& 10& $\cdot$& 15& $\cdot$& 21& $\cdot$& 28\\
    $B_3$ &  1& $\cdot$& $\cdot$& $\cdot$& 10& $\cdot$& 20& $\cdot$& $\cdot$& $\cdot$& 56& $\cdot$& 84\\
    $B_4$ &  1& $\cdot$& 5& $\cdot$& 15& $\cdot$& 35& $\cdot$& 70& $\cdot$& 126& $\cdot$& 210\\
    $B_5$ &  1& $\cdot$& 6& $\cdot$& $\cdot$& $\cdot$& 56& $\cdot$& 126& $\cdot$& 252& $\cdot$& 462\\
    $B_6$ &  1& $\cdot$& $\cdot$& $\cdot$& 28& $\cdot$& 84& $\cdot$& $\cdot$& $\cdot$& 462& $\cdot$& 924 \\
    \hline
    \hline
    $D_4$ &  1& $\cdot$& $\cdot$& $\cdot$& 20& $\cdot$& 50& $\cdot$& $\cdot$& $\cdot$& 196& $\cdot$& 336\\
    $D_5$ &  1& $\cdot$& 7& $\cdot$& 27& $\cdot$& 77& $\cdot$& 182& $\cdot$& 378& $\cdot$& 714\\
    $D_6$ &  1& $\cdot$& 8& $\cdot$& $\cdot$& $\cdot$& 112& $\cdot$& 294& $\cdot$& 672& $\cdot$& 1386
  \end{tabular}
  \caption{
    Some rational Catalan numbers of classical types.
    \label{fig:rationalABtables}
  }
\end{figure}

\medskip

If the Coxeter group~$W$ is crystallographic, it is well-known that the two sets $\{pe_1, \ldots, pe_n\}$ and $\{e_1,\ldots,e_n\}$ coincide modulo~$h$ (for $p$ coprime to $h$).  In this case, \Cref{def:ratcat} simplifies to
\begin{equation}
  \Catrat(W) = \prod_{i=1}^n \frac{ p + e_i }{ d_i }. \label{eq:catratcrystallographic}
\end{equation}
This formula has a combinatorial interpretation---for crystallographic $W$, M.~Haiman showed that $\Catrat(W)$ counts~$W$-orbits of $Q / pQ$ (where $Q$ is the root lattice)~\cite[Theorem~7.4.4]{Hai1994}.

\medskip

A general formula for the rational Catalan numbers was first considered by I.~Gordon and S.~Griffeth in the context of \emph{rational Cherednik algebras} associated to complex reflection groups~\cite{GG2012}.
We refer to~\cite{LT2009} for all needed background on complex reflection groups and extend \Cref{def:ratcat} to all well-generated complex reflection groups.
To see that this definition coincides with the definition in~\cite{GG2012}, we recall some notions from T.~A.~Springer's seminal paper on regular elements~\cite{Spr1974}.
We refer to that paper for further definitions and background on complex reflection groups.

\medskip

\christianside{We are getting a bit sidetracked here.}
Let~$W$ be a well-generated complex reflection group acting irreducibly on a complex $n$-dimen\-sional vector space~$V$.
It is well-known that~$W$ acts on the polynomial ring $\C[V]$ and that the coinvariant algebra $\C[V] / \C[V]^W_+$ carries the regular representation of~$W$.
For an irreducible representation~$\varphi \in \operatorname{Irred}(W)$, the \defn{$\varphi$-exponents} $e_1(\varphi) \leq \ldots \leq e_{\dim(\varphi)}(\varphi)$ are defined to be the degrees of the graded components of $\C[V] / \C[V]^W_+$ in which the $\dim(\varphi)$ copies of~$\varphi$ live.
Its generating function
\[
  f(\varphi,q) = \sum_{i=1}^{\dim(\varphi)} q^{e_i(\varphi)} = \sum_{k \geq 0} \big[ \C[V] / \C[V]^W_+; \varphi\big]_k q^k
\]
is called the \defn{fake degree polynomial} of~$\varphi$, where $\big[ \C[V] / \C[V]^W_+; \varphi\big]_k$ denotes the multiplicity of $\varphi$ inside the~$k$\th\ graded component of the coinvariant algebra.

Let~$h=e_n(V) + 1$ be the Coxeter number of~$W$, let $\zeta$ be a primitive $h$\th\ root of unity, and for~$p$ coprime to~$h$ let $g: \mathbb{C} \to \mathbb{C}$ be the automorphism sending $\zeta$ to $\zeta^p$.  In~\cite{GG2012}, the \defn{rational Catalan number of type~$W$} is defined as
\begin{equation}\label{eq:ggrational}
  \Catrat(W) \eqdef \prod_{i=1}^n \frac{ p + e_i(g(V) ) }{ d_i }.
\end{equation}
These rational Catalan numbers are dimensions of certain modules over the rational Cherednik algebra associated to~$W$ and the rational parameter~$p/h$.
In particular, these numbers are indeed integral---which was not obvious from \Cref{def:ratcat}.

\bigskip

We deduce from Springer theory that~\Cref{def:ratcat} and \eqref{eq:ggrational} agree.
\begin{proposition}
Let~$W$ be a well-generated complex reflection group acting irreducibly on a complex $n$-dimen\-sional vector space~$V$.  The fake degree polynomials of $V$ and of $g(V)$ at primitive $h$\th\ roots of unity are related by
  \[
  f( g(V), \zeta ) = f( V, \zeta^p ).
  \]
  In particular, the two sets $\{e_1(g(V)), \ldots, e_n(g(V))\}$ and $\{ p e_i, \ldots,p e_n\}$ coincide modulo~$h$.
\end{proposition}
\begin{proof}
  Let~$c$ be a $\zeta$-regular element of $W$ (which is known to exist in well-generated groups).
  Then $g(c)$ is $\zeta^p$-regular, and the eigenvalues of $g(c)$ are obtained from the eigenvalues of~$c$ by replacing every $\zeta$ by $\zeta^p$.

  Since both~$c$ and~$c^p$ satisfy the assumption in~\cite[Section~2.5]{Spr1974}, we apply \cite[Proposition~4.5]{Spr1974} for both~$\zeta$ and~$\zeta^p$, expressing the respective fake degree polynomials in terms of the eigenvalues.  This gives the desired relation between the fake degree polynomials of~$V$ and of~$g(V)$.
\end{proof}

\begin{remark}
\label{rem:rotationaction}
  This definition of the rational Catalan numbers has the $q$-analogue
  \[
    \Catrat(W;q) \eqdef \prod_{i=1}^n \frac{ \big[ p + ( p e_i \mod h ) \big]_q }{ \big[d_i \big]_q },
  \]
  where $[a]_q = 1 + q + \ldots q^{a-1}$ is the usual $q$-integer.
  This $q$-number appears as a graded Hilbert series in the context of rational Cherednik algebras~\cite{GG2012}.
\end{remark}

\section{A possible approach in the classical types}
\label{sec:possible_approach}

\emph{Rational Dyck paths} and their combinatorial properties have been the subject of considerable recent research~\cite{ARW2013,ALW2014,GM2015,Bod2017}.
Such paths are, by construction, counted by the rational Catalan numbers of type~$A$.
As such, they belong to the \emph{nonnesting} rational Fu\ss-Catalan structures and will not play a prominent role here.
On the other hand, the search for \emph{noncrossing} rational Fu\ss-Catalan structures was initiated for type$~A$ in~\cite{ARW2013}.  In this section, we propose an approach to such structures in classical types.
\christianside{It's not quite working, even though the definitions seem to be tailored also for the rational structures.}\nathanside{Yeah, it seems that Coxeter-initial complexes ought to be the right thing.}

\subsection{\mhead-eralizations}

We now follow the Catalan to Fu\ss-Catalan to positive Fu\ss-Catalan progression to its logical conclusion.  The central results of this monograph establish that this progression replaces $\wo \in W$ by $\bwom \in \Artinmon$ to produce $m$-eralizations, and then further by $\overline{\bc}\bwom$ for positive $m$-eralizations.

For $p$ coprime to $h$, it therefore seems reasonable to search for initial segments $\cwop$ of $\c^\infty$, from which we could define \defn{rational $c$-sortable elements}, \defn{rational $c$-noncrossing partitions}, and the \defn{rational $c$-cluster complex} by
\begin{align*}
  \Sortrat(W,c) &\eqdef \bigset{ \bw \in [\bone,\bwop]_{\Weak(\Artinmon)} }{ \bw \text{ is $c$-sortable} }, \\ 
  \NCrat(W,c) &\eqdef \subwordsR(\invs_\refl(\cwop[p+h](\c)), c), \text{ and}\\
  \Assocrat(W,c) &\eqdef \subwordsSB(\c \cwop(\c), \bwop).
\end{align*}
The goal is to find $\bwop \in \Artinmon$ so that all three definitions give objects counted by the rational Catalan number $\Catrat(W)$.

\begin{theorem}
  For any initial segment $\cwop$ of $\c^\infty$, the natural bijections between $\Sortm[\infty](W,c)$, $\NCm[\infty](W,c)$, and $\Assocm[\infty](W,c)$ restrict to bijections
  \[
    \Sortrat(W,c) \lrbij \NCrat(W,c) \lrbij \Assocrat(W,c).
  \]
\end{theorem}
\begin{proof}
  This follows directly from the observation that the bijections
  \begin{itemize}
    \item $\Assocm[\infty](W,c) \lrbij \Sortm[\infty](W,c)$ given by the skip set, and
    \item $\Assocm[\infty](W,c) \lrbij \NCm[\infty](W,c)$ given by the root configuration
  \end{itemize}
  have the desired property.
\end{proof}

Even though $m$-eralized and the positive $m$-eralized structures both fit into this framework, our expectations were somewhat diminished by the following example (checked by computer).

\begin{observation}
\label{obs:A3}
  For $\WA[6]$ with bipartite~$c = (12)(34)(56) \cdot (23)(45)$,
  there does not exist an initial $\cwop$ of $\c^\infty$ such that
  \[
    |\subwordsSB(\c \cwop,\bwop)|=\Catrat[5](\WA[6]) = 66.
  \]
\end{observation}

One still might maintain the following hope.

\begin{hope}
\label{conj:rationalwords}
  Let $(W,\sref)$ be a finite Coxeter system with $p$ coprime to $h$.
  Then there exists \emph{some} Coxeter element~$c$ and a word $\cwop$ initial in $\cwo$ for which
  \[
    |\Sortrat(W,c)| = |\NCrat(W,c)| = |\Assocrat(W,c)| = \Catrat(W),
  \]
with $\cwop[p+h](\c)\equiv \cwop(\c) \cwo$.
\end{hope}

Since such a $\cwop$ is initial in $\c^\infty$, $\Assocrat(W,c)$ is a $c$-initial subword complex, and hence is vertex decomposable by \Cref{thm:vertex-decomposability}.   The relation between $\cwop[p+h](\c)$ and $\cwop(\c)$ implies that for $p>h$, $\Assocrat(W,c)$ has the homotopy type of a wedge of $\Cat^{[p-h]}(W)$ spheres of dimension $n-1$.

\subsection{Conjectural rational constructions}
The constructions in this section provide conjectural combinatorial models for rational noncrossing structures in all infinite families of finite Coxeter systems.

\begin{theorem}
\label{thm:counting}
  \Cref{conj:rationalwords} holds for $I_2(h)$ with Coxeter element~$c$ given by
    \begin{center}
      \begin{tikzpicture}[scale=.4]
          \node[draw,circle,inner sep=1pt] (A) at (0,0) {$1$};
          \node[draw,circle,inner sep=1pt] (B) at (3,0) {$2$};
          \node at (1.5,0.5) {$h$};
          \draw [->]        (A) -- (1.5,0);
          \draw             (1.5,0) -- (B);
      \end{tikzpicture}
    \end{center}
    and $\bwop = \bc^{(p-1)/2}$.
\end{theorem}\nathanside{Amazing.}

\begin{proof}
  Write $s,t$ for the simple reflections, and fix the Coxeter element $c=st$.
  We prove the statement by explicitly counting
  \[
    \NCrat(W,c) = \subwordsR\big(\invs_\refl(\cwop[p+h](\c)), c \big).
  \]
  We immediately obtain that
  \begin{align*}
    |\NCrat(W,c)| &= |\NCrat[p-h](W,c)| + \lengthS(\bwop) + 1 \\
                  &= |\NCrat[p-h](W,c)| + p
  \end{align*}
  where $\lengthS(\bwop) = p-1$ and $|\NCrat[p-h](W,c)|=0$ for $p\leq h$.
  The first summand in this expression comes from those facets using no reflection from the initial copy of $\invs_\refl(\cwo)$, the second summand from those using a single reflection in this initial copy, and the final~$1$ comes from the single facet entirely contained in the initial copy.

 We compute for any $p$ that
  \begin{align*}
    \Catrat(W)  &= \frac{1}{2h}\Big[\big(p+(p \mod h)\big)\big(p+(-p \mod h)\big)\Big] \\
                &= \frac{1}{2h}\Big[p^2+ph+(p \mod h)(-p \mod h)\Big] \\
                &= \frac{1}{2h}\Big[p^2-ph+(p \mod h)(-p \mod h)\Big] + p \\
                &= \frac{1}{2h}\Big[p^2-2ph+h^2+(p-h)h+(p \mod h)(-p \mod h)\Big] + p\\
                &= \frac{1}{2h}\Big[(p-h)^2+(p-h)h+(p \mod h)(-p \mod h)\Big] + p\\
                &= \frac{1}{2h}\Big[\big((p-h)+(p \mod h)\big)\big((p-h)(-p \mod h)\big)\Big] + p\\
                &= \Catrat[p-h](W) + p,
  \end{align*}
  with $\Catrat(W) = p$ for $1 \leq p \leq h$.
\end{proof}

We conjecture \Cref{conj:rationalwords} to hold in types $A_n$, $B_n$, $D_n$, and $H_3$.

\begin{conjecture}
\label{conj:counting}
  \Cref{conj:rationalwords} holds for
  \begin{enumerate}[(1)]
    \item type $A_n$ with Coxeter element $c = (1,\ldots,n+1)$ given by
      \begin{center}
        \begin{tikzpicture}[scale=.4]
            \node[draw,circle,inner sep=1pt] (A) at (0,0) {$1$};
            \node[draw,circle,inner sep=1pt] (B) at (3,0) {$2$};
            \node[draw,circle,inner sep=1pt] (C) at (6,0) {$3$};
            \node[draw,circle,inner sep=1pt] (D) at (12,0) {$n$};
            \draw [->]        (A) -- (1.5,0);
            \draw             (1.5,0) -- (B);
            \draw [->]        (B) -- (4.5,0);
            \draw             (4.5,0) -- (C);
            \draw [dotted,->] (C) -- (10.5,0);
            \draw [dotted   ] (10.5,0) -- (D);
        \end{tikzpicture}
      \end{center}
      and for $p=mh+r$ coprime to $h$ with $0\leq r < h$
      \[
        \bwop[p] = \bc_{a_{r-1}} \cdots \bc_{a_{1}} \bwo^m, \quad a_i \eqdef \lfloor ih/r \rfloor, \quad \bc_j \eqdef \bs_1 \bs_2 \cdots \bs_j,
      \]\christianside{Who drew these?}\nathanside{\dots}
    \item type $B_n$ with Coxeter element~$c = (1,\ldots,n,-1,\ldots,-n)$ given by
      \begin{center}
        \begin{tikzpicture}[scale=.4]
            \node[draw,circle,inner sep=1pt] (A) at (0,0) {$1$};
            \node[draw,circle,inner sep=1pt] (B) at (3,0) {$2$};
            \node[draw,circle,inner sep=1pt] (C) at (6,0) {$3$};
            \node[draw,circle,inner sep=1pt] (D) at (12,0) {$n$};
            \node at (1.5,0.5) {$4$};
            \draw [->]        (A) -- (1.5,0);
            \draw             (1.5,0) -- (B);
            \draw [->]        (B) -- (4.5,0);
            \draw             (4.5,0) -- (C);
            \draw [dotted,->] (C) -- (10.5,0);
            \draw [dotted   ] (10.5,0) -- (D);
        \end{tikzpicture}
      \end{center}
      and
      \[
        \bwop = \bc^{(p-1)/2},
      \]
    \item type $D_n$ with any Coxeter element~$c = (1,-1)(2,\ldots,n,-2,\ldots,-n)$ given by
      \begin{center}
        \begin{tikzpicture}[scale=.4]
            \node[draw,circle,inner sep=1pt] (A1) at (0,1) {$1$};
            \node[draw,circle,inner sep=1pt] (A2) at (0,-1) {$2$};
            \node[draw,circle,inner sep=1pt] (B) at (3,0) {$3$};
            \node[draw,circle,inner sep=1pt] (C) at (6,0) {$4$};
            \node[draw,circle,inner sep=1pt] (D) at (12,0) {$n$};
            \draw [->]        (A1) -- (1.5,0.5);
            \draw             (1.5,0.5) -- (B);
            \draw [->]        (A2) -- (1.5,-0.5);
            \draw             (1.5,-0.5) -- (B);
            \draw [->]        (B) -- (4.5,0);
            \draw             (4.5,0) -- (C);
            \draw [dotted,->] (C) -- (10.5,0);
            \draw [dotted   ] (10.5,0) -- (D);
        \end{tikzpicture}
      \end{center}
      and
      \[
        \bwop = \bc^{(p-1)/2},
      \]
    \item type $H_3$ with Coxeter element~$c$ given by
      \begin{center}
        \begin{tikzpicture}[scale=.4]
            \node[draw,circle,inner sep=1pt] (A) at (0,0) {$1$};
            \node[draw,circle,inner sep=1pt] (B) at (3,0) {$2$};
            \node[draw,circle,inner sep=1pt] (C) at (6,0) {$3$};
            \node at (1.5,0.5) {$5$};
            \draw [->]        (A) -- (1.5,0);
            \draw             (1.5,0) -- (B);
            \draw [->]        (B) -- (4.5,0);
            \draw             (4.5,0) -- (C);
        \end{tikzpicture}
      \end{center}
      and
      \[
        \bwop = \bc^{(p-1)/2},
      \]
  \end{enumerate}
\end{conjecture}

\christianside{We have computed it several times, it seems to \emph{really} fail.}
\begin{remark}
  Unfortunately, this hope fails to work in at least some other types.  For example, in types~$F_4$ and~$H_4$, there do not exist Coxeter elements~$c$ and words~$\cwop$ initial in $\c^\infty$ such that \Cref{conj:rationalwords} holds.
\end{remark}

\chapter{\mhead-eralized structures in representation theory}
\label{sec:representationtheory}

\newcommand{\Endo}{\operatorname{End}}
\newcommand{\add}{\operatorname{add}}
\newcommand{\oco}{{{}^\infty\c^\infty}}

In this chapter, we exhibit the connection between the Catalan combinatorics
which we have been studying and the representation theory of finite-dimensional
hereditary Artin algebras.  Our main references for this representation
theory are \cite{dlabringel76,H1987,ARS1997}.
\hughside{If you were wondering where I was, I've mostly been hanging out in this chapter.}

We give a more detailed introduction to the connections to be explored in this chapter (\Cref{sec:rep-overview}) and present some combinatorial constructions which match directly with the combinatorics built into the representation theory (\Cref{sec:combcon}).
We then give a quick
introduction to the representation theory of Artin algebras (\Cref{sec:artin}).
We introduce exceptional sequences, and explain their equivalence to
$\refl$-factorizations of Coxeter elements (\Cref{sec:fact}).  We
use exceptional sequences to give representation-theoretic interpretations
of $m$-eralized clusters and $m$-eralized noncrossing partitions (\Cref{sec:cnc}), and
of the natural bijection between them (\Cref{sec:cncbij}).  We show that these interpretations can be extended to the positive
setting (\Cref{sec:pos}).  We show that certain symmetries of 
$\NablaAssocm(W,c)$ and $\DeltaNCmpos(W,c)$ can be accounted for
representation-theoretically (\Cref{sec:ar-trans}).  We give a
representation-theoretic interpretation of $m$-eralized $c$-sortable
elements (\Cref{sec:aisles}), and a representation-theoretic analogue of
the bijection to $m$-eralized clusters (\Cref{sec:bij}).

\section{Overview} \label{sec:rep-overview}
$\NablaAssocm(W,c)$ and $\DeltaNCm(W,c)$
are known to
admit interpretations in terms of representation theory of hereditary
Artin algebras.
See, for example, \cite{BRT2012}, where
this perspective was employed to give the first definition of a
uniform bijection between these two sets. 
In view of these existing interpretations, it is satisfying to
provide a similar interpretation for $m$-eralized sortable elements and
$m$-eralized Cambrian lattices.
In this chapter, we show that the elements of
$\Sortm(W,c)$ correspond bijectively to a certain natural class of
co-aisles in the bounded derived category of a hereditary Artin algebra
defined in terms of~$W$ and~$c$.  The co-aisles in question all contain a
certain ``standard'' co-aisle, and the inversion sets of the elements of
$\Sortm(W,c)$ designate the indecomposable objects in the corresponding
co-aisle other than those in the standard co-aisle.  It follows that
$\Cambsortm(W,c)$ is the inclusion order on this class of co-aisles.

Further, the combinatorial bijections which we have constructed in
previous chapters also have corresponding representation-theoretic versions in crystallographic types.
We will show that
the bijection between $\NablaAssocm(W,c)$ and $\DeltaNCm(W,c)$ in
\Cref{thm:rootconf_skiptset} agrees (up to a choice of convention)
with the bijection from \cite{BRT2012}.
We will also show that
the bijection between $\Cambsortm(W,c)$ and $\NablaAssocm(W,c)$ is a
combinatorial version of a bijection between (co)-aisles and
silting objects which goes back to B.~Keller and D.~Vossieck \cite{KV}.


We omit proofs of most of the purely representation-theoretic statements
in this chapter, preferring to refer the reader to the existing
literature.  On the other hand, we include some proofs either because
we consider them to be instructive or because it was not easy to find
the desired results in the literature.  \hughside{I spent a pleasant day reading all the algebra books in the Oberwolfach library so you don't have to.}

\section{Combinatorial constructions}\label{sec:combcon}
In this section, we give some combinatorial constructions, based on considerations from previous chapters, in order to prepare for our applications to the representation
theory of hereditary Artin algebras.
\christianside{This is the best way to convince a combinatorialist to read this chapter.}
\nathanside{What, use the word ``combinatorial'' in red italics?}

Let $(W, \sref)$ be a Coxeter system with $\sref=\{s_1,\dots,s_n\}$.  
For a (possibly infinite or bi-infinite) $\sref$-word $\Q$, we define the
\defn{combinatorial AR quiver} of $\Q$ to be the quiver whose vertex set is the letters of
$\Q$, and such that for each letter $\s_i$ in $Q$, and each $s_j\in\sref$ such that
$s_i$ and $s_j$ do not commute, there is an arrow from that $\s_i$ to the next
occurrence of $\s_j$, if any.  (The poset whose Hasse diagram is the combinatorial AR
quiver is known in the literature as the \emph{heap} of $\Q$ \cite{XV}.)

The \defn{combinatorial AR translation} is a partially defined map from
the letters of $\Q$ to the letters of $\Q$, sending each $\s_i$ to the previous instance
of $\s_i$, if any.

If $\Q=\s_1\s_2\dots$ is a (possibly infinite, but not bi-infinite) $\sref$-word, then there is an
inversion sequence of colored positive roots $\inv(Q)$, namely
$\alpha^{(0)}_1, s_1(\alpha_2^{(0)}),\dots$, as defined in \Cref{sec:inv-set-pos-Art}.
We refer to the colored positive root corresponding to a letter in $\Q$ as
its colored root label.  

We are especially interested in three cases of the combinatorial AR quiver
construction, depending on a
choice of $\c=\s_1\dots\s_n$.  \begin{enumerate}[$\circ$]

\item $\Q=\cwo$.
Recall that $\cwo$ is the $c$-sorting word for $\wo$; see \Cref{def:c-sorting} for the
definition and \Cref{lem:reflection_order} for some of its important properties.
Two examples of these quivers, for the Coxeter group of type~$A_3$, with $\c=\s_1\s_2\s_3$
and $\c=\s_2\s_1\s_3$, are given in~\Cref{combarquiv}.  (As always, when we give examples in type $A_n$, we use $s_i$ to represent the adjacent transposition $(i\ i+1)$.)

\begin{figure}
$$
  \begin{tikzpicture}
  \node (a) at (0,0) {$\s_1$};
  \node (b) at (1,1) {$\s_2$};
  \node (c) at (2,2) {$\s_3$};
  \node (d) at (2,0) {$\s_1$};
  \node (e) at (3,1) {$\s_2$};
  \node (f) at (4,0) {$\s_1$};
  \draw[-stealth] (a) -- (b);
  \draw[-stealth] (b) -- (c);
  \draw[-stealth] (b) -- (d);
  \draw[-stealth] (c) -- (e);
  \draw[-stealth] (d) -- (e);
  \draw[-stealth] (e) -- (f);
  \end{tikzpicture}
\qquad
    \begin{tikzpicture}
  \node (b) at (1,1) {$\s_2$};
  \node (c) at (2,2) {$\s_3$};
  \node (d) at (2,0) {$\s_1$};
  \node (e) at (3,1) {$\s_2$};
  \node (f) at (4,0) {$\s_1$};
  \node (g) at (4,2) {$\s_3$};
  \draw[-stealth] (b) -- (c);
  \draw[-stealth] (b) -- (d);
  \draw[-stealth] (c) -- (e);
  \draw[-stealth] (d) -- (e);
  \draw[-stealth] (e) -- (f);
    \draw[-stealth] (e) -- (g);
\end{tikzpicture}$$
  \caption{Combinatorial AR quivers for $\wo(\s_1\s_2\s_3)=\s_1\s_2\s_3\s_1\s_2\s_1$, and for $\wo(\s_2\s_1\s_3)=\s_2\s_1\s_3\s_2\s_1\s_3$.\label{combarquiv}
}
\end{figure}

\item $\Q=\c^\infty$.  The beginning of the combinatorial AR quiver for
$\c^\infty$ in the Coxeter group of type $A_3$ with $\c=\s_1\s_2\s_3$ is given in \Cref{combarinf}. Note that, as can be seen in the example, and as shown by \Cref{lem:reflection_order}, $\cwo$ is initial in $\c^\infty$.

\begin{figure}
$$
  \begin{tikzpicture}
  \node (a) at (0,0) {$\s_1$};
  \node (b) at (1,1) {$\s_2$};
  \node (c) at (2,2) {$\s_3$};
  \node (d) at (2,0) {$\s_1$};
  \node (e) at (3,1) {$\s_2$};
  \node (f) at (4,0) {$\s_1$};
  \node (g) at (4,2) {$\s_3$};
  \node (h) at (5,1) {$\s_2$};
  \node (i) at (6,2) {$\phantom{\s_3}$};
  \node (j) at (6,0) {$\phantom{\s_1}$};
  \draw[-stealth] (a) -- (b);
  \draw[-stealth] (b) -- (c);
  \draw[-stealth] (b) -- (d);
  \draw[-stealth] (c) -- (e);
  \draw[-stealth] (d) -- (e);
  \draw[-stealth] (e) -- (f);
    \draw[-stealth] (e) -- (g);
    \draw[-stealth] (f) -- (h);
    \draw[-stealth] (g) -- (h);
    \draw[-stealth] (h) -- (i);
    \draw[-stealth] (h) -- (j);
\end{tikzpicture}$$
  \caption{Initial subquiver of the combinatorial AR quiver of $\c^\infty$ for $\c=\s_1\s_2\s_3$.\label{combarinf}}
\end{figure}

\item $\Q=\oco$.  By definition, $\oco$ is the bi-infinite word consisting of
  repetitions of $\c$.  Part of the combinatorial AR quiver for $\oco$ is given
  in \Cref{combardinf}. Note that, up to commutations, the word $\oco$ does not depend
  on the choice of $\c$, and the corresponding combinatorial AR quiver does not depend on the choice of $\c$ at all.  

  \begin{figure}
$$
    \begin{tikzpicture}
        \node (w) at (-2,2) {$\phantom{\s_3}$};
  \node (z) at (-2,0) {$\phantom{\s_1}$};
      \node (y) at (-1,1) {$\s_2$};
      \node (x) at (0,2) {$\s_3$};
  \node (a) at (0,0) {$\s_1$};
  \node (b) at (1,1) {$\s_2$};
  \node (c) at (2,2) {$\s_3$};
  \node (d) at (2,0) {$\s_1$};
  \node (e) at (3,1) {$\s_2$};
  \node (f) at (4,0) {$\s_1$};
  \node (g) at (4,2) {$\s_3$};
  \node (h) at (5,1) {$\s_2$};
  \node (i) at (6,2) {$\phantom{\s_3}$};
  \node (j) at (6,0) {$\phantom{\s_1}$};
  \draw[-stealth] (a) -- (b);
  \draw[-stealth] (b) -- (c);
  \draw[-stealth] (b) -- (d);
  \draw[-stealth] (c) -- (e);
  \draw[-stealth] (d) -- (e);
  \draw[-stealth] (e) -- (f);
    \draw[-stealth] (e) -- (g);
    \draw[-stealth] (f) -- (h);
    \draw[-stealth] (g) -- (h);
    \draw[-stealth] (h) -- (i);
    \draw[-stealth] (h) -- (j);
        \draw[-stealth] (x) -- (b);
        \draw[-stealth] (y) -- (x);
            \draw[-stealth] (y) -- (a);
    \draw[-stealth] (z) -- (y);
    \draw[-stealth] (w) -- (y);

\end{tikzpicture}$$
  \caption{Subquiver of the combinatorial AR quiver of $\oco$ for $\c=\s_1\s_2\s_3$.\label{combardinf}}
\end{figure}
\end{enumerate}

By \Cref{lem:reflection_order}, $\cwo\psi(\cwo)\equiv \c^h$.  We can therefore view
$\c^\infty$ as an infinite alternating repetition of $\cwo$ and $\psi(\cwo)$.
Because $\cwo$ is initial in $\c^\infty$, the colored root labelling of
letters from $\cwo$ are the same in $\cwo$ and $\c^\infty$.  The set of these labels
are exactly the 0-colored positive roots.  

\medskip
We define the \defn{combinatorial shift} sending a letter of $\cwo$ to the
corresponding letter in the following $\psi(\cwo)$, and similarly sending letters of
$\psi(\cwo)$ to the corresponding letter of the following copy of $\cwo$.  This
sends the reflection corresponding to the colored almost positive root
$\beta^{(i)}$ to the reflection corresponding to $\beta^{(i+1)}$.  

Similarly, we can view $\oco$ as a bi-infinite word consisting of alternating copies
of $\cwo$ and $\psi(\cwo)$, although this factorization is not canonical.
Using such a factorization, we can also define the combinatorial shift, and inverse
combinatorial shift, on
the letters of $\oco$.  This definition does not depend on the choice of factorization.

Let us define the set of \defn{$\mathbb Z$-colored positive roots} to be $\PhiP\times \mathbb Z$.    
Once we fix a copy of $\cwo$ inside $\oco$, there is a well-defined labelling of
the letters of $\oco$ by $\mathbb Z$-colored positive roots.  The chosen copy of
$\cwo$ corresponds to the roots labelled with color 0; other reflections are
labelled so that the combinatorial shift sends the reflection corresponding to
$\beta^{(i)}$ to the reflection corresponding to $\beta^{(i+1)}$.

\section{A brief introduction to hereditary Artin algebras}\label{sec:artin}

We now introduce the representation-theoretic setting in which we shall
work. 
Let~$\Bbbk$ be a field, and let~$H$ be a finite-dimensional basic hereditary
algebra over~$\Bbbk$, with~$n$ isomorphism classes of simple objects. We consider the category $\mod H$ of finite-dimensional left $H$-modules. We suppose $H$ to be representation-finite, i.e., having only finitely many isomorphism classes of indecomposable modules.
Introductions to the representation theory of finite-dimensional hereditary Artin algebras
can be found in \cite{ARS1997,ASS2006,DDPW2008} and \cite[Chapter 7]{SY17}.

\subsection{The Grothendieck group} The
Grothendieck group of~$H$, denoted $K_0(H)$, is the
free abelian group
on the set of isomorphism classes of~$H$-modules, modulo the
subgroup generated by all expressions of the form
$[M]-[L]-[N]$ for any short exact sequence $0\rightarrow L
\rightarrow M \rightarrow N \rightarrow 0$.
The Grothendieck group is isomorphic to $\mathbb Z^n$;
the classes of the simple~$H$-modules form a
$\mathbb Z$-basis for $K_0(H)$.
The expansion of $[M]$ in the basis
of the simple modules records the multiplicity with which each simple
module appears in a composition series for $M$.

$K_0(H)$ is naturally equipped with a symmetric, bilinear form
(the symmetrization of the Euler form) which is
positive definite.  The map sending a module to its
class in the Grothendieck group defines a bijection from
the isomorphism classes of indecomposable modules to the positive roots of a
root system~$\Phi$ compatible with this symmetric, bilinear form.

The classes in $K_0(H)$ of the simple modules are the simple roots.
The Coxeter group corresponding to the root system,~$W$, acts naturally on $K_0(H)$.
Any finite crystallographic root system can be realized in this way for some
choice of~$H$.
If $\Phi$ is a simply-laced root system, we can take~$H$ to be the path algebra
of an orientation of the corresponding Coxeter diagram
over an arbitrary
ground field $\Bbbk$.  For more details
on constructing algebras corresponding to non-simply-laced root systems,
see for example~\cite{DDPW2008}.

For a simple reflection~$s_i \in W$, write~$S_i$ for the corresponding simple
$H$-module,~$P_i$ for the projective cover of $S_i$, $I_i$ for the injective
envelope of $S_i$, and $e_i$ for the corresponding idempotent in $H$.
Let~$c_H$ be the product of the simple reflections taken in an order such
that~$s_i$ precedes~$s_j$ if $\Ext^1(S_j,S_i)\ne 0$.  
Although there may be
different orders which satisfy this condition, the product $c_H$ is
well-defined as an element of~$W$.   The Coxeter combinatorics associated to
the element~$c_H$ turns out to encode a great deal of the structure of the
category of $H$-modules.  We will write $c$ for $c_H$ if there is no
risk of confusion.

\subsection{The AR translation and the AR quiver}
If $M$ is an indecomposable non-projective module, the Auslander-Reiten
translation of $M$, written~$\tau M$, is the indecomposable module
characterized by the fact that $[\tau M]=c[M]$.  Similarly,
the inverse Auslander-Reiten translation of $M$ is
characterized by $[\tau^{-1}M]=c^{-1}[M]$ if $M$ is a
non-injective module
\cite[Proposition VIII.2.2]{ARS1997}.

The isomorphism classes of
indecomposable $H$-modules are naturally organized as the
vertices of what is
called its \defn{Auslander-Reiten quiver}, or AR quiver for short.  By definition,
we draw an arrow from $X$ to $Y$ if there is a morphism from $X$ to $Y$
which is not a sum of morphisms factoring through other indecomposable objects (and is not
an isomorphism).
This is a slightly simplified version of the AR quiver,
which suffices for our purposes.  We are ignoring the multiplicities of the
arrows, which require some additional book-keeping if~$\Bbbk$ is not
algebraically closed.  We are also avoiding the discussion of AR sequences,
because we do not actually need them for our purposes

\medskip
The following theorem shows that the structure of the AR quiver of $\mod H$
corresponds to the structure
of $\cwo$.  This theorem is already known; the proof we give consists of a sequence of references to the literature for its different components.  \nathanside{My favorite kind of proof!}

\begin{theorem} Let $H$ be a finite-dimensional Artin algebra, and let $c=c_H$ be the
  corresponding Coxeter element as defined above.  
  \begin{enumerate}
  \item   \label{ptone}
    The AR quiver of $\mod H$ coincides with the combinatorial AR quiver of $\cwo$.  

  \item \label{ptoneb} AR translation in $\mod H$
    is given by the combinatorial AR translation.
    
  \item  \label{pttwo} The indecomposable module $M$ corresponds to the simple
    reflection in
    the combinatorial AR quiver whose corresponding inversion is the 0-colored root
    $[M]^{(0)}$.

\item  \label{ptthree} The projective indecomposable $P_i$ correponds to the first instance of~$\s_i$ in $\cwo$.  Its class in the Grothendieck group is $[P_i]=s_{1}\dots s_{i-1}(\alpha_{i})$.

\item  \label{ptfour} The injective indecomposable $I_i$ corresponds to the final instance of~$\s_i$ in $\cwo$.  Its class in the Grothendieck group is $-c^{-1}[P_i]$.
\end{enumerate}
  \end{theorem}

\begin{proof}
We start by considering the combinatorial AR quiver for $\c^\infty$.  
The AR quiver
of $H$ is a subquiver of the combinatorial AR quiver for $\c^\infty$
with $P_i$ corresponding to the first occurrence of
$\s_i$, by \cite[Proposition VIII.1.15]{ARS1997} and the discussion before it.
This also establishes that the AR translation in $\mod H$ agrees with the
combinatorial AR translation in $\c^\infty$.

The class in the Grothendieck group of $P_i$ is $s_1\dots s_{i-1}(\alpha_i)$,
by \cite[Lemma 1.6]{dlabringel76}.  
The subset of $\c^\infty$ corresponding to the AR quiver is 
as follows: take the first $k_i$ copies of $\s_i$, where $k_i$ is maximal
such that $[P_i]$, $c[P_i]$, $c^2[P_i]$, $\dots$, $c^{k_i}[P_i]$ are all positive roots,
by \cite[Proposition VIII.1.15]{ARS1997}.
These are the roots corresponding to successive copies of $\s_i$ in $\c^\infty$, so
$k_i$ can also be described as maximal such that the roots corresponding to the
first $k_i$ copies of $\s$ in $\c^\infty$ are positive.  This establishes that the
subset of $\c^\infty$ corresponding to the AR quiver agrees with the subset
corresponding to $\cwo$, proving (\ref{ptone}) and (\ref{ptoneb}).

We know that the $j$-th copy of $\s_i$ corresponds to the module
$\tau^{-j+1}P_i$, whose class in the Grothendieck group is $c^{j-1}s_1\dots s_{i-1}(\alpha_i)$.  This root, with color zero, is 
the colored positive root labelling the $j$-th copy of $\s_i$ in
$\c^\infty$. This shows (\ref{pttwo}).




We have already established (\ref{ptthree}).
Now (\ref{ptfour}) follows from \cite[Proposition VIII.2.2]{ARS1997}.
\end{proof}

\subsection{The bounded derived category}
The bounded derived category of $H$, denoted $D^b(H)$,
is a triangulated category, with shift functor denoted $[1]$.
Material on derived categories can
be found in \cite{H1987}, in textbooks on homological algebra,
or in \cite{K2007}.

We write $\ind~D^b(H)$
for the indecomposable objects of $D^b(H)$.  Any
indecomposable object of $D^b(H)$ is of the form $M[i]$, for~$i\in \mathbb Z$
and $M$ an indecomposable~$H$-module \cite[Theorem I.5.2]{H1987}; the hypothesis in this section
of \cite{H1987} that the ground field is algebraically close is not used for this
result.
  The indecomposable objects in $D^b(H)$ are in bijection with the $\mathbb Z$-colored
  positive roots.
  For $M$ an indecomposable object in $\mod H$ and $i\in \mathbb Z$, we define
  $$\udim\ M[i] := [M]^{(i)}.$$
  If $X$ is the direct sum of indecomposable objects $X_1\oplus \dots\oplus X_r$, we define $\udim X$ to be the set $\{\udim X_1,\dots,\udim X_r\}.$

There is also an AR quiver defined for the bounded derived category.
As before, we define the AR quiver by saying that there is a vertex for each
isomorphism class of indecomposable modules, and there is an arrow between two
vertices when there is an irreducible morphism between the corresponding
modules.  (This agrees with the usual definition by \cite[Proposition I.4.3]{H1987}, up to the fact that we are not keeping track of valuation data on the
arrows of the AR quiver.)  There is also an Auslander-Reiten translation which sends indecomposable objects to indecomposable objects.  

The next theorem shows that the AR quiver of $D^b(H)$ is the combinatorial AR quiver
of $\oco$.  Recall that we showed in \Cref{sec:combcon} that $\oco$ can also be
described as a bi-infinite alternating sequence of $\cwo$ and $\psi(\cwo)$.
Again, we include the known proof for the convenience of the reader.

\begin{theorem} The AR quiver of $D^b(H)$ is the combinatorial AR quiver of
  $\oco$, and the AR translation is given by the combinatorial AR translation.
  Fixing a copy of $\cwo$ inside $\oco$, we can identify it with the
  AR quiver of $\mod H$ inside $D^b(H)$; the following copy of $\psi(\cwo)$ is then
  identified with $\mod H[1]$, the following copy of $\cwo$ is identified with
  $\mod H[2]$, and so on in both directions.  
\end{theorem}
  
  \begin{proof} 
    \cite[I.4.7]{H1987} shows that the AR quiver and the AR translation
    restricted to each shift of
the module category, are the same as in the module category.
To understand the AR quiver, we need only understand how successive copies of the module category
are connected.

The arrows between successive copies of the module category
are given by \cite[Lemma I.5.4]{H1987}: there is an arrow from $I_i[-1]$ to
each (projective) vertex in the module category with an arrow to $P_i$, and an arrow to $P_i$ from each vertex in the $[-1]$-shift of the module category with an arrow from $I_i[-1]$.  (This lemma is proved under the assumption that~$\Bbbk$
is algebraically closed; the same proof applies in the general case, though
somewhat heavier notation is needed.)
This establishes the statements in the theorem about the AR quiver.

The statement about AR translation follows from the corresponding statement about AR translation in the module category, together with the fact that
$\tau P_i = I_i[-1]$ from \cite[Lemma I.5.4]{H1987} and the fact that
$[I_i]=-c^{-1}[P_i]$.
\end{proof}

  \begin{example} We give the representation theory corresponding to
   \Cref{ex:a2roots}.
   Let~$H$ be the path algebra of the quiver
   $$ 1 \longleftarrow 2. $$\christianside{What about $H_2$?} There
are three indecomposable $H$-modules, the simples $S_1$ and $S_2$, and
$P_2$, the projective at 2.  Their corresponding classes in $K_0(H)$ can
be identified as the positive roots of the $A_2$ root system, respectively
$\alpha$, $\beta$, and $\gamma$.  The Coxeter group is isomorphic to the
symmetric group on three letters, and
is generated by simple reflections~$s$ (corresponding to $S_1$) and
$t$ (corresponding to $S_2$).  The Coxeter element is $c_H=st$, and $\cwo=\s\t\s$, which gives us the
shape of the AR quiver of the module category.  
The AR quiver of the
bounded derived category can either be thought of as repeated copies of
$Q$ with connecting arrows, or repeated copies of the module category, with
connecting arrows.  The AR quiver is \hughside{Well, technically, not all of the AR quiver.}
\begin{center}\begin{tikzpicture}
\node (da) at (-2.7,0.5) {$\cdots$};
\node (ponen) at (-2,0) {$P_2[-1]$};
\node (stwon) at (-1,1) {$S_2[-1]$};
\node (sone) at (0,0) {$S_1$};
\node (pone) at (1,1) {$P_2$};
\node (stwo) at (2,0) {$S_2$};
\node (sonep) at (3,1) {$S_1[1]$};
\node (ponep) at (4,0) {$P_2[1]$};
\node (db) at (4.6,0.5) {$\cdots$};
\draw [-stealth] (sone) -- (pone);
\draw [-stealth] (pone) -- (stwo);
\draw [-stealth] (ponen) -- (stwon);
\draw [-stealth] (stwon) -- (sone);
\draw [-stealth] (stwo) -- (sonep);
\draw [-stealth] (sonep) -- (ponep);
\end{tikzpicture}.\end{center}

\end{example}
\section{Factorizations of Coxeter elements and exceptional sequences}
\label{sec:fact}


A key structure underlying the interplay between the representation theory of finite type Artin algebras and Catalan combinatorics is the
correspondence between exceptional sequences and $\refl$-factorizations of $c^{-1}$ (i.e., $\reds(c^{-1})$, as in \Cref{sec.dual_braid_rel}. We now explain this connection.
\hughside{Fun fact: exceptional sequences and factorizations of Coxeter elements were developed by disjoint sets of mathematicians who were unaware they were studying the same objects.}

\subsection{Exceptional sequences in the module category}
\begin{definition}
A sequence $X_1,\dots,X_r$ of $H$-modules is called
an \defn{exceptional sequence} if each $X_j$ is
indecomposable and $\Ext^i(X_\ell,X_j)=0$ for $\ell > j$ and $i=0,1$.  An
exceptional sequence is called \defn{complete} if its length is $n$.
\end{definition}

Exceptional sequences of length $n$ are called complete because that is the
maximum possible length of an exceptional sequence.
Basic references for exceptional sequences are \cite{CB93} and (over an
arbitrary ground field) \cite{R94}.

Recall that indecomposable $H$-modules correspond bijectively to positive
roots, which correspond bijectively to reflections in $W$.  A sequence of
indecomposable modules may therefore be considered as a
sequence of reflections.  It turns out that there is a very nice way to
characterize the sequences of reflections which correspond to
exceptional sequences of modules.  

The following result was shown in the simply-laced case
in \cite{IT2009}.  A proof in a much more general setting, which,
in particular, drops the simply-laced assumption, can be found in
\cite{IS2010}.  See also \cite{HK2015} for another presentation.

\begin{theorem+}
\label{exceptionalisfactor} If $X_1,\dots,X_n$ is a sequence of~$n$ indecomposable modules,
then it is an exceptional sequence if and only if $s_{[X_1]}\dots s_{[X_n]}=c^{-1}$.
\end{theorem+}

\begin{example} Continuing the $A_2$ example which we began above, there
are three exceptional sequences in $H$-mod: $(S_1,P_2)$, $(P_2,S_2)$, and
$(S_2,S_1)$. They correspond respectively to the $\refl$-factorizations
$$c^{-1}=su=ut=ts.$$
\end{example}

The braid group on~$n$ strands acts on $\reds(c^{-1})$ by the Hurwitz action, which we have recalled in \Cref{sec.dual_braid_rel}. For
$1\leq i \leq n-1$, let $\boldsymbol\sigma_i$
be the generator taking the $(i+1)$\th\ strand over the $i$\th.
Given a factorization
$c^{-1}=r_1\dots r_n$, the result of acting by $\boldsymbol\sigma_i$ is to move $r_i$ past
$r_{i+1}$, at the cost of conjugating $r_i$, i.e.,
$(r_i,r_{i+1}) \mapsto (r_{i+1},r_{i+1}r_ir_{i+1})$.
The corresponding moves on
exceptional sequences are known as \defn{mutations}. 
(Right) mutation replaces
a subsequence $(X,Y)$ by $(Y,R_YX)$.  $R_YX$ can be defined as the unique
module in the abelian, extension-closed subcategory generated by $X$ and $Y$
such that $(Y,R_YX)$ is an exceptional sequence.  
By Theorem \ref{exceptionalisfactor}, $R_YX$ can also be characterized by the fact
that $[R_YX]=|s_{[Y]}[X]|$.  

\begin{example}\label{mut-ex} We continue the same example.
If we apply right mutation to $(P_2,S_2)$, we obtain
$(S_2,S_1)$.  If we apply right mutation to $(S_2,S_1)$, we obtain
$(S_1,P_2)$.
If we apply right mutation to $(S_1,P_2)$, we obtain
$(P_2,S_2)$.
\end{example}

\subsection{Exceptional sequences in the derived category}
It turns out that it is useful to consider exceptional sequences in
$D^b(H)$, and in fact, from now on, when we refer to exceptional sequences,
we mean exceptional sequences in $D^b(H)$.
The definition is almost the same: a sequence $X_1,\dots,X_r$ of indecomposable objects from $D^b(H)$ is
exceptional if 
$\Ext^i(X_\ell,X_j)=0$ for $\ell > j$ and all $i\in \mathbb Z$.

It turns out that exceptional sequences in the derived category are very
closely related to exceptional sequences in the module category.  To aid the reader,
we provide a proof of the following simple lemma.  

\begin{lemma} Let $M_1,\dots,M_r$ be $H$-modules, and let $i_1,\dots,i_r \in\mathbb Z$.  Then $(M_1,\dots,M_r)$ is an exceptional sequence of $H$-modules if and only if $(M_1[i_1],\dots,M_r[i_r])$ is an exceptional sequence in $D^b(H)$.
  \end{lemma}

\begin{proof} The definition of exceptional sequence in $D^b(H)$ is clearly
  insensitive to applying $[1]$ to any of the terms.  It is therefore
  enough to check that if $(M_1,\dots,M_r)$ is a sequence of modules, then
  it is an exceptional sequence in $D^b(H)$ if and only if it is an
  exceptional sequence in $\mod H$.  On the face of it, the two definitions
  look different, because for an exceptional sequence in $D^b(H)$, we check
  the vanishing of $\Ext^i(M_\ell,M_j)$ for $\ell>j$ and all $i$, whereas in the definition in $\mod H$, we only check this vanishing for $i=0$ and $i=1$.  The
  point is that, since $M_\ell$ and $M_j$ are $H$-modules, we know that
  $\Ext^i(M_\ell,M_j)=0$ for all $i\ne 0,1$.
  \end{proof}

By Theorem \ref{exceptionalisfactor}, we
can think of complete exceptional sequences in $D^b(H)$ as $\refl$-factorizations
of $c^{-1}$ where each factor receives an (arbitrary) color in $\mathbb Z$.

For exceptional sequences in $D^b(H)$, there is a way to specify
precisely how mutations act with respect to shift so that mutations still induce a braid group action.  There is an
easy way to state the outcome concretely:
$R_YX$ is shifted so that it weakly precedes $X$ in the AR quiver partial order,
but it is far to the right as possible subject to that condition.

More algebraically, it can be described as follows.
 %
%
There will be at most one $b\in \mathbb Z$ such that $\Hom(X,Y[b])\ne 0$.  Having found this $b$, take an $\Endo(Y)$-basis
$f_1,\dots,f_a$ of $\Hom(X,Y[b])$.  Then define the map $f$ from $X$ to the sum
of $a$ copies of $Y[b]$ such that the map into the $i$-th copy is given by
$f_i$.  (This map is known as the left thick $\add Y$ approximation to $X$.)  
Then complete this map to a triangle. (This is a routine operation in a triangulated category, akin to taking the kernel or cokernel of a morphism in an abelian category.)
The third term of the triangle is defined to be
$R_YX$, which is determined up to isomorphism,
$$ R_YX \rightarrow X \rightarrow Y^a[b] \rightarrow R_YX[1].$$
(See the proof of \cite[Lemma 7.1]{BRT2012} for more on this.)

\begin{example} Let us redo \Cref{mut-ex} in the derived category.
  If we apply right mutation to $(P_2,S_2)$, we obtain
$(S_2,S_1)$.  If we apply right mutation to $(S_2,S_1)$, we obtain
  $(S_1,P_2)$.  (In these two cases, the results were the same as before.)
  But
if we apply right mutation to $(S_1,P_2)$, we obtain
$(P_2,S_2[-1])$.
\end{example}

The following useful lemma can be found, for example, in \cite{BRT2012}.

\begin{lemma+}
\label{full-mutation}
Given a complete exceptional sequence $(X_1,\dots,X_n)$, if we successively mutate it so as to move $X_1$ to the other end, the result is a sequence $(X_2,\dots,X_n,\tau^{-1}X_1[-1])$.
\end{lemma+}

\begin{corollary+}
\label{trapped}
Given a complete exceptional sequence $(X_1,\dots,X_n)$, if we successively mutate it so as to move $X_1$ part way through the sequence, the result is a sequence $(X_2,\dots,X_i,Y,X_{i+1},\dots,X_n)$ where $Y$ is located between $\tau^{-1}X_1[-1]$ and $X_1$ in the AR-quiver.
\end{corollary+}

\section{\mhead-eralized clusters and noncrossing partitions}
\label{sec:cnc}

In this section we define silting objects and $\Hom_{\leq 0}$ configurations
in the derived category, and show how they correspond to $m$-eralized clusters and $m$-eralized noncrossing partitions, respectively.

In order to do this, it is
convenient to begin in the $m=\infty$ setting.
We view $\c\cwo^m$ as an initial subsequence of $\c\cwo^{m+1}$, and thereby
interpret
$\NablaAssocm(W,c)$ as a subset of $\NablaAssocmp(W,c)$. We then
define $\NablaAssocinf(W,c)=\bigcup_{m\geq 1} \NablaAssocm(W,c)$.

Similarly, define
\begin{align*}
  \DeltaNCinf(W,c) &\eqdef \bigcup_{m\geq 1} \DeltaNCm(W,c) \\
  \Sortinf(W,c)    &\eqdef \bigcup_{m\geq 1} \Sortm(W,c),
\end{align*}
and define $\Cambsortinf(W,c)$ to be the containment order on $\Sortinf(W,c)$.
(We could also equivalently define $\Cambncinf$ using $\DeltaNCinf$.)

\subsection{\mhead-eralized noncrossing partitions}

\begin{definition}\label{def:hom} An object~$X$ in $D^b(H)$ is called a \defn{$\Hom_{\leq 0}$ configuration} if
its indecomposable direct summands can be ordered as $X_1,\dots,X_n$
such that the sequence is exceptional and $\Hom(X_i,X_j)=0$ for $i\ne j$.
\begin{enumerate}[$\circ$]
\item We write $\Hom_{\leq 0}(H)$ for the $\Hom_{\leq 0}$ configurations in $D^b(H)$.  
\item We write $\Homp(H)$ for the $\Hom_{\leq 0}$ configurations all of whose
indecomposable summands are of the form $M[i]$ with $i\geq 0$.  \item We write
$\Homm(H)$ for the $\Hom_{\leq 0}$ configurations all of whose summands are
of the form $M[i]$ with $m\geq i\geq 0$.\end{enumerate}
\end{definition}

\begin{proposition} \label{homisforwards}~$X$ is a $\Hom_{\leq 0}$ configuration
 if and only if the direct summands of~$X$ read
in some (or equivalently, any) order compatible with the AR quiver of
$D^b(H)$, yields a sequence of reflections whose product is~$c$.
\end{proposition}

\begin{proof} Suppose~$X$ is a $\Hom_{\leq 0}$ configuration.
Let the indecomposable summands
of~$X$ be $X_1,\dots,X_n$, ordered in the reverse of
any order compatible with the AR
quiver.  By definition, then, $\Ext^{i}(X_\ell,X_j)=0$ for $\ell>j$
and $i>0$  (because $X_\ell$ will be to the left of $X_j[i]$).
On the other hand, because~$X$ is a $\Hom_{\leq 0}$ configuration, we know that
$\Ext^i(X_\ell,X_j)=0$ for all $\ell\ne j$ and $i\leq 0$.  This establishes that
$X_1,\dots,X_n$ is an exceptional sequence, and, by
\Cref{exceptionalisfactor}, the corresponding product of reflections is
$c^{-1}$.  Thus, if we read the reflection in the AR order, rather
than in the reverse order, we obtain~$c$.

Conversely, let $X_n,\dots,X_1$ be the summands of~$X$ in
some order compatible with the AR quiver,
and suppose that the
corresponding product of reflections $t_{[X_n]}\dots t_{[X_1]}=c$, so
$t_{[X_1]}\dots t_{[X_n]} = c^{-1}$, and thus,
by \Cref{exceptionalisfactor}, we know that $X_1,\dots,X_n$ is an exceptional sequence.  We now reverse
the previous argument to deduce that~$X$ is a $\Hom_{\leq 0}$ configuration.
\end{proof}

\begin{example} We continue the same example. $\Hom_{\leq 0}$-configurations
  are of one of the three following forms:\begin{itemize}
  \item $S_1[i]\oplus P_2[j]$ with $j<i$,
    \item $P_2[i] \oplus S_2[j]$ with $j<i$,
    \item $S_2[i]\oplus S_1[j]$ with $j\leq i$.\end{itemize}
  \end{example}

\begin{theorem} \label{thm:hom-deltanc} There is a natural bijection
  \begin{eqnarray*}
\Homp(H) &\cambbij &\DeltaNCinf(W,c)\\
 X & \longmapsto & \udim\ X. \end{eqnarray*}
This bijection restricts to 
a bijection from $\Homm(H)$ to $\DeltaNCm(W,c)$.
\end{theorem}

\begin{proof}
Reading $\invs_\refl(\cwoinf)$ is equivalent to (a particular way of) reading the classes in the Grothendieck group of the objects in the AR quiver of $D^b(H)$.
The theorem now follows from \Cref{homisforwards}.
Because the bijection is induced from a bijection between indecomposables
and letters in $\c^\infty$, the fact that the Cambrian recursion is satisfied
is clear. 
  The statement about the restrictions is also clear.
  \end{proof}

\subsection{\mhead-eralized clusters}
\begin{definition}\label{def:silting} An object~$X$ in $D^b(H)$ is called \defn{silting} if $\Ext^i(X,X)=0$ for
$i>0$, and $X$ is the direct sum of $n$ pairwise non-isomorphic indecomposables.
  \begin{enumerate}[$\circ$]
\item 
  We write $\Silt(H)$ for the set of silting objects in $D^b(H)$.  
\item We write $\Siltp(H)$ for the
silting objects in $D^b(H)$ all of whose indecomposable
summands are of the
form $M[i]$ with~$M$ an $H$-module and $i\geq 0$, or $I[-1]$ with~$I$ an indecomposable
injective.
\item We write $\Siltm(H)$ for the silting objects in $D^b(H)$ all of whose indecomposable summands are of the form $M[i]$ with $m>i\geq 0$, or $I[-1]$ for $I$
an indecomposable injective.
\end{enumerate}
  \end{definition}

Statements similar to the following can be found in \cite{IS2010} and
\cite{BRT2011}; we omit the proof, which is very similar to that of
\Cref{homisforwards}.

\begin{proposition+}
\label{siltingisbackwards}
$X$ is silting if and only if the direct summands of~$X$ read in some (or equivalently, any) order compatible with the AR quiver of $D^b(H)$, yields a sequence of reflections whose product is $c^{-1}$.
\end{proposition+}



\begin{example}
  We continue the same example.
  Silting objects are of one of the three following forms:
  \begin{itemize}
    \item $S_1[i] \oplus P_2[j]$ with $j\geq i$,
    \item $P_2[i] \oplus S_2[j]$ with $j\geq i$,
    \item $S_2[i] \oplus S_1[j]$ with $j>i$.
  \end{itemize}
\end{example}



\medskip

Now, we can state the following theorem.

\begin{theorem}\label{prop:silt-to-assoc} There is a natural bijection
  \begin{eqnarray*}
    \Siltp(H) & \cambbij &\NablaAssocinf(W,c) \\
    X & \longmapsto & \udim \ \tau^{-1}X
    \end{eqnarray*}
 This bijection restricts to a bijection from $\Siltm(H)$ to $\NablaAssocm(W,c)$.  
\end{theorem}

\begin{proof} The proof is the analogue of the proof of \Cref{thm:hom-deltanc},
  using \Cref{siltingisbackwards}.\end{proof}

\section[A representation-theoretic bijection]{The bijection between \mhead-eralized clusters and noncrossing partitions}
\label{sec:cncbij}

\cite{BRT2012} gives a bijection from $\Siltp(H)$ to $\Homp(H)$.
Define $\Rev$ of an exceptional sequence $(E_1,\dots,E_n)$
to be obtained by applying the fundamental element of the braid group $\Braidgrp_{n}$, and then applying
[1].  More explicitly, $\Rev(E_1,\dots,E_n)$ is obtained by moving $E_1$ to
the end of the sequence (changing it as it moves),
moving $E_2$ to just before the modified $E_1$, moving $E_3$ to just before
the modified $E_2$, etc., and then applying [1].

In order to apply $\Rev$ to a silting object or a $\Hom$-configuration, we order its indecomposable summands into an exceptional
sequence, apply $\Rev$, and then take the direct sum of the resulting terms, forgetting their order.  The result is well-defined independent of the choice of
ordering.

\begin{theorem}[{\cite{BRT2012}}]\label{thm:rev}  \label{BRTbij}
The map $\Rev$ is a natural bijection from silting objects to
$\Hom_{\leq 0}$ configurations
\begin{eqnarray*} \Silt(H) &\cambbij& \Hom_{\leq 0}(H) \\
  X&\longmapsto& \Rev(X).
\end{eqnarray*}
This bijection restricts to a bijection   from $\Siltp(H)$ to $\Homp(H)$,
    and further restricts to a bijection from $\Siltm(H)$ to $\Homm(H)$.
\end{theorem}

\begin{proof} We refer to \cite{BRT2012} for the proof of the existence of
  the bijection.  We prove here that it respects the Cambrian recurrence.

Let $\gamma_1^{(k_1)},\dots,\gamma_n^{(k_n)}$ be in $\NablaAssocinf(W,c)$.
Let the corresponding indecomposable objects be $X_1,\dots,X_n$, with
$X$ their direct sum, so $X\in \Siltp(H)$.

Let~$s_1$ be initial in~$c$, and suppose that
$\gamma_1^{(k_1)}$ is not equal to $\alpha_{1}^{(0)}$.  Equivalently, this means
that $\tau S_1$ is not a summand of $X$.
In this case, it is clear
that $\Revp$ respects the Cambrian recurrence:  $\Revp$ is defined in
terms of the derived category, so it is not sensitive to the difference
between~$H$ and the algebra obtained by applying a reflection functor at $1$.
(See \cite{dlabringel76} for reflections functors over general fields.)


Now suppose that $\gamma_1^{(c_1)} = \alpha_1^{(0)}$.  In representation-theoretic
terms, it means that $\tau S_1$ is a summand of~$X$.  The summand of
$\Rev(X)$ corresponding to $\tau S_1$, is $S_1$, by \Cref{full-mutation}.
The other summands of~$X$ will satisfy that $\Ext^i( X_j,\tau S_s)=0$ for
$i>0$, because~$X$ is silting.  By Auslander-Reiten duality, it follows that
$\Hom^i(S_1, X_j) =0$ for $i\geq 0$.  This means that the other summands of
$X_j$ can be identified with objects in $D^b(H/He_1H)$.  The definitions of
$\Rev$ in $D^b(H)$ and $D^b(H/He_1H)$ agree. This shows that $\Rev$ satisfies
the Cambrian recurrence.
\end{proof}

\begin{theorem}
  The following diagram commutes

  $$\begin{tikzpicture}[xscale=6,yscale=2]
    \node (a) at (0,0) {$\Siltp(H)$};
    \node (b) at (1,0) {$\Homp(H)$};
    \node (c) at (0,-1) {$\NablaAssocinf(W,c)$};
    \node (d) at (1,-1) {$\DeltaNCinf(W,c)$.};
\draw [stealth-stealth] (a) -- (b) node[midway,above] {\Cref{thm:rev}};
\draw [stealth-stealth] (a) -- (c) node[midway,left]{\Cref{prop:silt-to-assoc}};
\draw [stealth-stealth] (b) -- (d) node [midway,right]{\Cref{thm:hom-deltanc}};
\draw [stealth-stealth] (c) -- (d) node[midway, below]{\Cref{thm:rootconf_skiptset}};
  \end{tikzpicture}$$
  The bijections all restrict to the appropriate $m$-eralized versions.
\end{theorem}


\begin{proof} We have already shown that all of the bijections in this
  diagram respect the Cambrian recurrence, so the diagram commutes.
  \end{proof}


\section{Positive Fu\ss-Catalan combinatorics in representation theory}
\label{sec:pos}

We can proceed in a similar fashion to define representation-theoretic objects
corresponding to positive analogues of noncrossing partitions or of
clusters. \hughside{This section is basically nothing but definitions, but it's put to some use in the next section.}

\begin{definition}
Define $\Hompos(H)$ to consist of $\Hom_{\leq 0}$ configurations
in $D^b(H)$ all of whose indecomposable summands are of the form
$M[i]$ with~$M$ an $H$-module and $i\geq 0$, and where,
if $i=0$,~$M$ is not allowed to be
projective. Define $\Homposm(H)$ to consist of those
$\Hom_{\leq 0}$ configurations in $\Hompos(H)$ such that, in addition, all
of the indecomposable summands are located in shift at most $m$.
\end{definition}

The following lemma is immediate.

\begin{lemma+}
  The bijection $\udim$ from $\Homp(H)$ to $\DeltaNCinf(W,c)$ restricts to bijections from $\Hompos(H)$ to $\DeltaNCinfpos(W,c)$ and from $\Homposm(H)$ to $\DeltaNCmpos(W,c)$.
\end{lemma+}

\begin{definition}
Define $\Siltpos(H)$ to consist of silting objects in $D^b(H)$ all of whose
indecomposable summands are of the form $M[i]$ with~$M$ an $H$-module and
$i\geq 0$.  Similarly, define $\Siltposm(H)$ to consist of silting objects
in $D^b(H)$ all of whose indecomposable summands are of the form
$M[i]$ with~$M$ an $H$-module and $m> i \geq 0$.
\end{definition}

In other words, compared
to $\Siltp(H)$ and $\Siltm(H)$ respectively,
we are simply forbidding the indecomposable
injectives in shift $[-1]$ as possible summands.
The following lemma is immediate.

\begin{lemma+}
  The bijection $\udim \circ \tau^{-1}$ from $\Siltp(H)$ to $\NablaAssocinf(W,c)$ restricts to bijections from $\Siltpos(H)$ to $\NablaAssocinfpos(W,c)$ and from $\Siltposm(H)$ to $\NablaAssocmpos(W,c)$.
\end{lemma+}

The positive analogue of \Cref{BRTbij} is also shown in \cite{BRT2012}.
We omit the easy proof.

\begin{theorem+}
  $\Rev$ defines a bijection 
  \begin{eqnarray*} \Siltpos(H)&\longleftrightarrow& \Hompos(H)\\
    X&\longmapsto&\Rev(X).\end{eqnarray*}
    This bijection
    restricts to a bijection from $\Siltposm(H)$ to $\Homposm(H)$.
\end{theorem+}


\section{Representation theory as a source of symmetries}
\label{sec:ar-trans}

There are cyclic group actions on $\NablaAssocm(W,c)$, 
and $\DeltaNCmpos(W,c)$, as discussed in \Cref{sec:m-assoc-cambrian-recurrence} and \Cref{sec:positivesymmetry} respectively.  In this section,
we explain how they can be seen as arising out of the representation
theory.
\christianside{Excitement!}

\subsection{Orbit categories} We begin with a quick introduction to
orbit categories, which we will make use of for studying both clusters and
noncrossing partitions.

\begin{definition}
Let $G$ be an autoequivalence of $D^b(H)$.  The \defn{orbit category} of $D^b(H)$ with
respect to $G$, which we denote $D^b(H)/G$, is a category whose objects are the objects of $D^b(H)$, and whose
morphisms are defined by
\begin{equation}\Hom_{D^b(H)/G}(X,Y)=\bigoplus_{i\in \mathbb Z} \Hom_{D^b(H)}(X,G^iY).
  \label{eq:homs}\end{equation}
\end{definition}

Define an autoequivalence of $D^b(H)$ by $F^{(m)}=[m]\tau^{-1}$.
Let $\Cm$ be the orbit category with respect to $F^{(m)}$.
It is easy to see that $\Cm$ has finite-dimensional $\Hom$-spaces, because
there are only a finite number of non-zero summands in the direct sum (\ref{eq:homs}).
It is a deeper result that this category is triangulated (\cite{K2005}, see also \cite{Amiot}).

A triangulated category $\mathcal C$ is called \defn{$k$-Calabi--Yau} if for any $X$ and
$Y$ objects of $\mathcal C$, we have that $\Ext^i_{\mathcal C}(X,Y)$ is naturally isomorphic to the dual of $\Ext^{k-i}_{\mathcal C}(Y,X)$.  

\begin{lemma}[\cite{K2005}] $\Cm$ is $(m+1)$-Calabi--Yau. \end{lemma}

\begin{proof} After unwinding the definitions, this follows from Auslander-Reiten duality. \end{proof}

We now divide into two cases, to treat clusters and noncrossing
partitions.

\subsection{\mhead-eralized clusters}
Let $m\geq 0$.  The orbit category $\Cm$ is called the \defn{$m$-cluster category}.
An object $X$ in $\Cm$ is called a (basic) \defn{$m$-cluster tilting object}
if no two indecomposable direct summands of $X$ are isomorphic, 
$\Ext^i(X,X)=0$ for $1\leq i \leq m$, and $X$ is maximal with respect to
this property, i.e., if there is some $Y$ such that $\Ext^i(X\oplus Y,X
\oplus Y)=0$, then $Y$ is a direct sum of summands of $X$.  The maximality property
can also be captured by saying that $X$ is the direct sum of $n$ pairwise non-isomorphic
indecomposable objects.  

Let $\Fm$ denote the full additive category of $D^b(H)$ whose indecomposable
modules are of the form either $M[i]$ for $0\leq i <m$ or $I[-1]$, with
~$I$ an indecomposable injective.  $\Fm$ is a fundamental domain for the
action of $F^{(m)}$ on $\ind~D^b(H)$.

\begin{lemma}[\cite{BRT2011}]\label{lem:silt-m-clust} Let $X$ be an object in $\Fm$.  $X$ is a silting object if
and only if the image of $X$ in $\Cm$ is an $m$-cluster tilting object.
\end{lemma}

\begin{proof} Suppose that $X$ and $Y$ are indecomposable objects in
$\Fm$.  It suffices to show that they can permissibly occur together as
summands of an $m$-cluster tilting object if and only if they can permissibly occur
together as summands of a silting object .  In other words,
we must check that
$\Ext_{\Cm}^i(X,Y)=0=\Ext_{\Cm}^i(Y,X)$ for all $1\leq i \leq m$
if and only if
$\Ext_{D^b(H)}^i(X,Y)=0=\Ext_{D^b(H)}^i(Y,X)$ for all $i>0$.
We may assume that $X$ is
weakly to the left of $Y$ in the AR quiver for $D^b(H)$.

We first consider the forwards implication.
Since $X$ is weakly to the left of $Y$, it follows that
$\Ext^i(X,Y)=0$ for all $i>0$.  Because $X$ and $Y$ both lie in
$\Fm$, $\Ext^i(Y,X)=0$ for $i>m$.  The fact that $\Ext^i(Y,X)=0$ for
$1\leq i \leq m$ follows from the corresponding statement for $\Cm$.

For the reverse implication, we observe that, because $X$ and $Y$
are both in $\Fm$, and $X$ is weakly to the left of $Y$, the only summand of $\Ext^i_{\Cm}(Y,X)$ which could be
nonzero is $\Ext^i_{D^b(H)}(Y,X)$, but this is zero by assumption.  The vanishing
of $\Ext^i_\Cm(X,Y)$ now follows from the $(m+1)$-Calabi-Yau property of $\Cm$.
\end{proof}

\begin{proof}
Since $\tau$ commutes with $F^{(m)}$, it descends to an autoequivalence of
the $m$-cluster category.
Acting on $\Cm$, it is clear that $\tau$ preserves the property of
being an $m$-cluster tilting object.  Thus, it defines a cyclic action on the
set of silting objects in $\Fm$, and thus on $\NablaAssocm(W,c)$. Since
$\tau$ corresponds to the combinatorial AR translation, it gives rise to
the Cambrian rotation $\Camb_c$.
\end{proof}

\subsection{Positive \mhead-eralized noncrossing partitions}
Let  $m\geq 0$. It turns out that the suitable orbit category in which to understand the
$m$-eralized noncrossing partitions is $\Cmp={\mathcal C}^{(-m-1)}$, that is to say, we use
the same construction as above, but with a negative value for the
parameter.  Write $\fmp$ for $F^{(-m-1)}$. This approach was introduced by
R. Coelho Simoes for $m=1$ \cite{CS}.

Let $\Fmp$ be the fundamental domain for the action of $\fmp$ on $D^b(H)$
which consists of all the indecomposable objects in shifts 0 to~$m$ of
$D^b(H)$, except for the indecomposable projectives in shift zero.

An object $X$ in $\Cmp$ is called an $\Hom_{\leq 0}$ configuration if
$\Ext^i(X,X)=0$ for $-1\geq i \geq -m$, while $\Hom(X,X)$ consists of linear
combinations of the identity maps on indecomposable summands of $X$, and
$X$ is maximal with respect to this property.

\begin{lemma+} Let $X$ be an object in $\Fmp$.  $X$ is $\Hom_{\leq 0}$
configuration in $D^b(H)$ if
and only if the image of $X$ in $\Cmp$ is a $\Hom_{\leq 0}$ configuration
in $\Cmp$.
\end{lemma+}

We omit the proof, which is identical to the proof of \Cref{lem:silt-m-clust}.

\begin{proposition}
  $[-1]$ induces the positive Kreweras complement action $\Krew$ on $\DeltaNCmpos(W,c)$.
\end{proposition}
\begin{proof}
Similarly to the $m$-cluster case, since $[-1]$ commutes with $\fmp$, it descends to an autoequivalence
of $\Cmp$, where it induces a cyclic action on $\Hom_{\leq 0}$ configurations,
and thus also induces a cyclic action of $\Hom_{\leq 0}$ configurations in
$\Fmp$, and thus on $\DeltaNCmpos(W,c)$.   Since
$[-1]$ corresponds to the combinatorial shift, it gives rise to
the positive Kreweras complement action $\Krew$.
\end{proof}

The Auslander-Reiten translation $\tau$ also gives a symmetry of $\DeltaNCmpos(W,c)$, but the group of symmetries induced by $[1]$ includes the group of
symmetries induced by $\tau$, and sometimes the inclusion is strict.

\section[\mhead-eralized sortable elements]{\mhead-eralized sortable elements and co-aisles in the derived category}
\label{sec:aisles}

When we speak of a subcategory of $D^b(H)$, we always mean a full
subcategory closed under direct sums and direct summands.
Thus, one way to specify a subcategory of $D^b(H)$ is to specify its
indecomposable objects.

\begin{definition}\label{def:aisle}
A subcategory~$\V$ of $D^b(H)$ is called a \defn{co-aisle}
if the following hold~\cite[Proposition~1.3]{KV}.
\begin{enumerate}
\item $X\in \V$ implies $X[-1]\in \V$. \label{cond:1}
\item \label{cond:2} If we have a triangle $X\rightarrow Y \rightarrow Z \rightarrow$ in
$D^b(H)$ such
that~$X$ and~$Z$ are in~$\V$, then $Y$ is also in~$\V$. 
\item For each object~$Z$ of $D^b(H)$, there is some $X\in \V$ such that
the map $\Hom_\V(X,\cdot) \rightarrow \Hom(Z,\cdot)|_\V$ is an epimorphism.
\label{cond:3}
\end{enumerate}
(Note that~\cite{KV} works with
aisles; we work with the dual notion of co-aisles for convenience.)
Following the terminology of~\cite{KV}, a co-aisle
$\V$ is called \defn{separated} if there is some~$m$ such that~$\V$ is
contained in the additive category generated by $\mod H[i]$ for $i\leq m$.
\end{definition}

Note that if~$\V$ is a subcategory of $D^b(H)$ satisfying this condition,
then, to show that~$\V$ is a separated co-aisle, it suffices to check
conditions~\eqref{cond:1} and~\eqref{cond:2} above.  Condition~\eqref{cond:3} will automatically be
satisfied because for any $Z \in D^b(H)$, there will only be a finite
number of indecomposable modules of~$\V$ admitting a non-zero morphism from~$Z$, so we could take~$X$ to be their direct sum.

The standard
co-aisle~$\sta(H)$ in $D^b(H)$ is additively generated by $M[i]$ for
$M \in \mod H$ and $i<0$.

\begin{definition}\label{def:good-co}
Write $\storf(H)$ for the separated co-aisles in $D^b(H)$ which
contain~$\sta(H)$.  Write $\storfm(H)$ for the separated co-aisles in
$\storf(H)$ which are contained in $\sta(H)[m]$.
\end{definition}

For $\V\in \storf_H$, denote the collection of indecomposable objects in
$\V$ in degrees $\geq 0$ by $\Vs$.  Write $\storfp(H)$ for the collections
which arise as $\Vs$ for some $\V\in \storf(H)$.  For $\TT\in \storfp(H)$, write
$\overline \TT$ for the corresponding separated co-aisle.

For~$\V$ a separated co-aisle containing~$\sta(H)$, define
\[
  \dimp(\V)= \big\{ \udim(X) \mid X\in \Vs \big\}.
\]

We show that the collections which arise as $\dimp(\V)$ for $\V$ a separated
co-aisle containing~$\sta(H)$
are exactly the inversion sets
of~$c$-sortable elements of the Artin monoid, by establishing
the appropriate Cambrian recursion for the sets $\dimp(\V)$.

Let $c=s_1\dots s_n$.  Define $\mu_1(H)$ to be the algebra obtained by applying a reflection functor at vertex 1. 
As usual, we identify $\ind \mod \mu_1(H)$ with $\ind \mod H$, with~$S_1$ removed, and
$S_1[1]$ added.
We will also need to
consider $H'=H/He_1H$, a hereditary Artin algebra of rank $n-1$.

\begin{proposition}
\label{prop:tf_cambrian_recurrence}
  Let~$s_1$ be initial in~$c_H$.
  Then
  \[
    \TT \in \storfp(H) \Leftrightarrow
    \begin{cases}
      \TT \in \storfp(H') & \text{if } S_1 \not \in \TT\\
      \TT  \in \storfp(\mu_1(H))& \text{if } S_1 \in \TT
    \end{cases}\ .
  \]
\end{proposition}

\begin{proof}
Suppose that $\TT \in \storfp(H)$.  Consider first the case that $S_1 \not\in \TT$.
Let~$M$ be an~$H$-module such that $\Hom(S_1,M)\ne 0$.
We claim that $M[i] \not \in \V$ for any
$i\geq 0$.  Because~$\overline\TT$ is closed under $[-1]$, it suffices to show that
$M \not \in \TT$.
Suppose otherwise.
We have a short exact
sequence of modules $0 \rightarrow S_1 \rightarrow M \rightarrow M'
\rightarrow 0$, which gives rise to a triangle:
$$ M'[-1] \rightarrow S_1 \rightarrow M \rightarrow M'[0]$$
Now $M'[-1]\in \overline \TT$
because $\overline \TT$ contains~$\sta(H)$.  Then \eqref{cond:2} 
implies that $S_1$ is in~$\TT$, contrary to our assumption.

Therefore, all the indecomposable objects of~$\TT$ are of
the form $M[i]$ for~$M$ an $H$-module
satisfying $\Hom(S_s,M)=0$ (and $i\geq 0$);
equivalently,
they are of the form $M[i]$ for~$M$ an $H'$-module (and $i\geq 0$).
Thus, $\TT \in \storfp(H')$.

Next, suppose that $S_1 \in \TT$. In this case,~$\overline\TT$ is a
co-aisle containing $\sta({\mu_1(H)})$, so $\TT \in \storf(\mu_1(H))$.

Conversely, suppose that we are given~$\TT$ in
$\storfp(H')$.  Define~$X$ to be the additive
subcategory of $D^b(H)$ generated by~$\TT$ and~$\sta(H)$.  \eqref{cond:1} obviously holds. Condition \eqref{cond:2}
follows from the 
long exact sequence for homology. Together these imply that $X$ is a co-aisle in $\storf(H)$, so $\TT\in \storfp(H)$.

Finally, suppose we are given~$\TT$ in
$\storfp(\mu_1(H))$.  It is clear that $\TT\cup \{S_1\}$ lies in
$\storfp(H)$.
\end{proof}






\begin{theorem}\label{aisle-sort}There is a natural bijection
  \begin{eqnarray*}
    \storf(H) & \cambbij & \Sortinf(W,c)\\
    \V &\longmapsto& \dimp(\V) \end{eqnarray*}
  This bijection restricts to a bijection from
  $\storfm(H)$ to inversion sets
  of $m$-$c$-sortable elements in the Artin monoid.
\end{theorem}\hughside{It was clear from the beginning of this project that this bijection would give us sets of colored roots that should be considered the $m$-eralization of the inversion sets of $c$-sortable elements, but at that point it wasn't clear there was anything interesting to be said about them.}

\begin{proof} Both satisfy the same Cambrian recursion. The statement about restriction is straightforward, since a $c$-sortable elment
is $m$-$c$-sortable if and only if all its inversions have color at most
$m$.
\end{proof}

An analogous result in the abelian case
was shown for $kQ$-mod in~\cite{IT2009}.
Another description of aisles in the hereditary
setting was provided by
\cite{SvR}.


\section{\mhead-eralized clusters and sortable elements}
\label{sec:bij}

\newcommand{\extinj}{\operatorname{ExtInj}}
There is a bijection between silting objects and aisles which goes back
to \cite[Section 5]{KV}.  We follow \cite{KV} except that we use co-aisles instead of aisles.  An object $Y$ in a co-aisle $\V$ is called $\Ext$-injective in $\V$ if $\Ext^i(M,Y)=0$ for all $M \in \V$ and all $i>0$.  We write $\extinj(\V)$ for the direct sum of the $\Ext$-injective indecomposables of $\V$.
\christianside{Does that mean, we are reinventing ideas from the 1980s?}

\begin{theorem}[\cite{KV}]\label{th:silt-co}
  There is a bijection from silting objects to co-aisles containing the standard
  co-aisle given as follows
  \begin{eqnarray*} \Siltp(H) &\longleftrightarrow& \storf(H) \\
    X &\longmapsto& \{ M\in D^b(H)  \mid \Ext^i(M,X)=0 \textrm{ for } i\geq 0\}\\
    \extinj(\V) & \longmapsfrom &    \V.
  \end{eqnarray*}
  This bijection restricts to a bijection between $\Siltm(H)$ and $\storfm(H)$.
\end{theorem}

We can now state the following theorem.

\begin{theorem}
  The following diagram commutes

  $$\begin{tikzpicture}[xscale=6,yscale=2]
    \node (a) at (0,0) {$\Siltp(H)$};
    \node (b) at (1,0) {$\storf(H)$};
    \node (c) at (0,-1) {$\NablaAssocinf(W,c)$};
    \node (d) at (1,-1) {$\Sortinf(W,c)$};
\draw [stealth-stealth] (a) -- (b) node[midway,above] {\Cref{th:silt-co}};
\draw [stealth-stealth] (a) -- (c) node[midway,left]{\Cref{prop:silt-to-assoc}};
\draw [stealth-stealth] (b) -- (d) node [midway,right]{\Cref{aisle-sort}};
\draw [stealth-stealth] (c) -- (d) node[midway, below]{\Cref{thm:sortcluster}.};
  \end{tikzpicture}$$
  The bijections all restrict to the appropriate $m$-eralized versions.
\end{theorem}

\begin{proof}
  All the bijections in the diagram respect the Cambrian recursion.
  \end{proof}
\hughside{And on that note, we leave you.  Thanks for reading!}

\bibliographystyle{amsalpha}
\bibliography{../StumpThomasWilliams}

\newcommand{\etalchar}[1]{$^{#1}$}
\providecommand{\bysame}{\leavevmode\hbox to3em{\hrulefill}\thinspace}
\providecommand{\MR}{\relax\ifhmode\unskip\space\fi MR }
\providecommand{\MRhref}[2]{%
  \href{http://www.ams.org/mathscinet-getitem?mr=#1}{#2}
}
\providecommand{\href}[2]{#2}
\begin{thebibliography}{BMCPR13}

\bibitem[ABW07]{ABW2007}
Christos Athanasiadis, Thomas Brady, and Colum Watt, \emph{Shellability of
  noncrossing partition lattices}, Proceedings of the American Mathematical
  Society \textbf{135} (2007), no.~4, 939--950.

\bibitem[ALW16]{ALW2014}
Drew Armstrong, Nicholas~A. Loehr, and Gregory~S. Warrington, \emph{Rational
  parking functions and {C}atalan numbers}, Annals of Combinatorics \textbf{20}
  (2016), no.~1, 21--58.

\bibitem[Ami07]{Amiot}
Claire Amiot, \emph{On the structure of triangulated categories with finitely
  many indecomposables}, Bulletin de la Soci\'et\'e Math\'ematique de France
  \textbf{135} (2007), 435--474.

\bibitem[AR04]{AR2004}
Christos~A. Athanasiadis and Victor Reiner, \emph{Noncrossing partitions for
  the group {$D_n$}}, SIAM Journal on Discrete Mathematics \textbf{18} (2004),
  no.~2, 397--417.

\bibitem[Arm06]{Arm2006}
Drew Armstrong, \emph{Generalized noncrossing partitions and combinatorics of
  {C}oxeter groups}, Memoirs of the American Mathematical Society \textbf{202}
  (2006), no.~949.

\bibitem[ARR15]{ARR2015}
Drew Armstrong, Victor Reiner, and Brendon Rhoades, \emph{Parking spaces},
  Advances in Mathematics \textbf{269} (2015), 647--706.

\bibitem[ARS97]{ARS1997}
Maurice Auslander, Idun Reiten, and Sverre Smal{\o}, \emph{Representation
  theory of {A}rtin algebras}, Cambridge Studies in Advanced Mathematics,
  vol.~36, Cambridge University Press, 1997, Corrected reprint of the 1995
  original.

\bibitem[ARW13]{ARW2013}
Drew Armstrong, Brendon Rhoades, and Nathan Williams, \emph{Rational
  associahedra and noncrossing partitions}, Electronic Journal of Combinatorics
  \textbf{20} (2013), no.~3.

\bibitem[ASS06]{ASS2006}
Ibrahim Assem, Daniel Simson, and Andrzej Skowro{\'n}ski, \emph{Elements of the
  representation theory of associative algebras. {V}ol. 1}, London Mathematical
  Society Student Texts, vol.~65, Cambridge University Press, 2006, Techniques
  of representation theory.

\bibitem[AT08]{AT2008}
Christos Athanasiadis and Elena Tzanaki, \emph{Shellability and higher
  {C}ohen-{M}acaulay connectivity of generalized cluster complexes}, Israel
  Journal of Mathematics \textbf{167} (2008), no.~1, 177--191.

\bibitem[Ath05]{Ath2005}
Christos~A. Athanasiadis, \emph{On a refinement of the generalized {C}atalan
  numbers for {W}eyl groups}, Transactions of the American Mathematical Society
  \textbf{357} (2005), 179--196.

\bibitem[BB05]{BB2005}
Anders Bj\"orner and Francesco Brenti, \emph{Combinatorics of {C}oxeter
  groups}, Springer, 2005.

\bibitem[Bes03]{Bes2003}
David Bessis, \emph{The dual braid monoid}, Annales scientifiques de l'Ecole
  normale sup{\'e}rieure \textbf{36} (2003), no.~5, 647--683.

\bibitem[Bes15]{bessis2015finite}
\bysame, \emph{Finite complex reflection arrangements are $k(\pi,1)$}, Annals
  of mathematics (2015), 809--904.

\bibitem[BI99]{BI1999}
H{\'e}l{\`e}ne Barcelo and Edwin Ihrig, \emph{Lattices of parabolic subgroups
  in connection with hyperplane arrangements}, Journal of Algebraic
  Combinatorics \textbf{9} (1999), no.~1, 5--24.

\bibitem[Bj{\"o}95]{bjorner1995topological}
Anders Bj{\"o}rner, \emph{Topological methods}, Handbook of combinatorics,
  vol.~2, MIT Press, 1995, pp.~1819--1872.

\bibitem[BJS93]{billey1993some}
Sara Billey, William Jockusch, and Richard Stanley, \emph{Some combinatorial
  properties of {S}chubert polynomials}, Journal of Algebraic Combinatorics
  \textbf{2} (1993), no.~4, 345--374.

\bibitem[BKL98]{BKL1998}
Joan Birman, Ki~Hyoung Ko, and Sang~Jin Lee, \emph{A new approach to the word
  and conjugacy problems in the braid groups}, Advances in Mathematics
  \textbf{139} (1998), no.~2, 322--353.

\bibitem[BMCPR13]{BMCPR2013}
Mireille Bousquet-M\'elou, Guillaume Chapuy, and Louis-Fran\cc{c}ois
  Pr\'eville-Ratelle, \emph{The representation of the symmetric group on
  m-{T}amari intervals}, Advances in Mathematics \textbf{247} (2013), 309--342.

\bibitem[BMFPR12]{BMFPR2012}
Mireille Bousquet-M\'elou, \'Eric Fusy, and Louis-Fran\cc{c}ois
  Pr\'eville-Ratelle, \emph{The number of intervals in the m-{T}amari
  lattices}, Electronic Journal of Combinatorics \textbf{18} (2012), no.~2.

\bibitem[BMR{\etalchar{+}}06]{BMRRT2006}
Aslak~B. Buan, Robert Marsh, Markus Reineke, Idun Reiten, and Gordana Todorov,
  \emph{Tilting theory and cluster combinatorics}, Advances in Mathematics
  \textbf{204} (2006), no.~2, 572--618.

\bibitem[Bod17]{Bod2017}
Michelle Bodnar, \emph{Rational noncrossing partitions for all coprime pairs},
  arXiv preprint, \url{arxiv.org/abs/1701.07198} (2017).

\bibitem[BPR12]{BPR2012}
Fran\cc{c}ois Bergeron and Louis-Fran\cc{c}ois Pr\'{e}ville-Ratelle,
  \emph{Higher trivariate diagonal harmonics via generalized {T}amari posets},
  Journal of Combinatorics \textbf{3} (2012), no.~3, 317--341.

\bibitem[BR11]{BR2007}
David Bessis and Victor Reiner, \emph{Cyclic sieving of noncrossing partitions
  for complex reflection groups}, Annals of Combinatorics \textbf{15} (2011),
  197--222.

\bibitem[Bra01]{brady2001partial}
Thomas Brady, \emph{A partial order on the symmetric group and new {$K(\pi,
  1)$}'s for the braid groups}, Advances in Mathematics \textbf{161} (2001),
  no.~1, 20--40.

\bibitem[BRT09]{buan2009m}
Aslak Buan, Idun Reiten, and Hugh Thomas, \emph{$m$-noncrossing partitions and
  $m$-clusters}, DMTCS Proceedings (2009), no.~01, 145--154.

\bibitem[BRT11]{BRT2011}
\bysame, \emph{Three kinds of mutation}, Journal of Algebra \textbf{339}
  (2011), 97--113.

\bibitem[BRT12]{BRT2012}
\bysame, \emph{From $m$-clusters to $m$-noncrossing partitions via exceptional
  sequences}, Mathematische Zeitschrift \textbf{271} (2012), no.~3--4,
  1117--1139.

\bibitem[BS72]{brieskorn1972artin}
Egbert Brieskorn and Kyoji Saito, \emph{Artin-{G}ruppen und
  {C}oxeter-{G}ruppen}, Inventiones mathematicae \textbf{17} (1972), no.~4,
  245--271.

\bibitem[BT94]{bott1994self}
Raoul Bott and Clifford Taubes, \emph{On the self-linking of knots}, Journal of
  Mathematical Physics \textbf{35} (1994), no.~10, 5247--5287.

\bibitem[BW96]{BW1996}
Anders Bj{\"o}rner and Michelle Wachs, \emph{Shellable nonpure complexes and
  posets. {I}}, Transactions of the American Mathematical Society \textbf{348}
  (1996), 1299--1327.

\bibitem[BW97]{BW1997}
\bysame, \emph{Shellable nonpure complexes and posets. {II}}, Transactions of
  the American Mathematical Society \textbf{349} (1997), no.~10, 3945--3975.

\bibitem[BW02]{BW2002}
Thomas Brady and Colum Watt, \emph{K($\pi$,1)'s for {A}rtin groups of finite
  type}, Geometriae Dedicata \textbf{94} (2002), no.~1, 225--250.

\bibitem[BW08]{BW2008}
\bysame, \emph{Non-crossing partition lattices in finite real reflection
  groups}, Transactions of the American Mathematical Society \textbf{360}
  (2008), no.~4, 1983--2005.

\bibitem[CB93]{CB93}
William Crawley-Boevey, \emph{Exceptional sequences of representations of
  quivers}, Representations of algebras (Ottawa, ON, 1992), CMS Conf. Proc.,
  vol.~14, Amer. Math. Soc., 1993, pp.~117--124.

\bibitem[CLS14]{CLS2011}
Cesar Ceballos, Jean-Philippe Labb{\'e}, and Christian Stump, \emph{Subword
  complexes, cluster complexes, and generalized multi-associahedra}, Journal of
  Algebraic Combinatorics \textbf{39} (2014), no.~1, 17--51.

\bibitem[CS12]{CS}
Raquel Coelho~Sim{\~o}es, \emph{Hom-configurations and non-crossing
  partitions}, Journal of Algebraic Combinatorics \textbf{35} (2012), 313--343.

\bibitem[DDG{\etalchar{+}}15]{Deh2013}
Patrick Dehornoy, Fran\cc{c}ois Digne, Eddy Godelle, Daan Krammer, and Jean
  Michel, \emph{Foundations of {G}arside theory}, EMS Tracts in Mathematics,
  vol.~22, European Mathematical Society (EMS), 2015.

\bibitem[DDPW08]{DDPW2008}
Bangming Deng, Jie Du, Brian Parshall, and Jianpan Wang, \emph{Finite
  dimensional algebras and quantum groups}, Mathematical Surveys and
  Monographs, vol. 150, American Mathematical Society, 2008.

\bibitem[Deh07]{Deh2007}
Patrick Dehornoy, \emph{Combinatorics of normal sequences of braids}, Journal
  of Combinatorial Theory, Series A \textbf{114} (2007), no.~3, 389--409.

\bibitem[DR76]{dlabringel76}
Vlastimil Dlab and Claus~Michael Ringel, \emph{Indecomposable representations
  of graphs and algebras}, Memoirs of the American Mathematical Society
  \textbf{6} (1976), no.~173.

\bibitem[DS58]{de1758enumeratio}
A~De~Segner, \emph{Enumeratio modorum, quibus figurae planae rectilineae per
  diagonales dividuntur in triangula}, Novi commentarii academiae scientiarum
  Petropolitanae \textbf{7} (1758), 203--209.

\bibitem[Dye93]{Dye1993}
Matthew~J. Dyer, \emph{Hecke algebras and shellings of {B}ruhat intervals},
  Compositio Mathematica \textbf{89} (1993), no.~1, 91--115.

\bibitem[Ede80]{Ede1980}
Paul Edelman, \emph{Chain enumeration and non-crossing partitions}, Discrete
  Mathematics \textbf{31} (1980), no.~2, 171--180.

\bibitem[EM94]{EM1994}
Elsayed~A. Elrifai and H.~R. Morton, \emph{Algorithms for positive braids},
  Quarterly Journal of Mathematics \textbf{45} (1994), no.~180, 479--498.

\bibitem[FR05]{FR2005}
Sergey Fomin and Nathan Reading, \emph{Generalized cluster complexes and
  {C}oxeter combinatorics}, International Mathematics Research Notices
  \textbf{44} (2005), 2709--2757.

\bibitem[Fus91]{fuss1791solutio}
Nikolaus~I. Fuss, \emph{Solutio quaestionis, quot modis polygonum n laterum in
  polygona m laterum, per diagonales resolvi queat}, Nova acta academi{\ae}
  scientiarum imperialis Petropolitan{\ae} \textbf{9} (1791), 243--251.

\bibitem[FZ02]{FZ2002}
Sergey Fomin and Andei Zelevinsky, \emph{Y-systems and generalized
  associahedra}, Canadian Mathematical Bulletin \textbf{45} (2002), 537--566.

\bibitem[FZ03]{FZ2003}
\bysame, \emph{Cluster algebras {II}: finite type classification}, Inventiones
  mathematicae \textbf{154} (2003), 63--121.

\bibitem[GG12]{GG2012}
Iain Gordon and Stephen Griffeth, \emph{Catalan numbers for complex reflection
  groups}, American Journal of Mathematics \textbf{134} (2012), no.~6,
  1491--1502.

\bibitem[GM16]{GM2015}
Eugene Gorsky and Mikhail Mazin, \emph{Rational parking functions and {LLT}
  polynomials}, Journal of Combinatorial Theory, Series A \textbf{140} (2016),
  123--140.

\bibitem[Hai94]{Hai1994}
Mark~D. Haiman, \emph{Conjectures on the quotient ring by diagonal invariants},
  Journal of Algebraic Combinatorics \textbf{3} (1994), 17--76.

\bibitem[Hap88]{H1987}
Dieter Happel, \emph{Triangulated categories in the representation theory of
  finite dimensional algebras}, LMS Lecture Note Series, vol. 119, Cambridge
  University Press, 1988.

\bibitem[HK16]{HK2015}
Andrew Hubery and Henning Krause, \emph{A categorification of non-crossing
  partitions}, Journal of the European Mathematical Society \textbf{18} (2016),
  2273--2313.

\bibitem[HPS18]{HPS}
Christophe Hohlweg, Vincent Pilaud, and Salvatore Stella, \emph{Polytopal
  realizations of finite type {\bf g}-vector fans}, Advances in Mathematics
  \textbf{328} (2018), 713--749.

\bibitem[Hum90]{Hum1990}
James~E. Humphreys, \emph{Reflection groups and {C}oxeter groups}, vol.~29,
  Cambridge University Press, 1990.

\bibitem[IS10]{IS2010}
Kiyoshi Igusa and Ralf Schiffler, \emph{Exceptional sequences and clusters},
  Journal of Algebra \textbf{323} (2010), no.~8, 2183--2202.

\bibitem[IT09]{IT2009}
Colin Ingalls and Hugh Thomas, \emph{Noncrossing partitions and representations
  of quivers}, Compositio Mathematica \textbf{145} (2009), 1533--1562.

\bibitem[Kel05]{K2005}
Bernhard Keller, \emph{On triangulated orbit categories}, Documenta Mathematica
  \textbf{10} (2005), 551--581.

\bibitem[Kel07]{K2007}
Bernhard Keller, \emph{Derived categories and tilting}, Handbook of tilting
  theory, Cambridge University Press, 2007, pp.~49--104.

\bibitem[KM04]{KM2004}
Allen Knutson and Ezra Miller, \emph{Subword complexes in {C}oxeter groups},
  Advances in Mathematics \textbf{184} (2004), no.~1, 161--176.

\bibitem[KM05]{KM2005}
Allen Knutson and Ezra Miller, \emph{Gr{\"o}bner geometry of {S}chubert
  polynomials}, Annals of Mathematics \textbf{161} (2005), no.~3, 1245--1318.

\bibitem[Knu73]{Knu1973}
Donald~E. Knuth, \emph{The art of computer programming {V}ol. 1}, 1973.

\bibitem[Kre72]{Kre1972}
Germain Kreweras, \emph{Sur les partitions non crois{\'e}es d'un cycle},
  Discrete Mathematics \textbf{1} (1972), no.~4, 333--350.

\bibitem[KS18]{KS2018}
Christian Krattenthaler and Christian Stump, \emph{Positive m-divisible
  non-crossing partitions and their cyclic sieving}, in preparation, 2018.

\bibitem[KV88]{KV}
Bernhard Keller and Dieter Vossieck, \emph{Aisles in derived categories},
  Bulletin of the Belgian Mathematical Society \textbf{40} (1988), 239--253.

\bibitem[LT09]{LT2009}
Gustav~I. Lehrer and Donald~E. Taylor, \emph{Unitary reflection groups},
  Australian Mathematical Society Lecture Series, vol.~20, Cambridge University
  Press, 2009.

\bibitem[MHPS12]{muller2012associahedra}
Folkert M{\"u}ller-Hoissen, Jean~Marcel Pallo, and Jim Stasheff,
  \emph{Associahedra, tamari lattices and related structures: Tamari memorial
  festschrift}, vol. 299, Springer Science \& Business Media, 2012.

\bibitem[Mic99]{Mil1999}
Jean Michel, \emph{A note on words in braid monoids}, Journal of Algebra
  \textbf{215} (1999), no.~1, 366--377.

\bibitem[Mon93]{montenegro1993fixed}
C.~Montenegro, \emph{The fixed point non-crossing partition lattices}, 1993.

\bibitem[MRZ03]{marsh2003generalized}
Robert Marsh, Markus Reineke, and Andrei Zelevinsky, \emph{Generalized
  associahedra via quiver representations}, Transactions of the American
  Mathematical Society \textbf{355} (2003), no.~10, 4171--4186.

\bibitem[MS16]{morse2016crystal}
Jennifer Morse and Anne Schilling, \emph{Crystal approach to affine {S}chubert
  calculus}, International Mathematics Research Notices \textbf{2016} (2016),
  no.~8, 2239--2294.

\bibitem[Pap94]{Pap1994}
Paolo Papi, \emph{A characterization of a special ordering in a root system},
  Proceedings of the American Mathematical Society \textbf{120} (1994), no.~3.

\bibitem[PS15]{PS20112}
Vincent Pilaud and Christian Stump, \emph{Brick polytopes of spherical subword
  complexes and generalized associahedra}, Adv. Math.

\bibitem[Rea06]{Rea2006}
Nathan Reading, \emph{Cambrian lattices}, Advances in Mathematics \textbf{205}
  (2006), no.~2, 313--353.

\bibitem[Rea07a]{Rea20072}
\bysame, \emph{Clusters, {C}oxeter-sortable elements and noncrossing
  partitions}, Transactions of the American Mathematical Society \textbf{359}
  (2007), 5931--5958.

\bibitem[Rea07b]{Rea2007}
\bysame, \emph{Sortable elements and {C}ambrian lattices}, Algebra Universalis
  \textbf{56} (2007), no.~3--4, 411--437.

\bibitem[Rea11]{Rea2011}
\bysame, \emph{Noncrossing partitions and the shard intersection order},
  Journal of Algebraic Combinatorics \textbf{33} (2011), no.~4, 483--530.

\bibitem[Rei97]{Rei1997}
Victor Reiner, \emph{Non-crossing partitions for classical reflection groups},
  Discrete Mathematics \textbf{177} (1997), no.~1, 195--222.

\bibitem[Rei02]{reiner2002equivariant}
\bysame, \emph{Equivariant fiber polytopes}, Doc. Math \textbf{7} (2002),
  113--132.

\bibitem[Rho14]{Rho2014}
Brendon Rhoades, \emph{Parking structures: {F}uss analogs}, Journal of
  Algebraic Combinatorics \textbf{40} (2014), no.~2, 417--473.

\bibitem[Rin94]{R94}
Claus~Michael Ringel, \emph{The braid group action on the set of exceptional
  sequences of a hereditary artin algebra}, Abelian group theory and related
  topics (Oberwolfach, 1993), Contemporary Mathematics, vol. 171, American
  Mathematical Society, 1994, pp.~339--352.

\bibitem[RS11]{RS2011}
Nathan Reading and David Speyer, \emph{Sortable elements in infinite {C}oxeter
  groups}, Transactions of the American Mathematical Society \textbf{363}
  (2011), no.~2, 699--761.

\bibitem[RS18]{reading2015cambrian}
\bysame, \emph{Cambrian frameworks for cluster algebras of affine type},
  Transactions of the American Mathematical Society \textbf{370} (2018), no.~2,
  1429--1468.

\bibitem[S{\etalchar{+}}09]{SD2008}
Kyoji Saito et~al., \emph{Growth functions for {A}rtin monoids}, Proceedings of
  the Japan Academy, Series A, Mathematical Sciences \textbf{85} (2009), no.~7,
  84--88.

\bibitem[Sim03]{simion2003type}
Rodica Simion, \emph{A type-{B} associahedron}, Advances in Applied Mathematics
  \textbf{30} (2003), no.~1-2, 2--25.

\bibitem[Spe09]{speyer2009powers}
David Speyer, \emph{Powers of {C}oxeter elements in infinite groups are
  reduced}, Proceedings of the American Mathematical Society \textbf{137}
  (2009), no.~4, 1295--1302.

\bibitem[Spr74]{Spr1974}
Tonny~A. Springer, \emph{Regular elements of finite reflection groups},
  Inventiones Mathematicae \textbf{25} (1974), 159--198.

\bibitem[SS12]{SS2012}
Luis Serrano and Christian Stump, \emph{Maximal fillings of moon polyominoes,
  simplicial complexes, and {S}chubert polynomials}, Electronic Journal of
  Combinatorics \textbf{19} (2012), no.~1, P16.

\bibitem[Sta97]{stasheff1997operads}
Jim Stasheff, \emph{From operads to `physically' inspired theories}, Operads:
  Proceedings of Renaissance Conferences, vol. 202, American Mathematical Soc.,
  1997, p.~53.

\bibitem[Sta15]{stanley2015catalan}
Richard~P. Stanley, \emph{Catalan numbers}, Cambridge University Press, 2015.

\bibitem[SvR16]{SvR}
Donald Stanley and Adam-Christiaan van Roosmalen, \emph{Derived equivalences
  for hereditary {A}rtin algebras}, Advances in Mathematics \textbf{303}
  (2016), 415--463.

\bibitem[SY17]{SY17}
Andrzej Skowro\'nski and Kunio Yamagata, \emph{Frobenius algebras {II}.
  {T}ilted and {H}ochschild extension algebras}, European Mathematical Society,
  2017.

\bibitem[Tho07]{thomas2007defining}
Hugh Thomas, \emph{Defining an $m$-cluster category}, Journal of Algebra
  \textbf{318} (2007), no.~1, 37--46.

\bibitem[Ton97]{tonks1997relating}
Andy Tonks, \emph{Relating the associahedron and the permutohedron},
  Contemporary Mathematics \textbf{202} (1997), 33--36.

\bibitem[Tza08]{Tza2008}
Elena Tzanaki, \emph{Faces of generalized cluster complexes and noncrossing
  partitions}, SIAM Journal on Discrete Mathematics \textbf{22} (2008), no.~1,
  15--30.

\bibitem[Vie86]{XV}
Xavier Viennot, \emph{Heaps of pieces, {I}: Basic definitions and combinatorial
  lemmas}, Combinatoire \'enum\'erative, Lecture Notes in Mathematics, vol.
  1234, Springer, 1986, pp.~321--350.

\bibitem[Wac07]{Wac2007}
Michelle~L. Wachs, \emph{Poset topology: Tools and applications}, Geometric
  Combinatorics, IAS / Park City Mathematics Institute Lecture Series, vol.~13,
  2007, pp.~497--615.

\bibitem[Zhu08]{zhu2008}
Bin Zhu, \emph{Generalized cluster complexes via quiver representations},
  Journal of Algebraic Combinatorics \textbf{27} (2008), no.~1, 35--54.

\end{thebibliography}
\cleardoublepage

\end{document}